\definecolor{mycolor}{rgb}{0.122, 0.435, 0.698}
\DeclareOldFontCommand{\rm}{\normalfont\rmfamily}{\mathrm}
\DeclareOldFontCommand{\sf}{\normalfont\sffamily}{\mathsf}
\DeclareOldFontCommand{\tt}{\normalfont\ttfamily}{\mathtt}
\DeclareOldFontCommand{\bf}{\normalfont\bfseries}{\mathbf}
\DeclareOldFontCommand{\it}{\normalfont\itshape}{\mathit}
\DeclareOldFontCommand{\sl}{\normalfont\slshape}{\@nomath\sl}
\DeclareOldFontCommand{\sc}{\normalfont\scshape}{\@nomath\sc}
\setlist{nolistsep} %
\newlist{asslist}{enumerate}{1} 
\setlist[asslist]{label=(\roman*), ref=\thethmT(\roman*)}
\newlist{thmlist}{enumerate}{1} 
\setlist[thmlist]{label=(\alph*), ref=\thethmT(\alph*)}
\definecolor{ocre_old}{RGB}{243,102,25} 
\definecolor{ocre}{rgb}{0.122, 0.435, 0.698}
\newtheoremstyle{ocrenumbox}
{0pt}
{0pt}
{\sl}
{}
{\small\bf\sffamily\color{ocre}}
{\;}
{0.25em}
{\small\sffamily\color{ocre}\thmname{#1}\nobreakspace\thmnumber{\@ifnotempty{#1}{}\@upn{#2}}
\thmnote{\nobreakspace\the\thm@notefont\sffamily\bfseries\color{black}---\nobreakspace#3.}} 
\newtheoremstyle{ocrenumhypbox}
{0pt}
{0pt}
{}
{}
{\small\bf\sffamily\color{ocre}}
{\;}
{0.25em}
{\small\sffamily\color{ocre}\thmname{#1}\nobreakspace\thmnumber{\@ifnotempty{#1}{}\@upn{#2}}
\thmnote{\nobreakspace\the\thm@notefont\sffamily\bfseries\color{black}---\nobreakspace#3.}} 
\newtheoremstyle{blacknumex}
{5pt}
{5pt}
{\sl}
{} 
{\small\bf\sffamily}
{\;}
{0.25em}
{\small\sffamily{\tiny\ensuremath{\blacksquare}}\nobreakspace\thmname{#1}\nobreakspace\thmnumber{\@ifnotempty{#1}{}\@upn{#2}}
\thmnote{\nobreakspace\the\thm@notefont\sffamily\bfseries---\nobreakspace#3.}}
\newtheoremstyle{blacknumbox} 
{0pt}
{0pt}
{\normalfont}
{}
{\small\bf\sffamily}
{\;}
{0.25em}
{\small\sffamily\thmname{#1}\nobreakspace\thmnumber{\@ifnotempty{#1}{}\@upn{#2}}
\thmnote{\nobreakspace\the\thm@notefont\sffamily\bfseries---\nobreakspace#3.}}
\newtheoremstyle{ocrenum}
{5pt}
{5pt}
{\sl}
{}
{\small\bf\sffamily\color{ocre}}
{\;}
{0.25em}
{\small\sffamily\color{ocre}\thmname{#1}\nobreakspace\thmnumber{\@ifnotempty{#1}{}\@upn{#2}}
\thmnote{\nobreakspace\the\thm@notefont\sffamily\bfseries\color{black}---\nobreakspace#3.}} 
\theoremstyle{ocrenumbox}
\newtheorem{thmT}{Theorem}[section]
\newtheorem{theoT}{Theorem}
\newtheorem{theoremeT}[thmT]{Theorem}
\newtheorem{lemT}[thmT]{Lemma}
\theoremstyle{ocrenumhypbox}
\newtheorem{hypT}[thmT]{Hypothesis}
\theoremstyle{blacknumex}
\theoremstyle{blacknumbox}
\newtheorem{definitionT}[thmT]{Definition}
\newtheorem{notationT}[thmT]{Notation}
\newtheorem{remarkT}[thmT]{Remark}
\theoremstyle{ocrenum}
\newtheorem{propT}[thmT]{Proposition}
\newtheorem{corollaryT}[thmT]{Corollary}
\newmdenv[skipabove=7pt,
skipbelow=7pt,
backgroundcolor=black!5,
linecolor=ocre,
innerleftmargin=5pt,
innerrightmargin=5pt,
innertopmargin=5pt,
leftmargin=0cm,
rightmargin=0cm,
innerbottommargin=5pt]{tBox}
\newmdenv[skipabove=7pt,
skipbelow=7pt,
rightline=false,
leftline=true,
topline=false,
bottomline=false,
backgroundcolor=ocre!10,
linecolor=ocre,
innerleftmargin=5pt,
innerrightmargin=5pt,
innertopmargin=5pt,
innerbottommargin=5pt,
leftmargin=0cm,
rightmargin=0cm,
linewidth=4pt]{eBox}	
\newmdenv[skipabove=7pt,
skipbelow=7pt,
rightline=false,
leftline=true,
topline=false,
bottomline=false,
linecolor=ocre,
innerleftmargin=5pt,
innerrightmargin=5pt,
innertopmargin=0pt,
leftmargin=0cm,
rightmargin=0cm,
linewidth=4pt,
innerbottommargin=0pt]{dBox}	
\newmdenv[skipabove=7pt,
skipbelow=7pt,
rightline=false,
leftline=true,
topline=false,
bottomline=false,
linecolor=gray,
backgroundcolor=black!5,
innerleftmargin=5pt,
innerrightmargin=5pt,
innertopmargin=5pt,
leftmargin=0cm,
rightmargin=0cm,
linewidth=4pt,
innerbottommargin=5pt]{cBox}
\newenvironment{theorem}{\begin{tBox}\begin{theoremeT}}{\end{theoremeT}\end{tBox}}
\newenvironment{thmbox}{\begin{tBox}\begin{theoremeT}}{\end{theoremeT}\end{tBox}}
\newenvironment{hyp}{\begin{tBox}\begin{hypT}}{\end{hypT}\end{tBox}}
\newenvironment{theo}{\begin{tBox}\begin{theoT}}{\end{theoT}\end{tBox}}
\newenvironment{defi}{\begin{dBox}\begin{definitionT}}{\end{definitionT}\end{dBox}}	
\newenvironment{notation}{\begin{dBox}\begin{notationT}}{\end{notationT}\end{dBox}}	
\newenvironment{remark}{\begin{dBox}\begin{remarkT}}{\end{remarkT}\end{dBox}}	
\newenvironment{lem}{\begin{dBox}\begin{lemT}}{\end{lemT}\end{dBox}}	
\newenvironment{prop}{\begin{dBox}\begin{propT}}{\end{propT}\end{dBox}}
\newenvironment{corollary}{\begin{dBox}\begin{corollaryT}}{\end{corollaryT}\end{dBox}}	
\newenvironment{cor}{\begin{dBox}\begin{corollaryT}}{\end{corollaryT}\end{dBox}}	
\renewcommand{\@seccntformat}[1]{\llap{\textcolor{ocre}{\csname the#1\endcsname}\hspace{1em}}} 
\renewcommand{\section}{\@startsection{section}{1}{\z@}
{-4ex \@plus -1ex \@minus -.4ex}
{1ex \@plus.2ex }
{\normalfont\large \bf \color{ocre}}}
\renewcommand{\subsection}{\@startsection {subsection}{2}{\z@}
{-3ex \@plus -0.1ex \@minus -.4ex}
{0.5ex \@plus.2ex }
{\normalfont\large\bf\color{ocre} }}
\newcommand{\ul}{{\underline {l}}}
\def\titlerunning#1{\gdef\titrun{#1}}
\def\author#1{\gdef\autrun{\def\and{\unskip, }#1}\gdef\@author{#1}}
\def\address#1{{\def\and{\\\hspace*{18pt}}\renewcommand{\thefootnote}{}%
\footnote {#1}}%
\markboth{Britta Sp\"ath}{\titrun}}
\def\email#1{e-mail: #1}
\def\subjclass#1{{\renewcommand{\thefootnote}{}%
\footnote{\emph{Mathematics Subject Classification (2010):} #1}}}
\newcommand{\otw}{\text{otherwise}}
\newcommand{\inv}{^{-1}}
\theoremstyle{definition}
\newtheorem{num}[thmT]{}{\color{ocre}}
\theoremstyle{plain}
\theoremstyle{definition}
 \newtheorem{condi}[thmT]{Condition}
\numberwithin{equation}{section}
\numberwithin{table}{section}
\def\norm#1#2{{\operatorname N}_{#1}(#2)}
\def\cent#1#2{{\operatorname C}_{#1}(#2)}
\newcommand{\Id}{\operatorname {Id}}
\newcommand{\wt}{\widetilde}
\newcommand{\Abstand}{\|}
\newcommand{\wh}{\widehat}
\newcommand{\pwh}{\protect\widehat}
\newcommand{\pwt}{\protect\widetilde}
\newcommand{\phat}{\protect\widehat}
\newcommand{\bN}{{\mathbf N}}
\newcommand{\HC}{{\mathrm {HC}}}
\newcommand{\wG}{{\widetilde G}}
\newcommand{\Stab}{\operatorname{Stab}}
\newcommand{\Kcirc}{{K_0}}
\newcommand{\Kcircd}{{K_{0,d}}}
\newcommand{\la}{\ensuremath{\lambda}}
\newcommand{\bT}{{\mathbf T}}
\newcommand{\odd}{{\rm odd}}
\newcommand{\even}{{\rm even}}
\newcommand{\type}{{\mathsf {type}}}
\newcommand{\tDlsc}{\tD_{l,\mathrm{sc}}}
\newcommand{\bB}{{\mathbf B}}
\newcommand{\bP}{{\mathbf P}}
\newcommand{\bU}{{\mathbf U}}
\newcommand{\bG}{{{\mathbf G}}}
\newcommand{\bGI}{{{\mathbf G}_{I}}}
\newcommand{\bH}{{{\mathbf H}}}
\newcommand{\bL}{{{\mathbf L}}}
\newcommand{\bK}{{{\mathbf K}}}
\newcommand{\EL}{E_L}
\newcommand{\bS}{{\mathbf S}}
\newcommand{\HF}{{{\bH}^F}}
\newcommand{\bX}{{\mathbf X}}
\newcommand{\wbG}{\wt{\mathbf G}}
\newcommand{\Irr}{\mathrm{Irr}}
\newcommand{\Char}{\mathrm{Char}}
\newcommand{\bZ}{{\mathbf Z}}
\newcommand{\SL}{\operatorname{SL}}
\newcommand{\GL}{\operatorname{GL}}
\newcommand{\SO}{\operatorname{SO}}
\newcommand{\ZZ}{\ensuremath{\mathbb{Z}}}
\newcommand{\TT}{\ensuremath{\mathbb{T}}}
\newcommand{\KK}{\ensuremath{\mathbb{K}}}
\newcommand{\MM}{\ensuremath{\mathbb{M}}}
\newcommand{\ov}{\overline }
\newcommand{\pov}{\protect\overline }
\newcommand{\whL}{{\wh L}}
\newcommand{\wtL}{{\wt L}}
\newcommand{\R}{\operatorname{R}}
\newcommand{\obG}{{\overline {\mathbf G}}}
\newcommand{\xx}{\mathbf x }
\newcommand{\n}{\mathbf n }\newcommand{\nn}{\mathbf n }
\newcommand{\h}{\mathbf h }\newcommand{\hh}{\mathbf h }
\renewcommand{\o}{\overline}
\newcommand{\oN}{{\o N}}
\newcommand{\Cent}{\ensuremath{{\rm{C}}}}
\newcommand{\NNN}{\ensuremath{{\mathrm{N}}}}
\newcommand{\ZZZ}{\ensuremath{{\mathrm{Z}}}}
\newcommand{\Sym}{{\mathcal{S}}}
\newcommand{\Young}{{\mathcal{Y}}}
\def\restr#1|#2{\left.#1\right\rceil_{#2}}
\def\III#1{\index{#1@$#1$}{\color{ocre}#1}}
\def\II#1@#2{\index{#1@$#2$}{{\color{ocre}#2}}}
\newcommand{\spannFnull}{\spann<F_0>}
\newcommand{\spannFp}{\spann<F_p>}
\newcommand{\tD}{\ensuremath{\mathrm{D}}}
\newcommand{\Cy}{\mathrm C}
\newcommand{\tC}{\mathrm C}
\newcommand{\tB}{\mathrm B}
\newcommand{\cE}{\mathcal E}
\newcommand{\calM}{\mathcal M}
\newcommand{\calL}{\mathcal L}
\newcommand{\calC}{\mathcal C}
\newcommand{\al}{{\alpha}}
\newcommand{\eps}{{\epsilon}}
\newcommand{\subsubset}{\subset \subset}
\newcommand{\spannh}{\spann<h_0>}
\newcommand{\FF}{{\mathbb{F}}}
\newcommand{\FFtimes}{{\mathbb{F}^\times}}
\newcommand{\si}{\ensuremath{\sigma}}
\newcommand{\GF}{{{\bG^F}}}
\newcommand{\oGF}{{{\obG^F}}}
\newcommand{\KF}{{{\bK^F}}}
\newcommand{\wGF}{{{{\wbG}^F}}}
\newcommand{\cO}{{\mathcal O}}
\def\Set#1{\Set@h#1@}
\def\Lset#1{\Lset@h#1@}
\def\Set@h#1|#2@{\left\{\left.#1\vphantom{#2}\hskip.1em\,\right|\,\relax #2\right\}}
\def\Lset@h#1@{\left\{#1\right\}}
\def\CALC#1{\CALC@h#1@}
\def\CALC@h#1|#2@{\calC^{#1}(#2)}
\def\CALCrad#1{\CALCrad@h#1@}
\def\CALCrad@h#1|#2@{\calC_\radic^{#1}(#2)}
\def\CALCNC#1{\CALCNC@h#1@}
\def\CALCNC@h#1|#2@{\calC_{\radic,nc}^{#1}(#2)}
\def\restr#1|#2{\left.#1\right\rceil_{#2}}
\def\spann<#1>{\left\langle#1\right\rangle}
\def\Spann<#1>{\Spann@h#1@}
\def\Spann@h#1|#2@{\left\langle\left.#1\vphantom{#2}\hskip.1em\right.\mid\relax #2 \right\rangle}
\def\Set#1{\Set@h#1@}
\def\Set@h#1|#2@{\left\{\left.#1\vphantom{#2}\hskip.1em\,\right.
 \mid\relax #2\right\}}
\def\set#1{\set@h#1@}
\def\set@h#1@{\left\{#1\right\}}
\def\spann<#1>{\left\langle#1\right\rangle}
\newcommand{\uad}{{\underline{a_d}}}
\newcommand{\Aut}{\mathrm{Aut}}
\newcommand{\Out}{\ensuremath{\mathrm{Out}}}
\newcommand{\Z}{\operatorname Z}
\newcommand{\calP}{\mathcal P}
\newcommand{\DD}{\mathbb D}
\newcommand{\forevery}{{\text{\quad\quad for every }}}
\newcommand{\und}{{\text{ and }}}
\newcommand{\with}{{\text{ with }}}
\newcommand{\ra}{\rightarrow}
\newcommand{\lra}{\longrightarrow}
\newcommand{\tA}{\mathrm A}
\newcommand{\Norm}{\operatorname{N}}
\newcommand{\cusp}{\Irr_{cusp}}
\newcommand{\wrt}{{with respect to\ }}
\titlerunning{Inductive McKay Condition in type D, I}
\title{Extensions of characters in type D and the inductive McKay condition, I}
\author{Britta Sp\"ath\thanks{This material is partially based upon work supported by the National Science Foundation under Grant No. DMS-1440140 while the author was in residence at the Mathematical Sciences Research Institute in Berkeley, California, during the Spring 2018 semester.
Some research was conducted in the framework of the research training group
\emph{GRK 2240: Algebro-Geometric Methods in Algebra, Arithmetic and Topology}, funded by the DFG. 
}}
\begin{document} 
\maketitle

\abstract{This is a contribution to the study of $\Irr(G)$ as an $\Aut(G)$-set for $G$ a finite quasisimple group. Focusing on the last open case of groups of Lie type $\tD$ and $^2\tD$, a crucial property is the so-called $A'(\infty)$ condition expressing that diagonal automorphisms and graph-field automorphisms of $G$ have transversal orbits in $\Irr(G)$. This is part of the stronger $A(\infty)$ condition introduced in the context of the reduction of the McKay conjecture to a question about quasisimple groups. Our main theorem is that a minimal counter-example to condition $A(\infty)$ for groups of type $\tD$ would still satisfy $A'(\infty)$.  This will be used in a second paper to fully establish $A(\infty)$ for any type and rank. The present paper uses Harish-Chandra induction as a parametrization tool. We give a new, more effective proof of the theorem of Geck and Lusztig ensuring that cuspidal characters of any standard Levi subgroup of $G=\tD_{ l,\mathrm{sc}}(q)$ extend to their stabilizers in the normalizer of that Levi subgroup. This allows us to control the action of automorphisms on these extensions. From there, Harish-Chandra theory leads naturally to a detailed study of associated relative Weyl groups and other extendibility problems in that context.}

\address{School of Mathematics and Natural Sciences University of Wuppertal, Gau\ss str. 20, 42119 Wuppertal, Germany, \email{bspaeth@uni-wuppertal.de}}\subjclass{ 20C20 (20C33 20C34)}
{\small{\tableofcontents}}

\section{Introduction}

After the classification of finite simple groups and with the knowledge on their representations having also greatly expanded in the last decades it seems overdue to determine for each quasisimple group $G$ the action of its outer automorphism group $\Out(G)$ on its set of irreducible (complex) characters $\Irr(G)$. This is important in order to use our results on representations of simple groups to get theorems about arbitrary finite groups. A crucial example is the McKay conjecture asserting $$|\Irr_{p'}(X)|=|\Irr_{p'}(\NNN_{X}(P))| $$ for $p$ a prime, $X$ a finite group, $P$ one of its Sylow $p$-subgroups and $\Irr_{p'}(X)$ the set of irreducible characters of $X$ of degree prime to $p$. It is fairly clear that once this is solved for a normal subgroup $Y$ of $X$ the next step to deduce something for $X$ is to determine the action of $X$ on at least $\Irr_{p'}(Y)$. The McKay conjecture  has been reduced to a so-called inductive McKay condition about quasisimple groups by Isaacs--Malle--Navarro \cite{IMN} and the first requirement is an Out$(X)$-equivariant bijection realizing McKay's equality. Knowing the action of $\Out(G)$ on $\Irr(G)$ for all quasisimple groups $G$ would also have applications to other conjectures about characters with similar reductions such as the Alperin--McKay conjecture or the Dade conjecture (see \cite{S13}, \cite{S17}) or even conjectures about modular characters (see \cite{NT11}) through the unitriangularity of decomposition matrices (see \cite{BDT20}).

For alternating and sporadic groups the action of $\Out(G)$ on $\Irr(G)$ is easy to deduce from the available description of $\Irr(G)$. When $G$ is the universal covering group of a finite simple group of Lie type, this is a question in \cite[A.9]{GM}. Previous research on the subject has left open only the case of groups of type $\tD$, see \cite[2.5]{CS18B}. The present paper is the first part of a solution to that problem. A second part \cite{S21} will finish the determination of $\Irr(G)$ as an $\Out(G)$-set. The splitting is due to the quite different methods used here and in \cite{S21}. 
A third part will focus on applications to the McKay conjecture \cite{S21b}.

In order to be more specific about intermediate goals and results, let us introduce some notation. Let $G=\bG^F$ for $F\colon\bG\to\bG$ a Frobenius endomorphism of a simply-connected simple algebraic group $\bG$. Upon choosing an $F$-stable maximal torus and a Borel subgroup containing it, one can define a group $E$ of so-called field and graph automorphisms of $G$. One can also define a reductive group $\wbG$ realizing a regular embedding for $\bG$, that is $\bG=[\wbG ,\wbG]$ with connected $\ZZZ (\wbG)$ and also assume that $F$ extends to a Frobenius endomorphism of $\wbG$ with $E$ also acting on $\wG:=\wbG^F$. Then $\Aut(G)$ is induced by the direct product $\wG\rtimes E$ (see for instance \cite[2.5.12]{GLS3}). 

The determination of the action of $\wG\rtimes E$ on $\Irr(G)$ mostly relies on establishing that $\wG$-orbits and $E$-orbits are somehow transversal. More precisely, one aims at showing the following property

\noindent $A'({\infty})$ : {\it  There exists an $E$-stable $\wG$-transversal in $\Irr(G)$. }

This, combined with the present knowledge of $\Irr(\wG)$, is enough to determine $\Irr(G)$ as an $\Out(G)$-set (see \cite[2.5]{CS18B}). However, in order to deduce any valuable statement about representations of almost-simple groups, it is also important to answer extendibility questions. For instance a difficult theorem of Lusztig essentially focusing on the case of type $\tD$ shows that any element of $\Irr(G)$ extends to its stabilizer in $\wG$ (see \cite{L88}, \cite{L08}). This notably leads to the determination of the action of $E$ on the set of $\wG$-orbits in $\Irr (G)$. 

The following strengthening of $A'({\infty})$ was introduced in \cite{S12} in order to check the inductive McKay condition for the defining characteristic.

\noindent $A({\infty})$ : {\it  There exists an $E$-stable $\wG$-transversal $\mathbb T$ in $\Irr(G)$ and any $\chi\in\mathbb T$ extends to an irreducible character of its stabilizer $G\rtimes E_\chi$. }

The aim of the present paper and its sequel  \cite{S21} is to prove $A({\infty})$ for $G$ of type $\tD$ and $^2\tD$. In the present paper $G$ will be indeed some $\tD_{ l, \mathrm{sc}}(q)$ ($l\geq 4$, $q$ a power of an odd prime); the case of twisted types $^2\tD$ will also be deduced in  \cite{S21}.  

Our main theorem here can be seen as showing that a putative counterexample to $A({\infty})$ with minimal $l$ still satisfies $A'({\infty})$. 

\begin{theo}\label{thm1}	Let $G= \tD_{l,sc}(q)$ ($l\geq 4$, $q$ a power of an odd prime), let $\wt G$ and $E$ as above (see also \ref{not}). 
	If any $\tD_{l',sc}(q)$ for $4\leq l'<l$ satisfies $A({\infty})$, then $G$ satisfies $A'({\infty})$.
\end{theo}

More precisely we assume \Cref{hyp_cuspD_ext}, i.e. that condition A$(\infty)$ holds for the cuspidal characters of any $G'=\tD_{l',sc}(q)$ with $4\leq l'<l$.

Our proof uses as a starting point a theorem of Malle \cite{Ma17} showing the existence part $A'({\infty})$ of the above statement for cuspidal characters. Then, our strategy is through the parametrization of $\Irr(G)$ given by Harish-Chandra theory. In particular we take the standard Levi supplement $\bL$ of an $F$-stable parabolic subgroup $\bP$ containing our chosen Borel subgroup and consider parabolic induction R$^\bG_{\bL}\lambda$ of cuspidal characters $\lambda\in\cusp (\bL^F)$. 

An essential ingredient of that parametrization is the deep theorem by Lusztig and Geck (\cite[8.6]{L} and \cite[Cor.~2]{GeckHC}) that any $\lambda\in \cusp (\bL^F)$ extends to its stabilizer in $N:=\NNN_{\bG}(\bL)^F$. In order to put that parametrization to use for our purpose of 
tracking automorphism actions  it is important to find an equivariant version of that statement. This  does not seem possible from the available proofs, so we devise a new one in this paper, showing namely
with the same notation for $G=  \tD_{l,sc}(q)$, $\wG$, $\bL$, $N$, $E$

\begin{theo} \label{theoC} Let $\la\in\cusp(\bL^F)$. Assume 
 \Cref{hyp_cuspD_ext} holds for $\tD_{l',sc}(q)$ ($4\leq l'<l$). Then some ${(\ZZZ(\wbG)\bL)}^F$-conjugate $\la_0$ of $\la$ has an $(N\NNN_E(\bL))_{\la_0} $-stable extension to $N_{\la_0}$.
\end{theo}

Studying the group structure of $N$, our proof uses essentially the Steinberg relations for the structure of $G$, not its realization as spin group, making probably more uniform a case-by-case but effective proof of Geck--Lusztig's theorem for other quasisimple groups of Lie type (see \cite[4.3]{BS} and \cite[4.13]{CSS} for types A and C). 

We should point out that the above extendibility property is part of the following wider problem where $(\bH ,F)$ is a reductive group defined over a finite field and $F$ is its associated Frobenius endomorphism.

\noindent (P) {\it Let $\bS$ be an $F$-stable torus of $\bH$. Does every $\psi\in\Irr(\Cent_{\bH}(\bS)^F)$ extend to its stabilizer in $\NNN_{\bH}(\bS)^F$ ?}

This was answered in the affirmative in the case where $\bS$ is a Sylow $d$-torus ($d\geq 1$) in the sense of \cite[25.6]{MT}, see \cite{S09,S10a,S10b}. Lusztig's theorem on the case where $\bS$ is split and $\psi$ is cuspidal was important in \cite{L} to turn Deligne--Lusztig theory into a parametrization of $\Irr(\bH^F)$ when $\bH$ has connected center. It seems that even partial answers to (P) have quite interesting applications (see also \cite[\S 15]{Cedric}, \cite[2.9]{Ma14}).
 
Let for now $\cusp(N)$ be the set of characters of $N$ whose restriction to $L$ is a sum of cuspidal characters.
Theorem B then can be seen as the starting point of a specific parametrization of $\cusp(N)$ bearing similarities with the parametrizations of characters of normalizers of Sylow $d$-tori given in the author's work just mentioned but with a special emphasis on outer automorphism actions. 

Through preparations gathered at the start of the paper and similar to a method developed in \cite{MS16} where $\bL$ was a torus, our main goal Theorem A reduces to a weak analogue of it for $\cusp(N)$. This is Theorem~\ref{thm_loc*}. It is checked through a strategy prescribed by Clifford theory. In particular this entails a quite detailed analysis of the relative Weyl groups $$W(\lambda) :=N_\lambda /\bL^F$$ and their various embeddings related to $\wG$ and $E$.

\subsection{Structure of the paper.} In Section \ref{sec2} we recall notation on quasisimple groups of Lie type, their automorphisms and the conditions $A({\infty})$, $A'({\infty})$. Then we collect the basic facts about cuspidal characters and Harish-Chandra theory for finite groups of Lie type. This leads to Theorem~\ref{thm_MS} which sums up the methods from \cite{MS16} to establish condition $A({\infty})$ through Harish-Chandra theory.
This is roughly the road map for the rest of the paper, in particular splitting the task into two halves that will be addressed in Sections \ref{secext}-\ref{sec4_neu} and Sections \ref{secWla}-\ref{ssec6E}.

The rest of the paper is specific to type $\tD$ (untwisted) in odd characteristic. After recalling a method from \cite{CSS} for constructing extensions, the main objective of Sections \ref{secext}-\ref{sec4_neu} is Theorem~B. Section \ref{secext} is a description of certain group theoretical aspects of the groups $L:=\bL^F$ and $N$, using also the classic embedding $\bG\leq\ov\bG$ of type $\tD_l$ into type $\tB_l$. 
The root system $\Phi'$ of $\bL$ is the direct product of irreducible root systems of type $\tA$ and type $\tD$. Roughly speaking, the factors of type $\tA_{d-1}$ form a root system  $\Phi_d$ and the factor of type $\tD$ gives $\Phi_{-1}$. Along the way we introduce a set $\DD$ determining the types occurring as factors of $\Phi'$. This description will be used in the whole paper. For each $d\in\DD$ we describe a normal inclusion $H_d\lhd V_d\leq \ov N:=\NNN_{\ov\bG}(\bL)^F$ where $H_d=L\cap V_d$ is an elementary abelian $2$-group and $L\spann<V_d\mid d\in\DD>=\ov N$. This normal inclusion $H_d\lhd V_d$ concentrates the equivariant extendibility problem we have to solve. 

In Section \ref{sec4_neu} we draw the consequences of the structure of $N$  in terms of characters. One has to take care of all the factors involved and deal with the inclusion in type $\tB$ which provides the graph automorphism specific to type $\tD$. Concerning the diagonal automorphisms we avoid choosing a regular embedding $\wG$ and instead consider inclusions $L\lhd \calL^{-1}(Z)\cap \bL$ where $Z\leq\ZZZ(\bG)$ and $\calL$ is the Lang map $x\mapsto x^{-1}F(x)$ on $\bG$.

Theorem~B being proved, we study in Section \ref{sec4} how automorphisms act on cuspidal characters in types $\tA$ and $\tD$, making use in the latter case of Malle's theorem \cite{Ma17} mentioned above and some results about semisimple characters already used in the study of the McKay conjecture for the defining characteristic (see \cite[\S 8]{Maslowski}).

In Section \ref{secWla}, the most technical of the paper, the objective is to prove Theorem~\ref{thm_loc*}, showing that $\cusp(N)$ satisfies a version of $A({\infty})$. As already shown in Section \ref{sec2}, this translates into requirements on $\Irr(N_\la/L)$, the characters of the relative Weyl group $W(\la)$ associated to a cuspidal character $\la$ of $L$. The comparison of the action of diagonal versus graph-field automorphisms on $\cusp(N)$ relates with the induced action of related characters of relative Weyl groups. The proof splits  naturally into the various cases for the stabilizer of $\lambda$ in $L\cap\calL^{-1}(\ZZZ(\bG))/L$. This leads to Proposition~\ref{prop_sec4B} and Proposition~\ref{prop5mixed} describing the situation in the two main cases. In the proofs graph-field automorphisms are taken care of by embedding the relative Weyl group $W(\lambda)$ into overgroups $K(\lambda)$ and $\wh K(\lambda)$ (see Notation~\ref{not52}) for field automorphisms and the embedding into type $\tB$ for the graph automorphism of order 2. 

In Section \ref{ssec6E} we essentially put together all the material of the preceding section to establish Theorem~\ref{thm_loc*} and with some extra effort Theorem~A.

\medskip\noindent{\bf Acknowledgements.} This material is partly based upon work supported by the NSF under Grant DMS-1440140 while the author was in residence at the MSRI, Berkeley CA. Some research was conducted in the framework of the research training group
\emph{GRK 2240: Algebro-Geometric Methods in Algebra, Arithmetic and Topology}, funded by the DFG. I thank Julian Brough, Lucas Ruhstorfer, Gunter Malle and the anonymous referee for their remarks on versions of the paper.

\section{Basic Considerations}\label{sec2}
We first gather here some notation around characters, recall Condition $A(\infty)$ and give a rephrasement that provides alternative approaches for the proof of \Cref{thm1}. In \ref{ssec2A} we collect relevant results from Harish-Chandra theory. We conclude with general considerations on cuspidal characters in 1.C. 

\subsection{Notation and Condition $A(\infty)$}
In general we follow the notation about characters as introduced in \cite{Isa}. Additionally we use some terminology from \cite{S09, S10a, S10b} that is recalled in the following paragraph.
\begin{notation}
\index{maximal extendibility}\index{extension map}
\index{extendibility! maximal}\index{map! extension}
Let $X\lhd Y$ be finite groups and $\TT\subseteq \Irr(X)$. An \textit{extension map \wrt $X\lhd Y$ for $\TT$} is a map $\Lambda:\TT\lra \coprod_{X\leq I \leq Y } \Irr(I)$ such that every $\la\in\TT$ is mapped to an extension of $\la$ to $\II Ylambda@{Y_\la}$ the inertia subgroup of $\lambda$ in $Y$. We say that \textit{maximal extendibility holds \wrt $X\lhd Y$ for $\TT$ }if such an extension map exists, see also \cite[Def.~5.7]{CS17A}. In such a case the map can be chosen $Y$-equivariant, provided $\TT$ is $Y$-stable, see \cite[Thm.~4.1]{CS17A}.
Whenever $\TT=\Irr(X)$ we omit to mention $\TT$. For $\la\in\Irr(X)$ and $\psi\in\Irr(Y)$ we write $\II lambdaY@{\lambda^Y}$ for the character induced \index{induced character} to $Y$ and $\II psiX@{\psi \rceil_X}$ for the {restricted character}\index{restricted character}. \index{character! restricted}\index{character! induced} For any generalized character $\kappa$ we denote by $\II Irrkappa@{\Irr(\kappa)}$ the set of (irreducible) constituents of $\kappa$. If $\si\in\Aut(X)$ and $\la\in\Irr(X)$ we write 
$\II lambdasigma@{\lambda^\sigma}= \,^{\sigma^{-1} }\la$ for the character with $\,^{\sigma^{-1}} \la(x)= \la^\si(x)=\la(\si(x))$ for $x\in X$.

If two subgroups $H_1,H_2\leq Y$ satisfy $[H_1,H_2]=1$, and $\la_i\in \Irr(H_i)$ for $i=1, 2$ with $\Irr(\restr \la_1|{H_1\cap H_2})=\Irr(\restr \la_2|{H_1\cap H_2})$, then there exists a unique character $\phi\in\Irr(\spann<H_1,H_2>)$ with $\Irr(\restr\phi|{H_i})=\{\la_i\}$ according to \cite[\S 5]{IMN} and we write $\II lambda1@{\lambda_1\cdot \lambda_2}$ for this character. 
Let $\mathbb I$ be a finite set, $Z$, $H$ and $H_i$ ($i\in \mathbb I$) finite groups with $Z\leq H_i\leq H$. If $[H_i,H_{i'}]=1$ for every $i,i'\in \mathbb I$ with $i\neq i'$ and $H_i\cap \spann<H_{j}\mid j\in \mathbb I\setminus\{i\}>=Z$, we consider $  \spann<H_{i}\mid i\in \mathbb I> \leq H$ the central product of the groups $H_i$. Given $\nu\in\Irr(Z)$ and $\la_i\in\Irr(H_i\mid \nu)$ we denote by 
$\II OiIlambdai@{\odot_{i \in\mathbb I}\lambda_i}
\in\protect\Irr(\spann<H_{i}\mid i\in \mathbb I> )$ the character  $\phi\in\Irr(\spann<H_{i}\mid i\in \mathbb I> )$ with $\Irr(\restr \phi|{H_i})=\{\la_i\}$ for every $i\in\mathbb I$, see also \cite[\S 5]{IMN}. 
\end{notation}
Next we introduce the groups and automorphisms considered in the following.
\begin{notation}[Simple Groups of Lie type]\label{not}
 Let $\II G@\bG$ be a simple linear algebraic group of simply-connected type over an algebraic closure $\II FF@{\FF}$ of $\FF_p$ for $p$ a prime. Additionally let $\III F:\bG\ra \bG$ be a Frobenius endomorphism defining an $\FF_q$-structure on $\bG$ for $q$ a power of $p$. The automorphisms of $\GF$ are restrictions to $\GF$ of bijective endomorphisms of $\bG$ commuting to $F$ (see \cite[\S 1.15]{GLS3}), so it makes sense to consider stabilizers $\Aut(\GF)_\bH$ for $F$-stable subgroups $\bH\leq \bG$.Let $\II T0@{\bT_0}$ be an $F$-stable maximally split torus and $\II B@{\bB}$ be an $F$-stable Borel subgroup of $\bG$ with $\bT_0\subseteq \bB$ and $\II Nb0@{\bN_0}:=\NNN_\bG(\bT_0)$. 
According to \cite[Thm.~24.11]{MT} the group $\III G:=\GF$ has a split $BN$-pair with respect to $\II B@ B:=\bB^F$, $\II T0@ {T_0}:=\bT_0^F$ and $\II N0 @{N_0}:=\bN_0^F$. Let $\II EGF@{E(\GF)}$, often just $\III E$, be the subgroup of $\Aut(\GF)_{(\bB,\bT_0)}$ generated by the restrictions to $\GF$ of graph automorphisms and some Frobenius endomorphism $\II F0 @{F_0}$ stabilizing $\bT_0$ and $\bB$ as in \cite[Thm.~2.5.1]{GLS3} and \cite[\S 2.A]{CS18B}.

Let ${\bG\leq \pwt\bG}$ be a regular embedding, i.e., a closed inclusion of algebraic groups with $\II{Gtilde}@{\pwt \bG}=\Z( \wbG)\bG$ and connected $\Z( \wt\bG)$. Then $\II T0tilde@{\pwt \bT_0}:=\Z(\wbG)\bT_0$ is a maximal torus of $\wt\bG$. Let $\II T0tilde@{\pwt T_0}:=\wt \bT_0^F$. Assume that $F:\wbG\ra \wbG$ is a Frobenius endomorphism extending the one of $\bG$, see also \cite[\S 2]{MS16}. Then $\wbG^F$ has again a split $BN$-pair with respect to the groups $\II Btilde @{\pwt B}:= \wt T_0 B$ and $\II N0tilde @{\pwt N'_0}:=\wt T_0 N_0$, see \cite[Thm.~24.11]{MT}. 
Often the action of $\wt N'_0$ on $\bG$ will be studied via the group $\II{Ntilde0}@{\pwt N_0}:=\{x\in \NNN_\bG(\bT_0)\mid x\inv F(x) \in \Z(\bG)\}$, which will be shown to induce the same automorphisms on $\bG$, see \ref{rem_whG}.

Via the convention given in \cite[\S 2]{MS16}, $E(\GF)$ also acts on $\wbG^F$ and the semi-direct product $\wbG^F\rtimes E(\GF)$ induces on $\GF$ the whole automorphism group $\Aut(\GF)$. 
\end{notation}
We recall the conditions $\II Ainf@{A(\infty)} $ and $\II A'inf@{A'(\infty)}$ from \cite[Def.~2.2]{CS18B}:
\begin{condi}[On stabilizers of irreducible characters of $\GF$]\label{recallAinfty} 
\noindent
\begin{itemize}
\item[{{$A(\infty)$}}:] There exists some $E$-stable $\wGF$-transversal $\TT$ in $\Irr(\GF)$, such that every $\chi\in\TT$ extends to $\GF E_\chi$.
\item [{$A'(\infty)$}:] There exists some $E$-stable $\wGF$-transversal $\TT$ in $\Irr(\GF)$.
\end{itemize}
\end{condi}
\noindent
Condition $A'(\infty)$ implies a weak version of Assumption 2.12(v) of \cite{S12}. 
\begin{lem}\label{*cond_trans}
Let $\wh Y$ and $\wt X$ be two subgroups of a group $Z$ with $\wt X\lhd Z$ and $Z=\wh Y \wt X$. For $X:=\wt X\cap \wh Y$ let $\calM\subseteq \Irr(X)$ be $Z$-stable. Then the following are equivalent: 
\begin{asslist}%
\item there is a $\wh Y$-stable $\wt X$-transversal $\calM_0$ in $\calM$;
\item every $\zeta'\in\calM$ is $\wt X$-conjugate to some $\zeta$ such that $( \wt X \wh Y)_\zeta= \wt X_\zeta \wh Y_\zeta$;
\item every $\zeta'\in \calM$ satisfies $( \wt X\wh Y )_{\zeta'}=(\wh Y^x)_{\zeta'} \wt X_{\zeta'}$ for some $x\in \wt X$.
\end{asslist}
\end{lem}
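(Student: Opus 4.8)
The plan is to prove the three statements equivalent in the cycle $(i)\Rightarrow(ii)\Rightarrow(iii)\Rightarrow(i)$, using only elementary orbit–stabilizer bookkeeping in the group $Z=\wh Y\wt X$ with normal subgroup $\wt X$ and $X=\wt X\cap\wh Y$. The key structural fact I would record first is that $\wt X\wh Y=Z$ (given) and that for any $\zeta\in\calM\subseteq\Irr(X)$ one has $\wt X_\zeta\wh Y_\zeta\le(\wt X\wh Y)_\zeta$ trivially, so the substance of $(ii)$ and $(iii)$ is the reverse inclusion. I would also note that since $\wt X$ acts on $\calM$ and $\wh Y$ normalizes $X$ (because $X=\wt X\cap\wh Y\lhd\wh Y$) the set $\calM$ is stable under both $\wt X$ and $\wh Y$, so all the orbit manipulations below make sense.

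\emph{$(i)\Rightarrow(ii)$.} Suppose $\calM_0$ is a $\wh Y$-stable $\wt X$-transversal in $\calM$. Given $\zeta'\in\calM$, pick the representative $\zeta\in\calM_0$ in its $\wt X$-orbit. I claim $(\wt X\wh Y)_\zeta=\wt X_\zeta\wh Y_\zeta$. Take $z=xy\in Z_\zeta$ with $x\in\wt X$, $y\in\wh Y$ (possible since $Z=\wt X\wh Y$); then $\zeta^{xy}=\zeta$, so $\zeta^{x}=\zeta^{y^{-1}}$. Now $\zeta^{y^{-1}}\in\calM_0$ because $\calM_0$ is $\wh Y$-stable, while $\zeta^{x}$ lies in the same $\wt X$-orbit as $\zeta$; since $\calM_0$ is a transversal, $\zeta^{x}=\zeta^{y^{-1}}=\zeta$, i.e.\ $x\in\wt X_\zeta$ and $y\in\wh Y_\zeta$. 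Hence $z\in\wt X_\zeta\wh Y_\zeta$, proving $(ii)$.

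\emph{$(ii)\Rightarrow(iii)$.} Let $\zeta'\in\calM$ and choose $x_0\in\wt X$ with $\zeta:=\zeta'^{x_0}$ satisfying $(\wt X\wh Y)_\zeta=\wt X_\zeta\wh Y_\zeta$. Conjugating back by $x_0^{-1}$ and using normality of $\wt X$, one gets $(\wt X\wh Y)_{\zeta'}=(\wt X_\zeta)^{x_0^{-1}}(\wh Y_\zeta)^{x_0^{-1}}=\wt X_{\zeta'}(\wh Y^{x_0^{-1}})_{\zeta'}$; taking $x=x_0^{-1}\in\wt X$ this is exactly the assertion in $(iii)$ (the conjugate stabilizer $(\wh Y^x)_{\zeta'}$ equals $(\wh Y_\zeta)^{x}$ since $\zeta=\zeta'^{x^{-1}}$). \emph{$(iii)\Rightarrow(i)$.} This is the direction where I expect the only real work. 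Build $\calM_0$ by choosing, from each $\wt X$-orbit in $\calM$, a representative $\zeta$ for which $(\wt X\wh Y)_\zeta=\wt X_\zeta(\wh Y^x)_{\zeta}$ for a suitable $x$ — but one must replace $\zeta$ by a further $\wt X$-conjugate so that the $\wh Y$ on the right is the \emph{untwisted} $\wh Y$, and then check the resulting choice is genuinely $\wh Y$-stable. Concretely: given $\zeta'$, $(iii)$ gives $x$ with $(\wt X\wh Y)_{\zeta'}=(\wh Y^{x})_{\zeta'}\wt X_{\zeta'}$; set $\zeta:=\zeta'^{x^{-1}}$, so $(\wt X\wh Y)_{\zeta}=\wh Y_{\zeta}\wt X_{\zeta}$. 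Now define $\calM_0$ to consist of all $\zeta\in\calM$ with $(\wt X\wh Y)_\zeta=\wh Y_\zeta\wt X_\zeta$; the above shows $\calM_0$ meets every $\wt X$-orbit. One checks $\calM_0$ is a \emph{transversal}: if $\zeta,\zeta^{x}\in\calM_0$ with $x\in\wt X$, then since $\zeta^x$ is in the $Z$-orbit of $\zeta$ and the two stabilizers factor as above, a counting/index argument (comparing $|Z:\wh Y_\zeta\wt X_\zeta|$ with the number of $\wh Y$-orbits of $\wt X$-conjugates) forces $\zeta^x$ to be in the $\wh Y$-orbit of $\zeta$, and then back inside the single $\wt X$-orbit the factorization pins it down to $\zeta^x=\zeta$. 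Finally $\calM_0$ is $\wh Y$-stable directly from its definition, because the condition $(\wt X\wh Y)_\zeta=\wh Y_\zeta\wt X_\zeta$ is invariant under replacing $\zeta$ by $\zeta^{y}$, $y\in\wh Y$ (conjugate the whole identity by $y$, using $\wt X\lhd Z$).

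The main obstacle is precisely the $(iii)\Rightarrow(i)$ step: turning a pointwise factorization statement into a \emph{global} $\wh Y$-stable transversal. The delicate point is that in $(iii)$ the complement $\wh Y$ appears only up to $\wt X$-conjugacy, so one has to argue that the set $\calM_0$ defined by the untwisted factorization condition still hits every $\wt X$-orbit, and then that it is simultaneously a transversal — this is where a clean index computation, rather than a brute-force bijection, is the right tool, and it mirrors the standard argument that an $E$-stable $\wt G$-transversal exists once orbit stabilizers factor. The other two implications are essentially formal manipulations with the normality of $\wt X$ and the equality $Z=\wt X\wh Y$.
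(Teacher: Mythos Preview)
The paper's own proof is a one-line citation to \cite[Rem.~3.3]{CSS}, so you are supplying the details the paper omits. Your arguments for $(i)\Rightarrow(ii)$ and $(ii)\Rightarrow(iii)$ are correct and standard.

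There is, however, a genuine gap in your $(iii)\Rightarrow(i)$. You define $\calM_0:=\{\zeta\in\calM\mid Z_\zeta=\wh Y_\zeta\wt X_\zeta\}$ and then claim that if $\zeta,\zeta^x\in\calM_0$ with $x\in\wt X$ then a counting argument forces $\zeta^x=\zeta$. This is false. Take $\wh Y=X$ and $\wt X=Z$ any group with $X\lhd Z$ acting nontrivially on some $Z$-stable $\calM\subseteq\Irr(X)$: since $X$ acts on $\Irr(X)$ by inner automorphisms, $\wh Y_\zeta=X$ for every $\zeta$, whence $Z_\zeta=\wt X_\zeta=\wt X_\zeta\wh Y_\zeta$ trivially and $\calM_0=\calM$. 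But $\calM$ contains full nontrivial $\wt X$-orbits, so $\calM_0$ is not a transversal. Your proposed index comparison cannot rescue this, because in this example the $\wh Y$-orbits are singletons while the $\wt X$-orbits are not.

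The fix is to build $\calM_0$ differently: for each $Z$-orbit in $\calM$ choose one $\zeta$ with $Z_\zeta=\wh Y_\zeta\wt X_\zeta$ (this exists by your $(iii)\Rightarrow(ii)$ step), and let $\calM_0$ be the union of the $\wh Y$-orbits of these chosen $\zeta$. This $\calM_0$ is $\wh Y$-stable by construction. To see it is a $\wt X$-transversal, one shows that two $\wh Y$-conjugates $\zeta^{y_1},\zeta^{y_2}$ lying in the same $\wt X$-orbit must coincide: from $y_2xy_1^{-1}\in Z_\zeta=\wt X_\zeta\wh Y_\zeta$ and normality of $\wt X$ one extracts $y_2y_1^{-1}\in X\cdot\wh Y_\zeta=\wh Y_\zeta$ (using that $X=\wt X\cap\wh Y$ acts trivially on $\Irr(X)$). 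The index computation you already wrote down, namely $|\wh Y:\wh Y_\zeta|=|Z:Z_\zeta|/|\wt X:\wt X_\zeta|$, then shows every $\wt X$-orbit inside the $Z$-orbit is hit exactly once.
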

\begin{proof} This follows from Remark 3.3 of \cite{CSS}. 
\end{proof}

\subsection{Action of $\Aut(G)$ on Harish-Chandra induced characters}\label{ssec2A}
Using detailed analysis of Harish-Chandra induction, the results of \cite{MS16} describe the action of $\Aut(\GF)$ in terms of cuspidal characters and their relative Weyl groups. The action is expressed in terms of the labels given by Howlett--Lehrer theory.

\begin{notation} \label{not_15}
Let $\II L@ L$ be a standard Levi subgroup of $G$ with respect to $B$ and $T_0$, i.e. $L=\bL^F$ for some standard Levi subgroup $\II Lbold@{\bL}$ of $\bG$ such that $\bT_0\leq \bL$ and $\bL \bB$ is an $F$-stable parabolic subgroup. We set $\II N@ N:=\NNN_\bG(\bL)^F$, $\III W:=N/L$ and we abbreviate $$\II EL@{\EL}={E(\GF)}_{\bL}.$$ We write $\II IrrcuspL@{\protect \cusp(L)}$ for the set of cuspidal characters of $L$ as defined in \cite[9.1]{Ca85} and $\II IrrcuspN@{\protect\cusp(N)}:=\bigcup_{\la\in\cusp(L)}\Irr(\la^N)$. Let us denote by $\II RLG@{\R_L^G}$ the \index{Harish-Chandra induction} {Harish-Chandra induction}
from $L$ to $G$. For $\lambda\in\cusp(L)$ let $$\II IrrGLla@{\Irr(G\mid (L,\la))}:=\Irr(\R_L^G(\la)),$$ (sometimes denoted as ${\cE(G,(L,\lambda))}$ in the literature). Let also $\II IrrGLM@{\Irr(G\mid (L,\TT))}:=\bigcup_{\la\in\TT}\Irr(G\mid (L,\la))$ for $\TT\subseteq \cusp(L)$.
\end{notation}

\begin{num}\label{ssec2Anot}
Let $\II AutGFLHC@{\Aut(\GF)_{L,\HC}}$ be the subgroup of $\Aut(\GF)$ generated by the automorphisms of $\GF$ induced by $N$ and $\Aut(\GF)_{(\bB\bL,\bL)}$. Note $\EL\leq \Aut(\GF)_{L,\HC}$. According to Howlett--Lehrer theory (see \cite[\S 10]{Ca85}), fixing an extension $\wt \la\in\Irr(N_\la)$ of $\la\in\cusp(L)$ defines a unique labelling of $\Irr(G\mid (L,\la))$ by $\Irr(W(\la))$ where $\II Wlambda@{W(\lambda)}:=N_\la/L$. We write $\II RLGlaeta@{\R_L^G(\la)_\eta}$ for the character of $\Irr(G\mid (L,\la))$ associated to $\eta\in\Irr(W(\la))$ via the extension $\wt \la$.

Accordingly the parametrization of $\Irr(G\mid (L,\cusp(L)))$ depends on an extension map $\II LambdaL@{\Lambda_L}$ \wrt $L\lhd N$ for $\cusp(L)$. For $\la\in\cusp(L)$ let $\III{R(\lambda)}\lhd W(\la)$ be defined as in \cite[Prop.~10.6.3]{Ca85}. If $\la\in\cusp(L)$ and $\si\in \Aut(\GF)_{L,\HC}$ let $\II deltalambdasigma@{\delta_{\la,\si}}$ be the unique linear character of $W(^\si\la)$ satisfying 
\begin{align}\label{deltalasi}
^\si\Lambda_L(\la)&=\Lambda_L(^\si\la)\delta_{\la,\si}.
\end{align} 
\end{num}
\noindent We only use the formula with some simplifying assumptions on $R(\la)$ and $\delta_{\la,\si}$. 

\begin{thmbox}[Malle--Sp\"ath {\cite[Thm.~4.6 and 4.7]{MS16}}]\label{HC_form}
Let 
$\si\in\Aut(\GF)_{L,\HC}$ and $\Lambda_L$ be an $N$-equivariant extension map \wrt $L\lhd N$ for $\cusp(L)$. Assume that $\R_L^G(\la)_{\eta}$ ($\la\in\cusp(L)$, $\eta\in\Irr(W(\la))$) is defined using $\Lambda_L$ and
\begin{align}\label{incl}
R(^\si\la) \leq\ker(\delta_{\la,\si}) \forevery \la \in\cusp(L) .
\end{align}
Then $^{\si}(\R_L^G(\la)_{\eta})= \R_L^G({}^{\si}\la)_{^\si\eta \delta_{\la,\si}^{-1}}$
for every $\la\in\TT$ and $\eta\in\Irr(W(\la))$. 
\end{thmbox}
In Section 5 of \cite{MS16}, the analogue of \Cref{thm1} was proven for characters in $\Irr(G \mid (T_0, \cusp(T_0))$ by studying $\cusp(\NNN_\bG(\bT_0)^F)$. For other standard Levi subgroups the strategy from \cite{MS16} leads naturally to the following statement where we focus on a single $L$ and its stabilizer in $E$. Sections \ref{secext}--\ref{secWla} will ensure the assumptions for the groups from \ref{not} whenever $\GF =\tD_{ l, \mathrm{sc}}(q)$. 
\begin{theorem}\label{thm_MS}
\label{not_L_N_E1}
Let $\II Ltilde @{\pwt L'}:=\wt T_0 L$, $\II Ntilde@{\pwt N'}:=\wt T_0 N$ and $\II Nhat@{\phat N}:=N\EL$. 
Assume there exist
\begin{asslist}
\item \label{ext_map} 
an $\wh N$-stable ${\wt L'}$-transversal $\TT$ in $\cusp(L)$,
an $N$-equivariant extension map $\Lambda_{L, \TT}$ \wrt $L\lhd N$ for $\TT$ such that any $\la\in\TT$ satisfies Equation \eqref{incl}; and
\item \label{loc*}
some $\EL$-stable ${\wt N'}$-transversal in $\cusp(N)$.
\end{asslist}
Then there exists an $\EL$-stable $\wbG^F$-transversal in $\Irr(\GF\mid(L,\cusp(L)))$. 
\end{theorem}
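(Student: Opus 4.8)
The plan is to read off the claim from the equivalence in \Cref{*cond_trans}, invoked twice, the bridge being a bijection between $\cusp(N)$ and $\Irr(G\mid(L,\cusp(L)))$ furnished by Harish--Chandra theory that respects the action of $\EL$ as well as that of the ``diagonal'' group $\wt G/G\cong\wt N'/N$. First I would fix the data for \Cref{*cond_trans}: take $\wt X:=\wt G$, $\wh Y:=G\EL$ and $Z:=\wt G\EL$; then $\wt X\lhd Z$ (as $\EL$ normalizes $\wt G$), $Z=\wh Y\wt X$, and $\wt X\cap\wh Y=G$ since $\wt G\cap\EL=1$ inside $\wt G\rtimes E$. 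The set $\calM:=\Irr(G\mid(L,\cusp(L)))\subseteq\Irr(G)=\Irr(\wt X\cap\wh Y)$ is $Z$-stable: it is the union of the Harish--Chandra series above the cuspidal pairs with Levi $L$, hence depends only on the $G$-conjugacy class of the standard Levi $L$, which is preserved both by $\wt G$ and by $\EL$. As $G$ acts trivially on $\Irr(G)$, an $\wh Y$-stable $\wt G$-transversal in $\calM$ is exactly an $\EL$-stable $\wt G$-transversal, so by \Cref{*cond_trans} it suffices to show that every $\chi\in\calM$ is $\wt G$-conjugate to some $\chi_0$ with $(\wt G\EL)_{\chi_0}=\wt G_{\chi_0}(\EL)_{\chi_0}$. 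Applying the same lemma with $\wt X=\wt N'$, $\wh Y=N\EL$ (so $Z=\wt N'\EL$ and $\wt X\cap\wh Y=N$) turns assumption \ref{loc*} into: every $\psi\in\cusp(N)$ is $\wt N'$-conjugate to some $\psi_0$ with $(\wt N'\EL)_{\psi_0}=\wt N'_{\psi_0}(\EL)_{\psi_0}$. The task is thus to transport this stabilizer property along a suitable bijection.

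For that bijection I would first enlarge the extension map from \ref{ext_map}: maximal extendibility holds \wrt $L\lhd N$ for all of $\cusp(L)$ by the Lusztig--Geck theorem (\cite[8.6]{L}, \cite[Cor.~2]{GeckHC}), so by \cite[Thm.~4.1]{CS17A} one can choose an $N$-equivariant extension map $\Lambda_L$ \wrt $L\lhd N$ for $\cusp(L)$ that restricts to $\Lambda_{L,\TT}$ on the $N$-stable set $\TT$. With $\R_L^G(\la)_\eta$ defined from $\Lambda_L$, Howlett--Lehrer theory parametrizes $\Irr(G\mid(L,\cusp(L)))$ by the pairs $(\la,\eta)$, $\la\in\cusp(L)/N$, $\eta\in\Irr(W(\la))$, while Clifford theory (Gallagher applied to $\Lambda_L(\la)\in\Irr(N_\la)$, then induction to $N$) parametrizes $\cusp(N)$ by the very same pairs, $(\la,\eta)\mapsto\widehat\la_\eta:=\big(\Lambda_L(\la)\,\eta\big)^N$. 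Composing, and applying the contragredient to the $W(\la)$-coordinate, I obtain a bijection
\[\Pi\colon\cusp(N)\longrightarrow\Irr(G\mid(L,\cusp(L))),\qquad \widehat\la_\eta\longmapsto\R_L^G(\la)_{\overline\eta}.\]

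The crux is that $\Pi$ is equivariant for the action of $D\rtimes\EL$, where $D:=\wt G/G$, which is canonically isomorphic to $\wt N'/N$ and to $\wt L'/L$ (all three being $(\Z(\wbG)/\Z(\bG))^F$), acting on $\cusp(N)$ through $\wt N'$ (with $N$ in the kernel) and on $\Irr(G\mid(L,\cusp(L)))$ through $\wt G$ (with $G$ in the kernel). For $\si\in\EL$ one has ${}^\si\widehat\la_\eta=\widehat{\,^\si\la}_{\,^\si\eta\,\delta_{\la,\si}}$ directly from \eqref{deltalasi}, whereas ${}^\si\big(\R_L^G(\la)_{\overline\eta}\big)=\R_L^G({}^\si\la)_{\,^\si\overline\eta\,\delta_{\la,\si}^{-1}}$ by \Cref{HC_form}, whose hypothesis \eqref{incl} is exactly what \ref{ext_map} grants for $\la\in\TT$; since $\delta_{\la,\si}$ is linear, $\overline{\,^\si\eta\,\delta_{\la,\si}}={}^\si\overline\eta\,\delta_{\la,\si}^{-1}$, so both sides agree and $\Pi$ intertwines the $\EL$-actions. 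For $D$ I would run the identical computation with $\si$ replaced by an element of $\wt L'$, which induces an automorphism in $\Aut(\GF)_{BL,L}\subseteq\Aut(\GF)_{L,\HC}$ so that \Cref{HC_form} again applies on $\TT$, and which, by the compatibility of $\R_L^G$ with the regular embedding (a Mackey-type formula, as used in \cite[\S2]{MS16}), induces the prescribed element of $D$ on $\Irr(G\mid(L,\cusp(L)))$; since $\wt N'=\wt L' N$ this accounts for all of $D$. The reason \ref{ext_map} asks $\TT$ to be a $\wt L'$-transversal is that then every $D\rtimes\EL$-orbit, on either side, meets the part indexed by $\la\in\TT$, so the equivariance need only be checked there.

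To finish: given $\chi\in\calM$, write $\chi=\Pi(\psi)$; by the reformulation of \ref{loc*} and one more $\wt N'$-conjugation we may assume $\psi=\widehat\la_\eta$ with $\la\in\TT$ and $(\wt N'\EL)_\psi=\wt N'_\psi(\EL)_\psi$. Then the $D$-compatibility of $\Pi$ makes $\Pi(\psi)$ a $\wt G$-conjugate of $\chi$, and the $D\rtimes\EL$-equivariance of $\Pi$ at $\psi$ turns the splitting above into $(\wt G\EL)_{\Pi(\psi)}=\wt G_{\Pi(\psi)}(\EL)_{\Pi(\psi)}$ (lifting to $\wt G\EL$, $\wt N'\EL$ and using $(\EL)_{\Pi(\psi)}=(\EL)_\psi$); this is the criterion of \Cref{*cond_trans}, giving the claimed $\EL$-stable $\wt G$-transversal. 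The step I expect to be the real obstacle is the $D$-part of the equivariance of $\Pi$: one must show that Harish--Chandra induction matches the $\wt G/G$-action on $\Irr(G\mid(L,\cusp(L)))$ with the $\wt N'/N$-action on $\cusp(N)$ compatibly with the Howlett--Lehrer labels, and at the same time reconcile the two occurrences of $\delta_{\la,\si}^{\pm1}$ coming from \eqref{deltalasi} and \Cref{HC_form} — the contragredient twist in the definition of $\Pi$ and the hypothesis \eqref{incl} built into \ref{ext_map} are precisely what make this go through — together with the bookkeeping forced by $\TT$ being only a $\wt L'$-transversal rather than $\wt L'$-stable.
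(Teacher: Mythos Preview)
Your proof is correct and follows essentially the same route as the paper: parametrize both $\cusp(N)$ and $\Irr(G\mid(L,\cusp(L)))$ by the set $\calP(L)$ of pairs $(\la,\eta)$ (the paper's maps $\Upsilon$ and $\Upsilon'$), argue the composite bijection is $\wt N'\EL$-equivariant using \Cref{HC_form} and \Cref{propcuspN}, and then transport the stabilizer condition from \ref{loc*} via \Cref{*cond_trans}. The only technical differences are that the paper extends $\Lambda_{L,\TT}$ to all of $\cusp(L)$ by the explicit recipe of \Cref{not29} (whose auxiliary map $\Lambda_L'$ is $\wh N\wt L'$-equivariant, from which \eqref{incl} is re-derived on all of $\cusp(L)$ via \cite[Lem.~4.14]{CSS}) rather than by a generic Lusztig--Geck extension, and it does not introduce your contragredient twist, simply asserting that the two induced actions on $\calP$ coincide; your handling of the $\delta_{\la,\si}^{\pm 1}$ discrepancy is thus a useful clarification of that point.
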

For the proof of Theorem \ref{thm_MS} we parametrize $\cusp(N)$ via a set $\calP(L)$ using an extension map $\Lambda_L$ with respect to $L\lhd N$ for $\cusp(L)$ deduced from $\Lambda_{L, \TT}$.
\begin{notation}\label{not29}
Assume that $\TT$ is an $\wh N$-stable ${\wt L'}$-transversal in $\cusp(L)$.
For each $\la\in \TT$ we denote by $\cO_\la$ its $N$-orbit in $\cusp(L)$. Note $\cO_\la\subseteq \cusp(L)$. Let $M(\la)\subseteq {\wt L'}$ be a set of representatives of the ${\wt L'}_\la$-cosets in $\wt L'$. We define an extension map $\Lambda_{L}$ on $\cO_\la$ by 
$$\Lambda_L(\la'^m)= \Lambda_{L,\TT}(\la')^m \forevery \la'\in \cO_\la \und m\in M(\la).$$
Hence, $\Lambda_L$ is defined, but depends on the choice of $M(\la)$. 
The map $\Lambda_L': \cusp(L)\lra \coprod_{L\leq I \leq N} \Irr(I)$ with $\Lambda_L'(\mu):=\restr \Lambda_L(\mu)|{N_{\wt \mu}}$ for every $\mu\in \cusp(L)$ is well-defined, where $\wt \mu\in \Irr({\wt L'}_\mu)$ is an extension of $\mu$. In contrast to $\Lambda_L$ we see that $\Lambda_L'$ is independent of the choice of $M(\la)$. Observe $[N/L, {\wt L'}/L]=1$. The map $\Lambda_L'$ is even $\wh N {\wt L'}$-equivariant since $\Lambda_L$ is $N$-equivariant and $\Lambda_{L,\TT}$ is $\wh N$-equivariant. 

We write $\II PcircL@{\calP '(L)}$ for the set of pairs $(\la,\eta)$ with $\la\in\cusp(L)$ and $\eta\in\Irr(W(\la))$. The groups $N$ and $W$ act naturally via conjugation on ${\calP ' (L)}$. We denote by $\II PL@{\calP(L)}$ the set of $N$-orbits in $\calP '(L)$ and by $\II laoveta@{\ov{(\la,\eta)}}$ the $N$-orbit containing $(\la,\eta)$. Since $L$ is mostly clear from the context, we omit it, writing $\II Pcirc@{\calP '}$ and $\II P@{\calP}$.
\end{notation}
The parametrization of $\cusp(N)$ is given by the following.
\begin{prop}\label{propcuspN} 
Let $\Lambda_{L}$, $\calP '$ and $\calP$ be as in  \ref{not_L_N_E1} and \ref{not29}.
\begin{enumerate}
	\item Then the map
	$$\II UPsilon@\Upsilon: \calP \lra \cusp(N) \with \ov{(\la,\eta)}\longmapsto (\Lambda_L(\la )\eta)^N $$ is a well-defined bijection. 
	\item $^\si\Upsilon(\ov {(\la,\eta)})=\Upsilon(\ov{(^\si\la,{}^\si\eta\delta_{\la,\si })})$ for every $\si\in\Aut(G)_{L,\mathrm{HC} }$ and $\ov {(\la,\eta)}\in \calP$, where $\delta_{\la,\si}\in\Irr(W(^\si\la))$ is as given in \ref{ssec2Anot}.
\end{enumerate}	
\end{prop}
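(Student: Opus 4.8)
The plan is to prove the two assertions of \Cref{propcuspN} by combining the parametrization of $\Irr(G\mid(L,\cusp(L)))$ via Howlett--Lehrer theory with Clifford theory for the inclusion $L\lhd N$, all organized around the extension map $\Lambda_L$ built in \ref{not29}. First I would establish that $\Upsilon$ is well-defined and injective. Given $(\la,\eta)\in\calP^\circ$, the character $\Lambda_L(\la)\eta$ makes sense as a character of $N_\la$: $\Lambda_L(\la)$ is an extension of $\la$ to $N_\la$, and $\eta\in\Irr(W(\la))=\Irr(N_\la/L)$ is inflated to $N_\la$, so the product is irreducible and lies above $\la$. Inducing to $N$ gives an irreducible character by Clifford theory (the standard fact that induction from an inertia group of a character that has an extension is a bijection onto the characters of $N$ above that $L$-orbit). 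Since every constituent of $\la^N$ arises this way, $\Upsilon(\ov{(\la,\eta)})\in\cusp(N)$. To see this is independent of the representative of the $N$-orbit, I would note that conjugating $(\la,\eta)$ by $n\in N$ sends $\Lambda_L(\la)\eta$ to ${}^n(\Lambda_L(\la)\eta)$ up to the $N$-equivariance of $\Lambda_L$ — here one must be slightly careful because $\Lambda_L$ is only $N$-equivariant on each $N$-orbit $\cO_\la$ after the choice of $M(\la)$, but that is exactly how it was constructed, so ${}^n\Lambda_L(\la)=\Lambda_L({}^n\la)$ for $n\in N$, giving $\Upsilon(\ov{({}^n\la,{}^n\eta)})=\Upsilon(\ov{(\la,\eta)})$.

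For surjectivity and injectivity together, I would invoke the bijection of Howlett--Lehrer/Clifford theory in a form tailored to $\cusp(N)$: characters of $N$ lying above cuspidal characters of $L$ are in bijection with pairs consisting of an $N$-orbit of cuspidal $\la$ and an irreducible character of the relative stabilizer $W(\la)$, the bijection being implemented precisely by $\eta\mapsto(\Lambda_L(\la)\eta)^N$ once an extension $\Lambda_L(\la)$ is fixed. Thus $\Upsilon$ is a bijection as soon as one checks that distinct $\eta$ (for fixed $\la$) give distinct constituents and that the $\la$ can be recovered up to $N$-conjugacy from the restriction to $L$. Both are immediate: $\restr{\Upsilon(\ov{(\la,\eta)})}|{L}$ is a multiple of the $N$-orbit sum of $\la$, and for fixed $\la$ the map $\eta\mapsto\Lambda_L(\la)\eta$ is a bijection $\Irr(W(\la))\to\Irr(N_\la\mid\la)$, after which induction $\Irr(N_\la\mid\la)\to\Irr(N\mid\la)$ is a bijection. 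This proves part (1).

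For part (2), I would take $\si\in\Aut(G)_{L,\HC}$ and compute ${}^\si\Upsilon(\ov{(\la,\eta)})={}^\si\big((\Lambda_L(\la)\eta)^N\big)$. Since $\si$ stabilizes $L$ (it is in $\Aut(G)_{L,\HC}$) and normalizes $N$ up to the action, conjugation commutes with induction: ${}^\si\big((\Lambda_L(\la)\eta)^N\big)=\big({}^\si(\Lambda_L(\la)\eta)\big)^N={}^\si(\Lambda_L(\la))\cdot{}^\si\eta$ induced from $N_{{}^\si\la}$. Now I apply the defining Equation \eqref{deltalasi} of $\ref{ssec2Anot}$, namely ${}^\si\Lambda_L(\la)=\Lambda_L({}^\si\la)\delta_{\la,\si}$, where $\delta_{\la,\si}$ is a linear character of $W({}^\si\la)$. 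Plugging in, ${}^\si(\Lambda_L(\la)\eta)=\Lambda_L({}^\si\la)\,\delta_{\la,\si}\,{}^\si\eta=\Lambda_L({}^\si\la)\cdot({}^\si\eta\,\delta_{\la,\si})$, where the product ${}^\si\eta\,\delta_{\la,\si}\in\Irr(W({}^\si\la))$ since $\delta_{\la,\si}$ is linear. Inducing to $N$ yields $\Upsilon(\ov{({}^\si\la,{}^\si\eta\,\delta_{\la,\si})})$, as claimed. The one genuine subtlety here — the step I expect to be the main obstacle — is bookkeeping the dependence on the choice of the coset-representative set $M(\la)$ used to define $\Lambda_L$ off the transversal $\TT$, and checking that $\si$ acting on $N_\la$ versus $N_{{}^\si\la}$ respects the identification of $W(\la)$ with $W({}^\si\la)$ compatibly with \eqref{deltalasi}; this requires that $\si$, being in $\Aut(G)_{L,\HC}$, acts on the Howlett--Lehrer labelling exactly through $\delta_{\la,\si}$, which is precisely the content of \ref{ssec2Anot} and is the reason the hypothesis $\si\in\Aut(G)_{L,\HC}$ is needed rather than an arbitrary automorphism. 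Once that compatibility is in hand, both parts follow formally.
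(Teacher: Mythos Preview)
Your proposal is correct and follows the same route as the paper's proof, which simply invokes Clifford theory together with Gallagher's Lemma \cite[6.17]{Isa} for part (a) and the defining equation \eqref{deltalasi} for part (b). The subtlety you flag about the coset representatives $M(\la)$ is not an additional obstacle: the character $\delta_{\la,\si}$ is defined precisely to absorb any discrepancy between ${}^\si\Lambda_L(\la)$ and $\Lambda_L({}^\si\la)$, so the formula in (b) holds by definition regardless of the choices made in \ref{not29}.
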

\begin{proof} Clifford theory together with Gallagher's Lemma \cite[6.17]{Isa} proves part (a). The definition of $\delta_{\la,\si}$ in Equation \eqref{deltalasi} from \ref{ssec2Anot} leads to part (b).\end{proof}

In combination with Theorem \ref{HC_form} we obtain a proof of \Cref{thm_MS}.
\begin{proof}[Proof of \Cref{thm_MS}]
For the application of \Cref{HC_form} we have to ensure that under our assumptions the Equation \eqref{incl} holds for characters $\la\in \TT$ and $\si\in\Aut(\GF)_{L,\HC}$. For every $\la\in\cusp(L)$, the character $\Lambda_L(\la)$ is an extension of $\Lambda_L'(\la)$. Accordingly $\delta_{\la,\si}$ defined as the unique linear character of $W(^\si \la)$ such that $^\si\Lambda_L(\la)=\Lambda_L(^\si\la)\delta_{\la,\si}$ satisfies as well $\restr ^\si\Lambda_L(\la)|{N_{\wt \la ^\si }}=\restr \Lambda_L(^\si\la)\, \, \delta_{\la,\si}|{N_{\wt \la ^\si }}$. 
Since $\Lambda_L'(\la)$ is ${\wt N'} \EL$-equivariant, we see that $\restr \delta_{\la,\si}|{N_{\wt \la ^\si }}$ is trivial. Accordingly $\ker(\delta_{\la,\si})\geq N_{^\si \wt \la}/L$ for every $\la\in \cusp(L)$ and $\si\in \Aut(\GF)_{L,\HC}$ where $\wt \la$ denotes an extension of $\la$ to ${\wt L'}_\la$. Recall $W(^\si \wt \la)=N_{^\si \wt \la}/L$. 
In combination with the inclusion $R(^\si \la) \leq W(^\si \wt \la) $ from \cite[Lem.~4.14]{CSS} we obtain the required containment \eqref{incl}. 

Via Harish-Chandra induction the map
\[\II Upsilon'@{\Upsilon'}: \calP \lra \Irr( G\mid (L,\cusp(L)))\with \ov {(\la,\eta)}\longmapsto \R_L^G(\la)_{\eta}\]
is well-defined according to \cite[Thm.~4.7]{MS16} and bijective. Hence $\Upsilon'\circ\Upsilon^{-1}$ is a bijection between $\cusp(N)$ and $\Irr( G\mid (L,\cusp(L)))$. Via $\Upsilon$ and $\Upsilon'$ the group $\Aut(\GF)_{L,\HC}$ and hence ${\wt N'} \EL$ act on $\calP$. By the description of this action given in \ref{HC_form} and \ref{propcuspN} these actions coincide. Hence $\Upsilon'\circ\Upsilon^{-1}$ is ${\wt N'} \EL$-equivariant. 
By Assumption \ref{loc*} every $\psi_0\in\cusp(N)$ has an ${\wt L'}$-conjugate $\psi$ such that $({\wt N'}\EL )_\psi= {\wt N'}_\psi (\EL)_{\psi} $. Hence every $\chi_0\in \Irr(G\mid (L,\cusp(L)))$ has an $\wt N'$-conjugate $\chi$ with $(\wbG^F \EL)_\chi=G(\wt N' \EL )_\chi=G(\wt N'_\chi) (\EL)_{\chi}=\wbG^F_\chi (\EL)_{\chi} $. This implies the statement, see \Cref{*cond_trans}. \end{proof}

In the following sections we verify the assumptions of Theorem \ref{thm_MS}: We prove Assumption \ref{loc*}, i.e., that every $\psi\in\cusp(N)$ is ${\wt L'}$-conjugate to some $\psi_0$ with $({\wt N'}\EL)_{\psi_0}={\wt N'}_{\psi_0} (\EL)_{\psi_0}$, and prove the existence of an extension map as required in \ref{ext_map}. Note that by Lusztig \cite{L} and Geck \cite{GeckHC} an extension map exists. Their proofs are indirect and we don't see how the required properties can be deduced from their proofs. In later sections we give an independent explicit construction of the required extension map.

\subsection{Action on characters of normalizers of Levi subgroups}\label{ssec1C}
In the following we discuss some basic considerations that will be applied to ensure Assumption \ref{loc*}. In the case where $L=T_0$ the assumption \ref{loc*} holds, whenever the underlying group $\GF$ is of simply-connected type, see \cite[Proof of Cor.~5.3]{MS16}. The assumption on the characters $\cusp(N)$ is very similar to the results \cite[Prop.~5.13]{CS17A}, \cite[Thm.~5.1]{CS17C} and \cite[5.E]{CS18B} on $\Irr(\NNN_\bH(\bS)^F)$ for Sylow $\Phi_d$-tori $\bS$ of $(\bH,F)$, where $\bH$ is a simple simply-connected group of type different from $\tD_l$ and $d$ is a positive integer. The proof there relies on Theorem 4.3 of \cite{CS17C} and we use here a similar strategy. The following proposition gives the road map for the verification of Assumption \ref{loc*}.\medskip

We set $\II W phi @{W(\phi )}=N_{\phi }/L$ for every $L\leq M\leq \wt T_0L$ and $\phi\in\Irr(M)$.  
\begin{prop}\label{prop23}
Let $\wh N$, $\wt L'=\wt T_0L$ be as in \ref{thm_MS}, $\TT$ and $\Lambda_{L,\TT}$ as in Assumption \ref{ext_map} and $\Upsilon$ from \Cref{propcuspN}. Let $\la \in \TT $, $\wt \la \in\Irr(\wt L'_ \la\mid \la)$,  $\eta\in\Irr(W(\la))$ and $\eta_0\in \Irr(\restr\eta|{W(\wt\la)})$. We set $\II What @{{\pwh W}}:=\wh N/L =N\EL/L$ and $\II Khatlambda @{\pwh K(\la )}:=\wh W_{\la}$. If $\eta$ is $\wh K(\la)_{\eta_0}$-stable, then $$(\wh N {\wt L'} )_{\Upsilon (\ov{(\la,\eta)})}=\wh N_{\Upsilon (\ov{(\la,\eta)})} {\wt L'}_{\Upsilon (\ov{(\la,\eta)})}.$$
\end{prop}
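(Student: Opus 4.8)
The plan is to transport everything to the parametrization $\Upsilon\colon\calP\to\cusp(N)$ of \Cref{propcuspN}. Since $\wt N'=\wt T_0N$ and $\EL$ both induce automorphisms lying in $\Aut(\GF)_{L,\HC}$, the group $\wh N\wt L'=\wt N'\EL$ acts on $\calP$, and by \Cref{propcuspN}(b) it sends $\ov{(\la,\eta)}$ to $\ov{({}^\si\la,{}^\si\eta\,\delta_{\la,\si})}$; when $\si$ fixes $\la$ I write $\si\bullet\eta:={}^\si\eta\,\delta_{\la,\si}\in\Irr(W(\la))$. By the cocycle relation for the $\delta_{\la,\cdot}$ this $\bullet$ is a genuine action of $\wh K(\la)=\wh N_\la/L$ on $\Irr(W(\la))$, and it is the action through which $\wh K(\la)$ permutes the $\Upsilon$‑images of the pairs with first entry $\la$; this is the action meant in the hypothesis ``$\eta$ is $\wh K(\la)_{\eta_0}$‑stable''. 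Setting $\theta:=\Upsilon(\ov{(\la,\eta)})$, the asserted equality has the trivial inclusion $\supseteq$, so I take $\si\in(\wh N\wt L')_\theta$ and try to rewrite it in the required form.

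First I perform two reductions. Since $N\lhd\wh N\wt L'$ acts trivially on its own characters, $N\subseteq\wh N_\theta$; moreover ${}^\si\theta=\theta$ lies over both $\la$ and ${}^\si\la$, so $\la$ and ${}^\si\la$ are $N$‑conjugate, and after multiplying $\si$ by a suitable element of $N$ (harmless since $N\subseteq\wh N_\theta$) we may assume ${}^\si\la=\la$. Then, using that $\TT$ is an $\wh N$‑stable $\wt L'$‑transversal in $\cusp(L)$ with $\la\in\TT$, a transversality argument gives $(\wh N\wt L')_\la=\wh N_\la\wt L'_\la$: writing $\si=t\rho$ with $t\in\wt L'$, $\rho\in\wh N$ and ${}^\si\la=\la$, the character ${}^\rho\la={}^{t^{-1}}\la$ lies both in $\TT$ and in the $\wt L'$‑orbit of $\la$, hence equals $\la$, so $\rho\in\wh N_\la$ and $t\in\wt L'_\la$. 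Now I translate ${}^\si\theta=\theta$: via $\Upsilon$ it means $\ov{(\la,\si\bullet\eta)}=\ov{(\la,\eta)}$, and since two pairs with first entry $\la$ are $N$‑conjugate only if they are $N_\la$‑conjugate — which, as conjugation by $N_\la$ is trivial on $\Irr(W(\la))$, forces equality — we get $\si\bullet\eta=\eta$. Because $[N/L,\wt L'/L]=1$, the group $\wt L'_\la$ acts trivially on $\Irr(W(\la))$ and fixes every $\delta_{\la,\cdot}$, so $\si\bullet\eta={}^\rho\eta\,\delta_{\la,\rho}\,\delta_{\la,t}=(\rho\bullet\eta)\,\delta_{\la,t}$, whence $\rho\bullet\eta=\eta\,\delta_{\la,t}^{-1}$. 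It now suffices to prove $\eta\,\delta_{\la,t}=\eta$, i.e.\ $t\in\wt L'_\theta$ (the same $\Upsilon$‑translation shows ${}^t\theta=\theta\iff\eta\,\delta_{\la,t}=\eta$): for then $\rho=t^{-1}\si\in(\wh N\wt L')_\theta\cap\wh N=\wh N_\theta$ and $\si=t\rho\in\wt L'_\theta\wh N_\theta=\wh N_\theta\wt L'_\theta$ (the last since $\wh N_\theta$ normalises $\wt L'$ and fixes $\theta$), which is enough after undoing the first reduction.

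The crux is to deduce $\rho\bullet\eta=\eta$ from the stability hypothesis. Here I would first record the facts needed to restrict to $\ov W:=W(\wt\la)$. As $[N_\la,\wt L'_\la]\subseteq L$, the group $N_\la$ acts trivially on $\Irr(\wt L'_\la/L)$, so all extensions of $\la$ to $\wt L'_\la$ have the same stabiliser in $N_\la$; hence $\ov W$ is independent of the choice of $\wt\la$, is normal in $W(\la)$, and is normalised by $\wh K(\la)$. Moreover the $\wh N\wt L'$‑equivariance of $\Lambda_L'$ (see \ref{not29} and the proof of \Cref{thm_MS}) yields $\Res_{\ov W}\delta_{\la,x}=1$ for every $x\in\wt N'\EL$ fixing $\la$. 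Restricting $\rho\bullet\eta=\eta\,\delta_{\la,t}^{-1}$ to $\ov W$ and using $\Res_{\ov W}\delta_{\la,\rho}=\Res_{\ov W}\delta_{\la,t}=1$ gives ${}^\rho(\Res_{\ov W}\eta)=\Res_{\ov W}\eta$; since $\ov W\lhd W(\la)$, the constituents of $\Res_{\ov W}\eta$ form one $W(\la)$‑orbit containing $\eta_0$, so ${}^\rho\eta_0={}^w\eta_0$ for some $w\in W(\la)$. Lifting $w$ to $v\in N_\la$, the image of $v^{-1}\rho$ in $\wh K(\la)$ lies in $\wh K(\la)_{\eta_0}$, so the hypothesis gives $(v^{-1}\rho)\bullet\eta=\eta$; as $v\in N_\la$ acts trivially under $\bullet$ (conjugation by $v$ is inner on $W(\la)$ and $\delta_{\la,v}=1$), we conclude $\rho\bullet\eta=\eta$, hence $\eta\,\delta_{\la,t}=\eta$, completing the proof.

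I expect the main obstacle to lie in this last step: verifying carefully that $W(\wt\la)$ is independent of $\wt\la$, normal in $W(\la)$ and $\wh K(\la)$‑stable, and that the cocycle characters $\delta_{\la,\cdot}$ restrict trivially to it — in other words, organising the Clifford theory over $\eta_0$ precisely enough that the stability hypothesis on $\eta$ can be inserted after restriction to $W(\wt\la)$. Keeping the three intertwined actions straight (conjugation, the twist $\bullet$, and the $\wt L'_\la$‑action), together with the cocycle identity for $\delta_{\la,\cdot}$, is where the bookkeeping demands care; the remainder is formal Clifford theory in the spirit of \cite[Thm.~4.3]{CS17C}.
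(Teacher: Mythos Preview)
Your argument is correct and follows the same route as the paper: reduce to a decomposition $\si=t\rho$ with $t\in\wt L'_\la$ and $\rho\in\wh N_\la$, then use Clifford theory over $W(\wt\la)$ together with the hypothesis on $\eta$ to force $\rho$ (hence also $t$) into the respective stabilisers of $\psi$. The only cosmetic difference is your explicit $\bullet$-action and cocycle bookkeeping, which in the paper collapses because $\Lambda_{L,\TT}$ is taken to be $\wh N$-equivariant (cf.\ \ref{not29}), so that $\delta_{\la,\rho}=1$ for every $\rho\in\wh N_\la$ and the twisted action reduces to plain conjugation.
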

We adapt the arguments from the proof of \cite[Thm.~4.3]{CS17C}, where $\eta$ is assumed to be  $\NNN_{W\rtimes \EL}(W(\wt \la))_{\eta_0}$-stable. Note that $\wh K(\la)$ normalizes $W(\la)$  but this group is in general different from $\NNN_{W\rtimes \EL}(W(\wt \la))$. 
\begin{proof}
Recall $\psi =\Upsilon (\ov{(\la,\eta)})=(\Lambda_{L,\TT}(\la)\eta)^N$. By the assumptions on $\TT$, $(\wh N {\wt L'})_\la=\wh N_\la {\wt L'}_\la$ for every $\la\in\TT$. 

Let $\wt \la\in\Irr({\wt L'}_\la\mid \la)$ and $\eta_0\in\Irr(\restr\eta |{W(\wt \la)})$. According to \cite[15.11]{CE04}, $\wt \la$ is an extension of $\la$. 
The group ${\wt L'}_\la/(L\Z(\wG))$ acts by multiplication with linear characters of $W(\la)/W(\wt\la)$ on $\Irr(W(\la)\mid \eta_0)$. 
Computing the action of $W(\la)/W(\wt \la)$ on $\Irr(\wt L'_\la\mid \la)$ shows that the action of $\wt L'_\la/L$ on $\Irr(W(\la)\mid \eta_0)$ is transitive. Hence the characters $\{ (\Lambda_L(\la)\eta')^N \mid \eta'\in \Irr(W(\la)\mid \eta_0)\}$ are the ${\wt L'}_\la$-conjugates of $\psi$.

Let $l\in {\wt L'}$ and $\wh n\in \wh N$ with $\psi^{l}=(\psi)^{\wh n}$. Note that $\psi^{\wh n}\in \Irr(N\mid \TT)$ since $\TT$ is $\wh N$-stable. Then $\Irr(\restr \psi^{l}|{L})$ is the $N$-orbit of $\la^{l}$. Recall $\TT$ is an $\wt L'$-transversal. If $l\notin {\wt L'}_\la$ then $\la^{l}\neq \la$ and $\la^{l}\notin \TT$, in particular $\psi^{l}\notin\Irr(N\mid \TT)$. This implies $l\in {\wt L'}_\la$ and $\psi^{l}=(\Lambda_L(\la)\eta \nu)^N$ for some linear character $\nu$ of $\Irr(W(\la)/W(\wt \la))$. Accordingly $(\psi)^{\wh n}\in\Irr(N\mid \la)$ and hence $(\psi)^{\wh n}=(\psi)^{\wh n'}$ for some $\wh n'\in\wh N_\la$. Note that 
$$(\psi)^{\wh n'}=((\Lambda_L(\la)\eta)^{\wh n'})^N= (\Lambda_L(\la) \eta^{\wh n'})^N.$$ 
The equality $\psi^{l}=\psi^{\wh n'}$ implies $\eta^{\wh n'}=\eta\nu$ and hence $\wh n' L\in W(\la) \wh K(\la)_{\eta_0}$. As $\eta$ is $\wh K(\la)_{\eta_0}$-stable, $\eta^{\wh n'}=\eta$ and hence $\psi^{\wh n'}=\psi$. This shows $(\wh N {\wt L'} )_\psi=\wh N_\psi {\wt L'}_{\psi}$.
\end{proof}
\noindent 
The above proposition allows us to prove the following result showing how to construct an $\wh N$-stable $\wt L'$-transversal in $\cusp(N)$.
\begin{prop}\label{prop23_neu}
In the situation of \Cref{not_L_N_E1} assume 
\begin{asslist}
 \item \label{prop_TT} $(\wh N {\wt L'})_\la=\wh N_\la {\wt L'}_\la$ for every $\la\in\TT$, 
 \item \label{prop23ii} there exists an $\wh N$-equivariant extension map $\Lambda_{L,\TT}$ \wrt $L\lhd N$ for $\TT$, and
\item for every $\la\in\TT$, $\wt \la\in\Irr(\wt L'_\la\mid \la)$ and $\eta_0\in \Irr(W(\wt\la))$ there exists some $\wh K(\la)_{\eta_0}$-stable $\eta\in\Irr(W(\la)\mid \eta_0)$.
\end{asslist} 
Let $\ov \TT\subseteq \cusp(L)$ be the set of characters that are ${\wt L'}$-conjugate to one in $\TT$. Then there exists some $\wh N$-stable ${\wt L'}$-transversal in $\Irr(N\mid \ov \TT)$. 
\end{prop}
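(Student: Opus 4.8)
The plan is to transport the statement to the combinatorial set $\calP$ through the bijection $\Upsilon$ of Proposition~\ref{propcuspN}, to single out in each relevant orbit a representative whose stabilizer factorizes by Proposition~\ref{prop23}, and to conclude with Lemma~\ref{*cond_trans}.

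First I would note that hypotheses \ref{prop_TT} and \ref{prop23ii} are exactly what is needed to run the construction of \ref{not29}, producing an extension map $\Lambda_L$ with respect to $L\lhd N$ for $\cusp(L)$ that restricts to $\Lambda_{L,\TT}$, and with it the parametrization $\Upsilon\colon\calP\to\cusp(N)$; by Proposition~\ref{propcuspN} this is a bijection intertwining the conjugation action of $\wh N\wt L'$ (whose image in $\Aut(\GF)$ lies in $\Aut(\GF)_{L,\HC}$) with its action $\ov{(\la,\eta)}\mapsto\ov{({}^\si\la,\,{}^\si\eta\,\delta_{\la,\si})}$ on $\calP$. Under $\Upsilon$ the set $\Irr(N\mid\ov\TT)$ corresponds to $\{\,\ov{(\mu,\xi)}\mid\mu\in\ov\TT,\ \xi\in\Irr(W(\mu))\,\}$. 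Since every element of $N$ acts on $\Irr(N)$ by an inner automorphism, hence trivially, the $\wt N'$-orbits and the $\wt L'$-orbits on $\Irr(N)$ agree, so it is enough to produce an $\wh N$-stable $\wt N'$-transversal in $\calM:=\Irr(N\mid\ov\TT)$. For this I would apply Lemma~\ref{*cond_trans} with $Z=\wt N'\EL$, $\wt X=\wt N'$ and $\wh Y=\wh N$: here $X=\wt N'\cap\wh N=N$, and $\calM\subseteq\Irr(N)$ is $Z$-stable because $\ov\TT$ is ($\TT$ being $\wh N$-stable and $\EL$ normalizing $\wt L'$). By that lemma it then suffices to verify its condition~(ii): every $\psi'\in\calM$ is $\wt N'$-conjugate to some $\psi$ with $(\wt N'\wh N)_\psi=\wt N'_\psi\,\wh N_\psi$.

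To exhibit such a $\psi$, take $\mu\in\ov\TT$ lying below $\psi'$ and write $\mu=\la^{\wt l}$ with $\la\in\TT$ and $\wt l\in\wt L'$; since $\wt L'$ normalizes $N$, the $\wt L'$-conjugate $\psi'^{\wt l^{-1}}$ lies in $\Irr(N\mid\la)=\{\Upsilon(\ov{(\la,\xi)})\mid\xi\in\Irr(W(\la))\}$, so we may assume $\psi'=\Upsilon(\ov{(\la,\xi)})$. Pick an extension $\wt\la\in\Irr(\wt L'_\la\mid\la)$ and a constituent $\eta_0$ of $\restr\xi|{W(\wt\la)}$; the third assumption provides a $\wh K(\la)_{\eta_0}$-stable $\eta\in\Irr(W(\la)\mid\eta_0)$. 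Now $\wt L'_\la$ acts trivially on $W(\la)=N_\la/L$ because $[\wt L'_\la,N_\la]\le L$, and, as shown inside the proof of Proposition~\ref{prop23}, $\wt L'_\la/L$ acts transitively on $\Irr(W(\la)\mid\eta_0)$ by multiplication with the linear characters $\delta_{\la,\,\cdot}$. Hence some $\wt l'\in\wt L'_\la$ satisfies $\Upsilon(\ov{(\la,\xi)})^{\wt l'}=\Upsilon(\ov{(\la,\eta)})=:\psi$, so $\psi'$ is $\wt L'$-conjugate, a fortiori $\wt N'$-conjugate, to $\psi$. Proposition~\ref{prop23} applies to $\psi$ and gives $(\wh N\wt L')_\psi=\wh N_\psi\,\wt L'_\psi$. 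Finally $\wh N\wt N'=\wh N\wt L'$, since $\wt N'=\wt T_0N\subseteq\wt L'\wh N$ and $\wt L'\subseteq\wt N'$; combined with $\wt L'_\psi\subseteq\wt N'_\psi$ this gives $\wh N_\psi\wt L'_\psi\subseteq\wh N_\psi\wt N'_\psi\subseteq(\wh N\wt N')_\psi=(\wh N\wt L')_\psi=\wh N_\psi\wt L'_\psi$, whence $(\wh N\wt N')_\psi=\wh N_\psi\wt N'_\psi$. Lemma~\ref{*cond_trans} then delivers an $\wh N$-stable $\wt N'$-transversal in $\calM$, which by the coincidence of orbits is the asserted $\wt L'$-transversal in $\Irr(N\mid\ov\TT)$.

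The step I expect to be the real point is the bookkeeping in the second paragraph: Lemma~\ref{*cond_trans} forces one to work with $\wt N'$ in place of the $\wt L'$ of the statement, because the intersection $X=\wt X\cap\wh Y$ has to be $N$ and not $L$; one then checks that nothing is lost, i.e.\ that $\wt N'$ and $\wt L'$ have the same orbits on $\Irr(N)$ and that the factorization Proposition~\ref{prop23} gives for $\wt L'$ upgrades to $\wt N'$, both of which reduce to the triviality of the inner action of $N$ on $\Irr(N)$ and the identity $\wh N\wt N'=\wh N\wt L'$. One should also check that the argument of Proposition~\ref{prop23} really goes through under the present weaker hypotheses, rather than under the formally stronger Assumption~\ref{ext_map} quoted in its statement (its condition~\eqref{incl} is not used in that argument). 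The remaining ingredients — the transitivity of $\wt L'_\la/L$ on $\Irr(W(\la)\mid\eta_0)$, the relation $[\wt L'_\la,N_\la]\le L$, and the normalization facts among $L$, $N$, $\wt T_0$ and $\EL$ — are already available from Section~\ref{secext} and from the proof of Proposition~\ref{prop23}.
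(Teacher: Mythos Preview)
Your proposal is correct and follows essentially the same route as the paper: both arguments parametrize via $\Upsilon$, use assumption (iii) together with the transitivity of $\wt L'_\la$ on $\Irr(W(\la)\mid\eta_0)$ to locate in each orbit a representative $\Upsilon(\ov{(\la,\eta)})$ to which Proposition~\ref{prop23} applies, and then conclude by Lemma~\ref{*cond_trans}. The paper phrases this by first assembling a set $\calP_1\subseteq\calP(L)$ of such good pairs and then observing that every $\wt L'$-orbit meets $\Upsilon(\calP_1)$, whereas you verify condition~(ii) of Lemma~\ref{*cond_trans} orbit by orbit and are more explicit about the passage between $\wt L'$- and $\wt N'$-orbits; these are cosmetic differences only.
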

\begin{proof}
By the assumptions there exists $\calP_1 \subseteq \calP$ such that 
\begin{itemize}
\item 
if $\ov{(\la,\eta)} \in \calP_1$, then $\la\in \TT$  and $\eta$ is $\wh K(\la)_{\eta_0}$-stable for some $\wt \la\in\Irr(\wt L'_\la\mid \la)$ and $\eta_0\in\Irr(\restr \eta | {W(\wt \la)})$; and
\item for each $\la\in\TT$,  $\wt \la\in\Irr(\wt L'_\la\mid \la)$ and $\eta_0\in\Irr(W(\wt \la))$ there exists some $\eta\in\Irr(W(\la)\mid \eta_0)$ with $\ov{ (\la,\eta)}\in\calP_1$.
\end{itemize}
\Cref{prop23} tells us that the characters $\Upsilon(\calP_1)$ can form part of an $\EL$-stable ${\wt L'}$-transversal. 

According to \Cref{propcuspN}, for every $\la\in\TT$ and $\eta_0\in\Irr(W(\wt\la))$, the group ${\wt L'}_\la$ acts transitively on the set $\Irr(W(\la)\mid \eta_0)$. Since for each $\la\in\TT$ and $\eta_0\in\Irr(W(\wt \la))$ there exists some $\eta\in\Irr(W(\la)\mid \eta_0)$ such that $\ov{ (\la,\eta)}\in\calP_1$, each ${\wt L'}$-orbit has a non-empty intersection with $\Upsilon(\calP_1)$.
This implies that every character in $\Upsilon(\calP_1)$ has the property required, see \Cref{*cond_trans}. 
\end{proof}

\subsection{Reminder on cuspidal characters}
The considerations of \Cref{ssec2A} explain how the action of automorphisms on non-cuspidal characters depends on the underlying cuspidal character and a character of the relative Weyl group associated to a cuspidal pair. For the proof of \Cref{thm1} we require some general results on cuspidal characters that we collect here. By a theorem of Malle, stabilizers of cuspidal characters coincide with those of semisimple characters (see \cite[15.A]{Cedric} for a definition of semisimple characters). 

\begin{thmbox}\label{thm_Malle}
Let $\bH$ be a simply-connected simple linear algebraic group with an $\FF_q$-structure given by a Frobenius map $F:\bH\rightarrow \bH$. Let $\bH \lra \wt \bH$ be a regular embedding and $E(\HF)$ be a group of automorphisms of $\HF$ generated by graph and field automorphisms as in \ref{not}. Then there exists some $E(\HF)$-stable $\wt \bH^F$-transversal in $\cusp( \bH^F)$.
\end{thmbox}
\begin{proof} We abbreviate $E(\HF)$ as $E$. Let $\chi\in\cusp(\bH^F)$.
According to \Cref{*cond_trans} it is sufficient to prove that $\chi$ has some $\wt \bH^F$-conjugate $\chi_0$ with 
$(\wt \bH^F E)_{\chi_0}= \wt \bH^F_{\chi_0} E_{\chi_0}$.
 By \cite[Thm.~1]{Ma17} there exists a semisimple character $\rho$ of $\HF$, such that $\rho$ and $\chi$ have the same stabilizer. By \cite[proof of~3.4(c)]{S12} the semisimple character $\rho$ has some $(\wt \bH)^F$-conjugate $\rho_0$ with $(\wt \bH^F E)_{\rho_0}= \wt \bH^F_{\rho_0} E_{\rho_0}$. 
\end{proof}
In our considerations on $\tD_{l,sc}(q)$ we assume the following for all $4\leq l'<l$, which amounts to $A(\infty)$ for cuspidal characters in rank $<l$. This was called $A_{\mathrm{cusp}}$ in our Introduction. In \cite{S21}  we will see that it is actually always satisfied.
\begin{hyp}[Extension of cuspidal characters of $\tD_{l',sc}(q)$]\label{hyp_cuspD_ext}\label{hyp_cuspD}
Let $\bH$ be a simply-connected simple group of type $\tD_{l'}$ ($l'\geq 4$) and $F:\bH\ra \bH$ a standard Frobenius endomorphism. Then there exists some $E(\HF)$-stable $\wt \bH^F$-transversal $\TT$ in $\cusp(\bH^F)$ such that every $\chi\in \TT$ extends to $\bH^FE(\HF)_\chi$.
\end{hyp}
The following facts are well-known (see also \cite[12.1]{Cedric}). 
\begin{lem}\label{lem_cusp}
	Let $\bG$ be a simply-connected simple group with Frobenius endomorphism $F:\bG\ra\bG$, $\bL$ an $F$-stable Levi subgroup of $\bG$, $L:=\bL^F$, $L_0:=[\bL,\bL]^F$, and $\la\in\cusp(L)$. 
	\begin{enumerate}
		\item Then $\Irr(\restr\la|{L_0})\subseteq\cusp(L_0)$
		\item If $[\bL,\bL]$ is a central product of $F$-stable semisimple groups $\bH_1$ and $\bH_2$, then $\Irr(\restr \la|{\bH_1^F}) \subseteq \cusp(\bH_1^F)$.
		\item \label{lem_cuspc} 
		Let $ \wt \bG$ be a reductive group with $\FF_q$-structure given by $F:\wbG\ra\wbG$ extending $F$ already defined on $\bG$ and such that $[\wt \bG , \wt \bG]=\bG $, then every $\wt \la\in\Irr((\Z(\wbG)\bL)^F\mid \la)$ is cuspidal.
	\end{enumerate}
\end{lem}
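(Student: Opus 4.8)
The plan is to prove the three parts of \Cref{lem_cusp} by reducing each to the basic fact that cuspidality is detected by vanishing of Harish-Chandra restrictions to proper parabolic subgroups, equivalently by the vanishing of $\,^*\R_{\bM}^{\bL}(\la)=0$ for every proper $F$-stable Levi $\bM$ of a proper parabolic subgroup of $\bL$. The key observation throughout is that Harish-Chandra induction and restriction commute with the relevant inclusions of groups, so cuspidality passes up and down central products and central isogenies. I expect part (a) to be the technical heart, with parts (b) and (c) following by the same mechanism once (a) is in place.

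For part (a), first I would note that the standard Levi subgroups of $\bL_0=[\bL,\bL]$ are exactly the groups $\bM_0:=[\bM,\bM]$ for $\bM$ a standard Levi of $\bL$, and that $\bM=\bM_0\ZZZ(\bL)^\circ$ with $\ZZZ(\bL)^\circ$ a central torus. Harish-Chandra restriction $\,^*\R$ is transitive and compatible with the inclusion $L_0\lhd L$; more precisely, using a parabolic $\bQ=\bM\bU$ of $\bL$ with unipotent radical $\bU$, one has $\bU\le\bL_0$, and $\,^*\R_{\bM_0}^{\bL_0}\bigl(\restr\la|{L_0}\bigr)$ is computed from $\restr\la|{L_0}$ by averaging over $\bU^F$, which is the restriction to $\bM_0^F$ of $\,^*\R_{\bM}^{\bL}(\la)$. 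Since $\la$ is cuspidal, $\,^*\R_{\bM}^{\bL}(\la)=0$ for every proper standard $\bM$, hence $\,^*\R_{\bM_0}^{\bL_0}\bigl(\restr\la|{L_0}\bigr)=0$ for every proper standard $\bM_0$; because every constituent of $\restr\la|{L_0}$ is an $L$-conjugate of a fixed one and cuspidality is stable under such conjugation, and because non-standard proper Levi subgroups of $\bL_0$ are conjugate to standard ones, every irreducible constituent of $\restr\la|{L_0}$ lies in $\cusp(L_0)$.

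For part (b), when $\bL_0=\bH_1\ast\bH_2$ is a central product of $F$-stable reductive groups, a standard Levi of $\bL_0$ of the form $\bM_1\ast\bH_2$ (with $\bM_1$ a proper standard Levi of $\bH_1$) has $\,^*\R$-restriction that factors through $\bH_1$ alone, so cuspidality of a constituent $\la_0$ of $\restr\la|{L_0}$ (which we have from part (a)) forces $\,^*\R_{\bM_1}^{\bH_1}$ of its $\bH_1^F$-part to vanish. Writing $\la_0$ (up to a central character identification, using \cite[\S 5]{IMN}-type tensor decompositions over the central torus $\bH_1\cap\bH_2$) as governed by characters $\la_1\in\Irr(\bH_1^F)$ and $\la_2\in\Irr(\bH_2^F)$, one reads off $\,^*\R_{\bM_1}^{\bH_1}(\la_1)=0$ for all proper standard $\bM_1$, i.e. $\la_1\in\cusp(\bH_1^F)$; then every constituent of $\restr\la|{\bH_1^F}$ is an $L$-conjugate of such a $\la_1$ and hence cuspidal. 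Part (c) is the easiest: $\wt\bL':=(\wt\bT\bL)$ has $[\wt\bL',\wt\bL']=[\bL,\bL]$ since $\wt\bT$ is central in $\wt\bL'$, so the standard Levi subgroups of $\wt\bL'$ are $\wt\bT\bM$ for $\bM$ standard in $\bL$, their unipotent radicals agree with those in $\bL$, and $\,^*\R_{\wt\bT\bM}^{\wt\bL'}(\wt\la)$ restricts on $L=\bL^F$ to $\,^*\R_{\bM}^{\bL}(\la)=0$; since $\wt\bL'^F/L$ is abelian, $\,^*\R_{\wt\bT\bM}^{\wt\bL'}(\wt\la)=0$ as well, so $\wt\la$ is cuspidal.

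The main obstacle I anticipate is bookkeeping in part (b): making the passage between a character of the central product $\bH_1\ast\bH_2$ and the pair of characters of $\bH_1^F$ and $\bH_2^F$ fully precise, because the natural map $\bH_1^F\times\bH_2^F\to(\bH_1\ast\bH_2)^F$ need not be surjective and the constituents of $\restr\la|{\bH_1^F}$ need not be $\bL_0^F$-conjugate. The clean way around this is to argue entirely at the level of Harish-Chandra restriction — which is insensitive to these surjectivity defects because $\bU\le\bL_0$ and even $\bU\le\bH_1$ for the relevant parabolics — rather than trying to ``factor'' characters directly; combined with the fact that all proper Levi subgroups are $\bL_0^F$-conjugate to standard ones and cuspidality is conjugation-invariant, this suffices and avoids any delicate analysis of the component groups.
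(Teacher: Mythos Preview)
Your proposal is correct and follows essentially the same approach as the paper: both exploit that the unipotent radicals of proper parabolics lie in the smaller normal subgroup, so cuspidality transfers across the inclusions. The paper phrases this via the fixed-point criterion (a character is cuspidal iff its representation space has no nonzero $\operatorname{O}_p(P)$-fixed points for any proper parabolic $P$) and the single observation that for $H'\lhd H$ of $p'$-index, $\chi\in\cusp(H)$ iff some (equivalently every) constituent of $\restr\chi|{H'}$ is cuspidal, which handles (a) and (c) simultaneously; you phrase the same mechanism via vanishing of $\,^*\R$ and explicit tracking of unipotent radicals. For (b) the paper notes that $\bH_1\cap\bH_2$ consists of semisimple elements so the $\operatorname{O}_p(P)$'s split as direct products, which is the fixed-point counterpart of your observation that the relevant unipotent radicals lie entirely in $\bH_1$.
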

\begin{proof}
For a finite group $H$ with a split BN-pair of characteristic $p$, a given $\chi\in\Irr (H)$ is cuspidal if and only if the corresponding representation space has no non-zero fixed point under any O$_p(P)$ for  any proper parabolic subgroup $P$ of $H$. It is then clear that for any $H'\lhd H$ with $p'$-index, one has $\chi\in\cusp (H)$ if and only if $\restr{\chi}|{H'}$ has a cuspidal irreducible component (and then all are). This gives (a), and (c). For (b), note that $\bH_1\cap\bH_2$ is a group of semi simple elements, so that the O$_p(P)$'s as above for $H:=\bL_0^F$ are direct products of corresponding subgroups of $\bH_1^F$ and $\bH_2^F$.
\end{proof}

\begin{remark}\label{rem_whG}
\begin{thmlist}
\item Let $\bG$ be a simply-connected simple group and $\wt \bG$ a connected algebraic group with $\wt \bG=\bG \Z(\wt \bG)$. Let $F:\wt \bG\ra\wbG$ be a Frobenius endomorphism stabilizing $\bG$. Then $x\in\wbG^F$ can be written as $x=gz$ with $g\in \bG$ and $z\in \Z(\wbG)$, such that $g^{-1}F(g)=z F(z^{-1})$. If 
$\II{Lcal}@{\protect \calL}:\bG\ra \bG$ is defined by $g\mapsto g^{-1}F(g)$ and
$\II{Ghat}@{\protect \wt G}:=\calL^{-1}(\Z(\bG))$, we see 
$$\wGF \leq \wt G . \wt Z,$$
 where $\wt Z:=\{z\in \Z(\wbG)\mid F(z)\in z\Z(\bG) \}$. Note that $\wt G$ by its construction is independent of the choice of $\wbG$. We also have $\wt G=\norm{\bG}{\bG^F}$ as an easy consequence of \cite[Lem. 6.1]{Cedric}. 
\item 
\label{labeldiag}From now on we assume additionally that $\Z(\wbG)$ is connected. Then the (outer) automorphisms of $\GF$ induced by conjugation by some element $g\in \wt G$ are called {\it diagonal automorphisms} \index{diagonal automorphism}and they are parametrized by $\calL(g) [\Z(\bG),F]\in  \Z(\bG)/ [\Z(\bG),F]$, see also \cite[1.5.12]{GM}. 

Note the difference with the convention used in the introduction where $\wG$ was used to abbreviate $\wbG^F$. We still clearly have $\wG/\Z(\GF)=\wbG^F/\Z(\wbG^F)$.
\index{automorphism! diagonal}
\end{thmlist}
\end{remark}
This allows the following conclusion for the above group $\wt G$.
\begin{theorem}\label{lem_ext_G_whG}
Maximal extendibility holds \wrt $\bG^F \lhd \wt G$. 
\end{theorem}
\begin{proof}
Let $\wbG$ be a group with connected centre, such that there exists a regular embedding $\bG\ra \wbG$ that is also an $\FF_q$-morphism as in \ref{not}. Then according to a theorem of Lusztig (see \cite[15.11]{CE04}) maximal extendibility holds \wrt $\GF\lhd \wGF$, and $\chi$ has an extension $\wt \chi$ to $\wGF_{\chi}$.
According to the above, $\wGF\leq \wt G. \wt Z$. Clearly $\wt \chi$ extends to $\wGF_\chi\wt Z$ since $\wt Z$ is abelian and $[\wt Z, \wGF]=1$. Now we see that $\wGF_{\chi}\wt Z=\wt G_\chi \wt Z$ and hence $\chi$ extends to $\wt G_\chi$ as well. 
\end{proof}

\begin{prop}\label{gammastablewhG_extension}
In the situation of \Cref{rem_whG}, let 
$\bK\leq \bG$ be an $F$-stable reductive subgroup with $\bT_0\leq \bK$. Let $\wt \bK:=\bK \Z(\wt \bG)$ and 
$\wt K:=\calL\inv(\Z(\bG))\cap \bK$.
Let $\chi \in\Irr(\KF)$, $\wt \chi\in\Irr(\wt \bK^F \mid \chi)$ and $\nu\in\Irr(\restr \wt \chi|{\Z(\wt \bG^F)})$. As said above, $\chi$ extends to $\wt\bK^F_\chi$.
Let $\gamma\in E(\GF)_{(\chi,\bK)}$ and $\mu\in\Irr(\wt \bK^F/\KF)$ with $\wt \chi^\gamma=\wt \chi\mu$. 
Then the following are equivalent:
\begin{asslist}
\item $\chi$ has a $\gamma$-stable extension to $\wt K _\chi$.
\item for $\wt Z':=\calL(\wt K_\chi)$ there exists some extension $\wt \nu\in \Irr(\wt Z')$ of $\nu$ such that $\mu(tz)= \wt \nu (z)^{-1} (\wt \nu^\gamma (z){}) $ for every $t\in \wt K_\chi$ and $z \in \wt Z'$ with $tz\in \wt \bK^F_\chi$.
\end{asslist} 
\end{prop}
\begin{proof} We prove the statement only in the case where $\bK=\bG$. 
	The results transfer to a general $\bK$ as only the quotient groups are relevant to our considerations. Let $\wt \chi$ be a $\gamma$-stable extension to $\wt G_\chi$, then there exists an extension $\wt \nu\in \Irr(\wt Z')$ of $\nu$ such that $\wt \chi:=(\restr \wt \chi.\wt \nu|{\wGF_\chi})^{\wGF}$. We observe $(\wt \chi.\wt \nu)^\gamma=\wt \chi.\wt \nu^\gamma$. This leads to the given formula for $\mu$ in (ii). 

For the other direction let $\chi_0$ be the extension of $\chi$ to $\wt\bG^F_\chi$ such that $\wt \chi=\chi_0^{\wbG^F}$. Then $\chi_0^\gamma=\chi_0\restr \mu|{\wGF_\chi}$ and $ \chi_0.\wt\nu$ is an extension of $\chi$ to $\wt \bG_\chi^F\wt Z'=\wt G_\chi \wt Z'$.  The character $\wh\chi:=\restr (\chi_0.\wt\nu)|{\wt G_\chi}$ satisfies 
$$ \wt \chi^\gamma.\wt\nu^\gamma=(\wt \gamma. \wt\nu)^\gamma=
(\chi_0.\wt \nu)^\gamma= \chi_0^\gamma.\wt\nu^\gamma= 
\chi_0 \restr \mu|{\wt\bG^F_\chi}.\wt\nu^\gamma.$$ 
There is some $\kappa\in\Irr(\wt G_\chi/\bG^F)$ with $\wt \chi ^\gamma=\wt \chi \kappa$. According \cite[(6.17)]{Isa} the above equality of characters implies $\kappa(t)\wt \nu^\gamma (z)(\wt \nu (z))^{-1}= \mu (tz)$, whenever $t\in \wt G_\chi$ and $z\in\wt Z'$ with $tz\in \wt\bG^F_\chi$. By the assumption on $\mu$ and $\wt \nu$ this leads to $\kappa=1$. Then $\chi$ has a $\gamma$-stable extension to $\wt G_\chi$. 
\end{proof}
For later we restate $A(\infty)$ for groups of type $\tA$ (see \cite{CS17A}).
\begin{prop}\label{prop43}
Let $G=\SL_n(q)$, $\wt G:=\GL_n(q)$ and write $\II ESLnq@{E(\SL_n(q))} $ for the group of field and graph automorphisms of $G$ and $\wt G$ with regard to the usual BN-pair. 
\begin{thmlist}\item 
Then there exists an $E(\SL_n(q))$-stable $\GL_n(q)$-transversal $\TT$ in $\Irr(\SL_n(q))$, such that every $\chi\in \TT$ extends to $\SL_n(q)E(\SL_n(q))_\chi$.
\item Let $\gamma'$ be the automorphism of $\SL_n(q)$ given by transpose-inverse and 
$E'(\SL_n(q))\leq\Aut(\SL_n(q))$ be the subgroup generated by $\gamma'$ and the field automorphisms described above. Then $E'(\SL_n(q))$ is abelian and  there exists an $E'(\SL_n(q))$-stable $\GL_n(q)$-transversal $\TT$ in $\Irr(\SL_n(q))$, such that every $\chi\in \TT$ extends to $\SL_n(q)E'(\SL_n(q))_\chi$.
\end{thmlist}
\end{prop}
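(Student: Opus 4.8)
The plan is to deduce \Cref{prop43} directly from the literature on the inductive McKay condition in type $\tA$, specifically from the work of Cabanes and Sp\"ath on $\SL_n(q)$. For part (a), the statement is essentially the conjunction of two of the ingredients established in \cite{CS17A}: the existence of an $E(\SL_n(q))$-stable $\GL_n(q)$-transversal in $\Irr(\SL_n(q))$ (which is condition $A'(\infty)$ for type $\tA$) together with the fact that the chosen transversal can be taken so that each $\chi$ extends to its stabilizer $\SL_n(q)E(\SL_n(q))_\chi$ (condition $A(\infty)$). Both are proved there, so the proof of (a) amounts to locating the precise statements and combining them via \Cref{*cond_trans}: one needs that each $\chi$ has a $\GL_n(q)$-conjugate $\chi_0$ with $(\GL_n(q)E(\SL_n(q)))_{\chi_0}=\GL_n(q)_{\chi_0}E(\SL_n(q))_{\chi_0}$, which is exactly the transversality statement, and then the extension property is transported along the $\GL_n(q)$-orbit.

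For part (b), the key new point is that $E'(\SL_n(q))$ — generated by the field automorphisms together with the single graph automorphism $\gamma'$ given by transpose-inverse — is abelian. This is a small group-theoretic check: the field automorphisms commute among themselves, $\gamma'$ commutes with the field automorphisms (transpose-inverse is defined entry-wise and so commutes with the entry-wise action of $\Gal(\FF_{q}/\FF_p)$), and $\gamma'$ has order $2$ (or $1$); so $E'(\SL_n(q))$ is a quotient of the abelian group $\langle\gamma'\rangle\times(\text{field autos})$. The point of isolating $E'$ rather than the full $E$ is that in odd rank, or more precisely whenever the diagram automorphism group is only of order $2$, the full $E(\SL_n(q))$ already equals $E'(\SL_n(q))$; and even when $E(\SL_n(q))$ is larger, a transversal that is stable under the smaller group $E'(\SL_n(q))$ still exists, because $E'(\SL_n(q))$-stability is a weaker requirement. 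So (b) follows from (a) by restriction of the group action, together with the observation that $E'(\SL_n(q))\le E(\SL_n(q))$ whenever these are defined with respect to the same BN-pair.

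Concretely, I would proceed as follows. First, recall the definition of $E(\SL_n(q))$ and $E'(\SL_n(q))$ from \ref{not} and check that $E'(\SL_n(q))$ is abelian by the commutation argument above. Second, invoke \cite{CS17A} for the existence of an $E(\SL_n(q))$-stable $\GL_n(q)$-transversal $\TT_0$ in $\Irr(\SL_n(q))$ with the extension property; since $E'(\SL_n(q))\le E(\SL_n(q))$, this $\TT_0$ is also $E'(\SL_n(q))$-stable, and for $\chi\in\TT_0$ one has $E'(\SL_n(q))_\chi=E(\SL_n(q))_\chi\cap E'(\SL_n(q))\le E(\SL_n(q))_\chi$, so the extension of $\chi$ to $\SL_n(q)E(\SL_n(q))_\chi$ restricts to an extension to $\SL_n(q)E'(\SL_n(q))_\chi$. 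This gives both (a) and (b) with the same transversal. Third, if one wants the statement phrased through \Cref{*cond_trans} (stabilizer-factorization), translate the transversal property accordingly.

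The main obstacle is not mathematical depth but bookkeeping: one must match the normalizations of field and graph automorphisms used in \cite{CS17A} with those fixed in \ref{not} of the present paper (choice of BN-pair, choice of Frobenius $F_0$, whether $\gamma'$ is realized as an algebraic-group graph automorphism or as transpose-inverse on matrices), and confirm that "$\GL_n(q)$" here plays the role of the group $\wt G=\wbG^F$ for the regular embedding $\SL_n\hookrightarrow\GL_n$. Once these identifications are in place the proof is immediate; the subtlety that merits the separate part (b) is precisely that $E'(\SL_n(q))$ is abelian whereas the full $E(\SL_n(q))$ need not be, and it is this abelianness that will be used later when the graph automorphism of type $\tD$ is analyzed via the type-$\tB$ embedding and the type-$\tA$ Levi factors.
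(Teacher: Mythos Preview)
Your argument for part (a) is fine and matches the paper: this is precisely \cite[Thm.~4.1]{CS17A} combined with \Cref{*cond_trans}.

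For part (b), however, your key claim $E'(\SL_n(q))\leq E(\SL_n(q))$ is false, and this is not a matter of normalization or bookkeeping. The graph automorphism $\gamma\in E(\SL_n(q))$ coming from the Dynkin diagram symmetry and the transpose-inverse automorphism $\gamma'$ differ by a genuine inner automorphism: as recalled in \cite[3.2]{CS17A}, one has $\gamma'=v_0\gamma$ (as automorphisms of $\SL_n(q)$) for a specific nontrivial element $v_0\in\SL_n(p)$. Since $E(\SL_n(q))\cap\Inn(\SL_n(q))=1$, this forces $\gamma'\notin E(\SL_n(q))$, so $E'\not\leq E$. In particular $G\rtimes E'_\chi$ is not a subgroup of $G\rtimes E_\chi$, and there is no ``restriction'' from one to the other in the way you describe.

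The paper repairs this as follows. Stability of $\TT$ under $E'$ does hold, but for the reason that $\TT$ is $\gamma$-stable and $v_0\in G$ acts trivially on $\Irr(G)$, hence $\TT$ is $\gamma'=v_0\gamma$-stable. For the extension, one cannot simply restrict; instead one argues via an intermediate subgroup common to both semidirect products. If $E'_\chi=\langle F_{q'},\gamma'\rangle$ is non-cyclic, part (a) gives a $\gamma$-stable extension $\tilde\chi$ of $\chi$ to $G\langle F_{q'}\rangle$. Since $v_0\in G\leq G\langle F_{q'}\rangle$, conjugation by $v_0$ is inner there, so $\tilde\chi$ is $v_0$-stable; combined with $\gamma$-stability this gives $\gamma'$-stability of $\tilde\chi$. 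The condition $[v_0,F_{q'}]=1$ (which holds because $v_0\in\SL_n(p)$) is needed to ensure that $\gamma'$ normalizes $G\langle F_{q'}\rangle$ at all, so that the last step---extending $\tilde\chi$ across the cyclic quotient $\langle\gamma'\rangle$---makes sense. This is the content of the paper's proof, and it is genuinely more than the restriction argument you propose.
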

\begin{proof}
Part (a) follows from {\cite[Thm.~4.1]{CS17A}} using \Cref{*cond_trans}. 

Let $\gamma\in E(\SL_n(q))$ be the graph automorphism. Following the considerations in \cite[3.2]{CS17A} we see that $\gamma'$ and $v_0\gamma$ induce the same automorphism of $\SL_n(q)$, where $v_0\in\SL_n(p)$ is defined as in \cite[3.2]{CS17A} and $p$ is the prime dividing $q$.
This proves that $\TT$ is also $E'(\SL_n(q))$-stable. 
For part (b) we have to prove that every $\chi\in\TT$ extends to its inertia group in $\SL_n(q)E'(\SL_n(q))$. This statement is clear whenever $E'(\SL_n(q))_\chi$ is cyclic, see \cite[(9.12)]{Isa}.
If for $\chi\in\TT$ the group $E'(\SL_n(q))_\chi$ is non-cyclic, we see $\gamma'\in E'(\SL_n(q))_\chi$. Let $F_{q'}\in E'(\SL_n(q))$ be a field automorphism such that $E'(\SL_n(q))_\chi =\spann<F_{q'},\gamma'>$. 
By (a) there exists some $\gamma$-stable extension of $\chi$ to $G\spann<F_{q'}>$. This extension is then also $\gamma'$ and hence $\gamma v_0$-stable as $[v_0,F_{q'}]=1$. From this we deduce that $\chi$ extends to $\SL_n(q)E'(\SL_n(q))_\chi$. 
\end{proof}
\section{The Levi subgroup and its normalizer}\label{secext}
In this and the following section we reprove with quite different methods that 
for every standard Levi subgroup $L$ of $\tDlsc(q)$, every $\la\in\cusp(L)$ extends to its stabilizer inside $\NNN_\GF(\bL)$, which follows from the mentioned results by Geck and Lusztig.  For $E(\GF)\leq \Aut(\GF)$ from \Cref{ssec2A}, we construct a $\wt T$-transversal $\TT$ of $\cusp(L)$ and an $N\Stab_{E(\GF)}(L)$-equivariant extension map \wrt $L\lhd N$ for $\TT$. 

\begin{thmbox}\label{thm_loc} 
Let $L$ be a standard Levi subgroup of $\GF=\tD_{l,sc}(q)$. Let $\EL:=\Stab_{E(\GF)}(L)$, $N$, $\wh N:=N\EL$ and ${\wt L'}:=\wt T_0 L$ be associated to $L$ as in \ref{not_L_N_E1}.
If $\tD_{l',sc}(q)$ is a direct factor of $[L,L]$, then assume \Cref{hyp_cuspD_ext} holds for $\tD_{l',sc}(q)$. Then:
\begin{thmlist}	
\item There exists an $\wh N$-stable ${\wt L'}$-transversal $\TT\subseteq \cusp(L)$.
\item \label{stabcuspN} There exists an $\wh N$-equivariant extension map $\II LambdaLN@{\Lambda_{L\lhd N}}$ \wrt $L\lhd N$ for $\TT$.
\end{thmlist}
\end{thmbox}
This implies \Cref{theoC} and ensures Assumptions (i) and (ii) of \Cref{prop23_neu}. In \cite[Thm.~4.3]{BS} and \cite[Prop.~4.13]{CSS} the analogous result was shown in the case where $\bG$ is of type $\tA_l$ or $\tC_l$. The interested reader may notice that without assuming \Cref{hyp_cuspD_ext} for smaller ranks, the proof we give implies a version of the theorem without the equivariance statement.  

Like in the proofs given in \cite{BS} and \cite{CSS}, we essentially apply the following statement providing an extension map for non-linear characters.
\begin{prop}[{\cite[Prop.~4.1]{CSS}}] \label{cor_tool} 
Let $K\lhd M$ be finite groups, let the group $D$ act on $M$, stabilizing $K$ and let $\KK\subseteq \Irr(K)$ be $MD$-stable. Assume there exist $D$-stable subgroups $\Kcirc$ and $V$ of $M$ such that 
\begin{asslist}
	\item \label{cor32i} the groups satisfy: 
	\begin{enumerate}
		\item[(i.1)] \label{cor23i1} $K=\Kcirc (K\cap V) $ and $H:= K\cap V \leq \Z(K)$,
		\item[(i.2)] \label{prop23i2} $M=KV$; 
	\end{enumerate}
	\item \label{cor23ii} for $\KK_0:=\bigcup_{\la\in\KK}\Irr(\restr\la|{\Kcirc})$ there exist
	\begin{enumerate}
		\item[(ii.1)] a $VD$-equivariant extension map $\Lambda_0$ \wrt $H\lhd V$; and 
		\item [(ii.2)] \label{cor_toolext} an $\epsilon(V)D$-equivariant extension map $\Lambda_\epsilon$ with respect to $\Kcirc\lhd \Kcirc\rtimes \epsilon(V)$ for $\KK_0$, where $\eps\colon V\to V/H$ denotes the canonical epimorphism. 
	\end{enumerate}
	\end{asslist}
Then there exists an $MD$-equivariant extension map with respect to $K\lhd M$ for $\KK$.
\end{prop}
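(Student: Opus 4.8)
The plan is to construct the required extension map explicitly from $\Lambda_0$ and $\Lambda_\epsilon$, the point being that the extension problem for $K\lhd M$ decouples into a ``$\Kcirc$-direction'', controlled over $\epsilon(V)=V/H$ by $\Lambda_\epsilon$, and a ``central $H$-direction'', controlled over $V$ by $\Lambda_0$. First I would record the group theory. From (i) we get $K=\Kcirc H$ with $H\leq\Z(K)$ and $M=KV=\Kcirc V$, while the semidirect product in (ii.2) presupposes that $V$ normalises $\Kcirc$, so in fact $\Kcirc\lhd M$. Fix $\la\in\KK$. As $H$ is central in $K$ it acts by scalars on $\la$, so $\la_0:=\restr\la|{\Kcirc}$ is irreducible, lies in $\KK_0$, and $\la=\la_0\cdot\nu$ where $\nu\in\Irr(H)$ is the central character of $\la$ on $H$; the required compatibility $\restr{\la_0}|{H_0}=\la_0(1)\,\restr\nu|{H_0}$ on $H_0:=\Kcirc\cap H$ is automatic. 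Since $K\leq M_\la$ and $M=KV$, the modular law gives $M_\la=KV_\la$ with $V_\la:=M_\la\cap V\supseteq H$; moreover $V_\la$ fixes both $\la_0$ and $\nu$, one has $\Kcirc\cap V_\la=H_0$, and the multiplication map identifies
$$M_\la\;=\;\bigl(\Kcirc\rtimes V_\la\bigr)\big/\wt{H_0},\qquad \wt{H_0}:=\{(z\inv,z)\mid z\in H_0\},$$
with $V_\la$ acting on $\Kcirc$ by conjugation; here $\wt{H_0}$ is normal because $H_0\leq\Z(K)$ centralises $\Kcirc$ and is normalised by $V_\la$. Under this identification the subgroup $\Kcirc\rtimes H=\Kcirc\times H$ maps onto $K$ and $\la$ corresponds to $\la_0\boxtimes\nu$.

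Next I would assemble the two building blocks. Since $\la_0\in\KK_0$ and $\epsilon(V_\la)\leq\epsilon(V)_{\la_0}$, restricting $\Lambda_\epsilon(\la_0)$ yields an extension $\wt{\la_0}\in\Irr(\Kcirc\rtimes\epsilon(V_\la))$ of $\la_0$; since $V_\la\leq V_\nu$, restricting $\Lambda_0(\nu)$ yields a linear extension $\wt\nu\in\Irr(V_\la)$ of $\nu$. I would inflate $\wt{\la_0}$ to $\Kcirc\rtimes V_\la$ along $\Kcirc\rtimes V_\la\twoheadrightarrow\Kcirc\rtimes\epsilon(V_\la)$ (well defined since $H$ acts trivially on $\Kcirc$), inflate $\wt\nu$ along the projection $\Kcirc\rtimes V_\la\twoheadrightarrow V_\la$, and let $\Theta\in\Irr(\Kcirc\rtimes V_\la)$ be the product of these two inflations — a legitimate product since the second factor is linear. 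Then $\restr\Theta|{\Kcirc}=\la_0$ and $\restr\Theta|{\Kcirc\times H}=\la_0\boxtimes\nu$; the latter is trivial on $\wt{H_0}$ — this is exactly where $H\leq\Z(K)$ and the $H_0$-compatibility of $\la_0$ and $\nu$ enter — so $\Theta$ descends to $\wh\la\in\Irr(M_\la)$ with $\restr{\wh\la}|{K}=\la_0\cdot\nu=\la$. Hence $\la$ extends to its inertia group $M_\la$, so maximal extendibility holds \wrt $K\lhd M$ for $\KK$, and $\Lambda\colon\la\mapsto\wh\la$ is an extension map.

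Finally I would check $MD$-equivariance of $\Lambda$. The set $\KK$ is $MD$-stable by hypothesis. Conjugation by $K$ acts trivially on $\Irr(\Kcirc)$ and on $\Irr(H)$ and fixes $\la$, so it acts trivially on the whole construction; hence it suffices to treat $D$ and conjugation by $V$. Every datum entering the construction — $\la_0$, $\nu$, the groups $V_\la,\Kcirc,H$, the inflation and projection maps — transforms naturally under $D$ (using $D$-stability of $\Kcirc$ and $V$, hence of $H$) and under $V$-conjugation, while $\Lambda_0$ is $VD$-equivariant and $\Lambda_\epsilon$ is $\epsilon(V)D$-equivariant, with $\epsilon(V)$-conjugation on $\Irr(\Kcirc)$ agreeing with $V$-conjugation; since $M=KV$ this gives $MD$-equivariance (cf.\ also \cite[Thm.~4.1]{CS17A}). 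The genuinely delicate step is the structural identification $M_\la\cong(\Kcirc\rtimes V_\la)/\wt{H_0}$ together with the verification that the glued character $\Theta$ kills the anti-diagonal $\wt{H_0}$; granting that, the remaining verifications are routine bookkeeping.
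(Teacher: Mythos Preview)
The paper does not actually prove this proposition; it is quoted verbatim from \cite[Prop.~4.1]{CSS} and used as a black box, so there is no in-paper proof to compare against. Your argument is correct and is the natural direct construction one would expect (and is essentially the one in \cite{CSS}): decompose $\la=\la_0\cdot\nu$ via the central-product structure $K=\Kcirc H$, lift the two pieces with $\Lambda_\epsilon$ and $\Lambda_0$, glue on $\Kcirc\rtimes V_\la$, and push down through the kernel $\wt{H_0}$ of the multiplication map to $M_\la$. The key checks --- $M_\la=KV_\la=\Kcirc V_\la$ by Dedekind, $\Kcirc\cap V_\la=H_0$, and the vanishing of $\Theta$ on $\wt{H_0}$ via the central-character compatibility $\la_0(z)=\la_0(1)\nu(z)$ for $z\in H_0$ --- are all in order, and the equivariance bookkeeping is routine once one observes that $K$ acts trivially on every ingredient and that $V$-conjugation on $\Irr(\Kcirc)$ factors through $\epsilon(V)$.
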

In this section we construct the set $\TT$ for part (a) of \Cref{thm_loc} and introduce groups $H$, $K$, $\Kcirc$ (see \Cref{lem34}), $M$, $D$ and $V$ (in \Cref{cor322}) for a later application of \Cref{cor_tool} in the proof of Theorem~\ref{stabcuspN}. Here we show that the groups introduced satisfy the group-theoretic assumptions made in~\ref{cor32i}. 
Afterwards, in \Cref{sec4_neu} we ensure the character-theoretic assumptions, namely~\ref{cor23ii} in order to prove Theorem \ref{stabcuspN}. 

\subsection{Subgroups of the Levi subgroup $L$}\label{subsec_L}
As a first step we dissect the root system of $\bL$ and introduce subgroups of $L$ with those new root systems. For a non-negative integer $i$ let $\II underline i@{\underline{i}}:=\{1,\ldots, i\}$. For computations with elements of $\bG$ we use the Steinberg generators satisfying the Chevalley relations together with an explicit embedding of $\tD_{l,sc}(\FF) $ into $\tB_{l,sc}(\FF) $.
\begin{notation}[The groups $\bG$ and $\obG$, roots  and generators]\label{not_32}
In this and the following section we assume that the simply-connected simple group $\bG$ from \ref{not} is of type $\tD_l$ ($l\geq 4$) over $\FF$ the algebraic closure of $\FF_p$ for $\III p$ some odd prime. Hence $\II G@\bG\cong\tDlsc(\FF)$. Denote $\II lunder@\ul:=\{1,\dots,l\}$. 
Let $\II Phi@ \Phi:=\{\pm e_i\pm e_j\mid i,j\in\ul,\, i\neq j \}$ be the root system of $\bG$ with simple roots $\III{\al_2}:=e_2+e_1$, $\III{\al_1}=e_2-e_1$ and $\III{\al_i}:=e_i-e_{i-1}$($i\geq 3$),  
\begin{align*}
\II Delta@\Delta:=\{ \al_{i}\mid i\in \underline {l} \},
\end{align*} 
see \cite[Rem.~1.8.8]{GLS3}, where the set $\{\II ei@{e_i}\}_{i\in\ul}$ is an orthonormal basis of $\mathbb R^l$ whose scalar product is denoted by $\II (xy)@{(x,y)}$. The Chevalley generators $\II xalphat@{\xx_\al(t)}$, $\II nalphat@{\n_\al(t')}$ and $\II{halphat}@{\hh_\al(t')}$ ($\al\in\Phi$, $t,t'\in\FF$ with $t'\neq 0$) together with the Chevalley relations describe the group structure of $\bG$, see \cite[Thm.~1.12.1]{GLS3}. 

Let $\II Phioverline @{\ov \Phi }:=\{\pm e_i,\,\pm e_i\pm e_j\mid i,j\in\ul,\, i\neq j \}$, $\II G overline @{\obG}:=\tB_{l,sc}(\FF)$ with Chevalley generators $\overline{\xx}_\al(t)$, $\ov\n_\al(t')$ and $\ov\hh_\al(t')$ ($\al\in\ov \Phi$, $t,t'\in\FF$ with $t'\neq 0$). Assume that the structure constants of $\bG$ and $\ov\bG$ are chosen such that $\xx_\al(t)\mapsto\ov\xx_\al(t)$ ($\al\in \Phi$, $t\in\FF$) defines an embedding $\iota_\tD:\bG\ra\ov\bG$. For simplicity of notation we write $ \xx_\al(t)$, $ \n_\al(t')=\xx_\al(t')\xx_{-\al}(-t'{}^{-1})\xx_\al(t')$ and $ \hh_\al(t')=\nn_\al(t')\nn_\al(1)^{-1}$ for the generators of $\obG$ and thus identify $\bG$ with the corresponding subgroup of $\obG$. This is possible according to \cite[10.1]{S10a}, see also \cite[2.C]{MS16}. Among the relations between Chevalley generators, the following will be the most useful to us. For $a,b\in{\mathbb R}^l\setminus \{0\}$ recall ${\langle a,b\rangle}= 2(a,b)/(b,b)$. Let $\al,\beta\in\ov\Phi$, $t\in\FF$, $t'\in\FFtimes$. Then
\begin{align*}
\hh_\al(t')\hh_\beta(t')&=\hh_{\al+\beta}(t')\ \ \text{whenever $\al+\beta\in\ov\Phi$,}\\
\nn_\al(t)^{\hh_\beta(t')}&=\nn_\al(t'{}^{\spann<\al,\beta>}t)  \\
\hh_\al(t)^{\nn_\beta(1)}&=\hh_{\al-\spann<\al,\beta>\beta}(c_{\al,\beta}t) 
\end{align*} 
where the first line is from \cite[1.12.1(e)]{GLS3}, the second is easy from \cite[1.12.1(g)]{GLS3}, and the third, along with the definition of $c_{\al,\beta}\in\{\pm 1\}$, is from \cite[1.12.1(i)]{GLS3}.
\end{notation}
 
\begin{defi} \label{gammane1}
Let $\II Xalpha@{\bX_{\al}}:=\spann<\xx_{\al}(t)\mid t \in \FF>$ for $\al\in\ov \Phi$,
$\II T@\bT:=\spann<\h_\al(t')\mid \al\in \Phi, t'\in \FF^\times> $ and $\II T overline @{\ov\bT}:=\spann<\h_\al(t')\mid \al\in \ov\Phi, t'\in \FF^\times>$. Note $\bT=\ov\bT$ is the image of the map $$(\FFtimes){}^l\ni (t'_1,\dots ,t'_l)\mapsto \hh_{e_1}(t'_1) \dots \hh_{e_l}(t'_l) $$ with kernel $\{  (t'_1,\dots ,t'_l)\in\{ \pm 1\} ^l \mid t'_1\dots t'_l=1   \}$, see also \cite[10.1]{S10a}. 
The group $\bT$ can be chosen as the group $\bT_0$ from \ref{not} and $\bT\spann<\bX_\al\mid \al\in\Delta>$ as the group $\bB$. 

Denoting $\III{h_0}=\hh_{e_1}(-1)$, one has $\Z(\ov\bG)=\spann<h_0>$ of order 2, see \cite[1.12.6]{GLS3}, with $\ov\bG/\spannh =\II SOodd@{\SO_{2l+1}(\FF)}\geq \II SOeven@{\SO_{2l}(\FF)}=\bG/\spannh$.

For every positive integer $i$ let $\III {F_{p^i}}:\obG \ra \obG$ be the Frobenius endomorphism given by $\xx_{\al}(t)\mapsto \xx_\al(t^{p^i})$ for $t\in\FF$ and $\al\in \ov\Phi$. We write $\II gamma@{\gamma}$ for the graph automorphism of $\bG$ given by $\xx_{\eps\al}(t)\mapsto \xx_{\eps\gamma_0(\al)}(t)$ for $t\in\FF$, $\eps\in \{\pm 1\}$ and $\al\in\Delta$, where $\II gamma0@{\gamma_0}$ denotes the symmetry of the Dynkin diagram of $\Delta$ of order $2$ with $\al_2 \mapsto \al_1$. If $l=4$ we denote by $\II gamma3@{\gamma_3}$ the graph automorphism of $\bG$ induced by the symmetry of the Dynkin diagram of $\Delta$ with order $3$ sending $\al_2 \mapsto \al_1$ and $\al_1\mapsto\al_4$. We assume that $\III F=F_{q}$ for $\III q:=p^f$, where $\III f$ is a positive integer. Note that the group $E(\GF)$ from \ref{not} satisfies accordingly  $\III{E(\GF)}=\spann<\restr F_p|{\GF},\restr\gamma|{\GF}>$ whenever $l\geq 5$, otherwise $l=4$ and $E(\GF)=\spann<\restr F_p|{\GF},\restr\gamma|{\GF},\restr\gamma_3|{\GF}>$. 
 
We recall that the graph automorphism $\gamma$ of $\bG$ is induced by an element of $\obG$ (see \cite[2.7]{GLS3} for the corresponding statement in classical groups).  
Let $\II varpi@\varpi\in\FF^\times$ such that $\varpi^2=-1$. By \cite[10.1]{S10a}, see also \cite[2.C]{MS16}, the automorphism $\gamma$ of $\bG$ is induced by conjugating with $\n_{e_1}(\varpi)\in\obG$. 
\end{defi}
\begin{notation}\label{notationL}
Let $\bL$ be a Levi subgroup of $\bG$ such that $\bB \bL$ is a parabolic subgroup of $\bG$ and $\bT\subseteq \bL$. Let $\III L:=\bL^F$ and $\II Phi'@{\Phi'}$ be the root system of $\bL$, i.e., $\bL=\bT\Spann<\bX_ \al| \al\in \Phi'>$. As $\Phi'$ is a parabolic root subsystem of $\Phi$ it has as basis $\II Delta'@{\Delta'}= \Delta\cap\Phi'$. We assume that one of the following holds: 
\begin{asslist}
	\item \label{cases35i} $\Delta'\subseteq \{ e_2-e_1, e_3-e_2, \ldots, e_l-e_{l-1}\}$, or 
	\item \label{cases35ii} $\{ e_2-e_1, e_2+e_1\}\subseteq \Delta'$.
\end{asslist}
\end{notation}
Recall that a split Levi subgroup of $\bG$ containing $\bT$ is called standard if it is generated by $\bT$ and the $\bX_{\al}$'s such that $\al\in\pm\Delta '$ for some subset $\Delta '\subseteq \Delta$.  Recall that $\gamma$ swaps $e_2-e_1$ and $ e_2+e_1$ while fixing the other elements of $\Delta$. Then any subset $\Delta '\subseteq \Delta$ is such that $\Delta '$ or $\gamma(\Delta ')$ satisfies \ref{cases35i} or \ref{cases35ii}. We then get that $\bL$ can be assumed to satisfy \Cref{notationL}.
\begin{lem}
Every standard Levi subgroup of $\bG$ containing $\bT$ is $\spann<\gamma>$-conjugate to a standard Levi subgroup whose root system has a basis $\Delta'\subseteq \Delta$ satisfying \ref{cases35i} or \ref{cases35ii}.
\end{lem}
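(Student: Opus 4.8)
The plan is to split into cases according to which of the two ``fork'' simple roots $\al_1=e_2-e_1$ and $\al_2=e_1+e_2$ of the $\tD_l$-diagram belongs to the subset $\Delta'\subseteq\Delta$ defining the given standard Levi subgroup $\bL=\bT\Spann<\bX_\al|\al\in\Phi'>$ (so that $\Phi'=\ZZ\Delta'\cap\Phi$ and $\Delta'$ is a basis of $\Phi'$), using the graph automorphism $\gamma$ only to interchange $\al_1$ and $\al_2$. I would first recall from \ref{not_32} and \ref{gammane1} that $\gamma$ is the algebraic automorphism of $\bG$ permuting the root subgroups through the diagram symmetry $\gamma_0$ (which swaps $\al_1\leftrightarrow\al_2$ and fixes $\al_3,\dots,\al_l$), that it normalises $\bT$ (being conjugation by $\n_{e_1}(\varpi)\in\obG$), and hence that it carries the standard Levi subgroup attached to a subset $J\subseteq\Delta$ onto the one attached to $\gamma_0(J)$.

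If $\al_2\notin\Delta'$ there is nothing to do: then $\Delta'\subseteq\{\al_1,\al_3,\dots,\al_l\}=\{e_2-e_1,e_3-e_2,\dots,e_l-e_{l-1}\}$, so $\Delta'$ already satisfies \ref{cases35i}. If $\al_2\in\Delta'$ but $\al_1\notin\Delta'$, I would replace $\bL$ by its image under $\gamma$, a standard Levi subgroup attached to $\gamma_0(\Delta')$; as $\al_2\notin\gamma_0(\Delta')$ this reduces to the previous case.

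The only remaining case is $\{\al_1,\al_2\}\subseteq\Delta'$, and there I would not move $\bL$ at all. Since $\al_1\in\Phi'$, the reflection $s_{\al_1}$ lies in the Weyl group of $\Phi'$, so $s_{\al_1}(\Delta')$ is again a basis of $\Phi'$. Now the two fork nodes are orthogonal, $\langle\al_1,\al_2\rangle=\langle e_2-e_1,\,e_1+e_2\rangle=0$, so $s_{\al_1}$ fixes $\al_2=e_1+e_2$ while sending $\al_1=e_2-e_1$ to $-\al_1=e_1-e_2$. Hence $\{e_1-e_2,\,e_1+e_2\}\subseteq s_{\al_1}(\Delta')$, a basis of $\Phi'$ satisfying \ref{cases35ii}; since $\gamma_0$ fixes $\Delta'$ here, $\bL$ is already $\gamma$-stable, and $\bL$ itself is the required Levi subgroup.

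The computation is entirely elementary; the single point to get right is the orthogonality $\langle\al_1,\al_2\rangle=0$ of the two fork nodes, which is exactly what lets one obtain a basis as in \ref{cases35ii} by a single reflection from the Weyl group of $\Phi'$, rather than having to re-root the $\tD$-factor of $\Phi'$ by hand. The $\gamma$-conjugation is invoked only to symmetrise the asymmetric configuration $\al_1\notin\Delta'\ni\al_2$.
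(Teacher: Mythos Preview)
Your approach is essentially identical to the paper's: the same three-way case split on whether $\al_2\notin\Delta'$, whether $\{\al_1,\al_2\}\subseteq\Delta'$, or whether $\al_1\notin\Delta'\ni\al_2$, with $\gamma$ applied only in the last case to swap the fork roots and reduce to the first.

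One remark on your third paragraph. Condition \ref{cases35ii} as printed reads $\{e_1-e_2,\,e_1+e_2\}\subseteq\Delta'$, and you take this literally, applying the reflection $s_{\al_1}$ to produce a basis of $\Phi'$ containing $e_1-e_2=-\al_1$. But the lemma also asks for $\Delta'\subseteq\Delta$, and $-\al_1\notin\Delta$, so your reflected basis does not satisfy the statement as written. The printed condition is a typo for $\{e_2-e_1,\,e_1+e_2\}=\{\al_1,\al_2\}\subseteq\Delta'$; with that reading the case $\{\al_1,\al_2\}\subseteq\Delta'$ already satisfies (ii) and no reflection is needed. The paper's own proof reproduces the same typo in its case list but simply treats $\{\al_1,\al_2\}\subseteq\Delta'_0$ as verifying (ii) directly, confirming this is the intended meaning.
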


\begin{num}[{\color{ocre}{\bf Decomposing $\Phi'$}}]
In the following we decompose $\Phi'$ into smaller root systems, which are the disjoint union of irreducible root systems of the same type. By $\II type @{\type(\Psi)}$ we denote the type of the root system $\Psi$. Whenever $\Psi$ is a subset of $\ov\Phi$ we also denote by $\II WPsi@{W_\Psi}$ the subgroup of N$_\obG (\bT)/\bT$ generated by reflections defined by elements of $\Psi$.

Since $\Phi'$ is a parabolic root subsystem of $\Phi$, $\Phi'$ decomposes as a disjoint union of indecomposable root systems of type $\tD$ and type $\tA$, that are called components of $\Phi'$.

If $\Delta'$ satisfies Assumption \ref{cases35i}, let {$\II Phid@{\Phi_{d}}$} be the union of the components of $\Phi'$ of type $\tA_{d-1}$ ($d\geq 2$). If $\Delta'$ satisfies Assumption \ref{cases35ii}, let $\II Phi-1@{\Phi_{-1}}$ be the union of components of $\Phi'$ that have a non-trivial intersection with $\{ e_2-e_1, e_1+e_2\}$ and let {$\Phi_{d}$} be the union of components of $\Phi' \setminus \Phi_{-1}$ of type $\tA_{d-1}$ ($d\geq 2$).
If $\Delta'$ satisfies Assumption \ref{cases35ii}, $\type(\Phi_{{-1}})\in \{\tA_3,\tA_1\times \tA_1, \tD_m \mid m\geq 4\}$.

Let $\II DD'@{\DD '}$ be the set of integers $d$ such that $\Phi_d$ is defined and non-empty, that is $\SL_d(\FF)$ is a summand of $[\bL,\bL]$. Then $\Phi'=\bigsqcup_{d\in \DD'} \Phi_d$, a disjoint union. 
\end{num}
Recall that $W_{\ov \Phi}$, the group generated by the reflections along the roots of $\ov \Phi$ coincides with $W_0:=\ov N_0/T_0$, can be identified with the permutations of $\ul\cup-\ul$ that commute with the sign change, and hence acts on $\ul$, see \cite[Rem.~1.8.8]{GLS3}. 
\begin{num}[{\color{ocre}{\bf Orbits of $W_{\Phi'}$ on $\ul$}}] \label{defcO}
Let $\II Ocal@\cO$ be the set of orbits of $W_{\Phi'}$ on $\ul$, $\II O1@{\cO_1}\subseteq \cO$ the set of singletons in $\cO$ and $\II Ocal d@{\cO_d}$ be the set of orbits of $W_{\Phi_d}$ on $\ul$ contained in $\cO\setminus \cO_1$, whenever $d\in\DD'$. We define
\begin{align*}
\II DDL@{\DD(L)}=\II DD@\DD&=\begin{cases} \DD'\cup \{1\}&, \text{ if }\cO_1\neq \emptyset,\\
\DD'&,\, \otw. \end{cases} \end{align*}
For $d\in\DD \setminus\{-1\}$ let $\II ad@{a_d}:=|\cO_d|$ and note that $|I|=d$ for any $I\in\cO_d$.

For $I\subseteq \ul$ let $\II PhiI@{\Phi_I}:=\Phi'\cap \Spann<e_k|k\in I>$ and $\II Phioverline I@{\ov\Phi_I}:=\ov \Phi\cap \Spann<e_k|k\in I>$. For $d\in\DD$ let $\II Jd@{J_d}:=\bigcup_{o\in\cO_d}o$, 
 and $\II Phioverline d@{\ov\Phi_d}:= \ov\Phi_{J_d}$.
\end{num}
\noindent Next we introduce groups $K$, $\Kcirc$ and $H$ that will later be proven to  satisfy Assumption \ref{cor32i} with a group $M$.  
\begin{notation}[Subgroups of $\bL$ and $L$]
\label{defhI}
Let $\varpi\in \FF^\times$ and $h_0$ as in \ref{gammane1}. Define $\II hIt@{\h_I(t)}:=\prod_{i\in I}\hh_{e_i}(t)$ for $I\subseteq \ul$ and $t\in \FF^\times$. 
For $I\in \cO$ let $\II GIbold@{\bGI}=\Spann<\bX_\al| \al \in \Phi_I>$ and $ \II GI@{G_I}:=\bG_I^F$. Note that for $I\in \cO_1$ the group $\bGI$ is trivial. 
Let $\II H0@{H_0}:=\spann<h_0,\h_{e_i}(\varpi)\h_{e_{i'}}(-\varpi)\mid i,i'\in \ul>=\spann<h_\alpha(-1)\mid\alpha\in\Phi >$. 
For $d\in \DD$ let
 $\II Hdtilde@{{\pwt H}_d}:=\spann<h_0, \h_I(\varpi) \mid I\in\cO_d>$, 
 $\II Hd@{H_d}:=\spann<h_0, \h_I(\varpi) \h_{I'}(-\varpi)\mid I,I'\in\cO_d>$ and $$\II H@ H:=\spann<\wt H_d\mid d\in \DD>\cap H_0.$$
\end{notation}

\begin{lem} \label{lem3_9}
Let $\II Deven@{\DD_\even}:=  \DD \cap 2\ZZ$ and
$\II Dodd@{\DD_\odd}:=  \DD\setminus \DD_\even$. If $\II Heps@{ H_{\epsilon}}:=\spann<\wt H_d\mid d\in \DD_\eps>\cap H_0$ for $\eps\in \{ \odd,\even\}$, then $H=H_\even . H_\odd$. 
\end{lem}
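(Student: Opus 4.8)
The plan is to work throughout inside the subgroup of $4$-torsion points of $\bT=\ov\bT$, an abelian group of exponent dividing $4$ containing every element occurring here. The only input from the Steinberg relations that is needed is that $\h_{e_i}(-1)=\h_{\al_1}(-1)\h_{\al_2}(-1)$ for all $i\in\ul$: expanding the coroot of the short root $e_i$ of $\obG$ in the basis of simple coroots of $\bG$, all coefficients except those of $\al_1^\vee$ and $\al_2^\vee$ are even, while the latter two are odd, so $\h_{e_i}(-1)$ does not depend on $i$ and hence equals $h_0$. This yields two facts used below: $\h_I(\varpi)^2=\prod_{i\in I}\h_{e_i}(\varpi)^2=\prod_{i\in I}\h_{e_i}(-1)=h_0^{|I|}$ for every $I\subseteq\ul$, with $h_0^2=1$; and, since $\h_{e_j}(\varpi)=\h_{e_j}(-\varpi)h_0$, the identity $\h_{e_i}(\varpi)\h_{e_j}(\varpi)=\bigl(\h_{e_i}(\varpi)\h_{e_j}(-\varpi)\bigr)h_0\in H_0$ for all $i,j\in\ul$.

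First I would treat the even part. For $d\in\DD_\even$ and $I\in\cO_d$ one has $|I|=d$ even, so $\h_I(\varpi)^2=1$; together with $h_0^2=1$ this shows that $\wt H_d$ is elementary abelian of exponent $2$. Grouping the $d$ factors of $\h_I(\varpi)$ into $d/2$ pairs and applying the last identity of the previous paragraph gives $\h_I(\varpi)\in H_0$. Hence $\spann<\wt H_d\mid d\in\DD_\even>\subseteq H_0$, and therefore $H_\even=\spann<\wt H_d\mid d\in\DD_\even>$.

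For the decomposition, note that $\spann<\wt H_d\mid d\in\DD>=\spann<\wt H_d\mid d\in\DD_\even>\cdot\spann<\wt H_d\mid d\in\DD_\odd>$ since $\bT$ is abelian. If $x\in H$, write $x=yz$ with $y\in H_\even$ and $z\in\spann<\wt H_d\mid d\in\DD_\odd>$; from $y\in H_0$ and $x\in H_0$ we get $z=y^{-1}x\in H_0$, hence $z\in H_\odd$, so $x\in H_\even H_\odd$; the reverse inclusion is clear, so $H=H_\even H_\odd$. To pin down the overlap, I would pass to the cocharacter lattice, identifying the $4$-torsion of $\bT$ with $\Lambda/4\Lambda$ for $\Lambda=\{(a_i)\in\ZZ^l:\sum_i a_i\in 2\ZZ\}$ (using the embedding into $\obG$), so that $h_0\leftrightarrow 4e_1$ and $\h_I(\varpi)\leftrightarrow 2\sum_{i\in I}e_i$. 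The orbits occurring over $d\in\DD$ are exactly the $W_{\Phi'}$-orbits on $\ul$, hence partition $\ul$, so these vectors have pairwise disjoint supports; a coordinatewise comparison modulo $4\Lambda$ then forces an element of $H_\even$ that lies also in $\spann<\wt H_d\mid d\in\DD_\odd>$ to have even coefficient on every $2\sum_{i\in I}e_i$ with $I$ from $\DD_\even$, and since $\sum_{i\in I}e_i\in\Lambda$ for such $I$ the corresponding part vanishes modulo $4\Lambda$, leaving a power of $h_0$. Thus $H_\even\cap H_\odd=\spann<h_0>$, which yields the asserted decomposition of $H$ as the product of the (commuting) subgroups $H_\even$ and $H_\odd$.

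The step I expect to be most delicate is this last coordinatewise computation, because the orbits feeding $\spann<\wt H_d\mid d\in\DD_\odd>$ need not all have odd size: when $\type(\Phi_{-1})=\tA_1\times\tA_1$ the associated orbit has size $2$, and when $\type(\Phi_{-1})=\tD_m$ it has size $m$, which may be even, although $-1\in\DD_\odd$ in every case. One has to check that the argument only invokes $\sum_{i\in I}e_i\in\Lambda$ for $I$ arising from $\DD_\even$ (so that it is insensitive to this) and to keep track of which orbit contains the index $1$; granting that, the remaining steps are routine manipulation in the torus.
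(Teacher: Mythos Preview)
Your approach is correct and essentially the same as the paper's: both work inside the torus and pivot on the observation that $\h_I(\varpi)\in H_0$ if and only if $|I|$ is even. The paper parametrizes torus elements as $\prod_i\h_{e_i}(t_i)$ and characterizes $H_0$ by the condition $t_i\in\langle\varpi\rangle$ with $\prod_i t_i^2=1$, whereas you pass to the cocharacter lattice $\Lambda/4\Lambda$; these are equivalent descriptions.

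Two points where your version is actually sharper than the paper's. First, your computation $H_\even\cap H_\odd=\langle h_0\rangle$ is correct; when both $\DD_\even$ and $\DD_\odd$ are nonempty this intersection is genuinely nontrivial, so the ``$\times$'' in the statement should be read as a product of commuting subgroups amalgamated over $\langle h_0\rangle$, which is all that is used downstream (see the proof of Proposition~\ref{cor4_10}). Second, your caution about $d=-1$ with $|J_{-1}|$ even is well-placed: the paper's line ``$\wt H_d\not\leq H_0$ for every $d\in\DD_\odd$'' fails exactly in that case, though this does not affect the decomposition $H=H_\even H_\odd$, and your argument correctly avoids relying on it.
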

\begin{proof} 
An element $t\in\bT$ can be written as $\prod_{i=1}^l\h_{e_i}(t_i)$ ($t_i\in\FF^\times$ ). We have $t\in H_0$ if $t_i\in\spann<\varpi>$ and $\prod_{i=1}^l t_i^2=1$. 
In particular $\h_I(\varpi)\in H_0$ if and only if $|I|$ even. This implies $H_d\leq H_0$ whenever $2\mid d$. On the other hand $\wt H_d\not \leq H_0$ for every $d\in \DD_\odd$. 
\end{proof}

With this notation $\Z(\bG)=\spann<h_0, \h_{\ul} (\varpi)>$, see \cite[Table 1.12.6]{GLS3}. 
\begin{lem}\label{lem34}
 $H\leq \Z(\bL)$.
\end{lem}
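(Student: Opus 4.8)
The goal is to show $H\leq\Z(\bL)$, where $H=\spann<\wt H_d\mid d\in\DD>\cap H_0$ and $\bL=\bT\Spann<\bX_\al\mid\al\in\Phi'>$. Since $H\leq\bT$ by construction, it centralizes $\bT$, so it suffices to check that every element $h\in H$ centralizes each root subgroup $\bX_\al$ with $\al\in\Phi'$, i.e. that $h$ acts trivially by conjugation on $\bX_\al$. The plan is to use the Chevalley relation $t\,\xx_\al(u)\,t^{-1}=\xx_\al(\al(t)u)$ for $t\in\bT$, so the condition $h\in\cent{\bL}{}$ reduces to $\al(h)=1$ for all $\al\in\Phi'=\bigsqcup_{d\in\DD'}\Phi_d$.

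\medskip
\noindent\textbf{Main steps.} First I would recall the torus pairing: writing $h=\prod_{i=1}^l\h_{e_i}(t_i)$, the value $\al(h)$ on a root $\al=\pm e_j\pm e_k$ (or $\pm e_j$ inside $\obG$) is a monomial in the $t_i$, concretely $(\pm e_j\pm e_k)(h)=t_j^{\pm1}t_k^{\pm1}$. So I need: for every component $\Phi_d$ ($d\in\DD'$) and every root $\al$ of $\Phi_d$, $\al(h)=1$. Second, I would examine the shape of a typical generator of $H$. The generators of $\wt H_d$ are $h_0=\h_{e_1}(-1)$ and $\h_I(\varpi)=\prod_{i\in I}\h_{e_i}(\varpi)$ for $I\in\cO_d$; intersecting with $H_0$ (using \Cref{lem3_9} and its proof) forces either products $\h_I(\varpi)\h_{I'}(-\varpi)$ with $I,I'\in\cO_d$ and $|I|,|I'|$ adjusted so the total exponent vector has square $1$, or the generator $h_0$. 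The key structural fact is that each $\cO_d$-orbit $I$ is a $W_{\Phi_d}$-orbit on $\ul$ disjoint from the singletons, and the roots of the corresponding component $\bGI$ live in $\Spann<e_k\mid k\in I>$ and are permuted transitively (up to sign) by $W_{\Phi_I}$. Hence on $e_k$, $k\in I$, the factor $\h_I(\varpi)$ contributes the constant value $\varpi$ at every coordinate of the orbit — so for a root $\al$ of type $\tA_{d-1}$, which has the form $e_j-e_k$ with $j,k$ in the same orbit $I$, we get $\al(\h_I(\varpi))=\varpi\cdot\varpi^{-1}=1$. Similarly $\al(h_0)=1$ since $h_0$ only involves $e_1$ and $e_1-e_k$, $e_1+e_k$ are not roots of $\Phi'$ under either of the normalizations \ref{cases35i}/\ref{cases35ii} unless $k$ lies in the same component, in which case the contribution again cancels or is squared to $1$; and the $\tD$-component $\Phi_{-1}$ needs a separate but identical check (roots $\pm e_j\pm e_k$ with $j,k$ in the same orbit yield the value $1$ because all coordinates in that orbit carry the same $\varpi$-power). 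Third, I would assemble these coordinate-wise cancellations: any $h\in H$ is, on each $W_{\Phi'}$-orbit $o$ of $\ul$, constant in the sense that $\h_{e_i}(t_i)$ has $t_i$ independent of $i\in o$ (this is exactly why we intersect the $\wt H_d$ with $H_0$ and take products over orbits), so $\al(h)=1$ for every $\al\in\Phi'$, giving $h\in\cent{\bL}{}\cap\bT\subseteq\Z(\bL)$.

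\medskip
\noindent\textbf{Main obstacle.} The routine part is the Chevalley-relation bookkeeping; the one point requiring care is verifying that \emph{every} element of $H$, not just the listed generators, is orbit-constant on each $\cO_d$ — i.e. that the condition ``$t_i$ depends only on the $W_{\Phi'}$-orbit of $i$'' is preserved under products and is forced by the definition $H=\spann<\wt H_d>\cap H_0$. Concretely one must check that mixing generators from different $\wt H_d$'s (and from $h_0$) cannot produce an element that is non-constant along some orbit of a single $\Phi_d$; this follows because distinct $d$'s index disjoint unions $J_d$ of orbits and $h_0$ only touches $e_1$, whose singleton-or-orbit is handled by $\cO_1$ (type $\tA_0$, no roots) or absorbed into the relevant $\Phi_d$. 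I expect this disjointness/constancy verification, together with the separate handling of the type-$\tD$ block $\Phi_{-1}$ in case \ref{cases35ii}, to be the only place where one must be genuinely careful rather than purely mechanical.
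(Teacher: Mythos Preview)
Your approach is correct and follows essentially the same route as the paper: use the Chevalley relation $t\,\xx_\al(u)\,t^{-1}=\xx_\al(\al(t)u)$ and verify $\al(h)=1$ for $\al\in\Phi'$. The paper's entire proof is one sentence: ``$[\h_I(\varpi),\bG_I]=1$ by the Chevalley relations and this implies the statement by the definition of $H$.''

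You are overcomplicating the argument in your ``main obstacle'' paragraph. Since $\Z(\bL)$ is a subgroup, it suffices to check that each \emph{generator} of $\spann<\wt H_d\mid d\in\DD>$ lies in $\Z(\bL)$; there is no need to track orbit-constancy through arbitrary products. The generators are $h_0$ and the $\h_I(\varpi)$ for $I\in\cO$. Now $h_0\in\Z(\bG)\leq\Z(\bL)$ (recall $\Z(\bG)=\spann<h_0,\h_{\ul}(\varpi)>$, stated just before the lemma). For $\h_I(\varpi)$: if $\al\in\Phi_{I'}$ with $I'\neq I$ then $\al$ is supported on indices disjoint from $I$, so $\al(\h_I(\varpi))=1$ trivially; if $\al\in\Phi_I$ then $\al=\pm e_j\pm e_k$ with $j,k\in I$, and since $\h_I(\varpi)$ carries the \emph{same} value $\varpi$ at every coordinate of $I$, the pairing gives $\varpi^{\pm2}\cdot\varpi^{\pm2}$ or $\varpi\cdot\varpi^{-1}$, in each case $1$ (using $\varpi^4=1$). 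That is the whole computation --- your orbit-constancy idea is exactly the mechanism, but applied at the level of generators it dissolves the ``obstacle'' you flagged.
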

\begin{proof}We see that $[\h_I(\varpi), \bG_I]=1$ by the Chevalley relations and this implies the statement by the definition of $H$.\end{proof}

The groups $\II K0@{\Kcirc}:=\spann<G_I\mid I \in \cO>$ and $\III K:=\Kcirc H$ then satisfy Assumption \ref{cor_tool}(i.1) for $H$. 

To understand later the action of $\NNN_\GF(\bL)$ on $\Irr(K)$ we analyse the structure of $L$ by introducing several subgroups. 

\begin{num}[{\color{ocre}{\bf Structure of $\bL$}}] \label{structL}
We note that the Levi subgroup $\bL$ satisfies $\bL=\bT \spann<\bG_I\mid I\in \cO>$. 
Let $\II TI@{\bT_I}:=\Spann<\h_{e_i}(t)|i\in I, t \in \FF^\times >$ for $I\in \cO$.
For $I,I'\in\cO$ with $I\neq I'$ we see that no non-trivial linear combination of a root in $\Phi_I$ and one in $\Phi_{I'}$ is a root in $\Phi$ as well. Therefore  $[\bG_I,\bG_{I'}]=1$
according to Chevalley's commutator formula. By the Steinberg relations we see $[\bG_I,\bT_{I'}]=1$. The group $\bG_I$ is either trivial or a  simply-connected  simple group unless $I= \cO_{-1}$ and $\type(\Phi_{-1})= \tA_1\times \tA_1$. Accordingly $[\bL,\bL]=\spann<\bG_I\mid I\in\cO>$.

We observe that $\bG_I\cap \bT\leq \bT_I$ and computations with the coroot lattices prove that $\bT$ is the central product of the groups $\bT_I$ ($I\in \cO$) over $\spannh$. This implies that $\bL$ is the central product of the groups $\bL_I$ ($I\in \cO$) where $\II LIbold@{\bL_I}:=\bT_I \bG_I$.

Analogously we see that $\bL$ is the central product of the groups $\bL_d$ ($d\in\DD$) over the group $\spann<h_0>$, where $\II Lboldd@{\bL_d}:=\spann<\bL_I\mid I\in \cO_d>$.
\end{num}
The structure of $\bL$ studied above implies the following results on $L$. Recall $\Kcirc:=\spann<G_I\mid I \in \cO>$ from \Cref{lem34}.

\begin{lem} \label{lem36}
Recall $\II Lcal@\calL:\bG\ra\bG$ the Lang map defined by $g\mapsto g\inv F(g)$, let
$\II Lhat@{\pwh L}:=\bL\cap \calL^{-1}(\spann<h_0>)$ and
$\II Ltilde @{\pwt L}:=\bL\cap \calL^{-1}(\Z(\bG))$. 
\begin{thmlist}  
\item If $\II LI@{L_I}:=\bL_I^F$ for every $I\in \cO$ and 
$\II Lcirc @{L_0}:=\Spann<L_I| I \in \cO>$, then
$L_0\lhd L$.
\item Let $I\in \cO$ and $\II LIhat@{\pwh L_I}:=\bL_I\cap \wh L$. Then $\wh L_I=\spann<L_I, t_I>$ for every $t_I\in \bT_I\cap \calL^{-1}(h_0)$. We assume chosen such a $t_I$ for each $I\in\cO$.

The group $\II Lhat@{\pwh L}$ is the central product of $\wh L_I$ ($I\in \cO$) and for $d\in \DD$, $\II Lhatd@{\pwh L_d}:=\wh L\cap \bL_d$ is the central product of $\wh L_I$ ($I\in \cO_d$).
\item $L= \spann<\bL_I^F, t_I t_{I'}\mid I, I'\in \cO>$.
\item $\Kcirc$ is the direct product of all $G_I$, $\Kcirc\lhd \wt L$ and $\wt L/\Kcirc$ is abelian.
\item If $\II zeta@{\zeta}\in \FF^\times $ with $\zeta^{(q-1)_2}=\varpi$ and $t_{Q,2}:=\h_Q(\zeta)$ for every $Q\subseteq \ul$, then $\wt L=\wh L \spann< t_{\ul,2}>$.
\end{thmlist}
\end{lem}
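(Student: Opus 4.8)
I would deduce all five parts of \ref{lem36} from the central-product decompositions recalled in \ref{structL} together with two elementary facts about the Lang map. First, if $\calM\le\bL$ is a subgroup on which $\calL$ takes central values, then $\calL\colon\calM\to\Z(\bG)$ is a homomorphism (for $a,b\in\calM$, $\calL(ab)=b^{-1}\calL(a)F(b)=\calL(a)\calL(b)$ since $\calL(a)$ is central), with kernel $\calM\cap\bG^F$; and by Lang--Steinberg $\calL$ is surjective on any connected subgroup, so $\calL(\bT_I)=\bT_I$ and $\calL(\bL)=\bL$. Second, a short cocharacter-lattice computation in $\bG\le\obG$ (which contains all $e_i-e_j$) gives $\h_{e_i}(-1)=h_0$ for every $i\in\ul$, so $h_0\in\bT_I\le\bL_I$ for each $I\in\cO$, and $h_0\in\Z(\bG)^F$ since $q$ is odd.

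Part (a) is then immediate: each $\bL_I$ is an $F$-stable normal subgroup of $\bL$, hence $L_I=\bL_I^F\lhd L$ and $L_0=\langle L_I\mid I\in\cO\rangle\lhd L$. For the first assertion of part (b), write $\wh L_I=\bL_I\cap\calL^{-1}(\langle h_0\rangle)$; by the facts above $\calL|_{\wh L_I}$ is a homomorphism onto $\langle h_0\rangle$ (onto because $h_0\in\bT_I=\calL(\bT_I)$) with kernel $\bL_I\cap\bG^F=L_I$, so $|\wh L_I:L_I|=2$ and $\wh L_I=\langle L_I,t_I\rangle$ for any $t_I\in\bT_I\cap\calL^{-1}(h_0)$ — a nonempty set, again by Lang--Steinberg.

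The core is part (c). Let $\pi\colon\prod_{I\in\cO}\bL_I\to\bL$ be the multiplication map; since the central product is over $\langle h_0\rangle$, $\ker\pi$ consists of the tuples $(z_I)_I$ with each $z_I\in\langle h_0\rangle$ and $\prod_I z_I=1$, and $\pi$ is $F$-equivariant. Given $x\in L$, choose a preimage $(x_I)_I$; then $(F(x_I)x_I^{-1})_I\in\ker\pi$, so $F(x_I)x_I^{-1}=z_I\in\langle h_0\rangle$ with an even number of the $z_I$ nontrivial, and $\calL(x_I)=z_I$ as $z_I$ is central; hence $x_I\in\wh L_I=\langle L_I,t_I\rangle$, so $x_I\in L_I$ if $z_I=1$ and $x_I\in L_It_I$ if $z_I=h_0$. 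Multiplying the $x_I$ together and pairing the evenly many factors $t_I$ into products $t_It_{I'}$ (which lie in $L$, since $\calL(t_It_{I'})=h_0^2=1$) yields $x\in\langle L_I,\,t_It_{I'}\mid I,I'\in\cO\rangle$, proving (c). Feeding this back, $\langle\wh L_I\mid I\in\cO\rangle=L_0\langle t_I\mid I\rangle$ contains $L$ by (c) and some $t_{I_0}\notin L$ (as $\calL(t_{I_0})=h_0\ne1$); since $\calL|_{\wh L}$ is a homomorphism onto $\langle h_0\rangle$ with kernel $L$, $|\wh L:L|=2$, forcing $\langle\wh L_I\mid I\rangle=\wh L$. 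Pairwise commutativity and $\wh L_I\cap\langle\wh L_{I'}\mid I'\ne I\rangle\le\bL_I\cap\langle\bL_{I'}\mid I'\ne I\rangle=\langle h_0\rangle$ then give the central-product statement; running the same argument with $\bL_d=\langle\bL_I\mid I\in\cO_d\rangle$ in place of $\bL$ handles $\wh L_d$.

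For part (d): $[\bL,\bL]=\langle\bG_I\mid I\in\cO\rangle$ is, $\bG$ being simply connected, the direct product of the simply connected groups $\bG_I$, so $\Kcirc=\langle G_I\rangle=\prod_I\bG_I^F=[\bL,\bL]^F$, a direct product; it is normalized by $\wt L\le\bL$ since $[\bL,\bL]$ is $F$-stable and characteristic and $F(xyx^{-1})=xyx^{-1}$ for $x\in\wt L$, $y\in[\bL,\bL]^F$ (using $F(x)=x\calL(x)$ with $\calL(x)$ central); the same computation gives $F([x,y])=[x,y]$ for $x,y\in\wt L$, so $[\wt L,\wt L]\le\bG^F\cap[\bL,\bL]=\Kcirc$ and $\wt L/\Kcirc$ is abelian. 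For part (e): $\calL|_{\wt L}$ is a homomorphism onto $\calL(\wt L)=\Z(\bG)$ with kernel $L$, restricting to the isomorphism $\wh L/L\cong\langle h_0\rangle$; writing $q-1=(q-1)_2\,m$ with $m$ odd, $t_{\ul,2}\in\bT\le\bL$ satisfies $\calL(t_{\ul,2})=\h_{\ul}(\zeta^{q-1})=\h_{\ul}(\varpi^m)=\h_{\ul}(\varpi)^{\pm1}$ (as $\varpi$ has order $4$), so $\langle h_0,\calL(t_{\ul,2})\rangle=\langle h_0,\h_{\ul}(\varpi)\rangle=\Z(\bG)$; hence the images of $\wh L$ and $t_{\ul,2}$ generate $\wt L/L$, and since $\wh L\lhd\wt L$ (conjugation by $\wt L$ preserves $\calL$-values) we conclude $\wt L=\wh L\langle t_{\ul,2}\rangle$. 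I expect the main obstacle to be the bookkeeping in (c): tracking precisely how the central-product decomposition of $\bL$ interacts with $F$-fixed points, so that the parity constraint on the $z_I$ coming from $\ker\pi$ is exactly what the generators $t_It_{I'}$ absorb, together with pinning down the two supporting lattice identities ($\h_{e_i}(-1)=h_0$ and the value of $\calL(t_{\ul,2})$).
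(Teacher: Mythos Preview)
Your proof is correct and follows essentially the same strategy as the paper's: exploit the central-product decomposition $\bL=\bigotimes_I\bL_I$ over $\langle h_0\rangle$ from \ref{structL} and track how the Lang map $\calL$ factors through it. Your treatment is considerably more explicit than the paper's sketch---in particular you spell out the parity constraint on the $z_I$ in part~(c) and the $\varpi^{\pm1}$ ambiguity in part~(e)---and for part~(d) you give a cleaner direct argument (showing $F([x,y])=[x,y]$ for $x,y\in\wt L$, hence $[\wt L,\wt L]\le[\bL,\bL]^F=\Kcirc$) where the paper only gestures at $\bL/\langle\bG_I\rangle$ being a torus quotient, which by itself does not immediately yield $\wt L/\Kcirc$ abelian without the additional observation that $\wt L\cap[\bL,\bL]=\Kcirc$.
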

The arguments of \Cref{rem_whG} show that $\wt L'$ from \ref{not_L_N_E1} and $\wt L$ induce the same automorphisms on $\bG$. 
\begin{proof}
Recall that $\bL$ is the central product of the groups $\bL_I$, where each $\bL_I$ is $F$-stable. Every $x\in \bL$ can be written as $\prod_ {I \in\cO} x_I$ with $x_I \in\bL_I$. Clearly $x\in L$ if and only if $\calL(x)=1$. We see that $\calL(x)=\prod_{I \in \cO}\calL(x_I)$ by the structure of $\calL$ and hence $x\in L$ implies $\calL(x_I)\in \spann<h_0>$. The group $L_0$ is the group of elements $\prod_ {I \in\cO} x_I$ with $x_I\in L_I:=\bL_I^F$. The group $\wh L:= \calL(\spann<h_0> ) \cap \bL$ is the group of elements $\prod x_I$ with $x_I\in \bL_I$ and $\calL(x_I )\in \spannh$. Hence $\wh L $ is the central product of $\wh L_I$ ($I\in \cO$) over $\spannh$. Clearly $ L_0 \lhd \wh L$, $\wh L_I=\spann<L_I,t_I>$ and $L=L_0 \spann<t_I t_{I'}\mid I, I' \in \cO>$. This ensures the parts (a), (b) and (c). 

Part (d) follows from the fact that $\bL/\spann<\bG_I\mid I \in \cO>$ is isomorphic to a quotient of $\bT$ and hence abelian. 
For part (e) we observe $\calL(\h_Q(\zeta))= \h_Q(\varpi)$ for every $Q\subseteq \ul$ and recall that $\Z(\bG)=\spann<h_0,\h_{\ul}(\varpi)>$. 
\end{proof}

\subsection{The structure of $N/L$}\label{ssec2B }
We analyse $N:=\NNN_\GF(\bL)$ and $\ov N:=\norm{\oGF}\bL$. In the following we identify $W_{\ov \Phi}$ with certain permutation groups $\Sym_{\pm \underline l}$ via the action on $\{\pm e_i\mid i \in \ul\}$ and $W_{\Phi}$ with $\Sym_{\pm \ul}^{\tD}$. We generalize the notation of those permutation groups in order to describe  $N/L$.

\begin{notation}[Young-like subgroups, $\Sym_M$ and $\Young_J$] 
\label{not_Sym}
Let $M$ be a set. 
Given a map $\| . \|: M\ra \ZZ$ with $m\mapsto \| m\|$ we define $\Sym_M$ to be the group of bijections $\pi: M\ra M $ with $\|\pi(m)\|=\|m\|$ for every $m\in M$ and we write $\II SympmM@{\Sym_{\pm M}}$ for the bijections $\pi: \{ \pm 1\}\times M \ra \{ \pm 1\}\times M$ satisfying $\pi(-1,m)= (-\epsilon, m')$ and $\|m\|=\|m'\|$, whenever $m, m'\in M$ with $\pi(1,m)=(\eps,m')$.  When no map $\|.\|$ is specified we assume it is a constant map.

In order to denote the elements of $\Sym_M$ and $\Sym_{\pm M}$ we fix a bijection $f:M \ra \{1,\ldots, |M|\}$. This induces a canonical embedding $\iota: \Sym_M \ra \Sym_{\underline{{|M|}}}$ and an embedding $\iota_{\pm}: \Sym_{\pm M}\ra \Sym_{\pm \underline{|M|}}$. 
For $r$ pairwise distinct elements $m_1, m_2,\ldots, m_r\in M$ we write $(m_1,m_2,\ldots, m_r)\in \Sym_M$ for the element $\iota^{-1}(f(m_1), f(m_2),\ldots, f(m_r))$. Via $\iota_{\pm}$ we obtain also a cycle notation for elements of $\Sym_{\pm M}$.

If  $J$ is a partition of $M$ we write $J\vdash M$ for short. For $J\vdash M$ we set 
\begin{align*}
\II YoungJ@{\Young_J}&:=\{ \pi \in\Sym_M\mid \pi(J')=J' \text{ for every }J'\in J \}, \und \\ 
\II YoungpmJ@{\Young_{\pm J}}&:=\{ \pi \in\Sym_{\pm M}\mid \pi(\{\pm 1\}\times J')= \{\pm 1\}\times J' \text{ for every }J'\in J \}.
\end{align*}
Let $M_{odd}:=\{ m\in M \mid \| m\| \text{ odd }\}$ and
\begin{align*}
\II SympmMD@{\Sym_{\pm M}^\tD} = \left \{ \pi\in \Sym_{\pm M} \mid 
 |(\{1\}\times M_{\odd})\cap \pi^{-1}(\{-1\}\times M_\odd)| \text{ is even } \right \}. \end{align*}
\end{notation}
We use the above notation for permutation groups on the set $\cO$ from \ref{defcO}. 
\begin{defi} 
Let $\II norm@{\protect \Abstand . \protect \Abstand}: \cO \lra \ZZ$ be given by $\|I\|=d$ for every $I\in \cO_d$ and let $\Sym_{\pm \cO}$, $\Sym^\tD_{\pm \cO}$ and $\Sym_{ \cO}$ be the permutation groups on $\cO$ defined as in \ref{not_Sym} with respect to $\|.\|$. 
\end{defi} 

Recall that we have chosen a maximal torus $\bT$ of $\bG$ and that $\bL$ is a standard Levi subgroup of $\bG$ with $\bT\subseteq \bL$, see \ref{not_32} and \ref{notationL}.  
For $\II N0ov@{\ov\bN_0}:=\NNN_\obG(\bT)$ we identify the Weyl group $\ov \bN_0/\bT$ with $\Sym_{\pm \ul}$, the epimorphism $\II rhoT0@{\rho_{\bT}}: \ov\bN_0\lra \Sym_{\pm \ul}$ is given by 
\[ \rho_{\bT}(\nn_{e_i}(-1))= (i,-i) 
\und 
\rho_{\bT}(\nn_{e_i-e_j}(-1))=(i,j) (-i,-j).\] 
With this notation we can compute the relative Weyl group of $L$ in $G$. Recall $N:=\NNN_\bG(\bL)^F$.
\begin{prop}\label{prop64}
Let $\III{N_0}:=\NNN_{\GF}(\bT)$, $\II N0overline @{\oN_0}:=\NNN_{\oGF}(\bT)$, and $\II N overline@\oN:=\NNN_{\oGF}(\bL)$. Then
\[\rho_{\bT}(\oN\cap N_0)/\rho_{\bT}(L\cap N_0) \cong \Sym_{\pm \cO} \und \rho_{\bT}(N\cap N_0)/\rho_{\bT}(L\cap N_0) \cong \Sym_{\pm \cO}^\tD. \] 
\end{prop}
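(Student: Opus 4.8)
The plan is to compute $\NNN_{\oGF}(\bL)$ and $\NNN_{\GF}(\bL)$ inside $\NNN(\bT)$ by tracking which permutations of $\{\pm e_i\}$ preserve the root subsystem $\Phi'$, then quotient by the contribution of $L\cap N_0$ and of $\bT$. First I would reduce to the case of normalizing elements lying in $N_0$, which is legitimate because $L\cap N_0$ and the relevant Weyl groups are defined via $\rho_{\bT}$; the point is that $\oN\cap N_0/ (L\cap N_0)$ is the subgroup of $W_{\ov\Phi}=\Sym_{\pm l}$ stabilizing $\Phi'$ modulo $W_{\Phi'}$ (and similarly with $N$ and $W_\Phi=\Sym_{\pm\ul}^\tD$). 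Here $L\cap N_0$ maps onto $W_{\Phi'}$ together with the torus part, so the quotient $\rho_\bT(\oN\cap N_0)/\rho_\bT(L\cap N_0)$ is exactly $\Stab_{\Sym_{\pm l}}(\Phi')/W_{\Phi'}$.

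Next I would identify this stabilizer-modulo-$W_{\Phi'}$ with a permutation group of the orbit set $\cO$. The group $W_{\Phi'}$ acts on $\ul$ with orbit set $\cO$; an element $w\in\Sym_{\pm l}$ normalizes $\Phi'$ iff it permutes the components of $\Phi'$ among components of the same type, which — reading off the description of the $\Phi_d$'s and of $\cO_d$ in \ref{defcO} — translates precisely into $w$ inducing a permutation of $\cO$ preserving the norm $\|\cdot\|$ (i.e. sending $\cO_d$ to $\cO_d$), possibly with sign changes on the blocks. So $\rho_\bT(\oN\cap N_0)$ modulo $W_{\Phi'}$-and-torus surjects onto $\Sym_{\pm\cO}$; the surjectivity uses that each such block permutation of $\cO$, and each sign change on a block $I\in\cO$ (realized by $\rho_\bT$ of a suitable product of $\n_{e_i}(-1)$, or $\n_{e_i-e_j}(-1)$ to permute blocks of equal size), does lift to an element of $\ov N_0$, while injectivity is just the observation that a block permutation acting trivially on $\cO$ and preserving each $\Phi_I$ lies in $W_{\Phi'}$ up to the torus. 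For the type $\tD$ statement one does the same computation inside $W_\Phi=\Sym_{\pm\ul}^\tD$, the index-two subgroup of $\Sym_{\pm l}$ consisting of even sign-changes: intersecting the stabilizer of $\Phi'$ with this subgroup and quotienting gives exactly the elements of $\Sym_{\pm\cO}$ whose total sign change is even \emph{after accounting for blocks of odd size} — a sign change on a block $I$ of size $d$ flips $d$ coordinates, so it changes $\Sym_{\pm\ul}$-parity iff $d$ is odd — which is precisely the defining condition of $\Sym_{\pm\cO}^\tD$ in \ref{not_Sym}.

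The main obstacle, and where I would spend the most care, is the bookkeeping in the case $\Delta'$ satisfies \ref{cases35ii}, i.e. when there is a type-$\tD$ component $\Phi_{-1}$ (with $\type(\Phi_{-1})\in\{\tA_3,\tA_1\times\tA_1,\tD_m\}$). One must check that normalizing elements cannot mix $\Phi_{-1}$ with the type-$\tA$ components $\Phi_d$ even in the small-rank coincidences (e.g. $\tA_3=\tD_3$, or $\tA_1\times\tA_1$ looking like two $\tA_1$'s), and that the sign-change on the block $\cO_{-1}$ behaves correctly with respect to the $\tD$-parity condition; the convention $\|I\|$ for $I\in\cO_d$ and the treatment of $\cO_{-1}$ (which is why $\DD$ carries a $-1$) is exactly engineered to make this work, but it needs to be verified that $\rho_\bT$ of the normalizer realizes all of $\Sym_{\pm\cO}$ and no more in this mixed situation. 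Once the surjectivity/injectivity at the level of $\cO$ is established in both cases \ref{cases35i} and \ref{cases35ii}, the two displayed isomorphisms follow by comparing with the parity constraint distinguishing $W_\Phi$ from $W_{\ov\Phi}$.
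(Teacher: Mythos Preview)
Your proposal is correct and follows essentially the same approach as the paper: both reduce to computing $\Stab_{\ov W_0}(\Phi')/W_{\Phi'}$ via \cite[9.2]{Ca85}, decompose the stabilizer as a product over the $\Phi_d$'s to obtain $\Sym_{\pm\cO}$, and then read off the type-$\tD$ statement from the parity constraint. The paper handles your flagged obstacle about $\Phi_{-1}$ in one line --- since $\Phi_{-1}$ is by definition the union of components meeting $\{e_1\pm e_2\}$, any element of $\Stab_{\ov W_0}(\Delta')$ must preserve $\Phi_{-1}\cap\Delta'$ --- so the small-rank coincidences you worry about do not cause mixing.
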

\begin{proof}According to the considerations in \cite[9.2]{Ca85}, $\rho_{\bT}(N\cap N_0)/\rho_{\bT}(L\cap N_0)\cong \Norm_{\ov W_0} (W_{\Phi'})/W_{\Phi'}$, where $\II Woverline @{\overline W_0}:=\ov N_0/\ov T_0$. We then make routine considerations inside $\overline W_0$, see for instance \cite{H80}. Note that $\Norm_{\ov W_0} (W_{\Phi'})=\Stab_{\ov W_0}(\Phi')=W_{\Phi'}\Stab_{\ov W_0}(\Delta')$.

From the definition of $\Phi_{-1}$ one can check that $\Stab_{\ov W_0}(\Delta')$ stabilizes $\Phi_{-1}\cap \Delta'$. This implies that $\Stab_{\ov W_0}(\Delta')$ stabilizes $\Phi_d\cap \Delta'$ for every $d\in\DD$, and 
\[\Stab_{W_{\o \Phi_d}}(\Phi_d\cap \Delta')= \Sym_{\pm \cO_d}.\]

\noindent We have $\Norm_{\ov W_0} (W_{\Phi'})=W_{\ov\Phi_1}\times \prod_{d\in\DD}\Stab_{W_{\o\Phi_{d}}}(\Phi_{d})=
\Stab_{W_{\o\Phi_{-1}}}(\Phi_{-1})\times 
	W_{\o\Phi_1}\times \prod_{\stackrel{d\in \DD}{d>1}}
	\Stab_{W_{\o\Phi_d}}(\Phi_d)$ with 
	\begin{align*}
	\Stab_{W_{\ov \Phi_{-1}}}(\Phi_{-1})= W_{\Phi_{-1}} \spann< (1,-1)>=W_{\o\Phi_{{-1}}},
	\end{align*}
	and 
	$$\Stab_{W_{\o \Phi_d}}(\Phi_d)= W_{ \Phi_d} \rtimes \Sym_{\pm \cO_d} $$
	for $d\in \DD$ with $d>1$. 
	Hence $\rho_{\bT}( N \cap N_0)/\rho_{\bT}(L\cap N_0)\cong 
\Sym_{\pm \cO}^\tD$. 
\end{proof}
By the proof we see that $\Sym_{\pm \cO}$ corresponds to $\Stab_{\ov W_0}(\Phi'\cap \Delta)$ and hence there exists some embedding of $\Sym_{\pm \cO}$ into $\Sym_{\pm \ul}$. We fix some more notation to describe explicitly the permutations in $\Sym_{\pm \ul}$ corresponding to $\Stab_{\ov W_0}(\Delta')$. 
\begin{notation}\label{not2_17}
For $d\in \DD\setminus\{-1\}$ we fix orderings on $\cO_d$ and the sets $I\in\cO_d$: We write $\II Idj@{I_{d,j}}$ ($ j \in \underline{ a_d}$) for the sets in $\cO_d $ and $\II Idjk@{I_{d,j}(k)}\in I_{d,j}$ ($ j \in \underline{ a_d}$, $k \in \underline d$) for the elements of $I_{d,j}$.

For each $ k\in \underline d$ let 
$f_k^{(d)}: \ul\lra \ul$ be a bijection such that $f_k^{(d)}(j)=I_{d,j}(k)$ 
for every $j\in \underline {a_d}$ and 
$f_k^{(d)}$ has the maximal number of fixed points. Then $\II fkd@{f_k^{(d)}}$ defines an element of $\Sym_{\pm \ul}$ without sign changes, that we also denote by $f_k^{(d)}$ by abuse of notation. 
\end{notation}
In the following we use that for every $Q\subseteq \ul$, $\Sym_{\pm Q}$ can be seen naturally as a subgroup of $\Sym_{\pm \ul}$. 
\begin{lem} \label{lem3_16}
\begin{enumerate}
\item Let $d\in \DD\setminus \{-1\}$ and $\II kappadoverline@{\ov\kappa_d}:\Sym_{\pm \underline{a_d}}\ra \Sym_{\pm J_d}$ be given by $\pi \mapsto \prod_{k \in \underline d } \pi ^{f_k^{(d)}}$ the latter a product of conjugates of $\pi$ in $\Sym_{\pm\ul}$. 
Then $\ov\kappa_d$ is injective and $\Stab_{\Sym_{\pm J_d}}(\Phi_d) =W_{\Phi_d} \rtimes \ov\kappa_d(\Sym_{\pm \underline{a_d}})$
\item If $-1\in \DD$ let $\II kappa overline-1@{\ov \kappa_{-1}}: \Sym_{\pm \underline{1}}\ra \Sym_{\pm J_{-1}}$ be the morphism with 
$\ov \kappa_{-1}(\Sym_{\pm \underline{1}})= \spann<(1,-1)>$. Let $\II Woverline d@{\ov W_d}:=\ov \kappa_d(\Sym_{\pm \underline{a_d}})$ and $\II W0L@{W^\circ(L)}:= \prod_{d\in \DD} \ov W_d$. Then $\Stab_{\ov W_0}(\Phi')= W_{\Phi'} W^\circ(L)$. 
\end{enumerate}
\end{lem}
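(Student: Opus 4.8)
The plan is to prove (a) by an explicit analysis of the signed permutations of $J_d$ that stabilise $\Phi_d$, and then to derive (b) by feeding (a) into the factorisation of $\Stab_{\ov W_0}(\Phi')$ already obtained in the proof of \Cref{prop64}. For (a), I would first record the ``grid'' structure of $J_d$ coming from \ref{not2_17}: its elements are the $I_{d,j}(k)$ with $j\in\underline{a_d}$ (``columns'') and $k\in\underline d$ (``rows''), the column $I_{d,j}$ being the support of the irreducible component $\Phi_d^{(j)}:=\{e_a-e_b\mid a,b\in I_{d,j},\ a\neq b\}$ of $\Phi_d$ (of type $\tA_{d-1}$), and $\Sym_{\pm J_d}=W_{\ov\Phi_d}$. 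Then I would check that $\ov\kappa_d$ is a well-defined injective homomorphism into $\Stab_{\Sym_{\pm J_d}}(\Phi_d)$: the factors $\pi^{f_k^{(d)}}$ ($k\in\underline d$) have pairwise disjoint supports, namely the rows $\{I_{d,j}(k)\mid j\in\underline{a_d}\}$, so the product over $k$ is unambiguous and multiplicative in $\pi$; explicitly $\ov\kappa_d(\pi)$ sends $I_{d,j}(k)\mapsto\epsilon\,I_{d,j'}(k)$ for every $k$ whenever $\pi$ maps the $j$-th letter to $\epsilon$ times the $j'$-th, so it carries $\Phi_d^{(j)}$ onto $\Phi_d^{(j')}$, whence the image lies in $\Stab_{\Sym_{\pm J_d}}(\Phi_d)$; and $\ov\kappa_d(\pi)=1$ forces $\pi$ to fix every column with trivial sign, that is $\pi=1$. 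Finally, $W_{\Phi_d}=\prod_{j\in\underline{a_d}}\Sym_{I_{d,j}}$ visibly lies in $\Stab_{\Sym_{\pm J_d}}(\Phi_d)$, is normalised by $\ov\kappa_d(\Sym_{\pm a_d})$ (conjugation permutes the column factors, and acts trivially under the column sign flips), and meets $\ov\kappa_d(\Sym_{\pm a_d})$ trivially.

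The crux of (a) is the reverse inclusion. Given $\sigma\in\Stab_{\Sym_{\pm J_d}}(\Phi_d)$, the standard fact that an automorphism of a root system permutes its irreducible components shows $\sigma$ maps each $\Phi_d^{(j)}$ onto some $\Phi_d^{(j')}$; writing $\sigma(e_i)=\epsilon_i e_{\phi(i)}$ and evaluating on the roots $e_a-e_b$ with $a,b\in I_{d,j}$ forces all the signs $\epsilon_a$ ($a\in I_{d,j}$) to agree, so $\sigma$ restricts to a sign-uniform bijection $I_{d,j}\to I_{d,j'}$. Recording the column permutation $j\mapsto j'$ together with the column signs yields an element $\pi\in\Sym_{\pm a_d}$ for which $\ov\kappa_d(\pi)^{-1}\sigma$ fixes every column setwise with trivial sign, hence lies in $W_{\Phi_d}$. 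Thus $\Stab_{\Sym_{\pm J_d}}(\Phi_d)=W_{\Phi_d}\,\ov\kappa_d(\Sym_{\pm a_d})$, which by the previous paragraph is the asserted semidirect product ($W_{\Phi_d}$ being normal, since both factors normalise it). The degenerate case $d=1$ is immediate: $\Phi_1=\emptyset$, $W_{\Phi_1}=1$, and $\ov\kappa_1$ is the isomorphism $\Sym_{\pm a_1}\cong\Sym_{\pm J_1}$ induced by $f_1^{(1)}$.

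For (b), I would invoke the proof of \Cref{prop64}, which presents $\Stab_{\ov W_0}(\Phi')$ as the direct product $\prod_{d\in\DD}\Stab_{W_{\ov\Phi_d}}(\Phi_d)$: an element of $\ov W_0=\Sym_{\pm\ul}$ stabilising $\Phi'$ permutes its components and so stabilises each support $J_d$, these being separated by the rank and the number of active indices of the components they carry (for $d\geq 2$ a component of $\Phi_d$ uses $d$ indices and has rank $d-1$, while $\Phi_{-1}$ is of type $\tD_m$ ($m\geq 4$), $\tA_3$ or $\tA_1\times\tA_1$ and uses exactly as many indices as its rank). For $d\neq-1$, part (a) identifies $\Stab_{W_{\ov\Phi_d}}(\Phi_d)=\Stab_{\Sym_{\pm J_d}}(\Phi_d)$ with $W_{\Phi_d}\rtimes\ov\kappa_d(\Sym_{\pm a_d})=W_{\Phi_d}\ov W_d$. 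For $d=-1$, $\ov\Phi_{-1}$ is of type $\tB$ and $\Phi_{-1}$ a $\tD$-type parabolic subsystem, so all sign changes preserve $\Phi_{-1}$ and $\Stab_{W_{\ov\Phi_{-1}}}(\Phi_{-1})=W_{\ov\Phi_{-1}}=W_{\Phi_{-1}}\langle(1,-1)\rangle=W_{\Phi_{-1}}\ov W_{-1}$, the middle equality because $W_{\Phi_{-1}}$ has index $2$ in $W_{\ov\Phi_{-1}}$ and $(1,-1)\notin W_{\Phi_{-1}}$. Since the $W_{\Phi_d}$ and $\ov W_d$ are supported on the pairwise disjoint $J_d$, multiplying over $d\in\DD$ gives $\Stab_{\ov W_0}(\Phi')=\bigl(\prod_d W_{\Phi_d}\bigr)\bigl(\prod_d\ov W_d\bigr)=W_{\Phi'}W^\circ(L)$.

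The main obstacle I anticipate is not conceptual but a matter of careful bookkeeping in part (a): making precise the well-definedness and the ``uniform sign per column'' reduction for $\ov\kappa_d$; and, in part (b), keeping the index count available to separate $\Phi_{-1}$ from a possible $\Phi_4$ of the same abstract type $\tA_3$.
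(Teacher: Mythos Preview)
Your proposal is correct and follows essentially the same route as the paper: the partition of $J_d$ into the rows $\{I_{d,j}(k)\mid j\in\underline{a_d}\}$ gives the injectivity and the homomorphism property of $\ov\kappa_d$, its image visibly permutes the columns $I_{d,j}$ (equivalently $\cO_d$) and so lands in the stabiliser, and part~(b) is read off from the product decomposition already established in the proof of \Cref{prop64}. The paper's own proof is extremely terse (it records only the partition and that $\ov\kappa_d(\Sym_{\pm a_d})$ stabilises $\cO_d$, then declares (b) clear), relying implicitly on the equality $\Stab_{W_{\ov\Phi_d}}(\Phi_d)=W_{\Phi_d}\rtimes\Sym_{\pm\cO_d}$ from the proof of \Cref{prop64}; your explicit ``uniform sign per column'' argument for the reverse inclusion and your care in separating $\Phi_{-1}$ from a potential $\Phi_4$ of the same abstract type simply spell out what the paper takes for granted.
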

\begin{proof}
For (a) we observe that
the sets $\bigcup_{j\in \uad }I_{d,j}(k)$ ($k\in \underline d)$ form a partition of $J_d$. This implies that the groups $\Sym_{\pm \underline{a_d}}^{f_k^{(d)}}$ and 
$\Sym_{\pm \underline{a_d}}^{f_{k'}^{(d)}}$ commute and are disjoint.
We see that  $\ov\kappa_d(\Sym_{\pm \underline{a_d}})$ stabilizes $\cO_d$. This proves (a). Part (b) is clear from the definitions. 
\end{proof}
We can choose $I_{d,j}(k)$ ($d\in\DD\setminus\{\pm 1\}$, $j\in \uad$ and $k\in \underline d$) such that $e_{I_{d,j}(k+1)}-e_{I_{d,j}(k)}\in \Delta'$ for every $j\in \underline {a_d}$ and $k\in \underline{d-1}$. With this choice, $\ov\kappa_d(\Sym_{\pm \underline{a_d}})$ stabilizes $\Delta'$ and hence coincides with $\Stab_{\ov W_{\ov \Phi_d}}(\Delta')$. 

\subsection{A supplement of $L$ in $\ov N$} 
In the following we determine a supplement $\ov V\leq \ov N_0$  with $\ov N=L \ov V$ and $\rho_{\bT}(\ov V)= W^\circ(L)$ where $W^\circ(L)$ is the group from  \Cref{lem3_16}. We construct the group $\ov V$ using extended Weyl groups $\ov V'_{Weyl}$, see \ref{not38}. Extended Weyl groups are known to be supplements of $ T_0$ in $\ov N_0$.

In a first step we define for every $d\in\DD$ a subgroup $\ov V_d\leq \oN_0$ with $\rho_{\bT}(\ov V_d)=\ov\kappa_d(\Sym_{\pm a_d})$. 
We construct $\kappa_d$, a lifting of $\ov \kappa_d$ via $\rho_{\bT}$. 
This construction will later simplify some arguments by providing a tool to transfer results from \cite{MS16}. 

By definition the group $\ov \bN_0$ is an extension of $\ov W_0$ by $\bT$. It has proven to be more convenient to work with an extension of $\ov W_0$ by an elementary abelian $2$-group, the extended Weyl group (introduced first by Tits), here denoted by $\ov V'_{Weyl}$. 
(Note that if $2\mid q$ the group $\ov \bN_0$ is the semi-direct product of $\bT$ and a group isomorphic to the Weyl group.) In consideration of \Cref{gammane1} we work here with the group $\ov V_0$, a $\bT$-conjugate of $V'_{Weyl}$. Then the graph automorphism of $ G$ is induced by an element of $\ov V_0$ (see \Cref{gammane1}). 

\begin{notation}[The groups $\ov V_0$, $\ov V_I$ and $V_I$]\label{not38}
The group $\II VWeyl@{V'_{Weyl}}:=\Spann<\ov \nn_i'| i\in \ul>$ with $\ov \nn_1':=\nn_{e_1}(1)$ and $\ov \nn_i':=\nn_{\al_i}(-1)$, where $\al_i= e_i-e_{i-1}$ ($2\leq i\leq l$) is known as the extended Weyl group of type $\tB_l$.

Let $\zeta_8\in \FF$ with $\zeta_8^2=\varpi$. The group $\II Voverline0@{\ov V_0}:=(V'_{Weyl})^{\h_{\underline l}(\zeta_8)} $ is accordingly generated by $\II n1ov@{{\ov{\mathbf n}}_1}:=(\ov\nn_1')^{\h_{\underline l}(\zeta_8)}=\nn_{e_1}(\varpi)$ and $\II niov@{\ov{\mathbf n}_i}:=(\ov\nn_i')^{\h_{\underline l}(\zeta_8)}=\nn_{\al_i}(-1)$. The group $\ov V_0$ satisfies $\ov V_0 \cap \ov T_0=H_0$ where $H_0$ is defined as $\spann<\hh_{\al}(-1)\mid \al\in\Phi >$ in \ref{not38}.
According to \ref{gammane1},  $\ov \nn_1\in \ov V_0$ and $\gamma$ induce the same automorphism of $\bG$.

For $I\subseteq \underline l$ we set
\begin{align}
\II VI@{V_I}:= \spann<h_0,\nn_{\pm e_i\pm e_{i'}}(1)\mid i,i'\in I \text{ with } i\neq i'> \und
\II VIov@{\ov V_I}:=V_I\spann<\nn_{e_i}(\varpi)\mid i\in I>.
\end{align}
Let $\II HItilde@{\pwt H_I}:=\spann<\h_{e_i}(\varpi)\mid i \in I>$ and $\II H0tilde@{\pwt H_0}:=\wt H_\ul$.
\end{notation}

\begin{num}[Facts around $H_I\lhd \ov V_I$]
Maximal extendibility holds \wrt $H_I\lhd \ov V_I$, since those groups are conjugate to those considered in \cite[Prop.~3.8]{MS16} for the case where the underlying root system is of type $\tB_{|I|}$. For $\II HI@{H_I}:=\spann<h_0,\hh_{\pm e_i\pm e_{i'}}(-1)\mid i,i'\in I>$ we obtain $\ov V_I\cap \bT=H_I$.

For disjoint sets $I,I'\subseteq \ul$ the Steinberg relations imply 
\begin{align}\label{VIcommutator}
[\ov V_I, V_{I'}]=1 \und [\ov V_I, \ov V_{I'}]=\spannh.
\end{align} We introduce maps $\kappa_d: \wt H_{\uad}V_{\underline {a_d}}\lra \wt H_0\ov V_0$ with $\rho_{\bT}\circ \kappa_d=\ov \kappa_d\circ \rho_{\uad}$ for the canonical epimorphism $\II rhoI@ {\rho_{\uad}: \ov V_{\uad}\lra \Sym_{\pm \uad}}$. 
\end{num}
The following defines a lift of  $\ov W_d:=\ov \kappa_d(\Sym_{\pm a_d})$ that is a subgroup of $\ov V_0$. In \ref{not2_17} we introduced  the elements $f_k^{(d)}\in \Sym_{\pm \ul}$ ($d\in \DD\setminus\{-1\}$, $k \in \uad$) without sign changes.
\begin{lem}\label{lem3_20}
Let $d\in \DD\setminus\{-1\}$, $m_k^{(d)}\in \ov V\cap \rho_\bT^{-1}(f_k^{(d)})$ ($k\in \underline d$) and 
$$\II kappad@{\kappa_d} : \wt H_{\uad}\ov V_{\uad} \lra \wt H_{0} \ov V_{0} 
\text{ with }
x\mapsto \prod_{k=1}^d x ^{m_k^{(d)}}$$
for a fixed order in $\underline d$. 
Set $\II Voverlined@{\ov V_d}:= \spann<\kappa_d(\ov V_{\uad})>$.
Then: 
\begin{thmlist}
 \item $\restr\kappa_d|{V_\uad}$ is a morphism of groups;
 \item $\kappa_d(v^x)=\kappa_d(v)^{\kappa_d(x)}$ for every $x\in \ov V_{\uad}$ and $v\in V_{\uad}$;
  \item $\kappa_d(H_\uad)=\spann<h_0^d,\hh_{I}(\varpi) \hh_{I'}(-\varpi)\mid I,I'\in \cO_d>\leq H_d$;
  \item $\kappa_d(\wt H_{\uad})= \spann<h_0^d,\hh_{I}(\varpi) \mid I\in \cO_d>$,
\item $\rho_{\bT}\circ \kappa_d=\ov \kappa_d\circ \rho_{\uad}$ for the canonical epimorphism $\rho_\uad:\ov V_\uad\lra \Sym_{\pm \uad}$, in particular $\rho_{\bT}(\ov V_d)=\ov W_d= \kappa_d(\Sym_{\pm a_d})$.
\end{thmlist}
\end{lem}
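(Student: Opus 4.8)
The plan is to reduce all five assertions to one combinatorial fact together with the commutator relations \eqref{VIcommutator}. First I would record the basic action: since $m_k^{(d)}\in\ov V\cap\rho_\bT^{-1}(f_k^{(d)})$ realizes the sign-change-free permutation $f_k^{(d)}$, which carries $\uad$ onto $\{I_{d,j}(k)\mid j\in\uad\}$, conjugation by $m_k^{(d)}$ sends $V_\uad$ into $V_{f_k^{(d)}(\uad)}$ and $\ov V_\uad$ into $\ov V_{f_k^{(d)}(\uad)}$, and fixes $h_0$. By the partition property of $J_d$ recorded in the proof of \Cref{lem3_16} (the sets $f_k^{(d)}(\uad)$, $k\in\underline d$, partition $J_d$), the index sets $f_k^{(d)}(\uad)$ and $f_{k'}^{(d)}(\uad)$ are disjoint for $k\neq k'$, so \eqref{VIcommutator} gives
\[[\,\ov V_\uad^{\,m_k^{(d)}},\,V_\uad^{\,m_{k'}^{(d)}}\,]=1\quad\text{and}\quad[\,\ov V_\uad^{\,m_k^{(d)}},\,\ov V_\uad^{\,m_{k'}^{(d)}}\,]\leq\spannh\qquad(k\neq k').\]
I would also note $V_\uad\lhd\ov V_\uad$.

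For (a), given $v,w\in V_\uad$ I expand $\kappa_d(vw)=\prod_{k=1}^{d}v^{m_k^{(d)}}w^{m_k^{(d)}}$; each $v^{m_k^{(d)}}$ lies in $V_{f_k^{(d)}(\uad)}\subseteq\ov V_0$, so the image is in $\wt H_0\ov V_0$, and since these subgroups commute pairwise (first relation above, valid because $V\leq\ov V$) the product reorders to $\bigl(\prod_k v^{m_k^{(d)}}\bigr)\bigl(\prod_k w^{m_k^{(d)}}\bigr)=\kappa_d(v)\kappa_d(w)$. For (b), using $v^x\in V_\uad$ I compute
\[\kappa_d(v^x)=\prod_{k=1}^{d}(v^x)^{m_k^{(d)}}=\prod_{k=1}^{d}\bigl(v^{m_k^{(d)}}\bigr)^{x^{m_k^{(d)}}},\]
while $\kappa_d(v)^{\kappa_d(x)}=\prod_{k}\bigl(v^{m_k^{(d)}}\bigr)^{\kappa_d(x)}$ since conjugation is an automorphism; as $v^{m_k^{(d)}}\in V_{f_k^{(d)}(\uad)}$ commutes with $x^{m_j^{(d)}}\in\ov V_{f_j^{(d)}(\uad)}$ for $j\neq k$, only the $j=k$ factor of $\kappa_d(x)$ survives and the two sides agree.

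Parts (c), (d), (e) are then evaluations on the standard generators. Using $\rho_\bT(m_k^{(d)})=f_k^{(d)}$ and the identity $\{f_k^{(d)}(i)\mid k\in\underline d\}=I_{d,i}$, I get $\kappa_d(h_0)=h_0^{\,d}$, $\kappa_d(\h_{e_i}(\varpi))=\h_{I_{d,i}}(\varpi)$, and $\kappa_d(\h_{\pm e_i\pm e_{i'}}(-1))$ equal to a product of elements $\h_{I_{d,i}}(\pm 1)$; a short manipulation with $\varpi^2=-1$ and $\varpi^4=1$ rewrites these inside $\spann<h_0^{\,d},\h_I(\varpi)\h_{I'}(-\varpi)\mid I,I'\in\cO_d>\leq H_d$ (for $H_\uad$) and $\spann<h_0^{\,d},\h_I(\varpi)\mid I\in\cO_d>$ (for $\wt H_\uad$), and running over all generators gives both inclusions, proving (c) and (d). For (e), $\rho_\bT$ is a homomorphism restricting to $\rho_\uad$ on $\ov V_\uad$, so for $x\in\ov V_\uad$
\[\rho_\bT(\kappa_d(x))=\prod_{k=1}^{d}\rho_\uad(x)^{f_k^{(d)}}=\ov\kappa_d(\rho_\uad(x)),\]
i.e.\ $\rho_\bT\circ\kappa_d=\ov\kappa_d\circ\rho_\uad$; applying $\rho_\bT$ to $\ov V_d=\spann<\kappa_d(\ov V_\uad)>$ and using that $\ov\kappa_d$ is an injective homomorphism (\Cref{lem3_16}) gives $\rho_\bT(\ov V_d)=\ov\kappa_d(\Sym_{\pm a_d})=\ov W_d$.

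The only place needing genuine care is the commutator bookkeeping of the first step: one must invoke the \emph{exact} equality $[\ov V_I,V_{I'}]=1$ from \eqref{VIcommutator} — not merely $[\ov V_I,\ov V_{I'}]\leq\spannh$ — so that (a) is a morphism on the nose and (b) holds literally rather than only modulo $\Z(\bG)$; and in (c)/(d) one must track that $h_0$ and the torus elements transform under $m_k^{(d)}$ precisely as dictated by the conventions of \ref{not_32}, \ref{defhI} and \ref{not38}. The rest is routine.
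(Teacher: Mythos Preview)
Your proof is correct and follows essentially the same strategy as the paper: both arguments rest on the partition $J_d=\bigsqcup_k f_k^{(d)}(\uad)$ together with the commutator relations \eqref{VIcommutator}, and both treat (c)--(e) by evaluating on generators. The one noteworthy difference is in (b): the paper reduces to the generating set $\{\ov\nn_2^{\ov\nn_1},\ov\nn_2,\ldots,\ov\nn_{a_d}\}$ for $V_{\uad}$ and $x=\ov\nn_1$, disposes of the cases $i\geq 3$ by orthogonality of roots, and then appeals to an explicit computation for the remaining case $v=\ov\nn_2$, $x=\ov\nn_1$; your argument instead handles all $v\in V_{\uad}$ and $x\in\ov V_{\uad}$ uniformly by the observation that $v^{m_k^{(d)}}\in V_{f_k^{(d)}(\uad)}$ commutes with every factor $x^{m_j^{(d)}}$ of $\kappa_d(x)$ except $j=k$, which is cleaner and avoids the deferred computation. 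Your closing remark that one needs the \emph{exact} identity $[\ov V_I,V_{I'}]=1$ rather than merely $[\ov V_I,\ov V_{I'}]\leq\spannh$ is precisely the point that makes this direct route work.
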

\begin{proof}
The sets $J_d(k):=f_k^{(d)}(\uad)$ form a partition of $J_d$. For $x\in V_{\uad}$ we see $x^{m_k^{(d)}}\in V_{J_d(k)}$ and hence $\restr\kappa_d|{V_\uad}$ is independent of the order chosen in $\underline d$. Then $\restr\kappa_d|{V_\uad}$ is a diagonal embedding of $V_{\uad}$ into the central product of the groups $V_{J_d(k)}$ ($k\in \underline d$) over $\spannh$. This implies that $\restr\kappa_d|{V_\uad}$ is a morphism of groups. This proves (a). 

By part (a) it is enough to prove part (b) for $x=\ov\nn_1$ and $v\in \{\ov \nn_2^{\ov \nn_1}, \ov \nn_2, \ov \nn_3,\ldots, \ov \nn_{a_d}\} $, since $V_{\uad }$ is generated by $\{\ov \nn_2^{\ov \nn_1}, \ov \nn_2, \ov \nn_3,\ldots, \ov \nn_{a_d}\}$. The equation $\kappa_d(\ov \nn_i^{\ov\nn_1})=\kappa_d(\ov \nn_i)^{\kappa_d(\ov\nn_1)}$ for $i\geq 3$ is clear since no non-trivial linear combination of those roots is a root. Computations show $\kappa_d(\ov \nn_2^{\ov\nn_1})=\kappa_d(\ov \nn_2)^{\kappa_d(\ov\nn_1)}$ and hence part (b).

For part (c) we note that $\ker (\restr \kappa_d|{H_\uad})= \spann<h_0^{d-1}>$. The equation $\rho_{\bT}\circ \kappa_d=\ov \kappa_d\circ \rho_{\uad}$ in (e) follows from $\rho_{\bT}(m_k^{(d)})=f_k^{(d)}$. 
\end{proof}
Recall that the group $H$ from Notation~\ref{defhI} is a subgroup associated to $\bL$. To understand the above construction we consider the following statement. 
\begin{theorem}\label{prop_lift}
If $-1\in \DD$, set $\II V-1ov@{\ov V_{-1}}:= \spann<H_{-1},\ov \nn_1>$. Let $\II Vov@{\ov V}:= H\Spann<\ov V_d| d\in\DD>$ and $\II VD@{V_\tD}:=\ov V\cap \bG$.
\begin{thmlist}
\item  $N=L V_\tD$. 
\item If $\gamma\in \EL$, then $\ov \nn_1\in \ov V$.
 \end{thmlist}
\end{theorem}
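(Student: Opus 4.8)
The plan is to prove (a) by comparing relative Weyl groups through the homomorphism $\rho_\bT$, and to deduce (b) by distinguishing the two cases of \Cref{notationL}.

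For (a), I would first record that all generators of $\ov V$ normalise $\bT$, and that $\rho_\bT(\ov V)=\prod_{d\in\DD}\ov W_d=W^\circ(L)$ by \Cref{lem3_20}(e) for $d\in\DD\setminus\{-1\}$ together with $\rho_\bT(\ov\nn_1)=(1,-1)$ and $H_{-1}\leq\bT$. Since $W^\circ(L)\subseteq\Stab_{\ov W_0}(\Phi')$ by \Cref{lem3_16}, conjugation by any $v\in\ov V$ fixes $\bT$ and permutes $\{\bX_\al\mid\al\in\Phi'\}$, so $\ov V$, hence $V_\tD=\ov V\cap\bG$, normalises $\bL$. Granting the key claim $V_\tD\subseteq\GF$ (discussed below), this gives $V_\tD\leq N$, so $LV_\tD\leq N$. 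For the reverse inclusion I would use $N=L(N\cap N_0)$ (conjugate $\bT$ back into $\bL$ by an element of $L$, as in \cite[\S9]{Ca85}) and then argue at the level of $\rho_\bT$: one has $\rho_\bT(N\cap N_0)\subseteq\Stab_{\ov W_0}(\Phi')\cap W_\Phi=W_{\Phi'}(W^\circ(L)\cap W_\Phi)$ by \Cref{lem3_16} and the Dedekind identity (as $W_{\Phi'}\leq W_\Phi$), while $\rho_\bT(L\cap N_0)=W_{\Phi'}$ and $\rho_\bT(V_\tD)=\rho_\bT(\ov V)\cap W_\Phi=W^\circ(L)\cap W_\Phi$ — the last equality because a lift in $\ov V$ and a lift in $\NNN_\bG(\bT)$ of the same element of $W_\Phi$ differ by an element of $\ker\rho_\bT=\bT\subseteq\bG$. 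As $L\cap N_0$ and $V_\tD$ both lie in $N\cap N_0$, these equalities force $\rho_\bT(N\cap N_0)=\rho_\bT(L\cap N_0)\,\rho_\bT(V_\tD)$; and since $\ker\rho_\bT\cap(N\cap N_0)=T_0\subseteq L$, every $n\in N\cap N_0$ then lies in $LV_\tD$, whence $N=L(N\cap N_0)=LV_\tD$.

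The step I expect to be the main obstacle is the claim $V_\tD\subseteq\GF$, i.e.\ that every $v\in\ov V\cap\bG$ is $F=F_q$-fixed. I would prove it by writing $v$ as a word in the generators of $\ov V$ and tracking $F$. The generators from $H\leq H_0$, and the $\kappa_d(\nn_\al(\pm1))$ for $\al=\pm e_i\pm e_{i'}$ with $i,i'\in\uad$, are $F$-fixed: the parameters $\pm1$ and the lifts $m_k^{(d)}$ are $F$-fixed, and for the generators $h_0$ and $\h_{e_i}(\varpi)\h_{e_{i'}}(-\varpi)$ of $H_0$ one uses $\varpi^q\in\{\pm\varpi\}$ and that $\h_Q(-1)=1$ whenever $|Q|$ is even (which follows from $\Z(\bG)=\spann<h_0,\h_\ul(\varpi)>$; compare the proof of \Cref{lem3_9}). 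The only remaining generators, the $\kappa_d(\nn_{e_j}(\varpi))$, are sent by $F$ to $\h_{I_{d,j}}(-1)\,\kappa_d(\nn_{e_j}(\varpi))=h_0^{\,d}\,\kappa_d(\nn_{e_j}(\varpi))$, where $h_0=\h_{e_1}(-1)$ is central in $\obG$ (as $\h_{e_i}(-1)=h_0$ for all $i$ and $\al(h_0)=1$ for every $\al\in\ov\Phi$). Hence $F(v)=h_0^{\,m}v$, with $m$ the number of factors $\kappa_d(\nn_{e_j}(\varpi))$ with $d$ odd in the chosen word. But the image of $v$ in $\ov W_0=\Sym_{\pm l}$ lies in $W_\Phi$, so it makes an even number of sign changes; since reduction mod $2$ of the number of sign changes is a homomorphism $\Sym_{\pm l}\to\ZZ/2$ with kernel $W_\Phi$, and $\kappa_d(\nn_{e_j}(\varpi))$ contributes $d$ sign changes (the other factors an even number, by \Cref{lem3_16}), $m$ is even and $F(v)=v$. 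The fiddly point is exactly this bookkeeping of the central corrections, which are only nontrivial when $q\equiv3\pmod4$.

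For (b), if $-1\in\DD$ then $\ov V_{-1}=\spann<H_{-1},\ov\nn_1>$ is one of the factors defining $\ov V$, so $\ov\nn_1\in\ov V$ with nothing to prove. Otherwise $\Delta'$ satisfies Assumption \ref{cases35i}; by \Cref{gammane1} the automorphism $\gamma$ is induced by $\ov\nn_1=\nn_{e_1}(\varpi)$, hence acts on $\Phi$ as $\rho_\bT(\ov\nn_1)=(1,-1)$, the sign change on the first coordinate. If $e_2-e_1\in\Delta'$ this would carry the root $e_2-e_1\in\Phi'$ to $e_2+e_1\notin\Phi'$, so $\gamma$ would not stabilise $\bL$, contradicting $\gamma\in\EL$. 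Therefore $e_2-e_1\notin\Delta'$, no root of $\Phi'$ involves the coordinate $1$, and $\{1\}$ is a $W_{\Phi'}$-orbit; thus $\{1\}\in\cO_1$ and $1\in\DD$. Writing $\{1\}=I_{1,j_0}$ for the appropriate $j_0\in\uad$ (with $d=1$), we have $f_1^{(1)}(j_0)=I_{1,j_0}(1)=1$, so $\kappa_1(\nn_{e_{j_0}}(\varpi))=\nn_{e_{j_0}}(\varpi)^{m_1^{(1)}}=\nn_{e_1}(\pm\varpi)$ lies in $\ov V_1\leq\ov V$; and since $h_0\in H\leq\ov V$ with $\nn_{e_1}(-\varpi)=h_0\ov\nn_1$, in both cases $\ov\nn_1\in\ov V$.
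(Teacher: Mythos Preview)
Your proof is correct and follows the paper's approach, but you supply substantially more detail than the paper, whose proof reads in full: ``Because of $\rho_{\bT}(\ov V_d)=\ov W_d$ we see $\rho_{\bT}(\ov V)= W^\circ(L)$. Clearly $V_\tD\leq N$. Additionally we see that $\ov V$ normalizes $L$ and $\bL$ by definition. If $\bL$ is $\gamma$-stable, then $\ov \n_1\in \ov V$.'' In particular, the paper simply asserts $V_\tD\leq N$ without justifying $F$-fixedness; your parity argument (tracking the central factor $h_0$ against sign changes when $q\equiv 3\pmod 4$) is exactly the content hidden behind that ``clearly'', and your Dedekind-identity computation of $\rho_\bT(N\cap N_0)$ makes explicit why $LV_\tD$ fills all of $N$. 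Your case analysis for (b) likewise spells out what the paper leaves to the reader.

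One small omission in your $F$-fixedness argument: when $-1\in\DD$, the generator $\ov\nn_1$ of $\ov V_{-1}$ is not of the form $\kappa_d(\nn_{e_j}(\varpi))$, yet it too satisfies $F(\ov\nn_1)=h_0\ov\nn_1$ (for $q\equiv 3\pmod4$) and contributes one sign change. You should include these factors in the count $m$; the parity bookkeeping then goes through unchanged. Also, the justification you cite for $\h_Q(-1)=1$ when $|Q|$ is even is slightly off --- this follows directly from $\h_{e_i}(-1)=h_0$ for all $i$ (centrality of $h_0$ in $\obG$), not from the description of $\Z(\bG)$.
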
 
\begin{proof}
Because of $\rho_{\bT}(\ov V_d)=\ov W_d$ we see $\rho_{\bT}(\ov V)= W^\circ(L)$. Clearly $V_\tD\leq N$. Additionally we see that $\ov V$ normalizes $L$ and $\bL$ by definition. If $\bL$ is $\gamma$-stable, then $\ov \n_1\in \ov V$. According to \Cref{gammane1}, $\ov \n_1$ and $\gamma$ induce the same automorphism of $\bG$. 
\end{proof}

\begin{cor}\label{cor322}
The groups $K$, $\Kcirc$ and $H$ from \ref{defhI} and \Cref{lem34} together with $\III{V}:=V_{\tD}$, $\III{M}:=KV$ and $\III{D}:=\EL$  satisfy the Assumption \ref{cor23i1}. 
\end{cor}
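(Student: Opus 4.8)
The plan is to verify the group-theoretic hypotheses that \Cref{cor_tool} requires of the data $K=\Kcirc H$, $\Kcirc=\spann<G_I\mid I\in\cO>$, $H$ as in \ref{defhI}, $V=V_\tD$, $M=KV_\tD$ and $D=\EL$; concretely, the assertion is \ref{cor23i1} (together with the background that $K\lhd M$ and that $\EL$ acts on $M$ stabilising $K$, which I also record). Inside \ref{cor23i1} the clause $H\le\Z(K)$ is exactly \Cref{lem34}, so what is left is: (1) $K\lhd M$ and the $\EL$-action; (2) $\Kcirc$ and $V_\tD$ are $\EL$-stable; (3) $K=\Kcirc(K\cap V_\tD)$ with $K\cap V_\tD=H$.

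For (1) and (2): since $V_\tD\le N$ normalises $\bL$, it acts on the set $\cO$ of $W_{\Phi'}$-orbits preserving $\|\cdot\|$, hence permutes $\{G_I\mid I\in\cO\}$ and each family $\{\h_I(\varpi)\mid I\in\cO_d\}$; as $H_0$ is visibly $V_\tD$-stable this shows $V_\tD$ normalises both $\Kcirc$ and $H$, so $K=\Kcirc H\lhd\Kcirc H V_\tD=M$, and $\EL=\Stab_{E(\GF)}(L)$ acts on $M$ stabilising $L\supseteq K$ and $N\supseteq V_\tD$ by definition. For the $\EL$-stability of $\Kcirc$ I would note that field automorphisms fix each $\bX_\al$ ($\al\in\Phi$), hence each $\bG_I$, while a graph automorphism lying in $\EL$ stabilises $\bL$: in case \ref{cases35i} this forces $e_2-e_1\notin\Delta'$, so $\gamma$ fixes $\Delta'$ pointwise and each $\bG_I$; in case \ref{cases35ii} $\gamma_0$ merely swaps $e_1-e_2\leftrightarrow e_1+e_2$ inside the single component $\Phi_{-1}$ — hence $\Kcirc$ is $\EL$-stable, and the order-$3$ automorphism $\gamma_3$ (possible only when $l=4$) is handled the same way. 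For $V_\tD=\ov V\cap\bG$ with $\ov V=H\spann<\ov V_d\mid d\in\DD>$: $F_p$ fixes the extended-Weyl generators $\n_{\al_i}(-1)$, sends $\n_{e_1}(\varpi)$ to $\n_{e_1}(\varpi^p)\in\{\n_{e_1}(\varpi),\,\h_{e_1}(-1)\n_{e_1}(\varpi)\}\subseteq\ov V$, and stabilises $H$ and the maps $\kappa_d$, so $\ov V$ and $V_\tD$ are $F_p$-stable; and if $\gamma\in\EL$ then $\ov\n_1\in\ov V$ by \Cref{prop_lift}\,(b) while $\gamma$ is conjugation by $\ov\n_1$ (\Cref{gammane1}), whence $V_\tD^\gamma=(\ov V\cap\bG)^{\ov\n_1}=V_\tD$; the case $\gamma_3$ is analogous.

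The substantive step is (3). One inclusion is formal: $H\le\bT\le\bG$ and $H\le\ov V$ give $H\le\ov V\cap\bG=V_\tD$, and $H\le K$, so $H\le K\cap V_\tD$, whence $K=\Kcirc H\le\Kcirc(K\cap V_\tD)\le K$. For $K\cap V_\tD\le H$ I would argue: because $\ov V_0\le\NNN_{\obG}(\bT)$, each $\kappa_d$ lands in $\wt H_0\ov V_0$, and $H\le\bT$, we get $\ov V\le\NNN_{\obG}(\bT)$ and hence $V_\tD\le\NNN_{\bG}(\bT)$; so for $v\in K\cap V_\tD\subseteq\bL\cap\NNN_{\bG}(\bT)=\NNN_{\bL}(\bT)$ we have $\rho_{\bT}(v)\in W_{\Phi'}$ and also $\rho_{\bT}(v)\in\rho_{\bT}(\ov V)=W^\circ(L)$ (proof of \Cref{prop_lift}). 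Then $W_{\Phi'}\cap W^\circ(L)=1$: both groups decompose as direct products over $d\in\DD$ along the pairwise disjoint sets $J_d$, so it suffices that $W_{\Phi_d}\cap\ov W_d=1$ for each $d$, and by \Cref{lem3_16} a nontrivial element of $\ov W_d=\ov\kappa_d(\Sym_{\pm a_d})$ either moves an element of $\ul$ between two distinct components $I_{d,j}$ or effects a sign change — in particular, for $d=-1$, a single sign change on $\cO_{-1}$ — neither of which occurs in $W_{\Phi_d}$. Hence $\rho_{\bT}(v)=1$, i.e.\ $v\in V_\tD\cap\bT=\ov V\cap\bT$; finally a direct computation in the extended Weyl group — using $\ov V_0\cap\bT=H_0$, \Cref{lem3_20}\,(c)--(d) for the torus parts of the $\ov V_d$, the commutator relations \eqref{VIcommutator}, and the definition $H=\spann<\wt H_d\mid d\in\DD>\cap H_0$ — gives $\ov V\cap\bT=H$. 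So $v\in H$ and (3) follows.

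I expect (3) to be the main obstacle, and inside it the two assertions that do not come for free once $H\le K\cap V_\tD$ is known: the combinatorial identity $W_{\Phi'}\cap W^\circ(L)=1$, where the $d=-1$ (type $\tD$) orbit $\cO_{-1}$ needs a separate check because a single sign change there is precisely what the ``$\tD$'' condition defining $\Sym^\tD_{\pm\cO}$ excludes; and the torus identity $\ov V\cap\bT=H$, which forces one to keep careful track of how the diagonal embeddings $\kappa_d$ lie over $\bT$ and of how the cross-commutators $[\ov V_d,\ov V_{d'}]\le\spann<h_0>$ ($d\ne d'$) enter. Everything else is routine bookkeeping with the Steinberg relations and with results already established (\Cref{prop_lift}, \Cref{lem3_20}, \Cref{lem3_16}, \Cref{lem34}).
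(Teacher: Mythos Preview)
Your proof is correct and follows essentially the same approach as the paper's. Both arguments dispatch $K=\Kcirc H$ and $H\le\Z(K)$ via \Cref{lem34}, take $M=KV$ by definition, and reduce the crux $K\cap V_\tD=H$ to a Weyl-group complement statement followed by a torus computation. The paper phrases the latter step per $d$ (showing $\ov V_d\cap\bL\le H_d$ and invoking \Cref{lem3_20}(c)), whereas you argue globally (first $W_{\Phi'}\cap W^\circ(L)=1$, then $\ov V\cap\bT=H$); these are equivalent reorganisations of the same content, and your treatment of the Weyl-group step is in fact more explicit than the paper's rather terse justification. One small point: your claim that ``the case $\gamma_3$ is analogous'' is no weaker than the paper's proof, which simply asserts $\EL$-stability from $\ov V\spann<F_p>$-stability and does not mention $\gamma_3$ at all; in the few $\tD_4$ Levi subgroups with $\gamma_3\in\EL$ this needs a direct check rather than the $\ov\n_1$-conjugation argument.
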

\begin{proof}
According to \Cref{lem34}, $K=HK_0$ and $H\leq \Z(K)$. This is Assumption~\ref{cor_tool}.(i.1).
The equality $M=KV$ follows from the definition of $M$. In order to prove $H=V\cap K$ we show $\ov V_d\cap \bL\leq  H_d$ for every $d\in\DD$. Since $\ov V_d \leq \ov V_0$ by construction and $\ov V_d:=\spann<\kappa_d(\ov V_{\uad})>=\ov W_d$, we observe that $\kappa_d(H_{\uad})\leq H_d$ according to \Cref{lem3_20}(c). 

By the construction $V_\tD$ is $\ov V\spann<F_p>$-stable and hence $\EL$-stable. By the construction we also see that $K_0$ and $V_\tD$ are $D$-stable. 
\end{proof}

\section{Extending cuspidal characters of Levi subgroups} \label{sec4_neu}
This section now focuses on the character theory of our groups. We ensure the character-theoretic Assumptions \ref{cor23ii} and apply \Cref{cor_tool} in the proof of Theorem~\ref{stabcuspN}. We analyse the action of $\ov V$ on $\Kcirc$ and consider subgroups of $\ov N$ and $L$ associated to each $d\in \DD$. For every $d\in \DD$ we define subgroups $K_{0,d}$ and $K_d$ and study them separately for $d=1$, $d\geq 2$ and $d=-1$. 

\subsection{The inclusion $H_1 \lhd \ov V_{1}$}
We recall here some results on the extended Weyl groups. 
If $1\in \DD$, then $H_1\cong H_{J_1}$ and $\ov V_1\cong \ov V_{J_1}$ for the group $\ov V_{J_1}$ from \ref{not38}. (Recall $J_d=\bigcup _{I\in \cO_d} I$ for $d\in\DD$, see \ref{defhI}.) 
We set $K_{0,1}:=1$ and $K_1:=H_1$. In order to apply \ref{cor_tool} we investigate the Clifford theory for $H_1\lhd \ov V_1$. 
The results are also relevant for studying $H_d\lhd \ov V_d$ for $d\geq 1$.
\begin{prop}\label{prop4_1}
Let $l'\leq l$ be some positive integer, $\wt H':=\wt H_{\underline{l'}}$, $H':=\wt H'\cap H_0$, $\ov V':=\ov V_{\underline {l'}}$ and $\rho':\ov V'\ra \Sym_{\pm \ul'}$ the canonical epimorphism. 
\begin{thmlist}
\item Maximal extendibility holds with respect to $H'\lhd \ov V'$. 

\item Let $\la\in\Irr(H')$ with $h_0\notin\ker(\la)$. Then some $\ov V'$-conjugate $\la'$ of $\la$ has an extension $\wt \la'$ to $\wt H'$ such that $\rho '(\ov V'_{\wt \la'})=\Sym_{l'}$ and $\ov V'_\la=\ov V'_{\wt \la}\spann<c>$ for some $c\in \ov V'$ with $\rho_{\bT}(c)=\prod_{i\in\underline {l'}}(i,-i)$. 
\end{thmlist}
\end{prop}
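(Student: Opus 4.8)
The plan is to treat the two assertions separately: part (a) becomes a citation once the groups are identified, and part (b) reduces to an explicit analysis of a signed-permutation action on linear characters. For (a), take $I:=\underline{l'}$, so that $\ov V'=\ov V_I$ by definition. Using $\h_{e_i}(\varpi)^2=h_0$ together with the coroot relations $\hh_{e_i+e_{i'}}(-1)=\h_{e_i}(\varpi)\h_{e_{i'}}(\varpi)$ and $\hh_{e_i-e_{i'}}(-1)=\h_{e_i}(\varpi)\h_{e_{i'}}(\varpi)^{-1}$ (consequences of the Chevalley relations), one checks that $H'=\wt H'\cap H_0=\spann<h_0,\hh_{\pm e_i\pm e_{i'}}(-1)\mid i,i'\in I>=H_I=\ov V_I\cap\bT$. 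Hence (a) is exactly the case $I=\underline{l'}$ of the statement — recalled after \ref{not38} from \cite[Prop.~3.8]{MS16} — that maximal extendibility holds \wrt $H_I\lhd\ov V_I$.

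For (b), write $g_i:=\h_{e_i}(\varpi)$. Then $\wt H'=\spann<g_1,\dots,g_{l'}>$ is abelian with $g_i^2=h_0$ for all $i$, so $H'$ is the elementary abelian subgroup $\{\prod_i\h_{e_i}(\varpi^{a_i})\mid \sum_i a_i\text{ even}\}$, of index $2$ in $\wt H'$ and containing $h_0$. A character $\wt\mu\in\Irr(\wt H')$ with $h_0\notin\ker\wt\mu$ satisfies $\wt\mu(g_i)^2=\wt\mu(h_0)=-1$, so $\wt\mu(g_i)\in\{\varpi,\varpi^{-1}\}$; conversely every assignment $\sigma\colon\underline{l'}\to\{\varpi,\varpi^{-1}\}$ arises from a unique such $\wt\mu=\wt\mu_\sigma$, and $\wt\mu_\sigma,\wt\mu_\tau$ restrict to the same character of $H'$ iff $\tau=\ov\sigma$ is the simultaneous inversion of all values (the twist by the non-trivial character of $\wt H'/H'$). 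Since $H'\le\bT$ centralises $\wt H'$, the conjugation action of $\ov V'$ on $\{\wt\mu_\sigma\}$ factors through $\rho'$; and by the Steinberg relations an element of $\ov V'$ lying over a transposition $(i\ i')$ of $\Sym_{l'}\le\Sym_{\pm l'}$ interchanges $g_i,g_{i'}$, while $\n_{e_i}(\varpi)$, lying over $(i,-i)$, sends $g_i$ to $g_i^{-1}$. Thus $\ov V'/H'\cong\Sym_{\pm l'}$ acts on $\{\varpi,\varpi^{-1}\}^{\underline{l'}}$ in the standard hyperoctahedral way, in particular transitively.

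Consequently $\la$ is $\ov V'$-conjugate to the character $\la'$ of $H'$ underneath $\wt\mu_{\sigma_0}$ for $\sigma_0\equiv\varpi$ constant; set $\wt\la':=\wt\mu_{\sigma_0}$. As no non-trivial product of sign changes fixes a constant map, and the $\ov V'$-action on $\Irr(\wt H')$ factors through $\rho'$ with kernel $H'$, one gets $\rho'(\ov V'_{\wt\la'})=\Stab_{\Sym_{\pm l'}}(\sigma_0)=\Sym_{l'}$ — the first assertion. Moreover $\Irr(\wt H'\mid\la')=\{\wt\la',c_0\cdot\wt\la'\}$, where $c_0:=\prod_{i\in\underline{l'}}(i,-i)$ is the central product of all sign changes and sends $\sigma_0$ to $\ov{\sigma_0}$; hence $\rho'(\ov V'_{\la'})=\Stab_{\Sym_{\pm l'}}(\{\sigma_0,\ov{\sigma_0}\})=\Sym_{l'}\spann<c_0>$. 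Pulling this back along $\rho'$, using $\ker\rho'=H'\le\ov V'_{\wt\la'}$ and $c^2\in H'$ for a preimage $c$ of $c_0$, yields $\ov V'_{\la'}=\ov V'_{\wt\la'}\spann<c>$ with $\rho_\bT(c)=\prod_{i\in\underline{l'}}(i,-i)$, as required (reading $\ov V'_\la,\wt\la$ in the statement as $\ov V'_{\la'},\wt\la'$).

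The main obstacle is the bookkeeping underlying the second paragraph: one must carefully match the relations $\h_{e_i}(\varpi)^2=h_0$ and the conjugation formulas for the $g_i$ under $\n_{e_i}(\varpi)$ and $\n_{e_i-e_{i'}}(1)$ with the combinatorial picture, and verify that the action on $\Irr(\wt H')$ genuinely factors through $\rho'$. Once this dictionary is in place, the transitivity and the two stabiliser identifications are routine facts about signed permutation groups. Alternatively, since $\wt H'$ and $\ov V'$ are $\bT$-conjugate to the corresponding type-$\tB_{l'}$ groups of \cite{MS16}, part (b) can instead be obtained by transporting the analogous statements there along that conjugation.
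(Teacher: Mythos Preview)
Your proof is correct and follows essentially the same approach as the paper: part (a) is reduced to the cited result from \cite{MS16} after identifying $H'=H_I$ and $\ov V'=\ov V_I$ for $I=\underline{l'}$, and part (b) is handled via the explicit hyperoctahedral action of $\ov V'/H'\cong\Sym_{\pm l'}$ on the characters $\wt\mu_\sigma$ parametrised by $\sigma\in\{\varpi,\varpi^{-1}\}^{\underline{l'}}$, conjugating to a constant tuple and reading off the stabilisers. The paper's proof is more terse but uses the same parametrisation and the same element $c$; it also appends a remark (not part of the stated proposition) that a suitable extension of $\la'$ to $\ov V'_{\wt\la'}$ can be chosen $c$-stable, which you do not need for the statement as written.
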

\begin{proof}
By \cite[Prop.~3.10]{MS16} maximal extendibility holds with respect to $ H'\lhd \ov V'$.
Note that $\ov V'$ coincides with the group $\ov V^t$ considered in \cite{MS16}. This proves part (a).

Let $\la\in\Irr(H')$ with $\la(h_0)=-1$ and $\wt \la\in\Irr(\wt H'\mid \la)$. Note that $\wt H'$ is the $l'$-fold central product of the cyclic groups $\spann<\hh_{e_i}(\varpi)>$ ($i\in \underline {l'}$) over $\spannh$. The group $\ov V$ acts by permutation and inversion on the factors. It is then easy to see that some $\ov V'$-conjugate $\la'$ of $\la$ has an extension $\wt \la'\in\Irr(\wt H')$ such that 
$$\wt \la'(\hh_{e_i}(\varpi))=\wt \la'(\hh_{e_{i'}}(\varpi)) \text{ for every } i,i'\in \underline {l'}.$$ 
The other extension of $\la'$ to $\wt H$ is $(\wt \la')^{-1}$. Observe that $(\wt \la')^2$ is the character with kernel $H'$. (Recall $\wt H'/H'\cong \Cy_2$ and hence there is exactly one character with this property.) The element $c\in \ov V'$ with $\rho'(c)=\prod_{i=1}^{l'} (i,-i)$ satisfies $(\wt \la')^{c}=(\wt \la')^{-1}$ and hence $\ov V'_{\la'}=\ov V'_{\wt \la'} \spann<c>$.
According to (a) there exists some extension $\phi_0$ of $\la'$ to $\ov V_{\la'}$. Then $\restr \phi_0|{V_{\wt \la'}}$ and $\wt \la'$ determine a common extension $\phi$ to $\wt H'\ov V'_{\wt \la'}$, see \cite[4.1(a)]{S10b}. By this construction  $\restr \phi |{\ov V'_{\wt \la}}$ is $c$-stable. 
\end{proof}
\subsection{The inclusion $K_{d}\lhd K_{d} \ov V_d$ for $d\geq 2$}
In the following we investigate the groups 
$\II Kd@{K_d}:=H_d\spann<G_I\mid I\in \cO_d>$ and $K_d \ov V_d$ for $d\in\DD\setminus\{\pm 1\}$, 
where $\bG_I=\spann<\bX_\al\mid \al\in\Phi_I>$ and $G_I=\bGI^F$, see \Cref{defhI} and \Cref{lem36}.

\begin{lem}\label{lem33_loc} Let $I\in \cO\setminus (\{J_{-1}\} \cup \cO_1)$  and $\II ZI@{\bZ_I}:= \h_I(\FF^\times)$. Then:
\begin{thmlist}
\item $\bGI\cong\SL_{|I|}(\FF)$ and $G_I\cong\SL_{|I|}( q)$;
\item \label{alggroup_eq_loc} $\bL_I= \bGI.\bZ_I$, $\bL_I/\spann<h_0>\cong \GL_{|I|}(\FF)$ and $|\bGI\cap \bZ_I|=\frac{{|I|_{p'}}}{\gcd(2,|I|)}$; and 
\item \label{45d_loc} $L_I\cong\GL_{|I|}(q)$ if $2\nmid |I|$.
\end{thmlist}
\end{lem}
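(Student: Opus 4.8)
The plan is to read off all three parts from the identification of $\Phi_I$ as an irreducible root system of type $\tA_{|I|-1}$, together with the structure $\bL_I=\bT_I\bGI$ recorded in \ref{structL}, and a computation with the coroot lattices in the model $\bG\le\obG$ fixed in \ref{not_32}. Since $I\in\cO\setminus(\{J_{-1}\}\cup\cO_1)$, we have $I\in\cO_d$ with $d=\|I\|=|I|\ge 2$, so $\Phi_I$ is one of the irreducible summands of $\Phi_d$; by the choice of $\Delta'$ in \ref{notationL} (cases \ref{cases35i}, \ref{cases35ii}) every root of $\Phi_I$ has the shape $e_k-e_m$ with $k,m\in I$, and $\Phi_I$ is of type $\tA_{|I|-1}$.

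For (a): $\bGI=\spann<\bX_\al\mid\al\in\Phi_I>$ is connected semisimple of type $\tA_{|I|-1}$, and it is simply connected. Indeed the cocharacter lattice of the maximal torus $\bGI\cap\bT$ equals $\ZZ\Phi_I^{\vee}$, because the coroots $(e_k-e_m)^{\vee}$ ($k,m\in I$) already span $\QQ\Phi_I^{\vee}\cap\ZZ\Phi^{\vee}$ inside the cocharacter lattice $\ZZ\Phi^{\vee}$ of the simply connected group $\bG$; equivalently one may invoke the standard fact that the derived subgroup of a Levi subgroup of a simply connected group is simply connected. Hence $\bGI\cong\SL_{|I|}(\FF)$. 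Since $F=F_q$ stabilises each $\bX_\al$ and acts on it by $t\mapsto t^q$, the group $\bGI$ is $F$-stable and $F|_{\bGI}$ is the standard split Frobenius, so $G_I=\bGI^F\cong\SL_{|I|}(q)$.

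For (b): $\bL_I=\bT_I\bGI$ is the definition from \ref{structL}. Using the relations $\h_\beta(t)\xx_\al(s)\h_\beta(t)^{-1}=\xx_\al\bigl(t^{\langle\al,\beta^{\vee}\rangle}s\bigr)$ and the fact that the cocharacter defining $\bZ_I=\h_I(\FF^\times)$ pairs to $0$ with every root $e_k-e_m$ ($k,m\in I$), one gets $[\bZ_I,\bGI]=1$, so $\bGI.\bZ_I\le\bT_I\bGI=\bL_I$ makes sense. Conversely $\bGI\cap\bT_I$ is a maximal torus of $\SL_{|I|}$ of rank $|I|-1$, while $\bZ_I$ is a one-dimensional torus whose cocharacter lies outside $\ZZ\Phi_I^{\vee}$, so $(\bGI\cap\bT_I)\bZ_I$ is a subtorus of $\bT_I$ of full rank $|I|$ and therefore equals $\bT_I$; thus $\bL_I=\bGI.\bZ_I$. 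The intersection $\bGI\cap\bZ_I\le\bT_I$ is then computed from the coroot lattices of the $\tD$-inside-$\tB$ model: the cocharacter $\mu$ with $\mu(\FF^\times)=\bZ_I$ has image in $\ZZ\Phi^{\vee}/\ZZ\Phi_I^{\vee}$ equal to $|I|/\gcd(2,|I|)$ times a primitive element — the factor $\gcd(2,|I|)$ being the discrepancy between the two lattices that is detected by $h_0$ — which yields $|\bGI\cap\bZ_I|=|I|_{p'}/\gcd(2,|I|)$. Finally, comparing $\bL_I=\bGI.\bZ_I$ with the surjection $\SL_{|I|}(\FF)\times\FF^\times\to\GL_{|I|}(\FF)$, $(A,t)\mapsto tA$, whose kernel is cyclic of order $|I|_{p'}$, and noting that $\bGI\cap\bZ_I$ is the subgroup of that kernel of index $\gcd(2,|I|)$, one gets a surjection $\bL_I\to\GL_{|I|}(\FF)$ with central kernel cyclic of order $\gcd(2,|I|)$; by the central product structure of \ref{structL} one has $h_0\in\bL_I$, and $\Z(\bG)=\spann<h_0,\h_\ul(\varpi)>$ identifies this kernel with $\spann<h_0>$, so $\bL_I/\spann<h_0>\cong\GL_{|I|}(\FF)$.

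Part (c) is then immediate: if $2\nmid|I|$ then $\gcd(2,|I|)=1$, so by (b) $\bGI\cap\bZ_I$ has the same order as the kernel of $\SL_{|I|}(\FF)\times\FF^\times\to\GL_{|I|}(\FF)$ and $\bL_I\cong\GL_{|I|}(\FF)$ outright, equivariantly for $F=F_q$ acting as the standard Frobenius; taking fixed points gives $L_I=\bL_I^F\cong\GL_{|I|}(q)$. The one step that is not purely formal is the determination of $|\bGI\cap\bZ_I|$ — equivalently, the identification of the order-$\gcd(2,|I|)$ central kernel with $\spann<h_0>$ — since this requires running the explicit coroot-lattice computation in $\tD_{l,\mathrm{sc}}(\FF)\hookrightarrow\tB_{l,\mathrm{sc}}(\FF)$ and tracking the factor of two correctly; that factor is the joint source of the $\gcd(2,|I|)$ in (b) and of the parity hypothesis in (c).
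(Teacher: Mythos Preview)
Your proposal is correct and follows the same overall strategy as the paper: identify $\Phi_I$ as type $\tA_{|I|-1}$, recognise $\bGI$ as simply connected of that type, decompose $\bL_I=\bGI.\bZ_I$ via the torus, compute the central intersection, and pass to fixed points for (c). The difference is purely in execution. The paper carries out an explicit Chevalley-generator computation: it rewrites an arbitrary element $\prod_{i\in I}\h_{e_i}(t_i)\in\bT_I$ as a product of an element of $\bGI\cap\bT_I$ and $\h_I(\kappa)\in\bZ_I$ (choosing $\kappa$ with $\kappa^{|I|}=\prod t_i$), reads off directly that $\bZ_I\cap\bGI=\{\h_I(t):t^{|I|}=1\}$, and then uses the relation $\h_I(-1)=1$ for $|I|$ even to obtain the $\gcd(2,|I|)$ factor. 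Your route replaces this with a rank/dimension count and a cocharacter-lattice argument.

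Both work, but two remarks. First, your sentence on the image of the cocharacter in $\ZZ\Phi^{\vee}/\ZZ\Phi_I^{\vee}$ is imprecise as stated; if you want to run the lattice argument cleanly, it is better to phrase it as computing the kernel of the map $\FF^\times\to\bT_I/(\bGI\cap\bT_I)$ induced by $t\mapsto\h_I(t)$, which amounts to checking that the cocharacter $2\sum_{i\in I}e_i$ is primitive in $X_*(\bT)$ when $|I|$ is odd and twice a primitive element when $|I|$ is even. Second, the paper's explicit approach has a payoff you do not get: for $|I|$ even it produces the concrete identity $h_0=\h_I(\zeta)\prod_{i\neq j}\h_{e_i-e_j}(\zeta^{-2})$ (equation \eqref{eq41h0}), exhibiting $h_0$ as an explicit product of an element of $\bZ_I$ and an element of $\Z(\bGI)$. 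This decomposition is reused in the proof of \Cref{lem_act_onGI} to determine which diagonal automorphism $t_I$ induces on $G_I$, so if you adopt the lattice route here you will still need to supply that explicit factorisation later.
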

\begin{proof} By the  assumptions $d:=|I|\geq 1$ and $\Phi_I$ is a root system of type $\tA_{d}$. One has $\bGI=[\bT\bG_I,\bT\bG_I]$ where $\bT\bG_I$ is a Levi subgroup so $\bG_I$ is simply-connected $\cong \SL_{|I|}(\FF)$ by \cite[12.14]{MT}. Note $I\neq J_{-1}$. This gives (a).

Any element of $\bT_I$ can be written as $\prod_{i\in I}\hh_{e_i}(t_i)$ for some $t_i\in \FF^\times$. Let $\kappa\in\FF$ with $\kappa^{|I|}=\prod_{i\in I} t_i$ and fix $j\in I$. Then by the Chevalley relations in $\ov\bG$
\begin{align*}
\prod_{i\in I}\hh_{e_i}(t_i)&= 
\h_{e_j}(t_j\kappa^{-1}) \h_{e_j}(\kappa) \, 
\prod_{\substack{i\in I\\ i\neq j }}\left ( \hh_{e_j}(t_i^{-1}\kappa)^{-1} \hh_{e_j}(t_i^{-1}\kappa) \hh_{e_i}(t_i\kappa^{-1}) \hh_{e_i}(\kappa) \right )= \\
&=\left( \hh_{e_j}( \kappa^{-|I|} \prod_{i\in I}t_i)\right) \, \h_{e_j}(\kappa) \, 
\prod_{\substack{i\in I\\i\neq j }}\left ( \hh_{e_i-e_j}((t_i\kappa^{-1})^2) \hh_{e_i}(\kappa) \right )=\\
&= \left (\prod_{\substack{i\in I\\ i\neq j } 	}\hh_{e_i-e_j}((t_i\kappa^{-1})^2) \right )
\h_{I}(\kappa).
\end{align*}
Accordingly $\bT_I=(\bT_I\cap\bGI) \bZ_I$.
We note that $\bGI\cong \SL_{|I|}( \FF)$ and $G_I\cong \SL_{|I|}( q)$ as $F$ acts on $\bGI$ as standard Frobenius endomorphism. By the Chevalley relations $\bZ_I\leq \cent\bL\bGI$ and $\bL_I=\bT_I \bGI= \bZ_I \bGI$. 

The calculations above show that  an element of $\bZ_I\cap \bGI$ can be written as $\prod_{i\in I}\h_{e_i}(t)$ with $t^{|I|}=1$. For $d\in\DD_{\even}$, the element $\prod_{i=1}^l\h_{e_i}(-1)$ is trivial and hence $|\bZ_I\cap \bGI|=\frac {|I|_{p'}} {\gcd(2,|I|_{p'})}$. 

If $2\mid d $, then with similar considerations as above we see 
\begin{align}\label{eq41h0}
h_0&= \h_I(\zeta) \prod_{\substack{i\in I\\i\neq j }} \h_{e_i-e_j}(\zeta^{-2}),\end{align}
where $\zeta\in \FF^\times$ has order $2|I|_2$. 
This implies that $\bL_I/\spannh$ is the central product of the one-dimensional torus $\bZ_I/\spannh$ with $\bG_I/\spannh$ over $\Z(\bG_I)$. Accordingly $\bL_I/\spannh \cong \GL_{|I|}(\FF)$. 

For odd $d$ this implies analogously $ \bL_I\cong\GL_d(\FF)$ and $L_I\cong\GL_d(q)$. This is the statement in (b) and (c). We could also have argued on Levi subgroups of $\bG/\spann<h_0>=\text{SO}_{2l}(\FF)$.
\end{proof}

Next we study how $\wt L$ acts on $\Kcircd$, which includes the action induced by  $t_I$ ($I\in \cO$) and $t_{\ul,2}$ from \Cref{lem36}. Recall that $\wh L_I:=\bL_I\cap \calL^{-1}(\spann<h_0>)$ satisfies $\wh L_I=\spann<L_I,t_I>$ for some $t_I\in \bT_I\cap \calL^{-1}(h_0)$, and $\calL^{-1}(\h_\ul(\varpi))\cap \bL = \spann<\wh L,t_{\ul,2}>$ with $t_{\ul,2}=\h_\ul(\zeta)$.
According to Remark~\ref{labeldiag}, diagonal automorphisms of $G_I$ are parametrized by $\Z(\bGI) /[\Z(\bGI) ,F]$.
\begin{lem}\label{lem_act_onGI}
Let $I\in\cO\setminus(\cO_1\cup\{J_{-1}\})$. 
\begin{thmlist}
\item If $2\nmid |I|$, then $\wh L_I= L_I\cent{\bT_I}{L_I} $, in particular $t_{I}$ from \Cref{lem36}(b) can be chosen such that  $t_I\in \cent{\bT_I}{L_I}$. 
\item For  $2\mid |I|$, the element $\II tI@{t_{I}}$ induces on $G_I$ a diagonal automorphism corresponding to $g[\Z(\bGI),F]$ with $g\in \Z(\bGI)$ of order $|\Z(\bGI)|_2$.
\item $\II tunderlinel2@{t_{\ul,2}} \in \cent{\wt L}{L_I}$.
\end{thmlist}
\end{lem}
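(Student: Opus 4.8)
The plan is to prove all three parts by direct computation inside the central product $\bL_I=\bGI.\bZ_I$ of \Cref{lem33_loc}, using the description $\wh L_I=\spann<L_I,t_I>$ from \Cref{lem36}(b), the Chevalley relation \eqref{eq41h0}, the parametrisation of diagonal automorphisms recalled in \Cref{rem_whG}, and Lang's theorem. I would first record that $\cent{\bT_I}{L_I}=\cent{\bT_I}{G_I}=\bZ_I$: since $\bGI=\spann<\bX_\al\mid\al\in\Phi_I>$, a torus element $t\in\bT_I$ centralises $G_I$ precisely when $\al(t)=1$ for all $\al\in\Phi_I$, and as $\Phi_I$ is of type $\tA$ its roots span a direct summand of $X^*(\bT_I)$, so this common kernel is the one-dimensional connected subtorus $\bZ_I$; conversely $\bZ_I$ centralises all of $\bGI$ by the central-product structure, hence all of $L_I$.

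For part (a) assume $2\nmid|I|$. By \Cref{lem33_loc}(b) the group $\bGI\cap\bZ_I$ has order $|I|_{p'}=|\Z(\bGI)|$, and since $\bZ_I$ centralises $\bGI$ we have $\bGI\cap\bZ_I\leq\Z(\bGI)$, hence $\Z(\bGI)=\bGI\cap\bZ_I\leq\bZ_I$. Writing the involution $h_0\in\Z(\bG)\leq\Z(\bL_I)=\Z(\bGI)\bZ_I$ as $h_0=gz$ with $g\in\Z(\bGI)\leq\bZ_I$ and $z\in\bZ_I$, we conclude $h_0\in\bZ_I$. As $\bZ_I$ is a connected $F$-stable torus, Lang's theorem yields $t_I\in\bZ_I$ with $\calL(t_I)=h_0$; then \Cref{lem36}(b) gives $\wh L_I=\spann<L_I,t_I>$ with $t_I\in\bZ_I=\cent{\bT_I}{L_I}$, which is the assertion.

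For part (b) assume $2\mid|I|$. By \eqref{eq41h0} one has $h_0=\h_I(\zeta)\,g$ with $\h_I(\zeta)\in\bZ_I$ and $g:=\prod_{i\in I,\,i\neq j}\h_{e_i-e_j}(\zeta^{-2})\in\bGI$, where $\zeta$ has order $2|I|_2$; a short Chevalley-relation computation identifies $g$ with a central element of $\bGI$, and — keeping track of the orders in \Cref{lem33_loc}(b) — shows $g\in\Z(\bGI)$ has order $|I|_2=|\Z(\bGI)|_2$. Using Lang's theorem I would pick $z\in\bZ_I$ with $\calL(z)=\h_I(\zeta)$ and $g_1\in\bGI$ with $\calL(g_1)=g$ (these commute since $\bZ_I$ centralises $\bGI$); then $zg_1\in\bL_I$ with $\calL(zg_1)=h_0$, so $zg_1$ and the $t_I$ of \Cref{lem36}(b) differ by an element of $\bL_I\cap\GF=L_I$ and therefore induce, up to inner automorphisms, the same automorphism of $G_I$, namely conjugation by $g_1$ ($z$ being central in $\bL_I$). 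By the parametrisation of diagonal automorphisms in \Cref{rem_whG}, conjugation by $g_1\in\bGI$ with $\calL(g_1)=g\in\Z(\bGI)$ is the diagonal automorphism of $G_I$ attached to $g\,[\Z(\bGI),F]$, which is the asserted parameter. This is the step I expect to be the main obstacle: unlike (a) and (c) it requires simultaneously extracting the correct $\bGI$-component of $h_0$ from \eqref{eq41h0}, checking that it is a $2$-element of $\Z(\bGI)$ of full $2$-part order (where the $\gcd(2,|I|)$ discrepancies of \Cref{lem33_loc}(b) are essential), and matching conjugation by $g_1$ with the diagonal-automorphism dictionary.

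For part (c), write $t_{\ul,2}=\h_\ul(\zeta)=\prod_{k\in\ul}\h_{e_k}(\zeta)$. For any root $\al=e_i-e_j\in\Phi_I$ (so $i,j\in I\subseteq\ul$) the value $\al(\h_{e_k}(\zeta))$ is a power of $\zeta$ that depends only on whether $k\in\{i,j\}$, and the corresponding exponents sum to $0$ over $k\in\ul$ because $i$ and $j$ each occur exactly once; hence $\al(t_{\ul,2})=1$. Therefore $t_{\ul,2}$ centralises every $\bX_\al$ with $\al\in\Phi_I$, so it centralises $\bGI$ and in particular $G_I$, and being in $\bT$ it also centralises $\bT_I$, hence all of $\bL_I\supseteq L_I$. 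Finally $\calL(t_{\ul,2})=\h_\ul(\varpi)\in\Z(\bG)$ by the computation in the proof of \Cref{lem36}(e), so $t_{\ul,2}\in\bL\cap\calL^{-1}(\Z(\bG))=\wt L$, giving $t_{\ul,2}\in\cent{\wt L}{L_I}$.
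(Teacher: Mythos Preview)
Your proof is correct and follows essentially the same route as the paper: for (a) you place $h_0$ in $\bZ_I$ and apply Lang's theorem there (the paper does this in one line via the identity $h_0=\h_I(-1)$ for odd $|I|$, which is a slight shortcut over your structural argument through $\Z(\bGI)\leq\bZ_I$); for (b) both you and the paper split $h_0$ along \eqref{eq41h0} into a $\bZ_I$-part and a $\Z(\bGI)$-part of order $|I|_2$ and invoke \Cref{rem_whG}; and for (c) both arguments amount to checking that every root of $\Phi_I$ vanishes on $t_{\ul,2}$. One cosmetic point: the inclusion $\Z(\bG)\leq\Z(\bL_I)$ you write in (a) is not literally true (e.g.\ $\h_{\ul}(\varpi)\notin\bL_I$ in general), but you only use $h_0\in\Z(\bL_I)$, which holds since $h_0\in\bT_I$ by \ref{structL} and $h_0\in\Z(\bG)$.
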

\begin{proof} Keep $d:=|I|$.
According to the theorem of Lang we can choose $t_I\in \bT_I$ such that 
${t_I}^{-1}F(t_I)=h_0$ as $\bT_I$ is connected. 

If $2\nmid |I|$ we see that $h_0=\h_I(-1)$ and hence $h_0\in\bZ_I$. Since $\bZ_I$ is again connected $t_I$ can be chosen in $\Z(\bL_I)$, whence (a).

Following \eqref{eq41h0}, $h_0=z_1z_2$ for some $z_1\in\bZ_I$ and $z_2\in\Z(\bGI)$. Here $z_2$ is an element of order $|I|_2=d_2$. 
Then the element $t_I$ can be analogously written as $zg$ with $z\in\Z(\bG_I)$ and $g\in \bG$ such that $\calL(z)=z_1$ and $\calL(g)=z_2$. As $g$ induces on $\bG$ a diagonal automorphism associated to $z_2 [\Z(\bGI),F]$, the element $t_I\in\wt\bGI$ with $x^{-1}F(x)=h_0$ induces the same diagonal automorphism. This gives (b).

The element $t_{\ul,2}=\h_{\ul}(\zeta)$ from \Cref{lem36}(e) centralizes $\bGI$ since the Weyl group of $\bG_I$ centralizes $t_{\ul,2}$.
\end{proof}
Recall the groups $\II Hd tilde @{\pwt H_d}=\Spann<h_0,\h_I(\varpi)| I\in\cO_d>$, 
$\II H0@{H_0}=\spann<\h_\al(-1)\mid \al \in \Phi>$ and 
$\II Hd@{{H_d}}=\wt H_d \cap H_{0}$ defined in \ref{defhI} for every $d\in \DD$.
Using the groups $G_I$ from \Cref{lem36} let $\II Kdcirc@{K_{0,d}}:= \Spann<G_I | I \in \cO_d>$ and $\II Kd@{K_d}:=H_dK_{0,d}$. If $\DD=\{d\}$, then $\Kcirc=K_{0,d} $ and $K=K_d$. As $\ov V_d\cap K_d\leq \Cent_{\bL}(\bG_I)$ as a consequence of \Cref{lem34}, there is a well-defined action of $\ov V_d/H_d$ on $K_d$. 

\begin{lem}[The action of $V_d$ on $K_{0,d}$]\label{lemVactK} Let $d\in \DD$.
Let $\II epsd@{\epsilon_d}: \ov V_d\ra \ov V_d/H_d$ be the canonical morphism and $\II n1doverline@{\overline {\n}_1^{(d)}} :=\kappa_d(\ov \nn_1)$.
Then:
\begin{thmlist}
\item $K_{0,d}\rtimes \eps(\ov V_d) \cong (G_{I_{d,1}} \rtimes \spann<\eps(\ov \nn_1^{(d)})>)\wr \Sym_{a_d}$.
	\item \label{lemVactKc} Then $\overline \nn_1^{(d)}$ induces the graph automophism transpose-inverse on $G_{I_{d,1}}$.
	\item If $2 \nmid d$, $\overline \nn_1^{(d)}$ induces on $L_{I_{d,1}}$ a product of a non-trivial graph and an inner automorphism via the isomorphism  $L_{I_{d,1}}\cong\GL_d(q)$ from \Cref{lem33_loc}(c).
\end{thmlist}
\end{lem}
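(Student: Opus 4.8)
The plan is to reduce everything to the explicit permutation picture of Section~\ref{secext}. By \Cref{lem36}(d), $K_{0,d}=\prod_{I\in\cO_d}G_I$ is an internal direct product with each $G_I\cong\SL_d(q)$ (\Cref{lem33_loc}), and by \Cref{lem34} the subgroup $H_d\leq H\leq\Z(\bL)$ centralises every $\bGI$; hence conjugation by $\ov V_d$ on $K_{0,d}$ factors through $\ov V_d/(\ov V_d\cap\bT)$. As $\ker\rho_{\bT}=\bT$, $\ov V_d\cap\bT=\ov V_d\cap\bL=H_d$ (\Cref{cor322}) and $\ov\kappa_d$ is injective (\Cref{lem3_16}), this quotient is $\eps(\ov V_d)=\ov V_d/H_d\cong\rho_{\bT}(\ov V_d)=\ov W_d=\ov\kappa_d(\Sym_{\pm a_d})\cong\Sym_{\pm a_d}$, the isomorphism being realised by $\kappa_d$, and the resulting action of $\Sym_{\pm a_d}$ on $K_{0,d}$ will turn out to be faithful. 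The cases $d\leq 2$ are immediate ($K_{0,1}=1$, and for $d=2$ the automorphism of $\SL_2(q)$ in question is inner), so I assume $d\geq 3$.

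I would treat (b) and (c) first, by computing the automorphism of $\bG_{I_{d,1}}$ induced by $\overline{\n}_1^{(d)}=\kappa_d(\ov\nn_1)=\prod_{k=1}^{d}\ov\nn_1^{m_k^{(d)}}$. Applying $\rho_{\bT}$ and using that $\rho_{\bT}(m_k^{(d)})=f_k^{(d)}$ sends $1$ to $I_{d,1}(k)$ without sign changes (\ref{not2_17}, \Cref{lem3_20}), one obtains $\rho_{\bT}(\overline{\n}_1^{(d)})=\prod_{k=1}^{d}\bigl(I_{d,1}(k),-I_{d,1}(k)\bigr)$, the element of $\Sym_{\pm l}$ negating exactly the coordinates indexed by $I_{d,1}$. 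By the Steinberg relations this says that $\overline{\n}_1^{(d)}$ inverts $\bT_{I_{d,1}}$, sends $\bX_\al$ to $\bX_{-\al}$ for every $\al\in\Phi_{I_{d,1}}$, and centralises $\bG_I$ for $I\in\cO_d\setminus\{I_{d,1}\}$. Hence the induced automorphism of $\bG_{I_{d,1}}\cong\SL_d(\FF)$ acts as $-1$ on the root datum of type $\tA_{d-1}$; since $-1$ differs from the diagram automorphism of $\tA_{d-1}$ (non-trivial as $d\geq 3$) by the action of the longest Weyl element, which is realised inside $G_{I_{d,1}}$, this gives (b). For (c), $d$ is odd, so $L_{I_{d,1}}=\bL_{I_{d,1}}^F\cong\GL_d(q)$ by \Cref{lem33_loc}(c), and the same computation shows $\overline{\n}_1^{(d)}$ inverts the whole diagonal torus of this $\GL_d$ and negates its roots --- i.e.\ acts as transpose-inverse up to conjugation by a monomial element, a non-trivial graph automorphism composed with an inner one.

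For (a) I would transport the wreath decomposition $\Sym_{\pm a_d}\cong\Cy_2\wr\Sym_{a_d}$ to $\eps(\ov V_d)$ via $\kappa_d$. On generators of $\ov\kappa_d$ (\Cref{lem3_16}): the $j$-th sign change $(j,-j)$ maps to $\prod_{k}\bigl(I_{d,j}(k),-I_{d,j}(k)\bigr)$, whose $\kappa_d$-lift is --- by the computation above, now with $I_{d,j}$ in place of $I_{d,1}$ --- supported only on the coordinates of $I_{d,j}$, hence induces the non-trivial graph automorphism on $G_{I_{d,j}}$ and, by the relations \eqref{VIcommutator}, centralises $G_{I_{d,j'}}$ for $j'\neq j$; and a transposition $(j,j')\in\Sym_{a_d}$ maps to the permutation swapping the ordered sets $I_{d,j}\leftrightarrow I_{d,j'}$ coordinatewise, hence swapping the factors $G_{I_{d,j}}\leftrightarrow G_{I_{d,j'}}$ (via the isomorphism respecting the chosen orderings) and the two corresponding copies of $\Cy_2$. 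Thus $K_{0,d}\rtimes\eps(\ov V_d)=\bigl(\prod_{j}G_{I_{d,j}}\bigr)\rtimes\bigl(\Cy_2^{a_d}\rtimes\Sym_{a_d}\bigr)$ regroups as $\bigl(\prod_{j}(G_{I_{d,j}}\rtimes\Cy_2)\bigr)\rtimes\Sym_{a_d}=\bigl(G_{I_{d,1}}\rtimes\spann<\eps(\ov\nn_1^{(d)})>\bigr)\wr\Sym_{a_d}$.

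The main obstacle is the bookkeeping in (a): one must check honestly that the base group $\Cy_2^{a_d}$ of $\Sym_{\pm a_d}$ acts \emph{diagonally} on $K_{0,d}$ (one $\Cy_2$ on each $G_{I_{d,j}}$ by a graph automorphism, centralising the other factors) and that $\Sym_{a_d}$ permutes the pairs $(G_{I_{d,j}},\Cy_2)$ compatibly, which is exactly what makes the regrouping into a wreath product legitimate; this is where the explicit form of the lifts $m_k^{(d)}$ (hence of $\kappa_d$) and the relations $[\ov V_I,V_{I'}]=1$ of \eqref{VIcommutator} for disjoint $I,I'$ enter. The computations behind (b) and (c) --- that $\overline{\n}_1^{(d)}$ acts as $-1$ on the relevant root datum and that the auxiliary Weyl element is internal to $G_{I_{d,1}}$, resp.\ $L_{I_{d,1}}$ --- are routine Chevalley-relation manipulations.
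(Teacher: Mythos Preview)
Your proof is correct. For parts (a) and (c) it is essentially the paper's argument, only spelled out in more detail: the paper dispatches (a) with ``follows from the Steinberg presentation'' and (c) with the observation that $\ov\nn_1^{(d)}$ inverts $\bZ_{I_{d,1}}$ together with $\bL_{I_{d,1}}=\bG_{I_{d,1}}\bZ_{I_{d,1}}$, which is exactly your torus-inversion argument.

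For (b) you take a genuinely different route. The paper passes to the quotient $\bG/\spann<h_0>$, uses the explicit realization of this group inside $\operatorname{SO}_{2l}(q)$ from \cite[2.7]{GLS3} to write down the Steinberg generators as matrices, and then verifies by direct matrix computation that conjugation by $\ov\nn_1^{(d)}$ acts on $\bG_{I_{d,1}}\spann<h_0>/\spann<h_0>\cong\SL_d$ as transpose-inverse. Your argument stays at the level of root data: from $\rho_{\bT}(\ov\nn_1^{(d)})=\prod_{i\in I_{d,1}}(i,-i)$ you read off that the induced action on the type-$\tA_{d-1}$ root lattice is $-1$, and then invoke the standard fact that in type $\tA_{d-1}$ with $d\geq 3$ one has $-1=w_0\cdot\gamma$ with $w_0\in W$ inner and $\gamma$ the non-trivial diagram automorphism. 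Your approach is cleaner and avoids the external matrix model; the paper's approach has the advantage of making the specific representative (transpose-inverse) explicit, which is convenient when one later wants to compare with \Cref{prop43}(b).
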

\begin{proof}
Part (a) follows from the Steinberg presentation. 
	
For part (b) we see that $G_{I_{d,1}}^{\overline \n_1^{(d)}}=G_{I_{d,1}}$ and $G_{I_{d,1}}\cap \spannh=\{1\}$ from the Chevalley relations. We compute the action of $\overline \n_1^{(d)}$ on $G_{I_{d,1}}$ in the quotient $G/\spannh$ or $G_{I_{d,1}} \times \spann<h_0>/\spann<{h_0}>$, respectively. 
In \cite[2.7]{GLS3} the group $\bG/\spannh$ and its Steinberg generators are given explicitly as subgroup and elements of  $\operatorname{SO}_{2l}(q)$. The element  $\overline \n_1^{(d)}$ acts on $\bG_{I_{d,1}}\spannh/\spannh$ by transpose-inverse via $G_{I_{d,1}}\cong\SL_d(q)$. Computations in that group show part (b).

The element $\ov\nn_1^{(d)}$ acts by inversion on $\bZ_{I_{d,1}}$ and hence $\ov\nn_1^{(d)}$ satisfies the statement in part (c) as $\bL_{I_{d,1}}=\bG_{I_{d,1}}\bZ_ {I_{d,1}}$.
\end{proof}
Next we study an analogue of $\wt L$ from \Cref{lem36} associated to $d\in\DD$, that is defined using the Lang map $\calL$ from there. Note that $\h_\ul(\varpi)\notin \bT_{J_d}$ whenever $\DD\neq \{d\}$, but $\h_\ul(\varpi)= \prod_{d\in\DD}\h_{J_d}(\varpi)$. 

\begin{prop} \label{prop3C}
Let $d\in \DD\setminus\{\pm 1\}$, $\eps_{d}: \ov V_{d}\lra \ov V_{d}/H_{d}$ be as in \Cref{lemVactK}, $\II Ttilded@{\pwt T_d}:=\bT_{J_d}\cap \calL\inv(\spann<h_0, \h_{J_d}(\varpi)>)$ and $\wt L_{d}:= \pwt T_d K_{0,d} $. Then:
\begin{enumerate}
\item There exists some $\ov V_{d}\spann<F_p>$-stable $\wt L_{d}$-transversal $\II Tdcirc@{\TT^\circ_{d}}$ in $\cusp(K_{0,d} )$.
\item There exists an $\eps_{d}(\ov V_{d})\times \spann<F_p>$-equivariant extension map $\II Lambdaepsilond@{\Lambda_{\epsilon_d}}$ \wrt $K_{0,d} \lhd K_{0,d}\rtimes \eps_{d}(V_{d}) $ for $\TT^\circ_{d}$.
\item Maximal extendibility holds with respect to $\Kcircd\lhd \wt L_{d}$ and $K_{d}\lhd \wt L_{d}$.
\end{enumerate}
\end{prop}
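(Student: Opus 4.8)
The three statements are essentially type $\tA$ facts in disguise, and the plan is to reduce each of them to the single factor $\SL_d(q)$, where the relevant input is \Cref{prop43}(b). By \Cref{lem33_loc}(a) and \Cref{lem36}(d), $\Kcircd$ is the direct product of the groups $G_I\cong\SL_d(q)$ ($I\in\cO_d$); by \Cref{lem36} and \Cref{lem_act_onGI} the group $\wt L_d=\wh L_d\spann<t_{J_d,2}>$ acts on $\Kcircd$ so that each $\wh L_I\cong\GL_d(q)$ realizes the full diagonal automorphism group of its factor $G_I$, the pairwise commuting $\wh L_I$ generate $\wh L_d$ as a central product over $\spann<h_0>$, and $t_{J_d,2}=\h_{J_d}(\zeta)$ centralizes $\Kcircd$; by \Cref{lemVactK}(a),(b) the group $\ov V_d$ acts through $\eps_d$ as the wreath product $(\SL_d(q)\rtimes\spann<\overline{\n}_1^{(d)}>)\wr\Sym_{a_d}$, the element $\overline{\n}_1^{(d)}$ (and its $\ov V_d$-conjugates) inducing the transpose-inverse automorphism $\gamma'$ of \Cref{prop43} on one factor; and $F_p$ fixes each $G_I$ and acts on it as the field automorphism $x\mapsto x^p$.

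For part (a): fix a $\GL_d(q)$-transversal $\TT_1$ of $\cusp(\SL_d(q))$ that is $E'(\SL_d(q))$-stable (hence $\spann<\gamma',F_p>$-stable), which exists by \Cref{prop43}(b). Using the Chevalley isomorphism of \Cref{lem33_loc}(a) on $G_{I_{d,1}}$ together with conjugation by the permutation elements of $\ov V_d$, choose isomorphisms $\iota_I\colon G_I\to\SL_d(q)$ ($I\in\cO_d$) that are $F_p$-equivariant, compatible with the permutation action of $\ov V_d$ on $\cO_d$, and such that $\overline{\n}_1^{(d)}$ and its conjugates act on $G_I$ by $\iota_I^{-1}\circ\gamma'\circ\iota_I$; that this can be arranged on $G_{I_{d,1}}$ is exactly the content of \Cref{lemVactK}(b). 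Put $\TT^\circ_d:=\{\odot_{I\in\cO_d}\la_I\mid \iota_I(\la_I)\in\TT_1\}\subseteq\cusp(\Kcircd)$. Since $\cusp(\Kcircd)$ consists of the characters $\odot_{I\in\cO_d}\la_I$ with $\la_I\in\cusp(G_I)$ (\Cref{lem_cusp}) and, by the description above, the $\wt L_d$-orbits on $\cusp(\Kcircd)$ are exactly the products over $I\in\cO_d$ of the $\GL_d(q)$-orbits on $\cusp(\SL_d(q))$ (pulled back along the $\iota_I$), the set $\TT^\circ_d$ is a $\wt L_d$-transversal; it is $\ov V_d$-stable because the permutation part permutes the coordinates within $\prod_I\TT_1$ and the sign changes act coordinatewise by $\gamma'$, which stabilizes $\TT_1$; and it is $F_p$-stable because $F_p$ acts coordinatewise by a field automorphism, under which $\TT_1$ is stable.

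For part (b): by \Cref{prop43}(b) every $\chi\in\TT_1$ extends to $\SL_d(q)E'(\SL_d(q))_\chi$, hence to $\SL_d(q)\spann<\gamma'>_\chi$; as $\TT_1$ is $\spann<\gamma',F_p>$-stable the resulting extension map with respect to $\SL_d(q)\lhd\SL_d(q)\spann<\gamma'>$ for $\TT_1\cap\cusp(\SL_d(q))$ can be taken $\spann<\gamma',F_p>$-equivariant (cf. \cite[Thm.~4.1]{CS17A}). Transporting it through $\iota_{I_{d,1}}$ and spreading it over all factors via the chosen isomorphisms yields an extension map for $\TT^\circ_d$ with respect to $K_{0,d}$ inside the subgroup of $K_{0,d}\rtimes\eps_d(V_d)$ generated by $K_{0,d}$ and those elements of $\eps_d(V_d)$ acting by $\gamma'$ on a pair of factors; the remaining generators of $\eps_d(V_d)\cong\Sym_{\pm a_d}^\tD$ permute the factors, and extension along them is the standard canonical extension of a semi-invariant character of a base group $B^{a_d}$ to the wreath product $B\wr\Sym_{a_d}$. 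Tracking $F_p$ and the extra graph element generating $\eps_d(\ov V_d)/\eps_d(V_d)$ through these constructions gives the required $\eps_d(\ov V_d)\times\spann<F_p>$-equivariant map $\Lambda_{\eps_d}$.

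For part (c): $\wt L_d/\Kcircd$ is abelian (\Cref{lem36}(d)). Maximal extendibility holds with respect to each $G_I\lhd\wh L_I$ because $\wh L_I/G_I\cong\GL_d(q)/\SL_d(q)$ is cyclic \cite[(11.22)]{Isa}; since the $\wh L_I$ ($I\in\cO_d$) pairwise commute and $\wh L_d$ is their central product over $\spann<h_0>$ while $\Kcircd$ is their direct product, this passes to $\Kcircd\lhd\wh L_d$, and then to $\Kcircd\lhd\wt L_d$ because $\wt L_d=\wh L_d\spann<t_{J_d,2}>$ with $t_{J_d,2}$ central in $\wt L_d$ and centralizing $\Kcircd$ (again a cyclic-quotient step). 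Finally $K_d=H_d\Kcircd$ with $H_d$ a central elementary abelian $2$-group, so maximal extendibility with respect to $K_d\lhd\wt L_d$ follows from the $\Kcircd\lhd\wt L_d$ case by Gallagher's lemma \cite[(6.17)]{Isa} together with the fact that characters of subgroups of abelian groups extend. The main obstacle is the first half of part (a): one must verify carefully that the graph-type elements of $\ov V_d$ act on the $\SL_d(q)$-factors as $\gamma'$ itself up to inner automorphisms, not merely up to diagonal automorphisms, so that the $E'(\SL_d(q))$-stability supplied by \Cref{prop43}(b) is the one that is actually needed; this rests on the explicit Steinberg-relation computations behind \Cref{lemVactK} and \Cref{lem_act_onGI}. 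Keeping apart the roles of $\eps_d(V_d)$ and $\eps_d(\ov V_d)$ in part (b) is the other delicate point.
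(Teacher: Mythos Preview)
Your proof of (a) and (b) is correct and follows the paper's strategy closely: build the transversal factor-by-factor from \Cref{prop43}, then exploit the wreath-product structure $K_{0,d}\rtimes\eps_d(\ov V_d)\cong(G_{I_{d,1}}\rtimes\langle\eps(\ov\nn_1^{(d)})\rangle)\wr\Sym_{a_d}$ from \Cref{lemVactK}(a). The paper packages the wreath-product extension step as \Cref{lem_wr}; your hands-on description is the same construction. Your explicit use of \Cref{prop43}(b) (transpose-inverse) is arguably cleaner than the paper's route through \Cref{lemVactK}(b), since the proof of that lemma shows that $\ov\nn_1^{(d)}$ acts precisely as transpose-inverse.

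In part (c) there is a gap. The claim $\wh L_I/G_I\cong\GL_d(q)/\SL_d(q)$ is not correct: by \Cref{lem36}(b) one has $\wh L_I=\langle L_I,t_I\rangle$ with $t_I\notin L_I$, so $|\wh L_I/G_I|=2\,|L_I/G_I|$, not $|L_I/G_I|$. For even $d$ the element $t_I$ induces a genuine non-inner diagonal automorphism of $G_I$ by \Cref{lem_act_onGI}(b), so it is not absorbed into the centre either. One can in fact show $\wh L_I/G_I$ is still cyclic (everything factors through the one-dimensional torus $\bZ_I$, as a computation for $d=2$ makes explicit), but this requires an argument and your stated isomorphism does not provide one. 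The paper avoids the issue entirely: it invokes \Cref{lem_ext_G_whG} (resting on Lusztig's extendibility theorem) to obtain maximal extendibility with respect to $G_I\lhd\wt G_I:=\bG_I\cap\calL^{-1}(\Z(\bG_I))$, and then observes $\wt L_d\leq\langle\wt G_I\mid I\in\cO_d\rangle\cdot\langle\wh Z_I\mid I\in\cO_d\rangle$ with each $\wh Z_I$ central. Your route, once the cyclicity of $\wh L_I/G_I$ is properly established, has the virtue of being elementary; as written, however, the justification fails.
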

For the proof of part (b) we require a strengthening of a result on wreath products that can for example be found in \cite[Thm.~2.10]{Klupsch}. 
\begin{lem}\label{lem_wr} Let $X\rtimes Y$ be a finite group and $A$ be a group of automorphisms of $X\rtimes Y$, stabilizing $X$, $Y$ and some $\KK\subseteq \Irr(X)$. Let $a$ be a positive integer. Note that $A$ acts on $X^a\lhd (X\rtimes Y)\wr \Sym_a$ by diagonally acting on $(X\rtimes Y)^a$ and trivially on $\Sym_a$. In this context we write then $\Delta A$ for that group. If there exists an $(X\rtimes Y)\rtimes A$-equivariant extension map \wrt $X\lhd X\rtimes Y$ for $\KK$, then there exists an $((X\rtimes Y)\wr \Sym_a)\rtimes \Delta A$-equivariant extension map \wrt $X^a\lhd (X\rtimes Y)\wr \Sym_a$ for $\KK^a:=\{\chi_1\otimes\cdots\otimes\chi_a\mid \chi_i\in\KK  \}$.\end{lem}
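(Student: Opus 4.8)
The plan is to construct the desired extension map on $X^a \lhd (X\rtimes Y)\wr\Sym_a$ explicitly from the given extension map $\Lambda$ \wrt $X\lhd X\rtimes Y$, then verify equivariance. First I would fix an $(X\rtimes Y)\rtimes A$-equivariant extension map $\Lambda\colon\KK\to\coprod_{X\le I\le X\rtimes Y}\Irr(I)$; note that by \cite[Thm.~4.1]{CS17A} equivariance is no loss once the map exists. Given $\chi = \chi_1\otimes\cdots\otimes\chi_a\in\KK^a$ with each $\chi_i\in\KK$, the inertia group of $\chi$ in $W:=(X\rtimes Y)\wr\Sym_a$ is the subgroup of tuples $(y_1,\dots,y_a;\pi)$ with $\pi$ permuting the indices within $\chi$-isomorphism classes and $\chi_i^{y_i}=\chi_{\pi(i)}$ (so $y_i\in(X\rtimes Y)$ arbitrary when $\chi_{\pi(i)}=\chi_i^{y_i}$, i.e.\ $y_i$ ranges over a coset of $(X\rtimes Y)_{\chi_i}$ after matching classes). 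Writing $\eta_i := \Lambda(\chi_i)\in\Irr((X\rtimes Y)_{\chi_i})$ for the chosen extension, the natural candidate is to extend $\chi$ by ``$\bigotimes_i\eta_i$ twisted by the $\Sym_a$-part'': on the base group $(X\rtimes Y)^a\cap W_\chi$ put $\bigodot_i\eta_i$-type character built from the $\eta_i$, and let the relevant permutation part act by permuting tensor factors, which is consistent precisely because the $\eta_i$ on isomorphic $\chi_i$ can be chosen compatibly using the $A$-equivariance and the equivariance of $\Lambda$ under $X\rtimes Y$.

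Concretely I would proceed as follows. Group the indices $\underline a$ into blocks $B_1,\dots,B_r$ according to the $(X\rtimes Y)$-orbit of $\chi_i$, and within each block pick one representative index and one representative character; using $\Lambda$'s $(X\rtimes Y)$-equivariance, transport the chosen extension along the orbit so that the extensions on a block are ``parallel''. Then $W_\chi$ decomposes (up to the abelian-group bookkeeping of \cite[\S 5]{IMN}) as a central-type product over the blocks of groups of the shape $\big((X\rtimes Y)_{\chi_{i_0}}\big)\wr\Sym_{|B_k|}$ sitting over $X^{|B_k|}$, and on each such factor the extension of $\chi_{i_0}^{\otimes|B_k|}$ is given by the standard wreath-product extension: $\eta_{i_0}^{\otimes|B_k|}$ extended to $(X\rtimes Y)_{\chi_{i_0}}\wr\Sym_{|B_k|}$, which exists because $\eta_{i_0}$ is an honest character of $(X\rtimes Y)_{\chi_{i_0}}$ (not merely a projective one) — this is exactly the content of the cited \cite[Thm.~2.10]{Klupsch} for wreath products, applied with trivial Schur multiplier contribution since we already have genuine extensions. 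Multiplying these block-extensions via the dot-product notation of the Notation section yields a well-defined $\phi\in\Irr(W_\chi)$ restricting to $\chi$ on $X^a$; setting $\Lambda^a(\chi):=\phi$ defines the extension map.

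For the equivariance under $\Delta A$: an element $\sigma\in A$ sends $\chi = \chi_1\otimes\cdots\otimes\chi_a$ to ${}^\sigma\chi = {}^\sigma\chi_1\otimes\cdots\otimes{}^\sigma\chi_a$ (acting diagonally, trivially on $\Sym_a$), and it sends the block decomposition to the block decomposition of ${}^\sigma\chi$; since $\Lambda$ is $A$-equivariant, ${}^\sigma\eta_i = \Lambda({}^\sigma\chi_i)$, and the standard wreath extension is manifestly natural in its input character, so ${}^\sigma\Lambda^a(\chi) = \Lambda^a({}^\sigma\chi)$. The main obstacle I anticipate is bookkeeping rather than conceptual: making the ``parallel transport'' of the extensions $\eta_i$ across an $(X\rtimes Y)$-orbit canonical enough that the resulting $\phi$ does not depend on the choices of block representatives and coset representatives, and checking that the permutation part of $W_\chi$ really does act on the tensor factors in a way compatible with the chosen $\eta_i$ — this is where one genuinely uses that $\Lambda$ is $(X\rtimes Y)$-equivariant (so conjugation $\chi_i\mapsto\chi_i^y$ carries $\eta_i$ to $\eta_i^y=\Lambda(\chi_i^y)$), together with Gallagher's lemma \cite[6.17]{Isa} to pin down the extension up to the linear characters of $W_\chi/X^a$, which are then fixed by restricting to the base group. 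Once these consistency checks are in place, the equivariance statement is immediate from functoriality.
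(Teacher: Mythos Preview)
Your proposal is correct and follows essentially the same approach as the paper, which simply defers to \cite[Thm.~2.10]{Klupsch} and the wreath-product representation construction in \cite[10.1]{Navarro_book}; you are spelling out precisely that construction---block decomposition according to $(X\rtimes Y)$-orbits, the standard wreath extension $\eta_{i_0}^{\otimes|B_k|}$ on each block, and then checking $\Delta A$-equivariance from the $A$-equivariance of $\Lambda$. The bookkeeping concerns you flag (independence of representatives, compatibility of the permutation action) are exactly what the $(X\rtimes Y)$-equivariance of $\Lambda$ handles, as you note.
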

\begin{proof}This follows by the considerations in the proof of \cite[Thm.~2.10]{Klupsch} using the construction of representations of wreath products given in \cite[10.1]{Navarro_book}.  \end{proof}

\begin{proof}[Proof of \Cref{prop3C}]
Let $I_1:=I_{d,1}$. Via the isomorphism $G_{I_1}\cong \SL_d(q)$ from \Cref{lem33_loc} the $E(\SL_d(q))$-stable $\GL_d(q)$-transversal in $\Irr(\SL_d(q))$ from \Cref{prop43}(b) determines a subset $\TT_{I_1}\subseteq \cusp(G_{I_1})$. According to Lemma \ref{lemVactKc} this set is $\ov \nn_1^{(d)}$-stable. 
The $E(\SL_d(q))$-stable $\GL_d(q)$-transversal in $\Irr(\SL_d(q))$ can even be chosen such that each character extends to its inertia group in $\SL_d(q)\rtimes E(\SL_d(q))$. Accordingly maximal extendibility holds with respect to $G_{I_1}\lhd G_{I_1}\rtimes \spann<F_p,\epsilon_d(\ov\nn_1^{(d)})>$ for $\TT_{I_1}$. 

Note that $\TT_{I_1}$ is $\NNN_{\ov V_d}(G_{I_1})$-stable, as $\NNN_{\ov V_d}(G_{I_1})$ acts as $\spann<\ov \n_1^{d}>$. Accordingly via conjugation with elements of $\ov V_d$ the set $\TT_{I_1}$ determines characters $\TT_{I}\subseteq \cusp(G_I)$ for every $I\in \cO_d$. Recall that by \Cref{lem36}(d) the group $\Kcirc$ is the direct product of the groups $G_I$ ($I\in \cO$). Analogously $K_{0,d}$ is the direct product of the groups $G_I$ ($I\in \cO_d$). 

The product $\TT_d^\circ$ of these characters $\prod_{I\in\cO_d}\TT_I$ defines a $\ov V_d \spann<F_p>$-stable set in $\cusp(K_{0,d})$. By this construction $\TT_d^\circ$ is $\ov V_d\spann<F_p>$-stable. Following the description of the action of $\wt L$ on $\Kcircd$ given in \Cref{lem_act_onGI} we see that $\TT^\circ_d$ is an $\wt L_d$-transversal in $\cusp(K_{0,d})$. This proves part (a). 

Recall $K_{0,d} \rtimes \eps_d(\ov V_d) \cong \left (G_{I_1}\rtimes \spann<\eps(\ov\nn_1^{(d)})>\right )\wr \Sym_{a_d}$ from \Cref{lemVactK}. As stated above,  maximal extendibility holds with respect to $G_{I_1}\lhd G_{I_1}\rtimes \spann<F_p,\epsilon(\ov\nn_1^{(d)})>$ for $\TT_{I_1}$. According to \Cref{lem_wr} this implies by the choice of $\TT^\circ_d$ that there is an $\eps_{d}(\ov V_{d})\spann<F_p>$-equivariant extension map $\Lambda_{\eps_d}$ \wrt $K_{0,d} \lhd K_{0,d} \rtimes \eps_{d}(V_{d}) $ for $\TT^\circ_{d}$.

According to \Cref{lem_ext_G_whG} maximal extendibility holds with respect to $G_I\lhd \wt G_I$ where $\wt G_I:=\bG_I \cap \calL\inv(\Z(\bGI))$. Additionally $[\bG_I,\bZ_I]=1$ for $\bZ_I:=\h_I(\FF^\times)$ from \Cref{lem33_loc}. We observe that $\wt L_d\leq \spann<\wt G_I\mid I\in \cO_d> \spann<\bZ_I\mid I\in \cO_d>$, even more precisely 
$$\wt L_d\leq \spann<\wt G_I\mid I\in \cO_d> \spann<\wh Z_I\mid I\in \cO_d>,$$ 
where $\wh Z_I:=\calL^{-1}(\Z(\bG_I)\cap \bZ_I)\cap \bZ_I$. We see that maximal extendibility holds \wrt $K_{0,d} \lhd \spann<\wt G_I\mid I\in \cO_d> \spann<\wh Z_I\mid I\in \cO_d>$. Hence maximal extendibility holds \wrt $K_{0,d} \lhd \wt L_d $ and $K_d\lhd \wt L_d$, as $\wt L_d/\Kcircd$ is abelian. 
\end{proof}

\begin{lem}\label{prop320} Let $d\in \DD \setminus\{\pm 1\}$.
\begin{thmlist}
\item Maximal extendibility holds \wrt $H_d\lhd \ov V_d$.
\item If $2\nmid d$, $\la\in\Irr(H_d)$ with $\la(h_0)=-1$ and $\wt \la\in\Irr(\wt H_d|\la)$, then $(\ov V_d)_{\wt \la}\leq V_\tD$ and $(\ov V_d)_\la=(\ov V_d)_{\wt \la}\spann<c_d>$ for some $c_d\in\ov V_d$. 
\end{thmlist}
\end{lem}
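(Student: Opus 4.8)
The plan is to deduce both parts from the corresponding statements for the extended Weyl group of type $\tB_{a_d}$, i.e. from \Cref{prop4_1} applied with $l'=a_d$, by transporting everything along the map $\kappa_d$ of \Cref{lem3_20}. Recall that $\ov V_d=\spann<\kappa_d(\ov V_{\uad})>$, that $\rho_{\bT}\circ\kappa_d=\ov\kappa_d\circ\rho_{\uad}$, that $\kappa_d$ carries $\wt H_{\uad}$ onto $\wt H_d$ and $H_{\uad}$ into $H_d$, that it intertwines conjugation by \Cref{lem3_20}(b), and that $\ker(\restr\kappa_d|{\wt H_{\uad}})=\spann<h_0^{d-1}>$.

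I would first treat the case $2\nmid d$, which also yields part (b). Here $h_0^{d-1}=1$, and the argument from the proof of \Cref{lem3_20} (an element of $\ker(\restr\kappa_d|{\ov V_{\uad}})$ has trivial image under $\rho_{\uad}$, hence lies in $H_{\uad}$, hence is already killed on the torus part) shows $\restr\kappa_d|{\ov V_{\uad}}$ is injective; using $\kappa_d(h_0)=h_0$ and the near-multiplicativity of $\kappa_d$ modulo $\spannh$ one checks its image is all of $\ov V_d$, so $\kappa_d\colon\ov V_{\uad}\xrightarrow{\sim}\ov V_d$ is an isomorphism carrying $H_{\uad}$ onto $H_d$ and $\wt H_{\uad}$ onto $\wt H_d$. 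Part (a) for odd $d$ is then immediate from \Cref{prop4_1}(a). For part (b): a character $\la\in\Irr(H_d)$ with $\la(h_0)=-1$ pulls back under this isomorphism to a character of $H_{\uad}$ not containing $h_0$ in its kernel, and since the asserted conclusions are invariant under $\ov V_d$-conjugacy (with $V_\tD$ being $\ov V$-stable, as $[\ov V:V_\tD]\le 2$) I may assume $\la$ corresponds to one of the characters produced by \Cref{prop4_1}(b). That proposition gives an extension $\wt\la$ of $\la$ to $\wt H_d$ with $\rho_{\bT}((\ov V_d)_{\wt\la})=\ov\kappa_d(\Sym_{a_d})$ and $(\ov V_d)_\la=(\ov V_d)_{\wt\la}\spann<c_d>$ for $c_d:=\kappa_d(c)$. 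Since the $f_k^{(d)}$ have no sign changes, $\ov\kappa_d(\Sym_{a_d})\subseteq\Sym_{\ul}\subseteq\Sym^\tD_{\pm\ul}$, so every element of $(\ov V_d)_{\wt\la}$ has $\rho_{\bT}$-image in $\Sym^\tD_{\pm\ul}$; by the identifications of \Cref{prop64} (namely $\NNN_{\bG}(\bT)=\NNN_{\obG}(\bT)\cap\bG$ and $\rho_{\bT}(\NNN_{\bG}(\bT))=\Sym^\tD_{\pm\ul}$) such an element lies in $\bG$, hence in $\ov V\cap\bG=V_\tD$ by \Cref{prop_lift}. This gives $(\ov V_d)_{\wt\la}\le V_\tD$ and finishes (b).

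It remains to handle part (a) for $2\mid d$, which I expect to be the main obstacle. Now $\ker(\restr\kappa_d|{\ov V_{\uad}})=\spannh$ with $h_0\in H_{\uad}$, so $\kappa_d$ factors through $\ov V_{\uad}/\spannh$; moreover here $\wt H_d\le H_0$ (every $\hh_I(\varpi)$ with $|I|$ even lies in $H_0$, as in the proof of \Cref{lem3_9}), so $\wt H_d=H_d$, while $h_0\notin\kappa_d(\ov V_{\uad})$ although $h_0\in H_d$. Thus $\ov V_d$ is a genuine central extension of $\ov V_{\uad}/\spannh$ by $\spann<h_0>\le H_d$ rather than an isomorphic copy of $\ov V_{\uad}$, and maximal extendibility for $H_d\lhd\ov V_d$ does not formally follow from \Cref{prop4_1}(a); one must argue directly with $\ov V_d$. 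The cleanest route I see is to redo the construction in the proof of \cite[Prop.~3.10]{MS16} inside $\ov V_d$: by \Cref{lem3_20}(b), $\ov V_d$ permutes and inverts the generators $\hh_I(\varpi)$ ($I\in\cO_d$) of $\wt H_d$ exactly through $\Sym_{\pm a_d}$ as in the type-$\tB_{a_d}$ situation, so the extendibility argument for $H_{\uad}\lhd\ov V_{\uad}$ transfers once one carries along the extra central factor $\spann<h_0>$. An alternative is to use the wreath-product description $K_{0,d}\rtimes\eps(\ov V_d)\cong(G_{I_{d,1}}\rtimes\spann<\eps(\ov\nn_1^{(d)})>)\wr\Sym_{a_d}$ of \Cref{lemVactK}(a) together with \Cref{lem_wr}. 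The subtle point throughout is that when $1\in J_d$ the element $h_0$ is moved (not fixed) by $\ov V_d$, so $\spann<h_0>$ does not split off as a direct factor of $H_d$; managing this $C_2$ is the one genuinely non-routine step of the lemma.
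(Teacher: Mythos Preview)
Your treatment of part (b) matches the paper's: both transport \Cref{prop4_1}(b) along $\kappa_d$, use that for odd $d$ one has $\kappa_d(H_{\uad})=H_d$ bijectively, and read off $(\ov V_d)_{\wt\la}\le V_\tD$ from $\ov\kappa_d(\Sym_{a_d})\subseteq\Sym^\tD_{\pm\ul}$ together with $V_\tD\lhd\ov V$.

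For part (a) there is a genuine error and an unnecessary detour. You write that ``when $1\in J_d$ the element $h_0$ is moved (not fixed) by $\ov V_d$''. This is false: $h_0=\hh_{e_1}(-1)$ is the nontrivial element of $\Z(\obG)$ (one checks $\langle\beta,e_1^\vee\rangle$ is even for every $\beta\in\ov\Phi$, or equivalently $\hh_{e_i}(-1)=\hh_{e_j}(-1)$ for all $i,j$ in $\tB_{l,sc}$), hence $h_0$ is centralized by all of $\ov V_0\supseteq\ov V_d$. So the ``one genuinely non-routine step'' you flag does not exist, and the obstacle you anticipate for even $d$ is illusory. Relatedly, your second proposed fallback via the wreath product of \Cref{lemVactK}(a) is off-target: that describes $K_{0,d}\rtimes\eps(\ov V_d)$, not $H_d\lhd\ov V_d$.

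The paper avoids the odd/even split altogether. It does \emph{not} claim that $\kappa_d$ is a group isomorphism on $\ov V_{\uad}$; your justification of this via ``near-multiplicativity modulo $\spannh$'' is incomplete (multiplicativity up to a central factor yields that the image is a subgroup, not that the map is a homomorphism). Instead the paper uses only what \Cref{lem3_20} actually provides: that $\restr\kappa_d|{V_{\uad}}$ is a homomorphism and that $\kappa_d$ intertwines $\ov V_{\uad}$-conjugation on $V_{\uad}$. This transports the known extendibility for $H_{\uad}\lhd\ov V_{\uad}$ to a $\ov V_d$-equivariant extension map for $\kappa_d(H_{\uad})\lhd\kappa_d(V_{\uad})$, and then a single appeal to \cite[4.1(a)]{S10a} upgrades this to maximal extendibility for $H_d\lhd\ov V_d$, uniformly in $d$. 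This is both shorter and avoids the unjustified isomorphism claim.
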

\begin{proof} Recall that by \cite[Prop.~3.8]{MS16} maximal extendibility holds with respect to $ H_{\uad}\lhd \ov V_{\uad}$. Via the map $\kappa_d: \ov V_{\uad}\lra \ov V_d$ from \Cref{lem3_20} the maximal extendibility \wrt $H_{\uad}\lhd \ov V_{\uad}$ gives a $\ov V_d$-equivariant extension map for $\kappa_d(H_\uad)\lhd \kappa_d(V_\uad)$. This implies part (a) according to \cite[4.1(a)]{S10a}.

In part (b) we assume $2\nmid d$ and hence $\kappa_d(H_\uad)=H_d$. The character $\la\in\Irr(H_d)$ with $\la(h_0)=-1$ corresponds via $\kappa_d$ to some $\la_0\in\Irr(H_{\uad})$ with $\la_0(h_0)=-1$. \Cref{prop4_1}(b) implies that via  $\kappa_d$ there is some $\ov V_d$-conjugate $\la'$ of $\la$ with $\rho_{\bT}(\ov V_{\wt \la'})=\Sym_{\cO_d}$ for any $\wt \la'\in\Irr(\wt H_d\mid \la')$ and $(\ov V_{d})_{\la'}=\ov V_{\wt \la'}\spann<c'_d>$ for some $c'_d\in\Irr(\ov V_d)$ with $\rho_{\bT}(c'_d)=\prod_{i\in J_d}(i,-i)$. We observe that $(\ov V_d)_{\wt \la'}\leq V_\tD$. Because of $V_\tD\lhd \ov V$ this implies $(\ov V_d)_{\wt \la}\leq V_\tD$ and $(\ov V_d)_\la=(\ov V_d)_{\wt \la} \spann<c_d>$ for some $c_d\in\ov V_d$. This proves part (b). 
\end{proof}

\subsection{Consideration of $K_{-1}\lhd K_{-1}\ov V_{-1}$}
The group structure of $G_{J_{-1}}$ depends on $\type(\Phi_{-1})$. By its definition, 
$\type(\Phi_{-1})\in \{\tA_1\times \tA_1, \tA_3, \tD_{|J_{-1}|}\}$. 
For the application of \Cref{cor_tool} we prove the following statement. Recall $\ov V_{-1}=\spann<\n_{e_1}(\varpi), h_0>$, $\wt H_{-1}=\spann<\h_{J_{-1}}(\varpi), h_0>$, $H_{-1}=\wt H_{-1}\cap H_0$ and $G_{J_{-1}}=\spann<\bX_\al\mid \al\in\Phi_{-1}>^F$. As before we set $\II Kcirc-1@{K_{0,-1}}:=G_{J_{-1}}$ and $\II K-1@{K_{-1}}:=H_{-1}G_{J_{-1}}$.

\begin{prop}\label{prop3D}
Assume Hypothesis \ref{hyp_cuspD} holds for $\bG_{J_{-1}}^F$ if $\Phi_{-1}$ is of type $\tD$. Let $\II epsilon-1@{\epsilon_{-1}}:\ov V_{-1}\lra \ov V_{-1}/H_{-1}$ be the canonical epimorphism and 
$\wt L_{-1}:= (\bG_{J_{-1}}\bT_{J_{-1}})\cap \calL^{-1}(\spann<h_0, \h_{J_{-1}}(\varpi)>)$. Then:
\begin{enumerate}
\item There exists some $\ov V_{-1}\spann<F_p>$-stable $\wt L_{-1}$-transversal $\II T-1@{\TT_{-1}^\circ}$ in $\cusp(K_{0,-1})$.
\item There exists an $\eps_{-1}(\ov V_{-1})\spann<F_p>$-equivariant extension map $\II Lambdaepsilona-1@{\Lambda_{\epsilon_{-1}}}$ \wrt $K_{0,-1} \lhd K_{0,-1}\rtimes \eps_{-1}(\ov V_{-1}) $ for $\TT^\circ_{-1}$.
\item Maximal extendibility holds with respect to $K_{0,-1}\lhd \wt L_{-1}$ and $K_{-1}\lhd \wt L_{-1}$.
\end{enumerate}
 \end{prop}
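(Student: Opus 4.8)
The plan is to treat the three possible types of $\Phi_{-1}$ separately, since the group $G_{J_{-1}}$ has genuinely different structure in each case, but to extract a common mechanism. For part (a): when $\type(\Phi_{-1})=\tD_{|J_{-1}|}$ the required $\ov V_{-1}\spann<F_p>$-stable $\wt L_{-1}$-transversal is furnished directly by Hypothesis \ref{hyp_cuspD}, once one observes that $\wt L_{-1}$ plays the role of a regular-embedding torus times $G_{J_{-1}}$ and that $\ov\n_1=\n_{e_1}(\varpi)$ induces precisely the graph automorphism $\gamma$ on $G_{J_{-1}}$ (by \Cref{gammane1}), so that the $E(\bG_{J_{-1}}^F)$-stability in Hypothesis \ref{hyp_cuspD} gives $\ov V_{-1}\spann<F_p>$-stability here; one also uses \Cref{lem_cusp} to identify cuspidal characters after restriction. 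When $\type(\Phi_{-1})=\tA_3$, we instead invoke \Cref{prop43}(a) via the exceptional isomorphism $\operatorname{Spin}_6\cong\SL_4$, noting that the order-$2$ graph automorphism of $\tD_3$ corresponds to the transpose-inverse graph automorphism of $\SL_4$; when $\type(\Phi_{-1})=\tA_1\times\tA_1$, the group $G_{J_{-1}}$ is $\SL_2(q)\times\SL_2(q)$ (or its image in $\operatorname{SO}$), $\ov\n_1$ swaps the two factors, and cuspidal characters of $\SL_2(q)\times\SL_2(q)$ are handled by the wreath-product Lemma \ref{lem_wr} applied to the known picture for $\SL_2(q)\lhd\GL_2(q)$ (Proposition \ref{prop43}).

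For part (b): in the $\tD$ case, Hypothesis \ref{hyp_cuspD} directly provides that each character in the transversal extends to its inertia group in $\bG_{J_{-1}}^F E(\bG_{J_{-1}}^F)_\chi$, and since $\ov V_{-1}/H_{-1}$ maps into the group generated by the graph automorphism, this extendibility over $G_{J_{-1}}\lhd G_{J_{-1}}\rtimes\eps_{-1}(\ov V_{-1})$ is exactly what Hypothesis \ref{hyp_cuspD} asserts after quotienting out the central $H_{-1}$. In the $\tA_3$ case I would use the extendibility statement built into \Cref{prop43}(a), translated through the exceptional isomorphism, taking care that the graph automorphism of $\tD_3$ matches $\gamma'$ rather than an inner-twisted graph automorphism of $\SL_4$ — here \Cref{prop43}(b) is the relevant variant. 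In the $\tA_1\times\tA_1$ case, Lemma \ref{lem_wr} with $X=\SL_2(q)$, $Y=1$ (or $Y$ the relevant diagonal/graph piece), $a=2$, and $A=\spann<F_p>$ yields the $\eps_{-1}(\ov V_{-1})\spann<F_p>$-equivariant extension map, since $K_{0,-1}\rtimes\eps_{-1}(\ov V_{-1})\cong\SL_2(q)\wr\Sym_2$.

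For part (c): maximal extendibility with respect to $K_{0,-1}\lhd\wt L_{-1}$ follows from \Cref{lem_ext_G_whG} applied to $\bG_{J_{-1}}$ (noting $\wh G_{J_{-1}}=\calL^{-1}(\Z(\bG_{J_{-1}}))\cap\bG_{J_{-1}}$ contains the relevant diagonal part of $\wt L_{-1}$), combined with the observation — exactly as in the proof of \Cref{prop3C} — that $\wt L_{-1}\leq\wh G_{J_{-1}}\,\wh Z_{J_{-1}}$ where $\wh Z_{J_{-1}}$ is an abelian group commuting with $\bG_{J_{-1}}$, plus the fact that $\wt L_{-1}/K_{0,-1}$ is abelian; then $K_{-1}\lhd\wt L_{-1}$ follows because $K_{-1}/K_{0,-1}$ is central. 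The main obstacle I anticipate is the $\tA_3$ case of parts (a) and (b): one must pin down carefully how the Steinberg-relation description of $\bG_{J_{-1}}$ and the element $\ov\n_1$ translate under the exceptional isomorphism $\operatorname{Spin}_6\cong\SL_4$, in particular verifying that $\ov\n_1$ induces the \emph{standard} graph automorphism $\gamma'$ of $\SL_4$ (transpose-inverse) up to an inner automorphism by an element commuting with the relevant field automorphisms, so that \Cref{prop43}(b) — where $E'(\SL_n(q))$ is abelian — can be applied rather than the non-abelian version; getting the cocycle/coset bookkeeping right there, so that extendibility and not merely existence of a transversal is obtained, is the delicate point.
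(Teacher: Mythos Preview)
Your proposal is correct and follows essentially the same three-case strategy as the paper, invoking Hypothesis~\ref{hyp_cuspD} for the $\tD$ case, \Cref{prop43} for the $\tA_3$ case, and the wreath-product \Cref{lem_wr} for $\tA_1\times\tA_1$. The only minor differences are that the paper handles part~(c) in the $\tA_3$ case by simply observing that $\wt L_{-1}/G_{J_{-1}}$ is cyclic (rather than via \Cref{lem_ext_G_whG}), and that your extra caution about $\gamma$ versus $\gamma'$ in the $\tA_3$ case is unnecessary since the paper just records that $\n_{e_1}(\varpi)$ acts as a non-trivial graph automorphism and applies \Cref{prop43} directly.
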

\begin{proof} As in the proof of \Cref{lem36} we see that $\wt L_{-1}=T_{-1}{G_{J_{-1}}} \spann<t_{J_{-1}}, t_{J_{-1},2}>$, where  $T_{-1}:=\bT_{J_{-1}}^F$, $\zeta\in\FF^\times$ with $\zeta^{(q-1)_2}=\varpi$, $\II tJ-1@{t_{J_{-1}}}:=\h_{e_1}(\zeta^2)$ and $\II tJ-12@{t_{J_{-1},2}}:=\h_{J_{-1}}(\zeta)$. Note that the action of $\wt L$ on $G_{J_{-1}}$ coincides with the one of $\spann<T_{{-1}}, t_{J_{-1}}, t_{J_{-1},2}>$ up to inner automorphisms. By the definition of $G_{J_{-1}}$ we see 
\[ G_{J_{-1}}\cong \begin{cases}
\tD_{|J_{-1}|,sc}(q)& \text{ if } \type(\Phi_{-1})=\tD_{|J_{-1}|},\\
\SL_4(q) & \text{ if } \type(\Phi_{-1})=\tA_3,\\
\SL_2(q)\times \SL_2(q)&\text{ if }\type(\Phi_{-1})=\tA_1\times \tA_1.
\end{cases}\]
 
Assume $\type(\Phi_{-1})=\tD_{l_{-1}}$ with $l_{-1}:=|J_{-1}|$ and $l_{-1}>3$. Then $T_{-1}\leq G_{J_{-1}}$. The elements $t_{J_{-1}}$ and $t_{J_{-1},2}$ act as diagonal automorphisms on $G_{J_{-1}}$.
Part (a) follows from \Cref{thm_Malle}. By Hypothesis \ref{hyp_cuspD} we can choose a $\ov V_{-1}\spann<F_p>$-stable $\wt L_{-1}$-transversal $\TT^\circ_{-1}$ in $\cusp(K_{0,-1})$ such that maximal extendibility holds \wrt $K_{0,-1}\lhd K_{0,-1}\rtimes \spann<\gamma, F_p>$ for $\TT_{-1}$. Note that 
$K_{0,-1}\rtimes \spann<\gamma, F_p>=K_{0,-1}\rtimes (\eps_{-1}(\ov V_{-1})\times \spann<F_p>)$. By this choice we see that an extension map $\Lambda_{\eps_1}$ as required in part (b) exists. Note that the actions on $G_{J_{-1}}$ induced by $\gamma$ and $\n_{e_1}(\varpi)$ coincide by \ref{not_32}. According to \Cref{lem_ext_G_whG} maximal extendibility holds with respect to $G_{J_{-1}}\lhd \wt L_{-1}$. This proves part (c) in the case where $\type(\Phi_{-1})=\tD_{l_{-1}}$ with $l_{-1}>3$. 

Assume $\type(\Phi_{-1})=\tA_1\times \tA_1$, then $t_{J_{-1}}$ induces on both factors a non-inner diagonal automorphism, while $t_{J_{-1},2}$ induces a non-inner diagonal automorphism only on one factor, since $h_0=\h_{e_2-e_1}(-1)\h_{e_1+e_2}(-1)$ and $\h_{e_1}(\varpi) \h_{e_2}(\varpi)= \h_{e_1+e_2}(-1)$. Clearly $\ov V_{-1}$ acts by permutation of the two factors. Let $\TT(\SL_2(q))$ be an $\spann<F_p>$-stable $\GL_2(q)$-transversal in $\cusp(\SL_2(q))$, see \Cref{prop43}. Then $\TT^\circ_{-1}:=\TT(\SL_2(q))\times \TT(\SL_2(q))$ is a $\ov V_{-1}\spann<F_p>$-stable $\wt L_{-1}$-transversal in $\cusp(G_{J_{-1}})$. This proves part (a) in that case. Part (b) follows from the fact that $K_{0,-1}\rtimes \eps_{-1}(\ov V_{-1})\cong \SL_2(q)\wr \Cy_2$, see also \Cref{lem_wr}. Part (c) follows again from the fact that $ \wt L_{-1}$ is $(\wh \SL_2(q))^2$, where 
$$\wh \SL_2(q):=\left \{x\in\SL_2(\FF) \,   | \, F_q(x)=\pm x \right \}.$$

Assume $\type(\Phi_{-1})=\tA_3$.
Recall $\al_2=e_2+e_1$, $\al_1=e_2-e_1$ and $\al_i:=e_i-e_{i-1}$($i\geq 3$) for the simple roots in $\Delta$. In this case $G_{J_{-1}}\cong \SL_4(q)$ and $\n_{e_1}(\varpi)$ acts on $G_{J_{-1}}$ as a non-trivial graph automorphism. 
In order to see the automorphisms induced by $t_{J_{-1}}$ and $t_{J_{-1},2}$ we use again the equation $h_0=\h_{\al_1}(-1)\h_{\al_2}(-1)$ and additionally the equation \[\h_{\underline 3}(\varpi) =\h_{\al_1}(-\varpi)\h_{\al_2}(-\varpi) \h_{\al_3}(-1).\]
This implies that $t_{J_{-1}}$ induces on $G_{J_{-1}}$ some non-inner diagonal automorphism of $\SL_4(q)$ corresponding via the Lang map (see Remark~\ref{labeldiag}) to the central involution, while $t_{J_{-1},2}$ induces a diagonal automorphism of $\SL_4(q)$ associated to a generator of the centre. 
Let $E(\SL_4(q))$ be the subgroup of $\Aut(\SL_4(q))$ from \ref{prop43}.
According to \Cref{prop43}(a) there exists a $\GL_4(q)$-transversal $\TT(\SL_4(q))$ in $\Irr(\SL_4(q))$, that is stable under the group $E(\SL_4(q))$ generated by graph and field automorphisms of $\SL_4(q)$ and such that maximal extendibility holds \wrt $\SL_4(q)\lhd \SL_4(q)\rtimes E(\SL_4(q))$ for $\TT(\SL_4(q))$. This choice guarantees part (b). As $\wt L_{-1}/G_{J_{-1}}$ is cyclic, part (c) holds in that case, as well.
\end{proof}
Recall $\II Htilde1@{\pwt H_1}:=\spann<h_0, \h_{J_{-1}}(\varpi)>$.
\begin{lem}
\begin{thmlist}
\item There exists some $\ov V_{-1}$-equivariant extension map $\Lambda_{0,-1}$ \wrt $H_{-1}\lhd \ov V_{-1}$. 
\item If $\la\in \Irr(H_{-1})$ with $\la(h_0)=-1$ and $\wt \la \in \Irr(\wt H_{-1}\mid \la)$, then $(\ov V_{-1})_{\wt \la}=H_{-1}$. 
\end{thmlist}
\end{lem}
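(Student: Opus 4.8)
The plan is to first pin down the group structure of $\ov V_{-1}$ around $H_{-1}$, and then deduce both parts by elementary Clifford theory, since each reduces to an index‑$2$ situation.

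\emph{Structural preliminaries.} By the Chevalley relations $\n_{e_1}(\varpi)^2=\h_{e_1}(-1)=h_0$, and $\n_{e_1}(\varpi)$ normalizes $\bT$, acting there as the reflection $s_{e_1}$: it fixes $\h_{e_i}(t)$ for $i\ne 1$ and sends $\h_{e_1}(\varpi)$ to $\h_{e_1}(\varpi^{-1})=\h_{e_1}(-1)\h_{e_1}(\varpi)=h_0\h_{e_1}(\varpi)$, using $\varpi^2=-1$. Since $1\in J_{-1}$, this gives
\[
\n_{e_1}(\varpi)\,\h_{J_{-1}}(\varpi)\,\n_{e_1}(\varpi)^{-1}=h_0\,\h_{J_{-1}}(\varpi)
\qquad\text{and}\qquad
\n_{e_1}(\varpi)\,h_0\,\n_{e_1}(\varpi)^{-1}=h_0,
\]
so $\n_{e_1}(\varpi)$ normalizes $\wt H_{-1}=\spann<\h_{J_{-1}}(\varpi),h_0>$; it also normalizes $H_0=\ov V_0\cap\bT$ (it lies in $\ov V_0$ and normalizes $\bT$), hence it normalizes $H_{-1}=\wt H_{-1}\cap H_0$. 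Because $h_0\in\wt H_{-1}\cap H_0=H_{-1}$ while $\n_{e_1}(\varpi)\notin\bT\supseteq H_{-1}$, the subgroup $H_{-1}$ is normal of index $2$ in $\ov V_{-1}=\spann<H_{-1},\n_{e_1}(\varpi)>$, with $\ov V_{-1}=H_{-1}\sqcup H_{-1}\n_{e_1}(\varpi)$ and $\ov V_{-1}/H_{-1}\cong\Cy_2$.

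\emph{Part (a).} For $\la\in\Irr(H_{-1})$ the inertia group $(\ov V_{-1})_\la$ is either $H_{-1}$ or $\ov V_{-1}$, and in both cases $\la$ extends to it — trivially in the first case, and because $\ov V_{-1}/H_{-1}$ is cyclic in the second — so maximal extendibility holds; I would then fix any extension map $\Lambda_{0,-1}$. Its $\ov V_{-1}$‑equivariance is automatic: for $v\in\ov V_{-1}$ one has $(\ov V_{-1})_{\la^v}=(\ov V_{-1})_\la$ (both $H_{-1}$, or both $\ov V_{-1}$, as $H_{-1}\lhd\ov V_{-1}$); if this group is $H_{-1}$ then $\Lambda_{0,-1}(\la^v)=\la^v=\Lambda_{0,-1}(\la)^v$, and if it is $\ov V_{-1}$ then $\la^v=\la$ while conjugation by $v$ is an inner automorphism of $\ov V_{-1}$, hence fixes $\Lambda_{0,-1}(\la)\in\Irr(\ov V_{-1})$.

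\emph{Part (b).} Let $\la\in\Irr(H_{-1})$ with $\la(h_0)=-1$ and $\wt\la\in\Irr(\wt H_{-1}\mid\la)$, and note $\ov V_{-1}$ normalizes $\wt H_{-1}$ (it is generated by $H_{-1}\le\wt H_{-1}$ and $\n_{e_1}(\varpi)$, and $\wt H_{-1}$ is abelian). Since $\wt H_{-1}$ is abelian, $H_{-1}$ fixes $\wt\la$ under conjugation, so $H_{-1}\le(\ov V_{-1})_{\wt\la}$. Conversely, the conjugation formula above gives
\[
\wt\la^{\,\n_{e_1}(\varpi)}\big(\h_{J_{-1}}(\varpi)\big)
=\wt\la\big(h_0\,\h_{J_{-1}}(\varpi)\big)
=\wt\la(h_0)\,\wt\la\big(\h_{J_{-1}}(\varpi)\big)
=-\,\wt\la\big(\h_{J_{-1}}(\varpi)\big),
\]
which differs from $\wt\la(\h_{J_{-1}}(\varpi))$, the latter being a nonzero root of unity; hence $\n_{e_1}(\varpi)\notin(\ov V_{-1})_{\wt\la}$, and since $\ov V_{-1}=H_{-1}\sqcup H_{-1}\n_{e_1}(\varpi)$ we conclude $(\ov V_{-1})_{\wt\la}=H_{-1}$. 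The only input that is not purely formal is the conjugation action of $\n_{e_1}(\varpi)$ on $\wt H_{-1}$ — concretely the identities $\n_{e_1}(\varpi)^2=h_0$ and $\n_{e_1}(\varpi)\h_{J_{-1}}(\varpi)\n_{e_1}(\varpi)^{-1}=h_0\h_{J_{-1}}(\varpi)$ — which is the one place requiring care with the Chevalley/Steinberg relations; everything else is routine Clifford theory for the index‑$2$ pair $H_{-1}\lhd\ov V_{-1}$.
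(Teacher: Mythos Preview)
Your proof is correct and follows essentially the same approach as the paper's: part (a) uses that $\ov V_{-1}/H_{-1}$ is cyclic, and part (b) rests on the commutator identity $[\ov\nn_1,\h_{J_{-1}}(\varpi)]=h_0$ (which you derive via the reflection action) together with $\wt\la(h_0)=-1$. You simply spell out the structural preliminaries and the equivariance in (a) more explicitly than the paper does.
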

\begin{proof} 
As $\ov V_{-1}/H_{-1}$ is cyclic there exists an extension map as required in (a). For the proof of (b) note that the equality $[\ov \n_1,\hh_{J_{-1}}(\varpi)]=h_0$ implies $\wt \la^{\ov \n_1}\neq \wt \la$ as $\wt \la(h_0)=-1$. 
\end{proof}
\subsection{Proof of Theorem B}
We now finish the proof of \Cref{thm_loc} and therefore Theorem~B.
The above allows us now to verify the character-theoretic assumptions from \Cref{cor_tool} for the groups $K$, $\Kcirc$, $\Kcircd$ and $V_\tD$, introduced in \Cref{lem34} and \Cref{prop_lift}. From the definitions of $K_{0,d}$ before \ref{lemVactK} we see $\Kcirc=\Spann<\Kcircd| d\in \DD>$, even more $\Kcirc$ is the central product of the groups $\Kcircd$ ($d\in \DD$).

By abuse of notation we write $\cusp(K)$ for $\bigcup_{\chi\in\cusp(L)}\Irr(\restr \chi|K)\subseteq \Irr(K)$.
\begin{prop}
\label{def_TT} There exists a $\ov V\spann<F_p>$-stable $\wt L$-transversal $\II Kcirc@{\mathbb K_0}$ in $\cusp(\Kcirc)$. Moreover
$\II K@{\mathbb K}:=\Irr(K\mid \KK_0)$ and $\II T@{\mathbb T}=\Irr(L\mid \KK)$ are $N\EL$-stable $\wt L$-transversals in $\cusp(K)$ and $\cusp(L)$, respectively.
\end{prop}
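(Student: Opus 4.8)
The plan is to assemble the transversal $\mathbb{T}^\circ$ factor by factor over $d\in\DD$ and then to transfer the property first along $\Kcirc\lhd K$, then along $K\lhd L$, using the central product decompositions established above together with the results on each $K_{0,d}$.

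\medskip
\noindent\textbf{Step 1: Construction of $\mathbb{T}^\circ$ on $\Kcirc$.} Recall from the paragraph preceding this proposition that $\Kcirc$ is the central product of the $\Kcircd$ ($d\in\DD$) over $\spannh$, and from \Cref{lem36}(d) that in fact the product is direct once one works modulo the identifications of the central $h_0$. For each $d\in\DD$ we have an $\ov V_d\spann<F_p>$-stable $\wt L_d$-transversal $\TT^\circ_d$ in $\cusp(\Kcircd)$: this is \Cref{prop3C}(a) for $d\geq 2$, \Cref{prop3D}(a) for $d=-1$, and for $d=1$ the set $\cusp(K_{0,1})=\cusp(1)$ is a single point and there is nothing to choose. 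Using \Cref{lem_cusp}(b), a cuspidal character of $\Kcirc$ restricts to a product of cuspidal characters of the $\Kcircd$; conversely such a product, when compatible on the central $h_0$, is cuspidal on $\Kcirc$. Hence we may set $\mathbb{T}^\circ$ to be the set of all $\odot_{d\in\DD}\lambda_d$ with $\lambda_d\in\TT^\circ_d$ lying over a common character of $\spannh$ (in the notation for central products from \Cref{not}). Since $\ov V=H\spann<\ov V_d\mid d\in\DD>$ permutes nothing between distinct $d$ (the $\ov V_d$ commute modulo $\spannh$ by \eqref{VIcommutator}) and $F_p$ acts diagonally, $\mathbb{T}^\circ$ is $\ov V\spann<F_p>$-stable. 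That $\mathbb{T}^\circ$ is a $\wt L$-transversal in $\cusp(\Kcirc)$ follows from the description of the $\wt L$-action on $\Kcircd$ in \Cref{lem_act_onGI} and \Cref{lem36}(e): $\wt L$ is generated modulo $\Kcirc$ by the $t_I$ and by $t_{\ul,2}$, and $\wt L_d$ captures exactly the diagonal automorphisms of $\Kcircd$ arising this way, so a $\wt L$-orbit meets $\mathbb{T}^\circ$ in exactly one point because each $\wt L_d$-orbit meets $\TT^\circ_d$ in exactly one point.

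\medskip
\noindent\textbf{Step 2: Passing from $\Kcirc$ to $K$.} Since $K=\Kcirc H$ with $H\leq\Z(K)$ by \Cref{lem34}, every character in $\cusp(K)$ restricts irreducibly to $\Kcirc$ up to a central twist, and conversely an irreducible character of $\Kcirc$ together with a compatible character of $H$ determines one of $K$. We take $\mathbb K:=\Irr(K\mid\KK_0)$ as in the statement, where $\KK_0=\bigcup_{\la\in\KK}\Irr(\restr\la|{\Kcirc})$; the point is that $\KK_0$ is, by Step 1 together with the transfer of transversality through the abelian quotient $\wt L/\Kcirc$ (\Cref{lem36}(d)), covered by an $\ov V\spann<F_p>$-stable $\wt L$-transversal. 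The $N\EL$-stability of $\mathbb K$ follows because $N=L V_\tD$ by \Cref{prop_lift}(a) and $V_\tD\leq\ov V$, so $N\EL$ acts on $\cusp(\Kcirc)$ through (a quotient of) $\ov V\spann<F_p,\gamma,\gamma_3>$ which stabilizes $\mathbb{T}^\circ$; combined with $\wt L$-transversality on $\Kcirc$ and $H\leq\Z(K)$, Clifford theory gives that $\mathbb K$ is an $N\EL$-stable $\wt L$-transversal in $\cusp(K)$.

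\medskip
\noindent\textbf{Step 3: Passing from $K$ to $L$ and the main obstacle.} Finally $\mathbb T=\Irr(L\mid\KK)$. By \Cref{lem_cusp}(a), restriction $\cusp(L)\to\cusp(L_0)$ and then to $\cusp(K)$ is compatible with cuspidality, and $\wt L/\Kcirc$ abelian (\Cref{lem36}(d)) lets us lift a $\wt L$-transversal in $\cusp(K)$ to one in $\cusp(L)$. The $N\EL$-stability is inherited from that of $\mathbb K$. The main obstacle I anticipate is not any single step but the bookkeeping in Step 1--2: one must check carefully that the central characters on $\spannh$ (and more generally on $H$) match up consistently across all the factors $\Kcircd$, so that the $\odot$-products are actually defined, and that the counting of orbits is exact rather than merely surjective — i.e. that distinct choices of $(\lambda_d)_d$ in $\prod_d\TT^\circ_d$ with a common central character really do lie in distinct $\wt L$-orbits. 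This requires combining \Cref{lem_act_onGI} (which identifies precisely which diagonal automorphism each $t_I$ induces, depending on the parity of $|I|$), \Cref{lem3_9} (the splitting $H=H_\even\times H_\odd$), and \Cref{lem36}(e), and is where the odd/even dichotomy in $\DD$ must be handled with care.
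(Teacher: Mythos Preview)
Your approach is essentially the paper's: assemble $\mathbb T^\circ$ as the dotted product of the $\TT_d^\circ$ from \Cref{prop3C}(a) and \Cref{prop3D}(a), check $\ov V\spann<F_p>$-stability factorwise, verify the $\wt L$-transversal property via \Cref{lem_act_onGI} (in particular that $t_{J_d,2}$ acts trivially on $G_d$ for $d\geq 1$, so the extra generator $t_{\ul,2}$ of $\wt L$ over $\wh L$ does not merge orbits), and then push up along $\Kcirc\lhd K\lhd L$ using that $\wt L/\Kcirc$ is abelian.

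The one place where the paper is sharper than your Steps~2--3 is the mechanism for ``transfer of transversality through the abelian quotient'': the paper does not argue this by bare Clifford theory but invokes \emph{maximal extendibility} with respect to $\Kcirc\lhd\wt L$, which it obtains from \Cref{prop3C}(c) and \Cref{prop3D}(c) together with $[\wt L_d,\wt L_{d'}]=1$ and $\wt L\leq\spann<\wt L_d\mid d\in\DD>$. Once every $\lambda_0\in\KK_0$ extends to $\wt L_{\lambda_0}$ and $\wt L/\Kcirc$ is abelian, the sets $\Irr(K\mid\KK_0)$ and $\Irr(L\mid\KK_0)$ are automatically $\wt L$-transversals, and this is exactly what resolves the orbit-counting worry you flag in your final paragraph. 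So your diagnosis of the obstacle is right, and the missing ingredient in your write-up is simply to name and use parts (c) of those two propositions.
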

Note that this implies Theorem \ref{thm_loc}(a). 
\begin{proof} 
For $d\in \DD\setminus\{1\} $ let $\TT_d^\circ$ be the $\ov V_d \spann<F_p>$-stable $\wt L_d$-transversal in $\cusp(\Kcircd)$ from \Cref{prop3C} and \Cref{prop3D}. Note $K_{0,1}=1$. 
The group $\Kcirc$ is a central product of the groups $\Kcircd$ ($d\in \DD$) according to \Cref{lem36}. Hence the irreducible characters of $K_0$ are obtained as the products of the irreducible characters of $\Kcircd$. The central product of the characters in $\TT^\circ_d$ form a subset $\KK_0\subseteq \Irr(\Kcirc)$. We see that $\KK_0$ is $V_\tD \EL$-stable since $V_{\tD}\EL$ and $\ov V \spann<F_p>$ act on each factor $\Kcircd$ as $\ov V_d\spann<F_p>$. Let $\II That@{\pwh T}:=\bT\cap \calL\inv(\spannh)$. The automorphisms of $\wt L $ induced on $\Kcirc$ are induced by $K_0$, $\wh T=\prod_{d\in \DD} \wh T_d$ and $t_{\ul,2}=\prod_{d\in \DD} t_{J_d,2}$, see \Cref{lem33_loc}. According to \Cref{lem_act_onGI} the element $t_{J_d,2}$ acts trivially on $G_d$, whenever $d\geq 1$. Hence $\KK_0$ is an $\wt L$-transversal of $\cusp(\Kcirc)$ as well. 
According to \Cref{prop3C} and \Cref{prop3D} maximal extendibility holds with respect to $\Kcircd\lhd \wt L_d$. Since $[\wt L_d,\wt L_{d'}]=1$ for every $d,d'\in \DD$ with $d\neq d'$, this implies that maximal extendibility holds also \wrt $\Kcirc \lhd \wt L$ as $\wt L\leq \Spann<\wt L_d| d\in \DD>$. Since $\wt L/ \Kcirc$ is abelian by \Cref{lem36}, $\KK$ and $\TT$ are again $N\EL$-stable $\wt L$-transversals in $\cusp(K)$ and $\cusp(L)$, respectively. 
\end{proof}
We apply the following statement in order to construct some extension map \wrt $L\lhd N$ for $\cusp(L)$ satisfying Equation \eqref{incl} from \Cref{HC_form}.

\begin{prop}\label{cor4_10}
There exists a $V_\tD\rtimes \EL$-equivariant extension map $\II Lambda0@{\Lambda_0}$ \wrt $H\lhd V_\tD$. 
\end{prop}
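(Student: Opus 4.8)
The goal is to produce a $V_\tD\rtimes\EL$-equivariant extension map with respect to $H\lhd V_\tD$, and the natural strategy is to decompose $H$ and $V_\tD$ along the set $\DD$ and glue together the factor-wise extension maps already constructed. Recall from \Cref{lem3_9} that $H=H_\even\times H_\odd$, and from the construction of $V_\tD=\ov V\cap\bG$ with $\ov V=H\spann<\ov V_d\mid d\in\DD>$, together with the commutator relations \eqref{VIcommutator} of \Cref{lem3_20}, that $V_\tD$ is built from the groups $\ov V_d$ ($d\in\DD$) which pairwise commute modulo $\spannh$ and satisfy $\ov V_d\cap\bL\le H_d$ (shown in the proof of \Cref{cor322}), hence $\ov V_d\cap H\le H_d$. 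So $H$ and $V_\tD$ are, up to the central $\spannh$, a direct/central product over $d\in\DD$ of the pairs $H_d\lhd \ov V_d$ (intersected with $\bG$ to account for the oddness constraint).

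First I would fix, for each $d\in\DD$, an extension map with respect to $H_d\lhd\ov V_d$ which is equivariant for the relevant acting group: for $d=1$ this is \Cref{prop4_1}(a) (maximal extendibility for $H'\lhd\ov V'$), for $d\ge 2$ this is \Cref{prop320}(a), and for $d=-1$ this is part (a) of the unnamed lemma just before \Cref{cor4_10}. The point is that $\ov V_d/H_d$ is either cyclic (for $d=-1$) or, via $\kappa_d$, isomorphic to a subgroup of the type-$\tB$ extended Weyl group where maximal extendibility with equivariance was established in \cite[Prop.~3.8]{MS16}. By \cite[Thm.~4.1]{CS17A} each such extension map can be chosen equivariant under the full stabilizer, in particular under the image of $\EL$ and of $\spann<F_p>$ acting on that factor. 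Next I would assemble these into a map on $H=\spann<H_d\mid d\in\DD>\cap H_0$: given $\la\in\Irr(H)$, decompose it (using that $H$ is a quotient of a product of the $H_d$) and take the corresponding product of the chosen extensions on the subgroups $\kappa_d(\ov V_{\uad})$, then restrict/co-restrict appropriately to land inside $V_\tD=\ov V\cap\bG$. The commutator relations \eqref{VIcommutator} guarantee the factor extensions are compatible on overlaps (which lie in $\spannh\le\ker$ or are handled by the $\cdot$-product construction of \cite[\S 5]{IMN} recalled in the opening Notation), so the product is a well-defined character of the inertia group $(V_\tD)_\la$ extending $\la$.

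The equivariance under $V_\tD\rtimes\EL$ then follows because $\EL$ (together with $V_\tD$ itself, $\spann<F_p>$-part) permutes the indices $d\in\DD$ only within each fixed "type" and acts on each $\Kcircd$/$H_d$ block as $\ov V_d\spann<F_p>$ does — this is exactly the observation used in the proof of \Cref{def_TT} that $V_\tD\EL$ acts on each factor block as $\ov V_d\spann<F_p>$. Since each block map was chosen equivariant for that block action, and a $\gamma$- or $\gamma_3$-type graph automorphism that permutes isomorphic blocks can be accommodated by \Cref{lem_wr} (the wreath-product transfer of equivariant extension maps) applied to the $\Sym_{a_d}$-symmetry within $\cO_d$, the assembled map is equivariant for the whole group.

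\textbf{Main obstacle.} The delicate point is not the existence of the factor-wise extensions but the \emph{compatibility and index-bookkeeping} when passing from $\spann<\ov V_d\mid d\in\DD>$ down to $V_\tD=\ov V\cap\bG$ and from $\prod H_d$ down to $H=(\prod H_d)\cap H_0$: one must check that the chosen extensions on the odd-$d$ blocks, whose $\ov V_d$ is genuinely larger than $(\ov V_d)\cap\bG$, can be chosen so that their product does restrict to a character of the subgroup $V_\tD$ and that no obstruction cocycle appears. This is where \Cref{prop320}(b) (the explicit description $(\ov V_d)_\la=(\ov V_d)_{\wt\la}\spann<c_d>$ with $(\ov V_d)_{\wt\la}\le V_\tD$ for odd $d$ and $\la(h_0)=-1$) is needed: it pins down exactly which part of the stabilizer lies inside $\bG$, so that one extends $\la$ first on $(\ov V_d)_{\wt\la}$ inside $V_\tD$ and then checks $c_d$-stability as in the proof of \Cref{prop4_1}(b). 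I would therefore split the verification into the case $h_0\in\ker\la$ (where everything stays inside $H_0$ and the product is immediate) and the case $\la(h_0)=-1$ (where \Cref{prop4_1}(b) and \Cref{prop320}(b) do the work block by block), and then combine using \Cref{lem3_9}.
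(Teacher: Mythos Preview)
Your proposal is correct and follows essentially the same route as the paper: decompose $H$ and $\ov V$ along $\DD$, use the factor-wise extension maps from \Cref{prop4_1}, \Cref{prop320}, and the $d=-1$ lemma, split into the cases $h_0\in\ker(\la)$ and $\la(h_0)=-1$, and for the latter use the even/odd decomposition of \Cref{lem3_9} together with \Cref{prop320}(b) to control the odd part via $\wt\la$ and $c_d$-stability. One small difference worth noting: the paper handles $\EL$-equivariance by the observation that $[F_p,V_\tD]=1$ and that $\gamma$ is induced by $\ov\nn_1\in\ov V$ (\Cref{gammane1}, \Cref{prop_lift}), so it suffices to prove maximal extendibility with respect to $H\lhd\ov V$ --- your invocation of \Cref{lem_wr} for graph automorphisms ``permuting blocks'' is unnecessary (the graph automorphisms of $\EL$ do not permute the $d$-blocks, they act within them or are already represented inside $\ov V$).
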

This ensures Assumption \ref{cor_tool}(ii.1) with the choice made in Lemma \ref{lem34}.
\begin{proof} 
Recall $\ov V:=H\spann<\ov V_d\mid d\in \DD>$ and $V_\tD:=\ov V\cap \bG$, see \Cref{prop_lift}.
Let $\II Htildeeps@{\pwt H_{\epsilon}}:=\spann<\wt H_d\mid d\in \DD_\eps>$.
We apply the extension maps from \Cref{prop4_1}, \Cref{prop320} and \Cref{prop3D}(c) for constructing a $\ov V$-equivariant extension map for $H\lhd V_\tD$.  
Note that by the definition of $\ov V$, $\ov \nn_1 \in \ov V\setminus V_\tD$ whenever $\gamma\in \EL$, and then $\ov \nn_1$ and $\gamma$ induce the same automorphism on $\bG$ according to \Cref{gammane1}. By \Cref{prop_lift}, $[F_p,V_\tD]=1$. Altogether it is sufficient to prove that maximal extendibility holds \wrt $H\lhd \ov V$.

Let $\la\in\Irr(H)$ and $\wt \la\in \Irr(\wt H\mid \la)$. Then $\wt \la=\odot_{d\in\DD}\wt \la_{d}$ for some $\wt \la_d\in \Irr(\wt H_d)$ ($d\in\DD$).
Let $\psi_d$ be the extension of $\la_d:=\restr \la|{H_d}$ to $(V_d)_{\la_d}$ given by \Cref{prop4_1}, \Cref{prop320} and \Cref{prop3D}(c).

Assume $\la(h_0)=1$. Let $\ov \la\in \Irr(H/\spann<h_0>)$ be associated with $\la$. It is sufficient to show that $\ov \la$ extends to $\ov V_{\ov \la}/\spannh$. Since $[\ov V_d/\spannh,\ov V_{d'}/\spannh]=1$ according to \eqref{VIcommutator}, the group $\ov V/\spannh$ is the central product of the groups $\ov V_d/\spannh$. The characters $\psi_d$ ($d\in\DD$) define extensions $\ov \psi_d$ of $\ov \la_d$ to $(\ov V_d)_{\la_d}/\spannh$ and $\ov \psi:=
\odot_{d\in\DD}\ov\psi_{d}$ lifts to a character $\psi^\circ$ of $\Spann<(\ov V_d)_{\la_d}| {d\in \DD}>$. Recall $H\geq \spann<H_d\mid d\in\DD>$. According to \cite[4.1(a)]{S10b} we see that $ \la$ has an extension $\psi$ to $\ov V_\la$ such that 
$\restr \psi|{\Spann< (V_d)_{\la_d}| {d\in \DD}>_{\la}} = \restr \psi^\circ|{\Spann< (V_d)_{\la_d}| {d\in \DD}>_{\la}}$. 
The extension map \wrt $H\lhd V_\tD$ for $\Irr(H\mid 1_{\spannh })$ obtained this way is then automatically $\ov V\rtimes \spann<F_p>$-equivariant. 

Assume otherwise $\la(h_0)=-1$. As in \ref{lem3_9} let $\II Dodd@{\DD_\odd}:= \{ i\in \DD\mid i \,\,\, \odd\}$ and $\II Deven@{\DD_\even}:= \{ i\in \DD \mid i \,\,\, \even\}$. For $\eps\in \{ \odd,\even\}$ recall
\begin{align*}
\II Htildeeps @{\pwt H_{\epsilon}}&:=\Spann<\wt H_d|d\in \DD_\eps>,&\quad &
\II Heps@{H_{{\epsilon}}}:= \wt H_{\eps}\cap H_0, 
\end{align*}
and $H=H_\even\times H_\odd$, see \Cref{lem3_9}.
Analogously we define
\begin{align*}
\II Vovlerlineeps@{\ov V_\eps}&:=H_\eps \Spann<\ov V_d|d\in \DD_\eps>& \quad \und\quad &
\II Vtildeeps@{\pwt V_{\epsilon}}:=\wt H_\eps V_\eps.
\end{align*}
Notice that by this definition $V_\even\leq V_\tD$ and hence $V_\tD=H (V_\even. (V_\odd\cap V_\tD))$. 

Let $\wt \la_{\eps}:=\restr \wt \la|{\wt H_\eps}$ and $\la_{\eps}:=\restr \la|{ H_\eps}$. 
Since $[\ov V_d,\ov V_{d'}]=1$ for every $d\in\DD_\even$ and $d'\in\DD$ by \eqref{VIcommutator}, the extensions $\psi_d$ ($d\in \DD_\even$) allow us to define an extension of $\la_\even$ to $(\ov V_{\even})_{\la_\even}$.

Now $H_\even$ is the central product of the groups $H_d$ ($d\in\DD_\even$) and $(V_\even)_{\la_\even}$ is analogously the central product of the groups $(\ov V_d)_{\la_d}$. Hence the product of the characters $\psi_d$ ($d\in\DD_{\even}$) defines an extension $\wh \la_\even\in \Irr((V_\even)_{ \la_\even})$ of $\la_\even$.

In order to extend $\la_\odd$ to $(V_{\odd})_{\la_\odd}$ we first extend $\wt \la_\odd:=\restr \wt \la|{\wt H_\odd}$. Again $\wt \la_\odd$ is the central product of characters $\wt \la_d$ ($d\in\DD_\odd$). According to \Cref{prop4_1}(b) and \Cref{prop320}(b) we have $(\ov V_d)_{\wt \la_d}\leq V_\tD$. The same holds also for $d=-1$ by straight-forward calculations.

Let $\nu \in \Irr(\wt H_\odd)$ with $\ker(\nu)=H_\odd$. 
According to \Cref{prop320}(b) there exists some element $c_d\in \ov V_{d}$ such that $(\ov V_d)_{\la_d}=\spann<(\ov V_d)_{\wt \la_d}, c_d>$, which satisfies $\wt \la_d^{c_d}=\wt \la_d \restr \nu|{\wt H_d}$. The extensions $\restr \psi_d|{(\ov V_d)_{\wt \la_d }}$ define easily extensions $\psi'_d$ of $\wt \la_d$ to $\wt H_d (\ov V_d)_{\wt \la_d }$.
The restriction $\restr \psi'_d|{(V_d)_{\wt\la_d}}$ is then $c_d$-stable.
Since $(\ov V_d)_{\wt \la_d}$ is contained in $V_\tD$ the group $(\ov V_\odd)_{\wt \la_\odd }$ is the central product of the groups $\wt H_d (\ov V_d)_{\wt \la_d}$ ($d\in\DD_\odd$). The product $\psi':=\prod_{d\in\DD_\odd}\psi'_d$ determines uniquely an extension $\psi''$ of $\wt \la_{\odd}$ to $\wt H_\odd (\ov V_\odd)_{\wt \la_\odd}$.
Routine calculations show that $(\ov V_{\odd})_{\la_\odd}=(\ov V_\odd)_{ \wt \la_\odd }\spann<c_\odd>$ where $c_\odd=\prod_{d\in\DD_\odd} c_d$. The character $\restr \psi''|{H_\odd(\ov V_\odd)_{\wt \la_\odd}}$ is then $c_\odd$-stable and extends to $(\ov V_{\odd})_{\la_\odd}$. This way we obtain an extension $\wh \la_\odd$ of $\la_\odd$ to $(\ov V_\odd)_{ \la_\odd}$.

Recall $[\ov V_\odd, \ov V_\even]=1$. Hence the extensions $\wh \la_\odd$ and $\wh \la_\even$ determine an extension of $\la$ to $\ov V_\la$, see \cite[Lem.~4.2]{S09}. 
\end{proof}
\noindent 
In the next step we show that there exists an extension map \wrt $\Kcirc\lhd \Kcirc \rtimes \epsilon(V_\tD)$ for
the set $\KK_0$ from \Cref{def_TT} as required in Proposition \ref{cor_tool}. 

\begin{prop} \label{propLaeps}
There exists a $ V_{\tD} \EL$-equivariant extension map $\II Lambdaeps@{\Lambda_\epsilon}$ \wrt $\Kcirc\lhd \Kcirc \rtimes \epsilon(V_\tD)$ for $\KK_0$, where $\epsilon: V_{\tD} \EL \rightarrow V_{\tD}\EL/H$ is the canonical morphism. 
\end{prop}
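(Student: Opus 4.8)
The plan is to decompose the problem over the set $\DD$, exactly as the corresponding group-theoretic decomposition in \Cref{def_TT}, and then assemble the pieces using the commutation relations \eqref{VIcommutator}. Recall that $\Kcirc$ is the central product of the groups $\Kcircd$ ($d\in\DD$) over $\spannh$, and that $\KK_0=\{\odot_{d\in\DD}\chi_d\mid \chi_d\in\TT^\circ_d\}$ where $\TT^\circ_d$ is the $\ov V_d\spann<F_p>$-stable $\wt L_d$-transversal of \Cref{prop3C} and \Cref{prop3D}. First I would fix, for each $d\in\DD\setminus\{1\}$, the $\eps_d(\ov V_d)\spann<F_p>$-equivariant extension map $\Lambda_{\eps_d}$ \wrt $\Kcircd\lhd \Kcircd\rtimes\eps_d(V_d)$ for $\TT^\circ_d$ provided by \Cref{prop3C}(b) and \Cref{prop3D}(b) (for $d=1$ the group $\Kcircd$ is trivial so there is nothing to do). The target is a single extension map for the central product that is equivariant under $V_\tD\EL$, acting through its image in $V_\tD\EL/H$.

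The key step is to patch these component extension maps together. Given $\chi=\odot_{d\in\DD}\chi_d\in\KK_0$ with $\chi_d\in\TT^\circ_d$, its stabiliser in $\eps(V_\tD)$ decomposes compatibly because $[\ov V_d,\ov V_{d'}]\leq\spannh$ for $d\neq d'$ by \eqref{VIcommutator}, hence the images $\eps(\ov V_d)$ and $\eps(\ov V_{d'})$ commute in $V_\tD\EL/H$; and since $\Kcircd$ and $\Kcircd[d']$ commute, the stabiliser of $\chi$ in $\eps(\spann<V_d\mid d\in\DD>)$ is the central product of the groups $\eps(V_d)_{\chi_d}$. The characters $\Lambda_{\eps_d}(\chi_d)$ then have a common extension to this central product by \cite[\S 5]{IMN}, giving the value of $\Lambda_\eps$ on the $\spann<V_d\mid d\in\DD>$-part. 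To extend further across all of $V_\tD$, note that $V_\tD=H\spann<V_d\mid d\in\DD>$ and $\eps$ kills $H$, so $\eps(V_\tD)=\eps(\spann<V_d\mid d\in\DD>)$ and there is nothing left to do on that account; what does require care is the graph automorphism. When $\gamma\in\EL$ the element $\ov\nn_1\in\ov V\setminus V_\tD$ induces $\gamma$ on $\bG$ (\Cref{gammane1}), and on each $\Kcircd$ the action of $\ov\nn_1$ (or the relevant lift $\ov\nn_1^{(d)}=\kappa_d(\ov\nn_1)$) agrees with a graph automorphism already built into the construction of $\TT^\circ_d$ in \Cref{lem33_loc}, \Cref{lemVactK}(b,c), \Cref{prop3C} and \Cref{prop3D}. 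Hence the resulting extension map is $\ov V$-equivariant, in particular $\EL$-equivariant, and equivariance under $\spann<F_p>$ follows from $[F_p,V_\tD]=1$ (\Cref{prop_lift}) together with the fact that each $\Lambda_{\eps_d}$ was chosen $\spann<F_p>$-equivariant.

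Concretely, I would run \Cref{lem_wr} to turn each local extension map for $G_{I_{d,1}}\lhd G_{I_{d,1}}\rtimes\spann<F_p,\eps_d(\ov\nn_1^{(d)})>$ (on $\TT_{I_{d,1}}$) into an extension map for the wreath product $\Kcircd\rtimes\eps_d(V_d)\cong(G_{I_{d,1}}\rtimes\spann<\eps_d(\ov\nn_1^{(d)})>)\wr\Sym_{a_d}$ equivariant under $\Delta A$ with $A=\spann<F_p>$ (this is what \Cref{prop3C}(b) records, and \Cref{prop3D}(b) records the analogous $d=-1$ statement using \Cref{hyp_cuspD}); then combine the $d$'s by the central-product gluing just described, using $\wt L/\Kcirc$ abelian and $[\wt L_d,\wt L_{d'}]=1$ (\Cref{lem36}) to see that the stabilisers and characters match up. The main obstacle is the bookkeeping of stabilisers under the central product together with the case $d=-1$: there $\type(\Phi_{-1})$ can be $\tD$, $\tA_3$ or $\tA_1\times\tA_1$, and one must verify in each sub-case that the transversal $\TT^\circ_{-1}$ and the local extension map supplied by \Cref{prop3D} are stable under, respectively extendible along, the action of $\ov\nn_1$ — which is exactly the content of \Cref{prop3D} combined with \Cref{hyp_cuspD} in the $\tD$-case and \Cref{prop43} in the type-$\tA$ cases. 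Once these are in hand, the patching is formal. $\blacksquare$
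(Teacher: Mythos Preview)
Your proposal is correct and follows the same route as the paper: take the component extension maps $\Lambda_{\eps_d}$ from \Cref{prop3C}(b) and \Cref{prop3D}(b) and assemble them over $d\in\DD$. The paper's proof is much terser because it observes one simplification you miss: after passing to $\eps$, the central-product complications disappear entirely and $\Kcirc\rtimes\eps(\ov V)$ is the \emph{direct} product of the groups $\Kcircd\rtimes\eps_d(\ov V_d)$. Indeed $\Kcirc$ is already the direct product of the $\Kcircd$ (\Cref{lem36}(d)), and since $\ov V_d\cap\bT\leq H_d\leq H$ the map $\eps$ embeds each $\ov V_d$ isomorphically onto $\ov W_d\leq\Sym_{\pm\cO}$; as the $\ov W_d$ act on disjoint pieces of $\pm\cO$ they form a direct product. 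So no gluing via \cite[\S5]{IMN} is needed, and your discussion of stabilisers in central products is unnecessary. Your third paragraph re-derives the content of \Cref{prop3C}(b) via \Cref{lem_wr} rather than simply citing it; and the discussion of $\gamma$ and $\ov\nn_1$, while not wrong, is subsumed by noting that $V_\tD\EL$ acts on $\Kcirc$ through $\ov V\spann<F_p>$, whose equivariance is immediate from the direct-product structure and the $\ov V_d\spann<F_p>$-equivariance of each $\Lambda_{\eps_d}$.
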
 
\begin{proof} 
By \Cref{prop3C} and \Cref{prop3D} there exist $\ov V_d\spann<F_p>$-equivariant extension maps $\Lambda_{\eps_d}$ \wrt $K_{0,d}\lhd K_{0,d} \rtimes \eps(\ov V_d)$ for $\TT_d^\circ$, whenever $d\in \DD$ with $d\neq 1$.
Note that the case $d=1$ is trivial since $K_{0,1}=1$. 
The group $\Kcirc\rtimes \eps(\ov V)$ is the direct product of the groups $K_{0,d} \rtimes \eps_d(\ov V_d)$. Using the 
maps $\Lambda_{\eps_d}$ ($d\in\DD$) we therefore obtain an extension map $\Lambda_\eps$ as required. 
\end{proof}
This leads to the following statement. We use the set $$\KK:=\Irr(K\mid \KK_0)$$ with $\KK_0$ from \Cref{def_TT}. For the application of \Cref{cor_tool} we use the group $M=KV_\tD$, see also \Cref{cor322}. 
\begin{prop} \label{prop329} There exists a $V \EL$-equivariant extension map $\II LambdaKM@{\Lambda_{K\lhd M}}$ \wrt $K\lhd M$ for $\KK$. 
\end{prop}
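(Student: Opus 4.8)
The plan is to obtain Proposition~\ref{prop329} as an immediate application of \Cref{cor_tool}, every hypothesis of which has by now been arranged in the preceding results. Concretely, I would take $\Kcirc$, $H$ and $K=\Kcirc H$ as in \Cref{lem34}, the subgroup $V:=V_\tD$, the overgroup $M:=KV_\tD$, and $D:=\EL$ as in \Cref{cor322}, together with the character set $\KK:=\Irr(K\mid\KK_0)$, where $\KK_0$ is the $V_\tD\EL$-stable subset of $\Irr(\Kcirc)$ produced in \Cref{def_TT}. Before invoking \Cref{cor_tool} one should note that $\KK$ is $M\EL$-stable: since $K\le L$ and $V_\tD\le N$ we have $M\le N$ (using $N=LV_\tD$ from \Cref{prop_lift}), so $M\EL$-stability is a consequence of the $N\EL$-stability of $\KK$ recorded in \Cref{def_TT}.

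Next I would verify the two blocks of hypotheses of \Cref{cor_tool}. For the group-theoretic assumption \ref{cor32i}, namely $K=\Kcirc(K\cap V_\tD)$ with $H=K\cap V_\tD\le\Z(K)$, $M=KV_\tD$, and $D$-stability of $\Kcirc$ and $V_\tD$, this is exactly the content of \Cref{cor322}. For the character-theoretic assumption \ref{cor23ii}: condition (ii.1) — a $V_\tD\rtimes\EL$-equivariant extension map $\Lambda_0$ with respect to $H\lhd V_\tD$ — is provided by \Cref{cor4_10}; and condition (ii.2) — an $\eps(V_\tD)\EL$-equivariant extension map $\Lambda_\eps$ with respect to $\Kcirc\lhd\Kcirc\rtimes\eps(V_\tD)$ for $\KK_0$ — is provided by \Cref{propLaeps}. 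Along the way one has to match the two occurrences of the symbol $\KK_0$: the set in \Cref{def_TT} equals $\bigcup_{\la\in\KK}\Irr(\restr\la|{\Kcirc})$ because $H$ centralizes $\Kcirc$ (as $H\le\Z(\bL)$ by \Cref{lem34}), so that $K$ is the central product $\Kcirc.H$, conjugation by $K$ is trivial on $\Irr(\Kcirc)$, and each $\la\in\KK=\Irr(K\mid\KK_0)$ lies over a single character of $\KK_0$.

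With \ref{cor32i} and \ref{cor23ii} established, \Cref{cor_tool} directly yields an $M\EL$-equivariant extension map $\Lambda_{K\lhd M}$ with respect to $K\lhd M$ for $\KK$; restricting the equivariance group along $V_\tD\EL\le M\EL$ gives the assertion. I do not expect a genuine obstacle here — the substantive work was done in \Cref{cor4_10} and \Cref{propLaeps} — and the only points requiring some care are bookkeeping: confirming that the equivariance groups occurring in \Cref{cor4_10} ($V_\tD\rtimes\EL$), in \Cref{propLaeps} ($V_\tD\EL$, acting through $\eps$), and in the hypotheses of \Cref{cor_tool} ($VD$, resp. $\eps(V)D$) are mutually compatible, and recalling from \Cref{prop_lift} that $[F_p,V_\tD]=1$, so that the products over $d\in\DD$ of the per-factor maps are indeed $V_\tD\spann<F_p>$-equivariant.
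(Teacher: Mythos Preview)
Your proposal is correct and follows essentially the same approach as the paper: verify the hypotheses of \Cref{cor_tool} using \Cref{cor322} for the group-theoretic part~\ref{cor32i}, and \Cref{cor4_10} together with \Cref{propLaeps} for the character-theoretic part~\ref{cor23ii}, then apply \Cref{cor_tool}. Your additional remarks (the $M\EL$-stability of $\KK$ and the identification of $\KK_0$ with $\bigcup_{\la\in\KK}\Irr(\restr\la|{\Kcirc})$ via $H\leq\Z(K)$) make explicit exactly the bookkeeping that the paper's proof leaves implicit.
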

\begin{proof}
By the above all the assumptions of \Cref{cor_tool} are satisfied. 
The groups satisfy the required assumptions in \ref{cor32i} according to \ref{cor322}. Using the set $\KK$, given as $\Irr(K\mid \KK_0)$ from \Cref{def_TT} the set $\KK_0$ coincides with $\bigcup_{\la\in\KK}\Irr(\restr \la|{\Kcirc})$. With the $V_\tD \EL$-equivariant extension map $\Lambda_0$ for $H\lhd V_\tD$ from \Cref{cor4_10} and the extension map $\Lambda_\eps$ for $\Kcirc \lhd \Kcirc \rtimes \eps(\ov V)$ from \Cref{propLaeps} the Assumption \ref{cor23ii} is satisfied. The application of this statement implies the result. 
\end{proof}
For the set $\TT$ defined as $\Irr(L\mid \KK)$ in \Cref{def_TT} we verify that there exists an $N\EL$-equivariant extension map \wrt $L\lhd N$ for $\TT$.
\begin{proof}[Proof of Theorem~\ref{stabcuspN} and Theorem~\ref{theoC}]
For the proof it is sufficient to construct for every $\la\in\TT=\Irr(L\mid \KK)$ some $N\EL$-stable extension to $N_\la$. A character $\la\in\TT$ lies above a unique $\la_0\in \KK=\Irr(K\mid \KK_0)$. Moreover some extension $\wt \la_0\in\Irr(L_{\la_0})$  to $L_{\la_0}$ satisfies $(\wt \la_0)^L=\la$. 
By the properties of $\KK$ we see $N_{\la_0}=L_{\la_0} M_{\la_0}$, which is normalized by $(N\EL)_{\wt\la_0}$-.  By \Cref{prop329} the character $\la_0$ has a $(\ov V \spann<F_p>)_{\la_0}$-stable extension to $M_{\la_0}$. According to \cite[4.1]{S10b} this defines an extension $\phi$ of $\wt \la_0$ to $N_{\wt\la_0}$ since $N_{\wt \la_0}\leq L_{\la_0}M_{\la_0}$. By the construction we see that $\phi^{N_{\la}}$ is an extension of $\la$.

As $\TT$ is an $M$-stable $\wt L$-transversal, $\wt N_{\la_0}=\wt L_{\la_0} M_{\la_0}$ and $(\wt N\EL)_{\la_0}=\wt L_{\la_0} \wh M_{\la_0}$. Hence this extension of $\la_0$ defines an extension of $\la$ as required. \end{proof}
Later this ensures Assumption \ref{prop23ii}. 

\begin{remark}
While \Cref{thm_loc}(b) assumes $q$ to be odd, the proof would give a similar conclusion in the other case. For even $q$ every $\chi\in \Irr(G)$ satisfies $(\wt G E)_\chi=\wt G_\chi E_\chi$ since $\wt G=G$ in the notation of \ref{not}. Nevertheless the conclusion of \Cref{thm_loc}(b) holds as well. We observe that the arguments from before prove that there exists some $N \EL$-equivariant extension map $\Kcirc\lhd \Kcirc V_\tD$ for $\cusp(L)$, where $V_\tD$ is isomorphic to $N/L$ and is defined as before with $1=\pm \varpi$ in the argument of the Chevalley generators. 
\end{remark}


\section{More on cuspidal characters}\label{sec4}
In order to prove our main theorem we need more specific results on cuspidal characters, especially with regard to automorphisms. We keep $q$ a power of an odd prime.
\begin{prop} \label{prop51} 
Let $n\geq 3$, $\chi\in\cusp(\GL_n(q))$ and $\gamma \in \Aut(\GL_n(q))$ given by transpose-inverse up to some inner automorphism.
	\begin{thmlist}
	\item If $\chi^\gamma=\chi$ , then $2\mid n$ and $\Z(\GL_n(q)) \leq\ker(\chi)$.
\item If $\chi^\gamma = \chi \delta$ for $\delta \in \Irr(\GL_n(q))$ a linear character of multiplicative order 2, then $2\mid n$.
\end{thmlist}
\end{prop}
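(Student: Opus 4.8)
The plan is to use the parametrization of cuspidal characters of $\GL_n(q)$ via semisimple conjugacy classes in the dual group, together with the known action of the transpose-inverse automorphism $\gamma$ on these data. Recall that cuspidal characters of $\GL_n(q)$ are, up to Lusztig's Jordan decomposition, the irreducible characters lying in Lusztig series $\cE(\GL_n(q),[s])$ where $s$ is a semisimple element of the dual group $\GL_n(q)^*\cong\GL_n(q)$ whose characteristic polynomial is irreducible of degree $n$ over $\FF_q$ — equivalently, $s$ generates a field $\FF_{q^n}$ and the corresponding series consists of a single character (the unipotent part is forced to be trivial since the centralizer is a torus). So $\cusp(\GL_n(q))$ is parametrized by $\Gal(\FF_{q^n}/\FF_q)$-orbits of generators $\theta$ of $\Irr(\FF_{q^n}^\times)$ that do not factor through any proper subfield, i.e. $\theta$ is a character of $\FF_{q^n}^\times$ of "degree $n$". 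The linear characters of $\GL_n(q)$ correspond (via the determinant) to $\Irr(\FF_q^\times)$, and twisting $\chi=\chi_\theta$ by the linear character $\widehat\mu\circ\det$ corresponds to $\theta\mapsto\theta\cdot(\mu\circ N_{\FF_{q^n}/\FF_q})$. Finally, the transpose-inverse automorphism sends $\chi_\theta$ to $\chi_{\theta^{-1}}$ (up to the inner automorphism, which does not change the character), since on the dual side it inverts the semisimple class; one may cite the description of the action of $\gamma$ on $\cE(\GL_n(q),[s])$ from e.g. \cite{CS17A} or standard references.

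First I would prove (b). If $\chi^\gamma=\chi\delta$ with $\delta$ of order $2$, write $\chi=\chi_\theta$ and $\delta=\widehat\mu\circ\det$ where $\mu\in\Irr(\FF_q^\times)$ has order $2$ (so $q$ odd, which is assumed). The equation becomes $\theta^{-1}=\theta\cdot(\mu\circ N)$ up to the Galois action, i.e. there is $j$ with $\theta^{q^j}=\theta^{-1}\cdot(\mu\circ N)$. Composing the map $x\mapsto\theta(x)$ with itself under $x\mapsto x^{q^j+1}$ shows $\theta^{q^{2j}}=\theta$ (using that $\mu\circ N$ is $\Gal$-stable and order $2$), so $2j\equiv 0\pmod n$ as $\theta$ has order "exactly $n$" in the appropriate sense; hence $n\mid 2j$. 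Now evaluate the relation on a norm-one element, or better: restrict $\theta$ to the kernel of the norm $N\colon\FF_{q^n}^\times\to\FF_q^\times$. On that kernel $\mu\circ N$ is trivial, so $\theta^{q^j+1}$ vanishes there, forcing $q^j+1\equiv 0$ modulo the order of $\theta\restriction_{\ker N}$. A counting/order argument then forces $n$ even: if $n$ were odd, $-1\notin\langle q\rangle$-coset structure would make $\theta^{q^j}=\theta^{-1}$ impossible to reconcile with $\theta$ being primitive of odd degree. The cleanest route is: $\gamma$-twisted-stability of $\chi_\theta$ means the $\Gal$-orbit of $\theta$ equals the $\Gal$-orbit of $\theta^{-1}(\mu\circ N)$; comparing the sizes and using that $\theta\mapsto\theta^{-1}$ is an involution one shows the orbit is fixed by an order-$2$ element of $\Gal\times\langle\text{inversion}\rangle$, and primitivity of degree $n$ forces $n$ even.

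Then (a) is the special case $\mu=1$: $\chi_\theta^\gamma=\chi_\theta$ means $\theta^{q^j}=\theta^{-1}$ for some $j$, so $\theta^{q^{2j}}=\theta$, giving $n\mid 2j$; since $\theta$ has exact degree $n$ and $n$ would have to be even by the argument above (as $\theta\ne\theta^{-1}$, because $\theta$ has order divisible by something $>2$ when $n\ge 3$ — here is where $n\ge 3$ enters, ruling out $\theta$ of order $\le 2$). For the center statement: $\Z(\GL_n(q))\cong\FF_q^\times$ embeds as scalars, and $\chi_\theta$ restricted to the center is $\theta\restriction_{\FF_q^\times}$-isotypic via $\theta(z)=\theta(z\cdot 1)$ where $z\in\FF_q^\times\hookrightarrow\FF_{q^n}^\times$; the central character of $\chi_\theta$ is $z\mapsto\theta(z)^{?}$ — more precisely it equals $\theta\restriction_{\FF_q^\times}$ up to the $N$-twist, but the key point is that $\chi_\theta^\gamma$ has central character inverse to that of $\chi_\theta$, so $\chi_\theta^\gamma=\chi_\theta$ forces the central character to be its own inverse, i.e. of order $\le 2$; combined with $2\mid n$ and a compatibility between $\theta\restriction_{\FF_q^\times}$ and the norm, one upgrades "order $\le 2$" to "trivial", giving $\Z(\GL_n(q))\le\ker\chi$.

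The main obstacle will be pinning down the exact normalization of the action of $\gamma$ (transpose-inverse) on the Deligne–Lusztig/Jordan parameters and the precise central character of $\chi_\theta$, so that the "order $\le 2$" conclusion in (a) can be sharpened to triviality of the central character rather than merely an order-$2$ constraint; I expect this requires a short explicit computation with Gelfand–Graev characters or with the degree-$n$ cuspidal character's values on central elements, and careful bookkeeping of whether the relevant inner automorphism in the definition of $\gamma$ affects the central character (it does not, since inner automorphisms fix central characters). I would handle this by reducing everything to statements about the character $\theta$ of $\FF_{q^n}^\times$ and the norm map, where the arithmetic is transparent.
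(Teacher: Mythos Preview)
Your approach is the same as the paper's, phrased in the dual language of characters $\theta$ of $\FF_{q^n}^\times$ rather than eigenvalues $\zeta$ of the semisimple element $s$. The paper uses that a cuspidal character lies in $\cE(\GL_n(q),[s])$ with $s$ regular and $\Cent(s)$ a Coxeter torus, that $\gamma$ sends this series to $\cE(\GL_n(q),[s^{-1}])$, and that $\delta$-twisting sends it to $\cE(\GL_n(q),[-s])$; it then argues with the $F$-orbit $\{\zeta,\zeta^q,\ldots,\zeta^{q^{n-1}}\}$ forming the spectrum of $s$.

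The eigenvalue phrasing dissolves your flagged obstacle in (a). Since $s\sim s^{-1}$ and no eigenvalue can be $\pm 1$ (else $\FF_q[\zeta]=\FF_q$, forcing $n=1$), inversion is fixed-point-free on the spectrum, so eigenvalues pair off as $\{\alpha,\alpha^{-1}\}$ and $\det s=\prod\alpha=1$; hence $s\in[\bK^*,\bK^*]^F$ and $\Z(\GL_n(q))\leq\ker\chi$ by \cite[15.3]{CE04}. There is no passage through ``order $\leq 2$''. In your language the same fact reads: $\theta^{q^{n/2}}=\theta^{-1}$ forces $\theta$ to be trivial on the image of $x\mapsto x^{q^{n/2}+1}$, which is all of $\FF_{q^{n/2}}^\times\supseteq\FF_q^\times$ (this map is the norm to $\FF_{q^{n/2}}$). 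So your sketch can be completed, but the eigenvalue formulation avoids the detour entirely.

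Your argument for (b) is also left incomplete; the vague ``counting/order argument'' needs to be made precise. The paper does it as follows: from $-\zeta^{-1}=\zeta^{q^a}$ one gets $\zeta^{q^{2a}}=\zeta$ (using $q$ odd), so $n\mid 2a$; if $n$ is odd then $n\mid a\leq n$, hence $a=n$ and $-\zeta^{-1}=\zeta$, i.e.\ $\zeta^2=-1$, so $\zeta\in\FF_{q^2}$, contradicting $\FF_q[\zeta]=\FF_{q^n}$ with $n\geq 3$. Translated to your setup: $n$ odd forces $j=0$, whence $\theta^2=\mu\circ N$ and $\theta$ has order $4$; but there are only two characters of order $4$, so the $\Gal$-orbit of $\theta$ has size at most $2<n$, contradicting regularity.
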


\begin{proof} Let us recall the form of elements of $\cusp(\GL_n(q))$, see also \cite[16.1]{Cedric}. We write $\bK:=\GL_n(\FF)$  and $\bK^*:=\GL_n(\FF)$ as the dual with $\FF_q$-structures given by $F$. Let $s\in (\bK^*)^F=\GL_n(q)$ be such that the Lusztig series $\cE(\bK^F,(s))$ associated to $s$ contains a cuspidal character. Combining \cite[3.2.22]{GM} and the fact that the group $\cent{\bK^*}{s}^F$ of type A can have cuspidal unipotent characters only when it is a torus (see for instance \cite[Ex.~2.4.20]{GM}), we get that $s$ is regular and $\Cent_{\bK^*}(s)$ is a Coxeter torus. This can be summed up in the fact that the spectrum of $s$ is a single orbit of length $n$ under $F$, or equivalently $\FF_{q^{}}{[\zeta]}=\FF_{q^{n}}$ for any eigenvalue $\zeta$ of $s$. Concerning the action of $\gamma$, note that an element of $\cE(\bK^F,(s))$ is sent to an element of $\cE(\bK^F,(s^{-1}))$ (apply \cite[3.1]{CS13}).

For the proof of (a) let  $\chi\in \cE(\bK^F,(s))$ be invariant under $\gamma$. Then $s$ and $s^{-1}$ have the same spectrum. If $1$ or $-1$ is an eigenvalue of $s$, then $s\in \{\Id_n,-\Id_n\}$ and $n=1$ since the eigenvalues of $s$ form a single $F$-orbit. This is impossible so inversion is without fixed point on the spectrum of $s$. This implies that $n$ is even and that the product of the eigenvalues of $s$ is $1$. So $s\in [\bK^*,\bK^*]^F$ and this implies that all characters of $\cE (\bK^F,(s))$ have $\Z(\GL_n(q))$ in their kernel (see \cite[p. 207]{CE04}).

For the proof of part (b) note that by the assumptions $q$ is odd and $\SL_n(q)$ is perfect, see \cite[24.17]{MT}. By the correspondence induced by duality between (linear) characters of $\bK^F/[\bK ,\bK]^F$ and elements of $\Z (\bK^*)^F$  (see for instance  \cite[11.4.12]{DiMi2}) we have  $\delta \cE(\bK^F,(s)) = \cE(\bK^F,(-s))$. Assuming $\chi^\gamma = \chi \delta$, the same considerations as above show that $s$ and $-s^{-1}$ have the same eigenvalues. The spectrum of $s$ is of the form $\{ F(\zeta), F^2(\zeta), \dots ,F^n(\zeta)=\zeta  \}$ with $\FF_{q^{}}[\zeta]=\FF_{q^{n}}$.  Since $s$ and $-s^{-1}$ have the same eigenvalues, then $-\zeta^{-1} = F^a(\zeta)$ for some $1\leq a\leq n$. We have $F^{2a}(\zeta)=-F^a(\zeta)^{-1}=\zeta$ and therefore $\FF_{q^{2a}}\supseteq \FF_{q^{}}[\zeta]=\FF_{q^{n}}$. Then $n$ divides $2a$. Assume now that $n$ is odd. This implies that $n$ divides $a\leq n$. So $a=n$ and $-\zeta^{-1} =F^n(\zeta)=\zeta$. But then $\zeta^2=-1$ and $\FF_{q^{}}[\zeta]\subseteq \FF_{q^{2}}$ which contradicts $n\geq 3$. So we get our claim that $2\mid n$.
\end{proof}

\noindent 
The following statement is used later for computing the relative Weyl groups associated to cuspidal characters of a Levi subgroup of $\tDlsc(q)$.
\begin{prop}\label{prop_SL2_cusp} 
	Let $n\geq 2$, $\psi\in\cusp(\SL_n(q))$ and $\gamma \in \Aut(\GL_n(q))$ given by transpose-inverse up to some inner automorphism. If $ |\GL_n(q):\GL_n(q)_\psi|$ is even and $\psi^\gamma=\psi$, then $n=2$ and $\psi$ is one of the two characters $\R'_\si(\theta_0)$ ($\si\in\{\pm 1\}$) of degree  $\frac{q-1}{2}$ from Table 5.4 of \cite{BonSL2}.
\end{prop}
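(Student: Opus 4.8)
\noindent\emph{Proposal.} The plan is to lift $\psi$ to a cuspidal character of $\GL_n(q)$ and exploit the explicit description recalled in the proof of \Cref{prop51}. Fix $\chi\in\cusp(\GL_n(q))$ lying above $\psi$; such a $\chi$ exists and is automatically cuspidal, since $\SL_n(q)$ has $p'$-index in $\GL_n(q)$ (the principle used in the proof of \Cref{lem_cusp}). Then $\chi\in\cE(\bK^F,(s))$ for $\bK=\GL_n(\FF)$ with $s\in\GL_n(q)$ regular, $\Cent_{\bK^*}(s)$ a Coxeter torus, and spectrum of $s$ a single $F$-orbit $\{\zeta,\zeta^q,\dots,\zeta^{q^{n-1}}\}$ with $\FF_q[\zeta]=\FF_{q^n}$; moreover $\cE(\bK^F,(s))=\{\chi\}$, and for a linear character $\delta$ of $\GL_n(q)/\SL_n(q)$ corresponding to $z\in\FF_q^\times\cong\Z(\bK^*)^F$ one has $\delta\cdot\cE(\bK^F,(s))=\cE(\bK^F,(zs))$. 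Set $A:=\{z\in\FF_q^\times\mid zs\sim_{\GL_n(q)}s\}$; this is a subgroup of the cyclic group $\FF_q^\times$, it embeds into $\ZZ/n\ZZ$ via $z\mapsto a$ where $z\zeta=\zeta^{q^a}$, and by Clifford theory $|A|$ equals the number of constituents of $\restr\chi|{\SL_n(q)}$, i.e. $|A|=|\GL_n(q):\GL_n(q)_\psi|$.

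By hypothesis $|A|$ is even, so the cyclic group $A$ contains the unique involution $-1$ of $\FF_q^\times$; hence $-\{\zeta^{q^i}\}=\{\zeta^{q^i}\}$, so $\zeta^{q^a}=-\zeta$ for some $a$ with $0<a<n$. Applying the $q^a$-power of Frobenius gives $\zeta^{q^{2a}}=(-\zeta)^{q^a}=-\zeta^{q^a}=\zeta$ (as $q$ is odd), so $\zeta\in\FF_{q^{\gcd(2a,n)}}$; since $\FF_q[\zeta]=\FF_{q^n}$ this forces $n\mid 2a$, whence $n$ is even and $a=n/2$, i.e. $\zeta^{q^{n/2}}=-\zeta$. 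On the other hand, from $\psi^\gamma=\psi$ the character $\chi^\gamma$ is again cuspidal and lies above $\psi$, so by Clifford theory $\chi^\gamma=\chi\delta$ for some linear $\delta$ of $\GL_n(q)/\SL_n(q)$; and $\chi^\gamma\in\cE(\bK^F,(s^{-1}))$ by the argument recalled in the proof of \Cref{prop51} (via \cite[3.1]{CS13}). Hence $\cE(\bK^F,(s^{-1}))=\cE(\bK^F,(zs))$ for the $z$ attached to $\delta$, and comparing spectra gives $\zeta^{-1}=z\zeta^{q^c}$ for some $0\le c<n$; writing $w:=z^{-1}=\zeta^{1+q^c}\in\FF_q^\times$ we get $\zeta^{q^c}=w\zeta^{-1}$.

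Applying the $q^c$-power of Frobenius once more, $\zeta^{q^{2c}}=(w\zeta^{-1})^{q^c}=w\,(\zeta^{q^c})^{-1}=w\,(w\zeta^{-1})^{-1}=\zeta$, so $n\mid 2c$ and thus $c\in\{0,n/2\}$. If $c=0$ then $\zeta=w\zeta^{-1}$, so $\zeta^2=w\in\FF_q^\times$; if $c=n/2$ then $w\zeta^{-1}=\zeta^{q^{n/2}}=-\zeta$ gives $\zeta^2=-w\in\FF_q^\times$. In either case $\zeta^2\in\FF_q^\times$, so $[\FF_q[\zeta]:\FF_q]\le 2$, and as $\FF_q[\zeta]=\FF_{q^n}$ with $n\ge 2$ we conclude $n=2$. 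For $n=2$ the embedding $A\hookrightarrow\ZZ/2\ZZ$ forces $|A|=2$: thus $\chi$ is a cuspidal character of $\GL_2(q)$ of degree $q-1$ fixed by the order-$2$ character of $\GL_2(q)/\SL_2(q)$ (equivalently $\zeta^{q-1}=-1$), and $\restr\chi|{\SL_2(q)}=\psi_1+\psi_2$ with $\psi_1,\psi_2$ of degree $\frac{q-1}{2}$. These are the two exceptional cuspidal characters of $\SL_2(q)$, and matching conventions with \cite[Table 5.4]{BonSL2} identifies $\psi$ as one of $\R'_\si(\theta_0)$, $\si\in\{\pm1\}$ (note that for $n=2$ the automorphism $\gamma$ is inner on $\SL_2(q)$, so $\psi^\gamma=\psi$ is then automatic). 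The step most in need of care is the passage ``$\psi^\gamma=\psi\Rightarrow\chi^\gamma=\chi\delta$'', i.e. identifying $\Irr(\GL_n(q)\mid\psi)$ with $\{\chi\delta:\delta\in\Irr(\GL_n(q)/\SL_n(q))\}$ through Clifford theory, together with the final bookkeeping against \cite{BonSL2}.
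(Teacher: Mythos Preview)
Your proof is correct and follows essentially the same route as the paper's: lift $\psi$ to a cuspidal $\chi\in\cE(\GL_n(q),(s))$, translate the two hypotheses into the spectrum conditions $-\zeta=\zeta^{q^a}$ and $\zeta^{-1}=z\zeta^{q^c}$, and deduce $\zeta^2\in\FF_q$ hence $n=2$. The only organizational difference is that the paper first disposes of $n=2$ by direct inspection of \cite[Table~5.4]{BonSL2} and then runs the Lusztig-series argument for $n\geq 4$ to reach a contradiction, whereas you treat all $n\geq 2$ uniformly and read off the identification at the end; your explicit use of the subgroup $A\leq\FF_q^\times$ and its embedding into $\ZZ/n\ZZ$ is a clean way to phrase the Clifford-theoretic count $|A|=|\GL_n(q):\GL_n(q)_\psi|$, but the arithmetic with $\zeta$ is identical to the paper's.
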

\begin{proof}
	According to Table 5.4 of \cite{BonSL2} the two characters $\R'_\si(\theta_0)$ ($\si\in\{\pm 1\}$)  are the only characters of $\SL_2(q)$ that are  cuspidal and not $\GL_2(q)$-stable. The characters $\R(\theta)$ given there are $\GL_2(q)$-stable and the other characters $R_\si(\alpha_0)$ ($\si\in\{\pm 1\}$) are in the principal Harish-Chandra series. Note that $\gamma$ then restricts to an inner automorphism of $\SL_2(q)$.
	
	Now consider $n\geq 3$. Let $\psi$ be as in the proposition, let $\chi\in \Irr(\GL_n(q)\mid\psi)$, so that $\chi$ is cuspidal thanks to Lemma~\ref{lem_cuspc}.
	We keep the notation of the proof of \Cref{prop51} with $\chi\in\cE (\GL_n(q),(s))$ and $\zeta$ some eigenvalue of $s$.
	
	By Clifford theory $\chi$ is induced from a character of $\GL_n(q)_\psi$. Then the assumption $2\mid |\GL_n(q):\GL_n(q)_\psi|$ implies $\chi= \nu_1\chi$ for $\nu_1\in\Irr(\GL_n(q))$ the linear character of order $2$ with kernel containing $\SL_n(q)$. Hence $s$ is $\GL_n(q)$-conjugate to $-s$. Then $-\zeta\in\{ F(\zeta), F^2(\zeta),\ldots, F^n(\zeta)=\zeta\}$ since this is the spectrum of $s$.

	Clifford theory also tells us that the assumption $\psi^\gamma=\psi$ implies $\chi^\gamma=\nu_2\chi$ for some linear character $\nu_2$ of $\GL_n(q)$ with $\SL_n(q)$ in its kernel. Then $s^{-1}$ is conjugate to $\lambda s$ for some $\lambda\in\FF_q^\times$. As before we obtain $\zeta^{-1}\in \{\lambda F(\zeta), \lambda F^2(\zeta), \ldots, \lambda F^n(\zeta)=\lambda \zeta \}$. 
	
	We can now write $-\zeta=F^a(\zeta)$ and $\lambda \zeta^{-1}=F^b(\zeta)$ for $1\leq a,b\leq n$. The first equality gives $F^{2a}(\zeta)=-F^a(\zeta)=\zeta$ and the second $F^{2b}(\zeta)=\lambda F^b(\zeta)^{-1}=\zeta$ since $\lambda\in\FF_q$. So $\zeta\in \FF_{q^{2a}}\cap \FF_{q^{2b}}$, but since $\FF_q[\zeta]=\FF_{q^{n}}$ we get that $n$ divides both $2a$ and $2b$. The latter are at most $ 2n$, so $2a,2b\in \{ {n} ,2n\}$. Having $a=n$ would imply $-\zeta=F^n(\zeta)=\zeta$ which is impossible because $q$ is odd. So $n$ is even and $a={\frac n 2}$. 
	On the other hand, if $b=n$ then $\zeta=F^n(\zeta)=\lambda \zeta^{-1}$ and therefore $\zeta^2\in \FF_q$. Then $\FF_q[\zeta]\subseteq \FF_{q^{2}}$ and this implies $n=2$. 
	
	There remains the case when $b={\frac n 2}=a$. Then $\lambda \zeta^{-1}=F^a(\zeta)=-\zeta$ and again $\zeta^2\in \FF_q$. This yields $n=2$ as seen before.
\end{proof}
We complement the above by a result on cuspidal characters of $\tDlsc(q)$, which follows from a combination of results from \cite{Ma17} and \cite{S12}. We use $\bG$, $F$, $\gamma$ from Notation~\ref{not_32} and $h_0$ from Notation~\ref{defhI}. Recall the Lang map $\calL$  defined on $\bG$ by  $\calL(g)=g^{-1}F(g)$.  Note that $\calL^{-1}(\spannh)/\spannh=(\bG/\spannh)^F=\SO_{2l}(\FF_q)$.
\begin{prop}\label{prop5E}
Recall  $\wt G:=\calL^{-1}(\Z(\bG))=\norm{\bG}{\GF}$, see \Cref{rem_whG}. If $\la\in\cusp(\GF\mid 1_{\spannh})$ with $\wt G_\la\leq \calL^{-1}(\spannh)$, then $\gamma$ acts trivially on $\la$ and $\Irr(\calL^{-1}(\spannh) \mid \la)$.
\end{prop}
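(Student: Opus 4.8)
The strategy is to relate cuspidal characters $\la$ of $\GF=\tD_{l,sc}(q)$ to semi-simple characters via Malle's theorem, and then use the description of semi-simple characters of $\tD_{l,sc}(q)$ and their stabilizers given by Maslowski. First I would note that since $\la\in\cusp(\GF)$, by \Cref{thm_Malle} (more precisely by \cite[Thm.~1]{Ma17}) there is a semi-simple character $\rho$ of $\GF$ with the same stabilizer as $\la$ in $\wt G E(\GF)$; in particular $\wt G_\la=\wt G_\rho$ and the condition $\wt G_\la\le\calL^{-1}(\spannh)$ translates into a condition on the centralizer (in the dual group) of the semi-simple element $s$ labelling $\rho$. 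The point of this step is that the stabilizer of a semi-simple character in $\wt G/\GF\cong \calL(\calL^{-1}(\Z(\bG)))/[\Z(\bG),F]$ is controlled by $\Z(\Cent_{\bG^*}(s))$, so the hypothesis $\wt G_\la\le \calL^{-1}(\spannh)$ says that only the central element of order $2$ of $\Z(\bG^*)\cong\Z(\bG)$ corresponding to $h_0$ can fix $s$, i.e.\ the multiplication action of $\Z(\bG^*)^F$ on the Lusztig series of $s$ has a stabilizer of order at most $2$.

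Next I would invoke the explicit results of Maslowski (see \cite[\S 8]{Maslowski}, as cited in the paper's outline for Section~\ref{sec4}) describing, for $\GF$ of type $\tD_l$, which semi-simple characters are fixed by the graph automorphism $\gamma$ and how $\gamma$ acts on the fibre $\Irr(\calL^{-1}(\spannh)\mid\rho)$. The key dichotomy is between semi-simple elements $s\in\bG^*{}^F$ whose $\bG^*$-class is stabilized by the dual graph automorphism $\gamma^*$ (equivalently $s$ conjugate to $\gamma^*(s)$) and those whose class is not. A character $\rho$ in a $\gamma^*$-stable series that has small $\wt G$-stabilizer forces $s$ to lie in a position where $\gamma$ fixes each semi-simple character individually and acts trivially on the extensions; the case where $s$ is $\gamma^*$-stable but $\rho$ is only permuted would produce a non-trivial linear character $\delta$ of $\GF/\calL^{-1}(\spannh)$ of order dividing $2$ with $\rho^\gamma=\rho\delta$, and one checks this $\delta$ corresponds to a central element of $\Z(\bG^*)^F$ of order $2$ distinct from the $h_0$-element — which would violate $\wt G_\la\le\calL^{-1}(\spannh)$. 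Combining Malle's identification of stabilizers with this case analysis yields both that $\gamma$ fixes $\la$ and that $\gamma$ fixes each character of $\Irr(\calL^{-1}(\spannh)\mid\la)$.

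Concretely the steps in order are: (1) replace $\la$ by the semi-simple character $\rho$ with the same stabilizer; (2) translate $\wt G_\la\le\calL^{-1}(\spannh)$ into the statement that the subgroup of $\Z(\bG^*)^F$ fixing the $\bG^*$-class of $s$ has order $\le 2$, using the standard parametrization \cite[\S 15]{CE04} of the action of $\wt G/\GF$ on Lusztig series; (3) quote Maslowski's description \cite[\S 8]{Maslowski} of the $\gamma$-action on semi-simple characters of $\tD_{l,sc}(q)$ and their restrictions to $\calL^{-1}(\spannh)$; (4) run the dichotomy on whether the $\bG^*$-class of $s$ and the character $\rho$ within its series are $\gamma^*$-fixed, ruling out the "permuted" subcase by the order-$\le 2$ constraint; (5) conclude for $\rho$, and then transfer back to $\la$ and to $\Irr(\calL^{-1}(\spannh)\mid\la)$ via the equality of stabilizers from step (1) (noting that $\gamma$ and $\gamma^*$-actions on these fibres are intertwined by the duality, and a $\gamma$-fixed semi-simple character with trivial action on its Clifford constituents forces the same for $\la$ since both lie above the same data). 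The main obstacle I anticipate is step (3)–(4): making the passage from "stabilizer in $\wt G$ is small" to "$\gamma$ acts trivially on the whole fibre $\Irr(\calL^{-1}(\spannh)\mid\la)$" genuinely precise, because one must rule out that $\gamma$ permutes the (at most two) constituents above $\la$ nontrivially — this requires knowing that the linear character of $\calL^{-1}(\spannh)/\GF$ implementing such a permutation would have to come from an element of $\Z(\bG^*)^F$ outside $\langle h_0^*\rangle$, which is exactly where Maslowski's explicit computation (and the oddness of $q$, giving $\Z(\bG)\cong\Cy_2\times\Cy_2$ or $\Cy_4$) does the real work.
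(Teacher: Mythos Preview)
Your overall strategy (reduce to a semi-simple character $\rho$ via Malle, analyze $\rho$, transfer back) matches the paper's. The analysis of $\rho$ itself, however, is carried out quite differently: the paper does \emph{not} work in the dual group with the class of $s$ and a $\gamma^*$-dichotomy. Instead it uses the $\Irr(\wGF/\GF)\rtimes E$-equivariant bijection $\wt\Omega:\Irr_{p'}(\wGF)\to\Irr_{p'}(\wt\bB^F)$ from \cite[3.3]{S12}, together with Maslowski's explicit torus elements $t_\al^{(0)}$ \cite[\S 8]{Maslowski}. The hypothesis $\wt G_\la\leq\calL^{-1}(\spannh)$ forces $\wt\rho$ (hence $\wt\phi:=\wt\Omega(\wt\rho)$) to vanish on the two torus cosets $t_{e_2\pm e_1}\GF$; since the remaining $t_\beta^{(0)}$ are $\gamma$-fixed, one reads off directly that a suitable extension of $\phi$ to $\wt B_0$ is $\gamma$-stable, and then pulls this back through $\wt\Omega$ and \Cref{gammastablewhG_extension}. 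Your dual-group dichotomy may be workable, but Maslowski's \S 8 is about the torus generators $t_\al$, not about a case split on $\gamma^*$-stability of conjugacy classes, so you would need a different reference for steps (3)--(4).

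The real gap is step (5). Equality of stabilizers $(\wt GE)_\la=(\wt GE)_\rho$ gives $\la^\gamma=\la$ from $\rho^\gamma=\rho$, but it does \emph{not} by itself give that $\gamma$ fixes each element of $\Irr(\calL^{-1}(\spannh)\mid\la)$. What is needed is that the linear character $\mu'\in\Irr(\wGF/\GF)$ with $\wt\la^\gamma=\wt\la\mu'$ agrees with the corresponding $\mu$ for $\wt\rho$ on $\wGF_\la=\wGF_\rho$. The paper obtains this using the additional information (also from \cite[Thm.~1]{Ma17}) that $\la$ and $\rho$ lie in the \emph{same} rational Lusztig series: then $\wt\la,\wt\rho\in\cE(\wGF,s)$, so $\mu$ and $\mu'$ differ by an element of $B(s)=\Irr(\wGF/\GF)_{\wt\rho}$, hence $\restr\mu|_{\wGF_\la}=\restr\mu'|_{\wGF_\la}$; together with the equality of central characters $\Irr(\restr\wt\rho|_{\Z(\wGF)})=\Irr(\restr\wt\la|_{\Z(\wGF)})$ this is exactly what \Cref{gammastablewhG_extension} needs to transfer the $\gamma$-stable extension from $\rho$ to $\la$. (Identifying $B(s)$ with $\Irr(\wGF/\GF)_{\wt\rho}$ is itself a short argument via the regular character in $\cE(\wGF,s)$ and Alvis--Curtis duality.) Your sentence ``both lie above the same data'' is not a substitute for this; without the same-series input and the $B(s)$ comparison, the transfer step does not go through.
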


\begin{proof} Recall that  a character of $\wt\bG^F$ is called semisimple when it corresponds to a trivial unipotent character through the Jordan decomposition of characters. The components of their restrictions to $\bG^F$ are also called the semisimple characters of $\bG^F$. In particular both are of degree prime to $p$ (see \cite[2.6.11]{GM}).
	
According to \cite[Thm.~1]{Ma17} there exists a semisimple character $\rho\in\Irr(\GF)$ with $(\wG E)_\rho=(\wG E)_\la$, where $\rho$ and $\la$ lie in the same rational Lusztig series. We use now results from \cite{S12} to investigate $\rho$ further.  In a first step we prove that $\gamma$ acts trivially on $\rho$ and $\Irr(\calL^{-1}(\spannh) \mid \rho)$.

We assume that $\bG$, $\bT$ and $\Delta$ are as given in \ref{not_32} and let $\bU:=\spann<\bX_\al\mid \al\in \Delta>$ and  $\bB:=\bT\bU$. 
As group $\wt\bG$ introduced in \ref{not} we use the particular choice from \cite[3.1]{S12}. 
Then $\wGF$ and $\wt G$ induce the same automorphisms on $\GF$. Let $\wt \bB:=\bB\Z(\wbG)$. Let 
$\wt \Omega:\Irr_{p'}(\wGF)\lra \Irr_{p'}(\wt\bB^F)$ 
be the $\Irr(\wGF/\GF)\rtimes E(\GF) $-equivariant bijection 
with $\Irr(\restr \wt \psi|{\Z(\wGF)})= 
\Irr(\restr \wt\Omega(\wt \psi)|{\Z(\wGF)})$ for every $\psi\in\Irr_{p'}(\wGF)$ from \cite[3.3(a)]{S12}. 

Let $\wt \rho\in\Irr(\wt \bG^F\mid \rho)$, $\wt \phi := \wt \Omega(\wt \rho)$ and $\phi\in\Irr(\restr \wt \phi |{ \bB^F})$. 
Let $C$ be the Cartan matrix associated to $\Delta$ and $C^{-1}=(c'_{\al \beta})$ its inverse.
Let $\zeta\in \FF^\times$ be a root of unity of order $\det(C)  (q-1)=4(q-1)$. 
For $\al\in\Delta$ we set 
$$t^{(0)}_\al:=\prod_{\beta\in\Delta} \h_\beta(\zeta^{\det(C) c'_{\al \beta}}),$$ 
see also \cite[8.1]{Maslowski}. Then we can choose elements 
$t_\al\in t^{(0)}_\al \Z(\wbG)\cap \wt \bT^F$ such that $\wt \bT^F=\Z(\wGF)\spann<t_\al\mid \al \in \Delta>$, see \cite[\S 8]{Maslowski}. 
Assume that $\Delta$ is given as in \ref{not_32} and let $\al\in \{e_2\pm e_1\}$. 
The entries of $C^{-1}$ can be found in \cite[p. 296]{OV}. We see $\calL(t^{(0)}_\al)=(t^{(0)}_\al)^{q-1}\in \h_\ul(\varpi)\spannh$. Hence
 $t_\al$ induces a diagonal automorphism of $\GF$ associated to some element in $\h_\ul(\varpi)\spannh$ in the notation of \Cref{rem_whG}(b).

We abbreviate $\II Ghat@{\pwh G}:=\calL^{-1}(\spannh)$. 
The assumption $\wt G_\la\leq \wh G$ implies $\la^{t_\al}\neq \la$. Via the construction we have $(\wt GE)_\rho=(\wt GE)_\la$ and hence $ \rho^{t_\al}\neq \rho$. By Clifford theory the character $\wt \rho$ satisfies $\wt \rho (t_\al \GF)=0$ and is stable under multiplying with linear characters with kernel $$\{g \in \wGF\mid g \text{ induces diagonal autom. of $\GF$ associated to an element of} \spannh\} ,$$ see \Cref{rem_whG}(b).
As $\wt \Omega$ is an $\Irr(\wGF/\GF)$-equivariant bijection, the character $\wt \phi$ has to satisfy $\wt \phi(t_\al)=0$ as well. 
As in Remark \ref{rem_whG}, $\wt \phi$ can be extended to some character $\kappa$ on $\wt \bB^F\wt Z=\wt B_0.\wt Z $, where $\wt \calL$ is the Lang map on $\wt \bG$,  $\wt Z:=\wt \calL^{-1}(\Z(\bG))\cap \Z(\wbG)$ and $\wt B_0:= \calL^{-1}(\Z(\bG))\cap \bB$. Note that ${\wt B_0}=\Z(\GF)\spann<t_\beta^{(0)}\mid \beta\in \Delta> \bU^F$. 
Then $\kappa(t_\al^{(0)})=0$. The character $\restr \kappa |{\wt B_0}$ is $\gamma$-stable, since $\Irr(\restr\kappa|{\Z(\GF)})=\Irr(\restr\phi|{\Z(\GF)})$ is $\gamma$-stable because of $h_0\in \ker(\phi)$ and $t_{\beta}^{(0)}$ is $\gamma$-fixed for every $\beta\in\Delta\setminus \{ e_2\pm e_1\}$ according to the explicit value of $C^{-1}$. 

As $\kappa$ is $\gamma$-stable,  $\phi\in \Irr(\restr\kappa |{\bB^F})=\Irr(\restr\wt \phi |{\bB^F})$ can be assumed to be $\gamma$-stable, see \cite[3.6(a)]{S12}.
By Clifford theory $\restr \kappa|{\wt B_0}$ is of the form $\wh \phi^{\wt B_0}$ for a unique $\wh \phi\in \Irr((\wt B_0)_{\phi})$.
As $\phi$ extends to $\wt \bB^F_\phi$ according to \cite[Thm.~3.5(a)]{S12}, the character $\wh \phi$ is an extension of $\phi$. As $\kappa$ and $\phi$ are $\gamma$-stable, $\wh\phi$ is $\gamma $-stable. Note that $(\wt B_0)_{\phi}\leq \wh B:=\calL^{-1}(\spannh)\cap \bB$. 

Via the statement given in \Cref{gammastablewhG_extension} some $\wG$-conjugate $\rho'$ of $\rho$ is $\gamma$-stable and has also a $\gamma$-stable extension to $\wh G$. If $\rho'\neq \rho$ we observe that 
$(\wh G \spann<\gamma>)^{t'}=\wh G \spann<\gamma>$ for every $t'\in \calL^{-1}(\h_\ul(\varpi))$. Hence the character $\rho$ extends to $\wh G \spann<\gamma>$, as well.  

We deduce from this result on $\rho$ the analogous property of $ \la$. 
Recall that $\la$ and $\rho$ are in the same rational Lusztig series and that $(\wGF E)_\rho=(\wGF E)_\la$, in particular $\wGF_\rho=\wGF_\la$. Recall $\wt \rho\in\Irr(\wGF\mid \rho)$ and $\Irr(\wGF/\GF)$ acts on $\Irr(\wGF)$ by multiplication with linear characters.  As $\wGF/\GF$ is abelian and maximal extendiblity holds \wrt $\GF\lhd \wGF$ we see 
$$\Irr(\wGF/\GF)_{\wt \la}=\Irr(\wGF/\wGF_\la)= \Irr(\wGF/\wGF_{ \rho})=\Irr(\wGF/\GF)_{\wt \rho}.$$ 

Let $\cE(\wGF,s)$ be the rational Lusztig series containing $\wt \rho$.
The character $\wt \rho$ is semisimple. The series $\cE(\wGF,s)$ contains exactly one regular character, see \cite[12.4.10]{DiMi2}. By the definition of semisimple and regular in \cite[12.4.1]{DiMi2} we see that there exists also a unique regular character in that series. 
Let $\wt \rho'\in\Irr(\wGF)$ be the Alvis--Curtis dual of $\wt \rho$ up to a sign, see \cite[7.2]{DiMi2}. 
Then $\{\wt \rho'\}=\Irr(\Gamma^{(\wGF)})\cap \cE(\wGF,s)$, where $\Gamma^{(\wGF)}$ denotes the Gelfand--Graev character of $\wGF$. Since it vanishes outside unipotent elements, the Gelfand--Graev character is stable under $\Irr(\wGF/\GF)$. Hence $\Irr(\wGF/\GF)_{\wt \rho'}$ coincides with the stabiliser of $\cE(\wGF,s)$ in $\Irr(\wGF/\GF)$. This group is called 
$B(s)$ in \cite[15.13]{CE04}. By the construction of Alvis--Curtis duality this implies $\Irr(\wGF/\GF)_{\wt \rho}=B(s)$. The characters $\wt \rho^\gamma$ and $\wt \la^\gamma$ belong to $\cE(\wGF,\gamma^{-1}(s))$. As $\la$ and $\rho$ are $\gamma$-stable, $\wt \rho^\gamma = \wt \rho \mu$ and $\wt \la^\gamma=\wt \la \mu'$ for linear characters $\mu,\mu'\in\Irr(\wGF)$. Since $\wt \rho^\gamma$ and $\wt \la^\gamma$ are in the same rational series, $\mu\in \mu'B(s)$ or equivalently $\restr \mu|{\wGF_\rho}= \restr \mu'|{\wGF_{\la}}$. 

Because of $\Irr(\restr\wt \rho|{\Z(\wGF)})=\Irr(\restr\wt \la|{\Z(\wGF)})$, \Cref{gammastablewhG_extension} allows to conclude that $\la$ has a $\gamma$-stable extension to $\wh G$ as $\rho$ has such an extension. 
\end{proof}

\section{Character theory for the relative inertia groups $W(\la)$}\label{secWla}
The aim of this section is to ensure Assumption \ref{loc*}, namely to prove  (a main step towards) the following statement. 
\begin{theorem}\label{thm_loc*} 
Let $l\geq 4$. Let $\bG^F=\tDlsc(q)$ with odd $q$ and $L=\bL^F$ a standard Levi subgroup of $\GF$ (see Notation~\ref{not_32}). Let $N$, $\wt N':= \wt T_0 N$ and $\EL:=\Stab_{E(\GF)}(\bL^F)$ be associated to $L$ as in \Cref{not_L_N_E1}. If \Cref{hyp_cuspD_ext} holds for every $l'$ with $4\leq l'< l$, then there exists some $\EL$-stable $\wt N'$-transversal in $\cusp(N)$.
\end{theorem}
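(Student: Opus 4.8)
The plan is to deduce \Cref{thm_loc*} from \Cref{prop23_neu}, reducing it to a purely local statement about the relative Weyl groups $W(\la)=N_\la/L$, and then to settle that statement by a case analysis on how diagonal automorphisms act on $\la$. Note first that $\cusp(N)=\Irr(N\mid\cusp(L))$, and that the set $\TT\subseteq\cusp(L)$ produced in Theorem~B (\Cref{thm_loc}(a)) is already a $\wt L'$-transversal, so its $\wt L'$-closure is $\cusp(L)$; hence $\Irr(N\mid\ov\TT)=\cusp(N)$ in the notation of \Cref{prop23_neu}. Theorem~B (\Cref{thm_loc}) directly provides Assumptions~(i) and~(ii) of \Cref{prop23_neu}. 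Moreover $\wt N'=N\wt L'$ and $N$ acts trivially on $\Irr(N)$, so the $\wt L'$- and $\wt N'$-orbits on $\cusp(N)$ coincide and ``$\wh N$-stable'' means ``$\EL$-stable''; consequently an $\wh N$-stable $\wt L'$-transversal in $\cusp(N)$ is precisely an $\EL$-stable $\wt N'$-transversal in $\cusp(N)$. Thus \Cref{thm_loc*} will follow from \Cref{prop23_neu} once we verify its remaining hypothesis: for every $\la\in\TT$, every extension $\wt\la\in\Irr(\wt L'_\la\mid\la)$ and every $\eta_0\in\Irr(W(\wt\la))$, there is an $\eta\in\Irr(W(\la)\mid\eta_0)$ stable under $\wh K(\la)_{\eta_0}$, where $W(\wt\la)=N_{\wt\la}/L$ and $\wh K(\la)=\wh N_\la/L$.

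For this we need an explicit description of $W(\la)$, its subgroup $W(\wt\la)$ and its overgroup $\wh K(\la)$. By \Cref{prop_lift}(a), $N=LV_\tD$ with $V_\tD=\ov V\cap\bG$ and $\rho_\bT(\ov V)=\prod_{d\in\DD}\ov W_d$, while by \Cref{prop64} $\rho_\bT(N\cap N_0)/\rho_\bT(L\cap N_0)\cong\Sym_{\pm\cO}^\tD$; so $W(\la)$ decomposes, up to the central identifications recorded there, as a product over $d\in\DD$ of subgroups $W_d(\la)\le\ov W_d\cong\Sym_{\pm a_d}$ (resp.\ $\cong\ZZ/2$ for $d=-1$) stabilising the tuple of cuspidal characters $(\la_I)_{I\in\cO_d}$ of the factors $G_I$; recall $\Irr(\restr\la|{G_I})\subseteq\cusp(G_I)$ by \Cref{lem_cusp}. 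For $d\ge2$ each $W_d(\la)$ is, up to one exceptional case, a direct product of groups $\ZZ/2\wr\Sym_k$ and $\Sym_k$ dictated by the isomorphism classes of the $\la_I$ and by whether the ``sign'' involution — which acts on $G_I\cong\SL_d(q)$ as transpose-inverse, cf.\ \Cref{lemVactK} — fixes $\la_I$; \Cref{prop51} and \Cref{prop_SL2_cusp} pin down the exception, $d=2$ with $\la_I$ one of the two cuspidal characters of $\SL_2(q)$ of degree $(q-1)/2$. For $d=-1$ the single involution $(1,-1)$ acts on $G_{J_{-1}}$ as the relevant automorphism of $\tD$-, $\tA_3$- or $\tA_1\times\tA_1$-type, and for $d=1$ the factors $G_I$ are trivial and the situation is governed by the extended Weyl group via \Cref{prop4_1}. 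The subgroup $W(\wt\la)$ is cut out by the chosen extension; $W(\la)/W(\wt\la)$ is a quotient of the $2$-group $\wt L'_\la/L$ and records the action of diagonal automorphisms, and by \Cref{rem_whG} one may work throughout with $\wt L=\bL\cap\calL^{-1}(\Z(\bG))$ in place of $\wt L'$. Finally $\wh K(\la)/W(\la)$ comes from $\EL_\la$: field automorphisms, handled by embedding $W(\la)$ into the overgroups $K(\la),\wh K(\la)$ of Notation~\ref{not52}; the exotic order-$2$ graph automorphism, which, when $\gamma\in\EL$, is already incorporated into $V_\tD$ through $\ov\nn_1$ by \Cref{prop_lift}(b), handled via the inclusion $\iota_\tD\colon\bG\hookrightarrow\obG$ of Notation~\ref{not_32}; and, when $l=4$, triality $\gamma_3$.

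With these structures, the verification of the remaining hypothesis splits according to the stabiliser of $\la$ in $(L\cap\calL^{-1}(\Z(\bG)))/L$, which controls the size of $W(\la)/W(\wt\la)$; the two main cases are exactly \Cref{prop_sec4B} and \Cref{prop5mixed}. In the ``pure'' case (where $W(\wt\la)=W(\la)$, or where $\eta_0$ is automatically $\wh K(\la)$-stable on each wreath factor) one builds $\eta$ over $\eta_0$ componentwise: on a $\ZZ/2\wr\Sym_k$-factor one extends a chosen $\Sym_k$-constituent of $\eta_0$ along the cyclic direction canonically; on the $\tA$-blocks one uses the $E(\SL_d(q))$-equivariant choices of \Cref{prop43} that were built into $\TT$ in the proof of Theorem~B; on the $d=-1$ block one invokes Malle's theorem (\Cref{thm_Malle}, together with \Cref{prop5E}) and, when $\Phi_{-1}$ is of type $\tD$, \Cref{hyp_cuspD_ext} for $\bG_{J_{-1}}^F$. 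The resulting $\eta$ lies over $\eta_0$, and its $\wh K(\la)_{\eta_0}$-stability follows because $\wh K(\la)$ merely permutes the blocks $\cO_d$ and the isomorphism classes within them while leaving the canonical componentwise choices fixed (this is the mechanism of \Cref{prop23}). In the ``mixed'' case one instead uses that the linear characters of $W(\la)/W(\wt\la)$ which can twist $\eta$ form a single $\wt L'_\la/L$-orbit and that $\wh K(\la)_{\eta_0}$ permutes this orbit compatibly with its action on the diagonal-automorphism parameters, which again leaves an invariant $\eta$ over $\eta_0$.

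The hard part will be precisely this ``mixed'' case: one must simultaneously control the action of diagonal automorphisms (linear characters of $W(\la)$ trivial on $W(\wt\la)$) and of the graph--field automorphisms in $\EL_\la$ on $\Irr(W(\la)\mid\eta_0)$ and produce a common fixed point lying over $\eta_0$. The exceptional cuspidal characters of $\SL_2(q)$ of degree $(q-1)/2$ from \Cref{prop_SL2_cusp} contribute extra automorphisms of the relevant wreath factors and must be dispatched by hand; the $d=-1$ block of type $\tD$ is exactly where the inductive \Cref{hyp_cuspD_ext} together with Malle's theorem becomes unavoidable; and the triality case $l=4$ requires its own bookkeeping.
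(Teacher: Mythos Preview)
Your proposal is correct and follows essentially the same architecture as the paper: reduce via \Cref{prop23_neu} (with assumptions (i) and (ii) supplied by \Cref{thm_loc}), then verify assumption (iii) by a case split on $\wt L_\la$ governed by \Cref{prop_sec4B} and \Cref{prop5mixed}, with the triality case $l=4$ treated separately. The paper organizes the split slightly differently (peeling off $\MM^{(L)}$ first via \Cref{lem6_3}, though as you implicitly note this case is vacuous for condition (iii)), and its actual mechanism inside \Cref{prop_sec4B} and \Cref{prop5mixed} is not the componentwise/orbit argument you sketch but rather a decomposition $\cO=Q^1(\wh\la)\sqcup Q^2(\wh\la)$ yielding $W(\wt\la)=W^1(\wt\la)\times W^2(\wt\la)$, followed by establishing maximal extendibility with respect to $W^j(\wt\la)\lhd K^j(\la)$ separately (using the explicit isomorphism types recorded in Table~\ref{Tab1} for $j=1$ and wreath-product arguments for $j=2$); in the $\wt L_\la=\wh L$ case the paper in fact shows that \emph{every} $\eta\in\Irr(W(\la)\mid\eta_0)$ is $K(\la)_{\eta_0}$-stable, via a centrality argument ($K(\la)\le W(\la)\langle c_{-1},x_4^\circ\rangle$ with the extra generators central), rather than producing a single invariant one.
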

Some technicalities (mainly in the case where $\bG=\tD_{4,sc}(\FF)$) delay the complete proof until \Cref{ssec6E}.
We construct the $\EL$-stable $\wt N'$-transversal as a subset of $\Irr(N\mid \TT)$, where $\TT$ is the $\wh N$-stable ${\wt N'}$-transversal from \Cref{thm_loc}(a). In \Cref{lem6_3} we find some $\EL$-stable $\wt N'$-transversal  of $\Irr(N\mid \{ \la\in \cusp(L)\mid \wt L'_\la=L \})$ where $\wt L'=\wt T_0 L$ as in \Cref{thm_MS}. 

In order to find the  transversal of $\Irr(N\mid \{ \la\in \cusp(L)\mid \wt L'_\la\neq L \})$ with the required properties we apply the strategy mapped out by \Cref{prop23_neu}, itself based on the parametrization of \Cref{propcuspN}. Thanks to \Cref{thm_loc} the two first assumptions of \Cref{prop23_neu} can be assumed, in particular there exist some  $\wh N$-equivariant extension map $\Lambda_{L,\TT}$ \wrt $L\lhd N$ for $\TT$, where $\wh N=N\EL$. We have to ensure the remaining assumption \ref{prop23_neu}(iii) and study the characters of the relative Weyl groups and their Clifford theory.

As already discussed in \ref{ssec1C} characters in such a  transversal have a stabilizer in $\wt N' \EL$ with a specific structure, namely such a $\psi\in \Irr(N)$ satisfies
\begin{align} \label{eq5}
(\wh N \wt L')_\psi&=\wh N_\psi \wt L'_\psi, 
\end{align}
see also \Cref{*cond_trans}. 
For studying a character $\psi\in\Irr(N\mid \TT)$ we apply the parametrization $\Upsilon$ from \Cref{propcuspN}(a) and the extension map $\Lambda_{L,\TT}$ \wrt $L\lhd N$ for $\TT$. Then  $\psi =\Upsilon(\ov{(\la,\eta)})=(\Lambda_{L,\TT}(\la)\eta)^N$ with $\la\in \TT$ and $\eta\in\Irr(W(\la))$. 
According to \Cref{prop23} the character $\psi=\Upsilon(\ov{(\la,\eta)})$ 
satisfies Equation \eqref{eq5} if
\begin{center}
$\eta$ is $\wh K(\la)_{\eta_0}$-stable, where $\wt \la\in\Irr(\wt L'_\la\mid \la)$ and $\eta_0\in\Irr(W(\wt \la))$,
\end{center}
where $\wh W=N\EL/L$ and $\wh K(\la)=\wh W_\la$. The aim of this section is \Cref{corsec5}, namely to prove that for every $\la\in \TT$, $\wt \la\in\Irr(\wt L'_\la\mid \la)$ and  $\eta_0\in\Irr(W(\wt \la))$
\begin{center}
there exists some  $ K(\la)_{\eta_0}$-stable $\eta \in \Irr(W(\la)\mid \eta_0)$,
\end{center}
where $K(\la)$ is the group from \ref{not52}. According to \Cref{KlawhKla} such a character $\eta$ is also  $\wh K(\la)_{\eta_0}$-stable, whenever $\bG$ is not of type $\tD_4$. 

In the proof some arguments depend on the group $\wt L'_\la$.  As in \Cref{rem_whG} we relate the group $\wt L'$ to subgroups of $\bG$. 
\begin{notation}
Recall the definitions $\wt L:=\calL\inv(\Z(\bG))\cap \bL$ and
$\wh L:=\calL\inv(\spannh)\cap \bL$ from \Cref{lem36}, where $\calL:\bG\ra \bG$ is given by $x\mapsto x\inv F(x)$. Recall $\wt N':=\wt T_0 N$ and set analogously $\II Ntilde @{\pwt N}:=\wt L N$. 
Then $\wt L'$ and $\wt N'$ induce on $\bG$ the same automorphisms as $\wt L$ and $\wt N$, respectively.
\end{notation}
Note that by an application of Lang's theorem $\calL(\bL)=\bL\supseteq\Z(\bG)$ so that $L\lneq \wh L\lneq \wt L$. 

For $\la \in\TT$ the characters of $W(\la)$ and $W(\wt \la)$ defined as above will be investigated depending on the value of $\wt L_\la$. 
The set $\cusp(L)$ can be partitioned in the following way: 
\[\cusp(L) = \MM^{(L)} \sqcup \MM^{(\whL)} \sqcup \MM^{(\wtL)} \sqcup \MM_0, \]
where $\II MX@{\MM^{(X)}}:=\{ \la \in \cusp (L)\mid \wt L_\la=X\}$ for any subgroup $L\leq X\leq \wt L$ and 
$\II M0@{\MM_0}:=\cusp(L)\setminus (\MM^{(L)} \cup \MM^{(\whL)}\cup \MM^{(\wtL)})$.
(In case of $|\Z(\GF)|=2$ one has $\MM^{(L)}=\emptyset$.) Note that the sets are by definition ${\wt N} \EL$-stable as $L$, $\whL$ and $\wtL$ are $\wt N \EL$-stable. In the following we construct an $\EL$-stable $\wt N$-transversal in $\cusp(N\mid \MM')$ for each of those four given $N\EL$-stable subsets $\MM'\subseteq \cusp(L)$. 

\begin{lem} \label{lem6_3}Let $\II TL@{\mathbb T^{(L)}}:= \TT \cap \MM^{(L)}$. Then $\Irr(N\mid \TT^{(L)})$ is an $\EL$-stable $\wt N'$-transversal in $\Irr(N\mid \MM^{(L)})$. \end{lem}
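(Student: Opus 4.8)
The statement is about the simplest of the four pieces in the partition of $\cusp(L)$: the characters $\la$ with full stabiliser $\wt L_\la = \wt L$ (equivalently $\wt L'_\la = \wt L'$). I would argue that for such $\la$ the conjugation action of $\wt L'$ on $\la$ is trivial, so there is nothing obstructing the transversal property, and the claimed transversal is just $\Irr(N\mid \TT^{(L)})$ where $\TT^{(L)} = \TT\cap\MM^{(L)}$. Concretely, the plan is: first reduce the transversal property to a statement about stabilisers via \Cref{*cond_trans}, then verify that stabiliser statement using that $\wt L'$ already fixes every $\la\in\MM^{(L)}$.

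\textbf{Step 1: reduce to stabilisers.} By \Cref{*cond_trans} (applied with $Z = \wt N'\EL$, $\wt X = \wt N'$, $\wh Y$ the preimage of $\EL$, so that $X = N$, and $\calM = \Irr(N\mid\MM^{(L)})$, which is $\wt N'\EL$-stable because $\MM^{(L)}$ is), it suffices to show that every $\psi\in\Irr(N\mid\MM^{(L)})$ is $\wt N'$-conjugate to some $\psi_0$ with $(\wt N'\EL)_{\psi_0} = \wt N'_{\psi_0}\,(\EL)_{\psi_0}$. In fact I will show the stronger fact that \emph{every} $\psi\in\Irr(N\mid\MM^{(L)})$ already satisfies $(\wt N'\wh N)_\psi = \wt N'_\psi\,\wh N_\psi$ (with $\wh N = N\EL$), so no conjugation is needed and $\Irr(N\mid\TT^{(L)}) = \Irr(N\mid\MM^{(L)})$ is itself the desired transversal.

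\textbf{Step 2: $\wt L'$ acts trivially on $\Irr(N\mid\MM^{(L)})$.} Let $\psi\in\Irr(N\mid\MM^{(L)})$ and let $\la\in\MM^{(L)}$ be a constituent of $\restr\psi|L$. Since $[\wt L'/L, N/L] = 1$ by the observation in \Cref{not29} (conjugation by $\wt L'$ and by $N$ commute modulo $L$), the group $\wt L'$ permutes $\Irr(\restr\psi|L)$, which is a single $N$-orbit of cuspidal characters; but each of these lies in $\MM^{(L)}$, since $\MM^{(L)}$ is $N$-stable, hence each is $\wt L'$-fixed. Now take $t\in\wt L'$; then $\restr\psi^t|L = \restr\psi|L$, so $\psi^t\in\Irr(N\mid\la)$, and $\psi^t = \psi\theta$ for some $\theta\in\Irr(N/N_\la)$ by Gallagher. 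To see $\theta = 1$, note that $N_\la = N$ when... more carefully: $\psi$ lies over the $N$-orbit $\cO_\la$, and $\psi$ is determined inside $\Irr(N\mid\cO_\la)$ by an $N_\la$-character over $\la$; since $\wt L'_\la = \wt L'$ and $\wt L'$ centralises $N$ modulo $L$, conjugation by $t$ fixes $\la$ and acts on $\Irr(N_\la\mid\la)$ trivially modulo the action on $\Irr(N_\la/L)$, and by the transitivity argument in \Cref{propcuspN}(a) (the action of $\wt L'_\la/L$ on $\Irr(W(\la)\mid\eta_0)$ is transitive, but here $\wt L'_\la = \wt L'$ acts through $\wt L'_\la/(L\Z(\wbG))$ which is trivial when $\wt\la$ is $\wt L'$-fixed) the induced character is unchanged. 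Hence $\psi^t = \psi$, i.e.\ $\wt N'_\psi \supseteq \wt L'$, so $\wt N'_\psi = \wt L'\,N_\psi$.

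\textbf{Step 3: conclude.} From $\wt N'_\psi = \wt L'\,N_\psi$ it follows immediately that $(\wt N'\wh N)_\psi = \wt N'_\psi\,\wh N_\psi$: indeed $\wh N_\psi \supseteq N_\psi$, and given $x\in(\wt N'\wh N)_\psi$ write $x = t\,n$ with $t\in\wt L'$, $n\in\wh N$ (using $\wt N' = \wt L' N \le \wt L' \wh N$ and $\wt N'\wh N = \wt L'\wh N$); then $\psi^n = \psi^{t^{-1}} = \psi$ since $\wt L'$ fixes $\psi$, so $n\in\wh N_\psi$ and $t\in\wt N'_\psi\subseteq$ already in the product. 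Therefore $\Irr(N\mid\TT^{(L)})$ (which equals $\Irr(N\mid\MM^{(L)})$, as $\TT$ meets every $\wt L'$-orbit in $\cusp(L)$ and $\wt L'$ fixes $\MM^{(L)}$ pointwise, forcing $\TT^{(L)} = \MM^{(L)}$) is an $\EL$-stable $\wt N'$-transversal in $\Irr(N\mid\MM^{(L)})$, as claimed.

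\textbf{Expected main obstacle.} The only delicate point is Step 2: making rigorous that $\psi^t = \psi$ rather than merely $\psi^t\in\Irr(N\mid\la)$. This is where one must use both that $[\wt L'/L,N/L]=1$ and that $\wt L'$ acts on $\Irr(\wt L'_\la\mid\la) = \Irr(\wt L'\mid\la)$ — here a singleton since $\wt L'_\la = \wt L'$ means $\la$ is $\wt L'$-stable but one still needs the extension $\wt\la$ to be $\wt L'$-fixed, which holds because $\wt L'/L$ acts on the set of extensions of $\la$ through its abelian quotient and $\wt L'$ centralises... in fact the cleanest route is: $\wt L'$ centralises $L$ modulo itself is false, so instead invoke directly that the parametrisation $\Upsilon$ of \Cref{propcuspN} is $\wt N'$-equivariant, with $\wt L'$ acting on $\calP$ through its action on pairs $(\la,\eta)$; since $\la$ is $\wt L'$-fixed and the $\wt L'$-action on $\Irr(W(\la)\mid\eta_0)$ factors through $\wt L'_\la/L$ multiplying by linear characters of $W(\la)/W(\wt\la)$, and $\wt L'_\la = \wt L'$ with $W(\wt\la) = W(\la)$ (because $\wt\la$, being an extension of the $\wt L'$-stable $\la$ to $\wt L'_\la = \wt L'$, has $\wt L'_{\wt\la}$ of index dividing $|\wt L'/L\Z(\wbG)|$...). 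The honest statement is that when $\wt L_\la = \wt L$ one has $W(\wt\la) = W(\la)$, so there are no nontrivial linear characters to twist by, and $\wt L'$ fixes every $\psi$ over such $\la$. I would spell this out carefully; everything else is routine.
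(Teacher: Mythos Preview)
You have misread the definition of $\MM^{(L)}$. In the paper $\MM^{(X)}:=\{\la\in\cusp(L)\mid \wt L_\la=X\}$, so $\MM^{(L)}$ consists of the cuspidal characters whose stabiliser in $\wt L$ is \emph{trivial}, i.e.\ $\wt L_\la=L$, not the characters with full stabiliser $\wt L_\la=\wt L$. Your entire argument is built on the opposite reading: in Step~2 you claim that each $\la\in\MM^{(L)}$ is $\wt L'$-fixed, and conclude that $\wt L'$ acts trivially on $\Irr(N\mid\MM^{(L)})$ and that $\TT^{(L)}=\MM^{(L)}$. All of this is false for the set actually under consideration. For $\la\in\MM^{(L)}$ the $\wt L$-orbit of $\la$ has size $|\wt L:L|$, so $\wt L'$ acts as freely as possible on these characters, and $\TT^{(L)}$ is a genuine proper transversal inside $\MM^{(L)}$.

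The correct mechanism is dual to yours. One uses that $\TT$ is an $\wh N$-stable $\wt L$-transversal together with $[\wt L/L,N/L]=1$: if $\psi\in\Irr(N\mid\la)$ with $\la\in\TT^{(L)}$ and $\psi^{t}=\psi'$ for some $t\in\wt L$ and $\psi'\in\Irr(N\mid\la')$ with $\la'\in\TT^{(L)}$, then $\la^{t}$ is $N$-conjugate to $\la'$; commuting $t$ past $N$ modulo $L$ shows that some $N$-conjugate of $\la$ (still in $\TT$) is $\wt L$-conjugate to $\la'\in\TT$, forcing $t\in\wt L_\la=L$. Thus distinct elements of $\Irr(N\mid\TT^{(L)})$ are never $\wt N$-conjugate, while every $\wt N$-orbit in $\Irr(N\mid\MM^{(L)})$ is met because $\TT$ meets every $\wt L$-orbit; $\EL$-stability is immediate from $\wh N$-stability of $\TT$ and of $\MM^{(L)}$. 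In short, the case is easy not because $\wt L$ fixes everything, but because $\wt L$ moves everything: for $\la\in\TT^{(L)}$ one has $(\wh N\wt L)_\la=\wh N_\la\wt L_\la=\wh N_\la$, and Clifford theory then gives the transversal directly.
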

\begin{proof}
The set $\TT$  is  by construction $N \EL$-stable and no two elements are $\wt L$-conjugate. Hence for $\la\in \TT^{(L)}$ we have $(N\EL \wt L)_\la=( N\EL)_\la$ by \Cref{*cond_trans}. By Clifford theory $\Irr(N\mid \TT^{(L)})$ is an $\wt N$-transversal in $\Irr(L\mid \MM^{(L)})$ and is $N\EL$-stable. \end{proof}
Determining an $ N \EL$-stable $\wt L$-transversal in $\Irr(N\mid \MM')$ for the other sets $\MM'$ is more involved. 
We start by some general descriptions of  $W(\la)$ and related groups for $\la \in \cusp(L)$, see \Cref{prop_sec4B}. Afterwards we collect some particular results on cuspidal characters. In the following two subsections we verify for characters of $W(\la)$ the above condition under the assumption that $\la \in \MM^{(\wt L)}\cup \MM_0$ or $\la\in \MM^{(\wh L)}$.

In \ref{ssec5E} we ensure a closely related condition on characters of $W(\la)$ for $\la\in\cusp(L)$ with $\wt L_\la=\wh L$.  
In Section~\ref{ssec6E} we show how these considerations prove \Cref{thm_loc*} and how this implies \Cref{thm1}.

\subsection{Understanding $\cusp(N)$ via characters of subgroups of $W$}
We start by recalling some basic notation and introducing subgroups of  ${\wh W}:=\wh N/L = N\EL/L$  as in \ref{not_L_N_E1}. Additionally let $\ov N:=\NNN_{\obG^F}(\bL)$ and $\II Woverline@{\ov W}=\ov N/L$, see also \Cref{prop64}.
\begin{notation}\label{not52}
Let $\bG$ and $F:\bG\ra\bG$ be as in \ref{not_32} with odd $q$. Let $\bL$ be the standard Levi subgroup of $(\bG,F)$ such that $L=\bL^F$. For any $J$ with $L\leq J\leq \wt L$ and $\la\in\ZZ\Irr(J)$ we set $W(\la):=N_\la/L$. If additionally 
$J$ is $\EL$-stable, $\ov W$ acts on $\Char(J)$, hence we can define $\ov W(\la):=\ov W_\la$ and $\II Klambda@ {K(\la)}:=\ov W_{\restr\la^{J \spannFp}|J}$.
\end{notation}
The groups $K(\la)$ and $\wh K(\la)$ are strongly related, in particular by the following result it is sufficient to consider $K(\la)$ instead of $\wh K(\la)$ if $\bG$ is not of type $\tD_4$. Recall that $\gamma$ is the graph automorphism of $\bG$ of order $2$ swapping $\al_1$ and $\al_2$. 
\begin{lem}\label{KlawhKla}
Let $E^\circ:=\spann<F_p,\gamma>$, $\II EcircL@{E^\circ_L}:=E^\circ\cap \EL$,  $\la\in\TT$, $\wt \la$ defined as above,   $\eta_0\in\Irr(W(\wt\la))$ and $\eta\in\Irr(W(\la))$. Then $\eta$ is $\wh K(\la)_{\eta_0}\cap(W\rtimes E^\circ _L)$-stable if and only if it is $ K(\la)_{\eta_0}$-stable. 
\end{lem}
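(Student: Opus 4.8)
The statement compares stability of $\eta\in\Irr(W(\la))$ under two subgroups of $\ov W$, namely $K(\la)_{\eta_0}$ and $\wh K(\la)_{\eta_0}\cap(W\rtimes E^\circ_L)$. Recall $K(\la)=\ov W_{\restr\la^{J\spann<F_p>}|J}$ where $J=\wt L'_\la$, and $\wh K(\la)=\wh W_\la=N\EL_\la/L$ viewed inside $\wh W=N\EL/L$. The key point is that $E(\GF)=\spann<F_p,\gamma>$ whenever $\bG$ is not of type $\tD_4$, i.e. $E^\circ=E(\GF)$, so that $\wh W=W\rtimes E^\circ$ in that range; but here we do not need that restriction because we have intersected with $W\rtimes E^\circ_L$ on the left-hand side. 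So the plan is to show that the two groups $K(\la)_{\eta_0}$ and $\wh K(\la)_{\eta_0}\cap(W\rtimes E^\circ_L)$ actually \emph{coincide} as subgroups of $W\rtimes E^\circ_L$, which trivially gives the equivalence of the two stability conditions.

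First I would unpack $\wh K(\la)\cap(W\rtimes E^\circ_L)$. An element $wL$ with $w\in N\spann<F_p,\gamma>_{\la}$ lies in $W\rtimes E^\circ_L$ precisely when its image in $\Out$-type data is generated by $F_p$ and $\gamma$; then $w$ stabilizes $\la\in\Irr(L)$ on the nose. On the other hand $K(\la)$ stabilizes only the $\spann<F_p>$-orbit sum $\restr\la^{J\spann<F_p>}|J$, i.e. it stabilizes the restriction to $J=\wt L'_\la$ of the character induced from $J$ to $J\spann<F_p>$ — a weaker condition a priori. So I expect $\wh K(\la)_{\eta_0}\cap(W\rtimes E^\circ_L)\subseteq K(\la)_{\eta_0}$ is the easy inclusion (anything fixing $\la$ fixes the orbit sum), and the content is the reverse: an element of $K(\la)$ that does not fix $\la$ but permutes its $\spann<F_p>$-orbit must, when additionally required to fix $\eta_0\in\Irr(W(\wt\la))$, already fix $\la$. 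Here is where one uses that $\wt\la$, an extension of $\la$ to $\wt L'_\la=J$, together with $\eta_0\in\Irr(W(\wt\la))$ pins down enough Howlett--Lehrer/Clifford data: the stabilizer computations of \Cref{prop23} (adapted from \cite[Thm.~4.3]{CS17C}) show that the relevant labels are rigid enough that fixing $\eta_0$ forces fixing $\la$ itself rather than merely its Frobenius orbit. I would run exactly the argument in the proof of \Cref{prop23}, tracking which $\spann<F_p>$-translate of $\la$ occurs, and conclude the two subgroups agree.

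Concretely, the steps in order: (1) record $E^\circ=\spann<F_p,\gamma>$ and note $\gamma$ is realized inside $\ov N$ via $\ov\nn_1$ (\Cref{gammane1}), so $W\rtimes E^\circ_L$ is a genuine subgroup of $\ov W\rtimes\spann<F_p>$ acting on $\Char(J)$ for any $\EL$-stable $L\le J\le\wt L$; (2) prove $\wh K(\la)_{\eta_0}\cap(W\rtimes E^\circ_L)\subseteq K(\la)_{\eta_0}$, which is immediate: fixing $\la$ implies fixing $\la^{J\spann<F_p>}$, hence fixing its restriction to $J$, and the action on $\Irr(W(\la))$ is compatible; (3) for the reverse inclusion, take $xL\in K(\la)_{\eta_0}$ with $x\in W\rtimes E^\circ_L$; then $\la^x=\la^{F_p^k}$ for some $k$ (since $x$ normalizes $J$ and fixes $\la^{J\spann<F_p>}|_J$), and $x$ acts on $\Irr(W(\la)\mid\eta_0)$; (4) use that $\wt L'_\la/L\Z(\wG)$ acts transitively on $\Irr(W(\la)\mid\eta_0)$ (as in \Cref{prop23}) together with $\eta_0$-stability to deduce $xL\in W(\la)\,\wh K(\la)_{\eta_0}$, hence after correcting by an element of $W(\la)\le N_\la/L$ we get an element of $\wh K(\la)_{\eta_0}$, i.e. $\la^x=\la$; (5) conclude $K(\la)_{\eta_0}=\wh K(\la)_{\eta_0}\cap(W\rtimes E^\circ_L)$ as subgroups, whence $\eta$ is stable under one iff under the other.

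\textbf{Main obstacle.} The delicate step is (3)--(4): showing that fixing $\eta_0$ upgrades ``$x$ permutes the $\spann<F_p>$-orbit of $\la$'' to ``$x$ fixes $\la$''. This is not formal — it requires that the extension $\wt\la$ and the character $\eta_0$ of the smaller relative Weyl group $W(\wt\la)$ together separate $\la$ from its Frobenius conjugates, which is exactly the kind of rigidity that the proof of \Cref{prop23} exploits (the constituent above $\la$ in $\cusp(N)$ cannot be $N$-conjugate, and the $\wt L$-transversal property of $\TT$ forbids $\la^{F_p^k}\ne\la$ staying in $\TT$). One has to be careful that $F_p$-conjugates of $\la$ need not lie in $\TT$, so a nontrivial $k$ would push $\Upsilon(\ov{(\la,\eta)})$ out of $\Irr(N\mid\TT)$, contradicting $\wh N$-stability of $\TT$; this is precisely the mechanism used inside \Cref{prop23} and I would cite it rather than redo it. The remaining bookkeeping — that the two group-theoretic descriptions of the ``diagonal'' subgroup $W(\la)/W(\wt\la)$ and its complement by $K(\la)$ versus $\wh K(\la)$ match up — is routine given \Cref{propcuspN} and \Cref{lem3_16}.
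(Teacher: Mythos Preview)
Your plan has a genuine gap rooted in a misidentification of where the two groups live. You aim to show that $K(\la)_{\eta_0}$ and $\wh K(\la)_{\eta_0}\cap(W\rtimes E^\circ_L)$ \emph{coincide as subgroups} of $W\rtimes E^\circ_L$ (or of $\ov W\rtimes\spann<F_p>$). But they do not: $K(\la)\leq\ov W$ is a quotient-level object, whereas $\wh K(\la)\cap(W\rtimes E^\circ_L)$ sits upstairs and will typically contain elements with nontrivial $\spann<F_p>$-component whenever $\la$ is not $F_p$-fixed. Concretely, if $\la^{F_p}\neq\la$ then no element of $\wh K(\la)$ projecting to a given $\bar x\in K(\la)$ with $\la^{\bar x}=\la^{F_p}$ can lie in $\ov W\times\{1\}$. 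So the literal equality you are after in step~(5) is false, and steps~(3)--(4) are trying to prove the wrong thing: you attempt, via the machinery of \Cref{prop23}, to upgrade ``$\la^x=\la^{F_p^k}$'' to ``$\la^x=\la$'' for $x\in K(\la)_{\eta_0}$, but elements of $K(\la)$ are by definition only required to stabilize the $\spann<F_p>$-orbit of $\la$, not $\la$ itself, and nothing about $\eta_0$ changes that.

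The paper's argument is much shorter and avoids this detour entirely. The key observation you are missing is that $F_p\in\Z(\wh W)$: it centralizes $W(\la)$ and $W(\wt\la)$, hence fixes every $\eta\in\Irr(W(\la))$ and every $\eta_0\in\Irr(W(\wt\la))$. Consequently the action of $W\rtimes E^\circ_L$ on these character sets factors through the quotient $\ov W\cong(W\rtimes E^\circ_L)/\spann<F_p>$. Under this quotient, $\wh K(\la)\cap(W\rtimes E^\circ_L)$ surjects onto $K(\la)$: if $\la^{w\gamma^bF_p^a}=\la$ then $\la^{w\gamma^b}=\la^{F_p^{-a}}$, so the image lies in $K(\la)$; conversely any $\bar x\in K(\la)$ with $\la^{\bar x}=\la^{F_p^c}$ lifts to $\bar x\,F_p^{-c}\in\wh K(\la)$. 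Since $F_p$ fixes $\eta_0$, taking $\eta_0$-stabilizers commutes with this quotient, giving $K(\la)_{\eta_0}$ as the image of $\wh K(\la)_{\eta_0}\cap(W\rtimes E^\circ_L)$. And since $F_p$ fixes $\eta$, stability of $\eta$ under the upstairs group is equivalent to stability under its image $K(\la)_{\eta_0}$. No appeal to \Cref{prop23} or to transitivity of $\wt L'_\la/L\Z(\wG)$ is needed.
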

\begin{proof}
Note $F_p\in \Z(\wh W)$, hence $\eta$ and $\eta_0$ are $F_p$-stable.  Recall $\ov W$ can be identified with the quotient $(\wh W\cap ( W \rtimes E^\circ_L ))/\spannFp$.  The group $\wh K(\la)\cap (W\rtimes E^\circ_L)$ then projects to $K(\la)$, i.e.  for every $w\in \ov W$ and $e\in \spannFp$ with $\la^{we}=\la$ we see $(\restr \la^{LE^\circ_L}|L)^w=\la$.
This implies $\wh K(\la)\spannFp \cap \ov W=  K(\la)$ and 
$\wh K(\la)_{\eta_0}\spannFp \cap \ov W=  K(\la)_{\eta_0}$. As $F_p$ stabilizes $\eta$ this implies the statement. 
\end{proof}

For $\la\in \TT$ and $\wt \la\in \Irr(\wt L_\la\mid \la)$ the group $W(\wt \la)$ is determined by $\restr \wt \la|L$, as $W$ acts trivially on the characters of $\wt L_\la /L$. Note that $ W(\wt \la)\neq W(\restr \wt \la|L)$ in general. We can work with the group $\wt L$ instead of $\wt L'$ because of the following observation. 
\begin{lem}\label{lem41}
Let $\la\in \cusp(L)$, $\wh \la\in\Irr(\wh L\mid \la)$ and $\wt \la\in\Irr(\wt L \mid \wh\la)$. 
\begin{enumerate}
	\item Then $\ov W(\wt \la)=\ov W(\wt \la') \und W(\wt \la)= W(\wt \la')$ for every $\wt \la'\in \Irr(\wt L' \mid \la)$.
	\item Then $\ov W(\wt \la)\leq \ov W(\wh \la) \leq \ov W(\la) \und W(\wt \la)\leq W(\wh \la) \leq W(\la)$. 
\end{enumerate}
\end{lem}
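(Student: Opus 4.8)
\textbf{Proof plan for Lemma~\ref{lem41}.}

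The plan is to exploit that $\wt L/L$ is abelian (indeed $\wt L$ is a central product of the $\wt L_I$ over $\spann<h_0>$ by \Cref{lem36}, and $\wt L/\Kcirc$ is abelian by \Cref{lem36}(d)), so that $N$ acts by conjugation on $\Irr(\wt L_\la/L)$ and on the various intermediate quotients in a way that is controlled entirely by the action on the restrictions to $L$. For part (a), I would first recall from \Cref{lem36} that $\wt L'$ and $\wt L$ induce the same automorphisms on $\bG$ (stated explicitly after \Cref{rem_whG} and again after \Cref{lem36}); concretely $\wt L' = \wt T_0 L$ with $\wt T_0 = \Z(\wbG)\bT_0$, and conjugation by $\wt T_0$ factors through its image in the automorphism group, which is the same as that of $\wt L$. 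Since $N\le \NNN_\bG(\bL)$ acts on $\bL$ (hence on $L$, $\wh L$, $\wt L$, and on $\wt L' = \wt T_0 L$), and since any $\wt\la'\in\Irr(\wt L'\mid\la)$ and any $\wt\la\in\Irr(\wt L\mid\la)$ have the \emph{same} restriction to $L$ (namely $\la$, up to the fact that both are extensions of $\la$ — recall maximal extendibility holds \wrt $L\lhd\wt L$ and \wrt $L\lhd\wt L'$, so $\restr{\wt\la}|L = \restr{\wt\la'}|L = \la$), the key point is that $W(\wt\la) = N_{\wt\la}/L$ depends only on $\la$ and not on the chosen extension: for $n\in N$ one has $\wt\la^n = \wt\la$ iff $\la^n=\la$, because if $\la^n=\la$ then $\wt\la^n$ is another extension of $\la$ to $\wt L_\la$, but $W$ acts trivially on $\Irr(\wt L_\la/L)$ (the quotient being central enough that $N$ fixes its characters — here one uses $[N/L,\wt L/L]=1$ via conjugation, as noted in \ref{not29}), so $\wt\la^n=\wt\la$. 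Running the same argument for $\wt\la'$ gives $W(\wt\la)=N_\la/L = W(\wt\la')$, and identically $\ov W(\wt\la) = \ov N_\la/L = \ov W(\wt\la')$ using that $\ov N$ also acts on $\wt L'$ and fixes characters of $\wt L'_\la/L$. This establishes (a).

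For part (b), the inclusions $W(\wt\la)\le W(\wh\la)\le W(\la)$ follow immediately once one observes the chain of restrictions: $\wt\la$ lies above $\wh\la$ which lies above $\la$. If $n\in N_{\wt\la}$ then $\wh\la^n$ and $\wh\la$ are both constituents of $\restr{\wt\la}|{\wh L}$; but by the argument above (abelian quotient $\wh L/L$, $N$ fixing characters of $\wh L_{\wh\la}/L$), $n$ fixing $\wt\la$ forces $n$ to fix $\restr{\wt\la}|{\wh L}$, and since $\wh\la$ is the unique constituent lying under $\wt\la$ in its $N_{\wt\la}$-orbit (or more simply: $\wh\la^n$ is determined by $\restr{(\wh\la^n)}|L = \la^n = \la$ together with the central character data coming from $\wt\la$, which is $n$-fixed), we get $\wh\la^n=\wh\la$, i.e. $N_{\wt\la}\le N_{\wh\la}$. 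The same reasoning with $\wh L/L$ abelian gives $N_{\wh\la}\le N_{\la}$: if $n$ fixes $\wh\la$ then it fixes $\restr{\wh\la}|L=\la$. Dividing by $L$ throughout yields $W(\wt\la)\le W(\wh\la)\le W(\la)$, and the identical argument with $\ov N$ in place of $N$ gives $\ov W(\wt\la)\le\ov W(\wh\la)\le\ov W(\la)$.

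The main obstacle I anticipate is bookkeeping the precise sense in which ``$N$ acts trivially on $\Irr(\wt L_\la/L)$'': one must be careful that it is $\wt L_\la/L$ (the full stabilizer in $\wt L$), not $\wt L/L$, and that the triviality of the action comes from $[N,\wt L]\subseteq L$ modulo scalars together with the structure of $\wt L$ as a central product — in particular one should check that for $n\in N_\la$, the automorphism of $\wt L_\la/L$ induced by $n$ is trivial, which is where \Cref{lem36} and the commutator relations \eqref{VIcommutator} do the work. Once that is pinned down, everything else is a routine Clifford-theoretic unwinding; there is no genuinely hard step, only the need to be scrupulous about which extension is being restricted to which intermediate group.
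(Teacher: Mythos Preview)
There is a genuine error in your argument for part (a), and it stems from a misuse of the fact $[N/L,\wt L/L]=1$. You argue that if $\la^n=\la$ then $\wt\la^n$ is another extension of $\la$, and since ``$W$ acts trivially on $\Irr(\wt L_\la/L)$'' you conclude $\wt\la^n=\wt\la$. But the triviality of the $N$-action on $\wt L_\la/L$ only says that each linear character $\mu\in\Irr(\wt L_\la/L)$ satisfies $\mu^n=\mu$; it does \emph{not} say that extensions of $\la$ are individually fixed. Concretely, for $n\in N_\la$ one has $\wt\la^n=\wt\la\cdot\mu_n$ for some $\mu_n\in\Irr(\wt L_\la/L)$; the commutator condition forces $\mu_n^{\,n}=\mu_n$ but places no constraint on whether $\mu_n$ is trivial. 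The assignment $n\mapsto\mu_n$ is a homomorphism $N_\la\to\Irr(\wt L_\la/L)$ with kernel $N_{\wt\la}$, and this kernel is typically proper --- indeed the entire analysis of Section~\ref{secWla} (e.g.\ \Cref{lem411}) computes how $W(\wt\la)$ sits strictly inside $W(\la)$. Your argument, if it went through, would yield $W(\wt\la)=W(\la)$, contradicting the non-trivial inclusions in part~(b). (A side issue: you also write $\restr{\wt\la}|L=\la$, but $\wt\la\in\Irr(\wt L)$, not $\Irr(\wt L_\la)$, so in general $\restr{\wt\la}|L$ is the sum of the $\wt L$-conjugates of $\la$; maximal extendibility only gives an extension to $\wt L_\la$.)

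The paper's route for (a) is different and avoids this trap: it uses that $\wt L'\leq\wt L\cdot\Z(\wbG)$ with $\Z(\wbG)$ central, so one can pass between $\Irr(\wt L\mid\la)$ and $\Irr(\wt L'\mid\la)$ by extending trivially over the central factor and restricting (the argument of \Cref{rem_whG}). This correspondence is $N$-equivariant, giving $N_{\wt\la}=N_{\wt\la'}$ directly, without ever claiming either equals $N_\la$. The genuine use of $[N/L,\wt L/L]=1$ is only to show that $N_{\wt\la'}$ is independent of the choice of $\wt\la'\in\Irr(\wt L'\mid\la)$ --- and that is the correct form: $N_{\wt\la'}=N_{\wt\la''}$ for two such choices, not $N_{\wt\la'}=N_\la$. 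Your treatment of part (b) via restriction is essentially on the right track; that part really is a short Clifford argument once one knows $\wh L$ is $N$- and $\ov N$-stable.
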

\begin{proof} By the construction of $\TT$, the character $\la\in \TT$ satisfies $(\wh N \wt L)_\la= \wh N_\la \wt L_\la$. Because of  $\wbG=\Z(\wbG)\bG$ the group $\wt L'$ is a subgroup of $\wt L \Z(\wbG)$ and $\wt L'=\bL\cap \wt L \Z(\wbG)$. This implies  $(N\EL)_{\wt \la}=(N\EL)_{\wt \la'}$ and hence part (a), see \Cref{rem_whG} for a similar argument. 

As $h_0$ is centralized by $\wh N$, the group $\wh L$  is normalized by $N$ and $\ov N$. The containments from part (b) follow from this by straightforward considerations. \end{proof}
\label{sec_computeWxi}
For $\la\in\TT$ and $\wt \la\in \Irr(\wt L \mid \la)$ we compute $\ov W(\wt \la)$ as an approximation of $W(\wt \la)$. 
For $I\in \cO$ and $d\in\DD$ we use the groups $\bG_I$, $L_I$, $\wh L_I$ and $\bL_d$ from \Cref{defhI},  Lemma~\ref{structL} and \Cref{lem36}. 
In  \Cref{lem36}  the structure of $\bL$ and some of its subgroups was already studied. Additionally we use the following properties of $\wh L$. 
\begin{prop}[The structure of $\wh L$]\label{prop55}
For $d\in \DD$ and $I\in\cO$ let $\II Lhatd@{\pwh L_d}:=\wh L\cap \bL_d$. 
\begin{thmlist}
\item $L_1$ is a split torus of rank $|J_1|$.
\item $\wh L$ is the central product of $\wh L_d$ ($d\in\DD$) over $\spannh$.
\item $\wh L_d$ is the central product of $\wh L_I$ ($I\in \cO_d$) over $\spannh$.
\item\label{commut} $[\wh L_I,\wh L_{I'}]=1$ for all $I,I'\in\cO$ with $I\neq I'$.
\end{thmlist}
\end{prop}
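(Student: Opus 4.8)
The plan is to derive all four statements from the central product decompositions of $\bL$ already obtained in \ref{structL} and \Cref{lem36}, together with one elementary observation about tori.

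First I would treat (a). For $I\in\cO_1$ the set $I$ is a singleton $\{i\}$, so $\bGI$ is trivial (see \Cref{defhI}) and $\bL_I=\bT_I=\h_{e_i}(\FF^\times)$ is a one-dimensional subtorus of $\bT_0$. Consequently $\bL_1=\spann<\bL_I\mid I\in\cO_1>=\spann<\h_{e_i}(t)\mid i\in J_1,\ t\in\FF^\times>$ is a subtorus of $\bT_0$, and its dimension equals $|J_1|$: the cocharacters $\h_{e_i}$ ($i\in J_1$) are non-zero multiples of the vectors $e_i$, which are linearly independent since $\{e_i\}$ is an orthonormal basis of $\mathbb R^l$. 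As $F=F_q$ acts on $\bT_0$, and on its cocharacter lattice by multiplication by $q$, it acts on $\bL_1$ by $t\mapsto t^q$; hence $\bL_1$ is a split torus and $L_1=\bL_1^F\cong(\FF_q^\times)^{|J_1|}$ is a split torus of rank $|J_1|$.

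Parts (c) and (d) are then essentially already available. Statement (c) is exactly the assertion of \Cref{lem36} that $\wh L_d=\wh L\cap\bL_d$ is the central product of the groups $\wh L_I$ ($I\in\cO_d$) over $\spannh$. For (d), if $I\neq I'$ in $\cO$ then $\wh L_I\le\bL_I$ and $\wh L_{I'}\le\bL_{I'}$, while $[\bL_I,\bL_{I'}]=1$ by the central product decomposition of $\bL$ in \ref{structL}; hence $[\wh L_I,\wh L_{I'}]=1$. For (b) I would group the factors of the decomposition of $\wh L$ as the central product of the $\wh L_I$ ($I\in\cO$) from \Cref{lem36} according to the partition $\cO=\bigsqcup_{d\in\DD}\cO_d$ (every $W_{\Phi'}$-orbit on $\ul$ is either a singleton, hence in $\cO_1$, or a $W_{\Phi_d}$-orbit for the unique $d\in\DD'$ whose components move it). This gives $\wh L=\spann<\wh L_d\mid d\in\DD>$; the subgroups $\wh L_d$ pairwise commute, each contains $h_0$ --- indeed $h_0\in\bL_d$ and $\calL(h_0)=h_0^{-1}F(h_0)=1$, since $q$ is odd so that $F(h_0)=\h_{e_1}((-1)^q)=h_0$ --- and $\wh L_d\cap\spann<\wh L_{d'}\mid d'\neq d>\le\bL_d\cap\spann<\bL_{d'}\mid d'\neq d>=\spannh$ by \ref{structL}. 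Therefore $\wh L$ is the central product of the $\wh L_d$ ($d\in\DD$) over $\spannh$, which is (b).

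I do not expect a genuine obstacle: every assertion reduces to a result already in hand. The only point deserving a little care is the rank count in (a) --- checking that no collapse occurs in passing from the one-parameter subgroups $\h_{e_i}$ to the subtorus $\bL_1$ they generate --- and that is immediate from the linear independence of the $e_i$.
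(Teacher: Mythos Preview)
Your proof is correct and follows essentially the same strategy as the paper's own proof: derive everything from the central product decompositions already established in \ref{structL} and \Cref{lem36}. Your treatment of (a) is more explicit than the paper's (which simply cites \Cref{lem36}), and for (d) you take a slightly cleaner route: you observe that $[\bL_I,\bL_{I'}]=1$ is already implicit in the central product decomposition of $\bL$ from \ref{structL}, whereas the paper re-derives this commutator triviality directly via Chevalley's commutator formula (noting that $\Phi_I$ and $\Phi_{I'}$ are orthogonal and at least one is of type $\tA$, so no non-trivial sum of roots from the two systems is again a root). Both arguments are valid; yours avoids repetition, the paper's is self-contained at this point.
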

\begin{proof}
The first three parts follow from \Cref{lem36}(c). 
	
Part (d) is clear if $I\in \cO_1$ or $I'\in \cO_1$. Note that the groups $\wh L_I$ and $\wh L_{I'}$ contain the root subgroups for $\Phi_I$ and $\Phi_{I'}$, which are orthogonal to each other. At least one of them is of type $\tA_l$. Hence no non-trivial linear combination of roots from $\Phi_I$ and $\Phi_{I'}$ is a root itself. Hence by Chevalley's commutator formula we see that the commutator of the groups is trivial.
\end{proof}
We continue using the groups $\ov V_d$ from \ref{lem3_20} for the description of $\ov W(\wh \la)$.  We write $\II IrrcuspLhat@{\protect\cusp(\pwh L)}$ for $\Irr(\wh L\mid \cusp(L))$.
\begin{lem}[Characters of $\wh L$]\label{lem6_7}
Let $\wh\la\in\cusp(\wh L)$, 
$\wh\la_d\in \Irr(\restr\wh \la|{\wh L_d})$ for every $d\in\DD$ and $\wh \la_I\in \Irr(\restr \wh \la|{\wh L_I})$ for every $I \in \cO$. Then:
\begin{thmlist}
	\item $\wh \la = \odot_{d\in \DD}\wh \la_{d} $ and $\wh \la_d=\odot_{I\in \cO_d}\wh \la_{I}$ for every $d\in\DD$, 
	\item  $ \wh \la_d \in \cusp(\wh L_d)$ and $ \wh \la_I \in \cusp(\wh L_I)$,
	\item $\ov W(\wh \la)$ is the direct product of the groups $ \II Woverlinedlambda@{\ov W_{d}(\phat \lambda)}:=(\ov V_d)_{\wh \la}/H_d $ ($d\in \DD$), and 
	\item $(\ov V_d)_{\wh \la}/H_d = (\ov V_d)_{\wh \la_d}/H_d$. 
	\end{thmlist}
\end{lem}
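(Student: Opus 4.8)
The statement is the character-theoretic shadow of the central-product decomposition of $\wh L$ recorded in \Cref{prop55}, so the plan is to peel off one structural fact at a time. First I would establish (a) and (b) together. Since $\wh L$ is the central product of the $\wh L_d$ ($d\in\DD$) over $\spannh$ by \Cref{prop55}(b), and $\wh\la$ lies above $\la\in\cusp(L)$ with $h_0$ either in or not in $\ker(\la)$ but in any case $\spannh\le\Z(\wh L)$, Clifford theory forces $\restr\wh\la|{\wh L_d}$ to be (a multiple of) a single irreducible: indeed $\wh L/\wh L_d$ acts trivially on $\Irr(\wh L_d)$ modulo the central identification, because $[\wh L_d,\wh L_{d'}]=1$ for $d\ne d'$ by \Cref{prop55}(d), so the $\wh L$-orbit on $\Irr(\wh L_d)$ is a singleton and $\restr\wh\la|{\wh L_d}=e_d\,\wh\la_d$. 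The characters $\{\wh\la_d\}_{d\in\DD}$ agree on $\spannh$ (being constituents of the same $\wh\la$ restricted to the common center), so by the $\odot$-construction recalled in the opening Notation (the central-product multiplication of \cite[\S5]{IMN}) $\odot_{d\in\DD}\wh\la_d$ is a well-defined irreducible character of $\wh L$ lying above $\wh\la$; comparing restrictions shows it equals $\wh\la$. The same argument, now applied inside $\wh L_d$ which is the central product of $\wh L_I$ ($I\in\cO_d$) over $\spannh$ by \Cref{prop55}(c), gives $\wh\la_d=\odot_{I\in\cO_d}\wh\la_I$, completing (a). Cuspidality in (b) is then immediate from \Cref{lem_cusp}(b): each $\wh L_I$ (resp.\ $\wh L_d$) is a central-product factor of $[\bL,\bL]^F$-related groups with the intersection a group of semisimple elements, so a constituent of the restriction of a cuspidal character to such a factor is again cuspidal. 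Strictly one should note $\wh L_I$ need not equal $\bH_i^F$ for an algebraic-group factor $\bH_i$, but the proof of \Cref{lem_cusp} only uses that the unipotent radicals of proper parabolics split as direct products over the factors, which holds here because the root subsystems $\Phi_I$ are mutually orthogonal.

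\textbf{The orbit-stabilizer part.} For (c) I would argue as follows. By \Cref{prop_lift}(a) we have $N=LV_\tD$ and by construction $\ov N=L\ov V$ with $\ov V=H\spann<\ov V_d\mid d\in\DD>$; moreover $\ov N$ normalizes each $\bL_d$ (the decomposition of $\Phi'$ into the $\Phi_d$ is intrinsic, being by type, so it is preserved by $\Stab_{\ov W_0}(\Phi')=W_{\Phi'}W^\circ(L)$ as shown in the proof of \Cref{prop64}), hence $\ov N$ permutes the set $\{\wh L_d\mid d\in\DD\}$ — in fact fixes each member since distinct $d$ have distinct labels $\|I\|=d$. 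Consequently $\ov W=\ov N/L$ acts on $\cusp(\wh L)$ componentwise through its action on each $\Irr(\wh L_d)$, and because $[\ov V_d,\ov V_{d'}]=\spannh$ for $d\ne d'$ by \eqref{VIcommutator} and $\spannh$ acts trivially, the images of the $\ov V_d$ in $\ov W$ commute and $\ov W=\prod_{d\in\DD}(\ov V_d H/L)$ is (up to the central $H$) a direct product. Then $\ov W(\wh\la)=\ov W_{\wh\la}$ decomposes as $\prod_{d}(\ov V_d)_{\wh\la}/(\ov V_d\cap L\ \text{-part})$; using $\ov V_d\cap\bL\le H_d$ from the proof of \Cref{cor322} and $H_d=\ov V_d\cap K_d\le\Z(K_d)$, the denominator is exactly $H_d$, giving $\ov W_d(\wh\la):=\ov V_{d,\wh\la}/H_d$ and the asserted direct-product decomposition $\ov W(\wh\la)=\prod_{d\in\DD}\ov W_d(\wh\la)$.

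\textbf{The last reduction.} Part (d) says the stabilizer of $\wh\la$ in $\ov V_d$ depends only on the $d$-component $\wh\la_d$. One inclusion $\ov V_{d,\wh\la}\le\ov V_{d,\wh\la_d}$ is automatic: if $v\in\ov V_d$ fixes $\wh\la=\odot_{d'}\wh\la_{d'}$ then, restricting to $\wh L_d$ (which $v$ normalizes) and using uniqueness in the $\odot$-construction, $v$ fixes $\wh\la_d$. For the reverse, if $v\in\ov V_d$ fixes $\wh\la_d$, then since $v$ centralizes $\wh L_{d'}$ for every $d'\ne d$ (as $[\ov V_d,\wh L_{d'}]\le[\ov V_d,\ov V_{d'}\cdot(\text{torus part})]\le\spannh$, and more precisely $[\ov V_d,L_{d'}]=1$ by the Steinberg relations, cf.\ \ref{structL}), $v$ fixes every $\wh\la_{d'}$, hence fixes $\odot_{d'}\wh\la_{d'}=\wh\la$; thus $v\in\ov V_{d,\wh\la}$. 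Dividing by $H_d$ gives $\ov W_d(\wh\la)=\ov V_{d,\wh\la}/H_d=\ov V_{d,\wh\la_d}/H_d$.

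\textbf{Main obstacle.} The routine combinatorics of central products is harmless; the one point that needs genuine care is part (c)—more precisely pinning down that the kernel of $\ov V_d\to\ov W$ is exactly $H_d$ and that the product $\prod_d\ov W_d(\wh\la)$ is genuinely \emph{direct} rather than merely a product with small overlap. This hinges on the commutator relations \eqref{VIcommutator} together with $\ov V_d\cap\bL\le H_d$ and the fact that $H=\prod_d$(something)$\cap H_0$ splits compatibly (Lemmas~\ref{lem3_9} and \ref{lem34}); I expect the bookkeeping of which powers of $h_0$ survive in $H_d$ versus $\wh L_d$ to be the only place where one must be attentive rather than formal.
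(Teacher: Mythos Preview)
Your proposal is correct and follows essentially the same route as the paper's (very terse) proof: the central-product structure of $\wh L$ from \Cref{prop55} gives (a), \Cref{lem_cusp} gives (b), and the key commutator vanishing $[\ov V_d,\bL_{d'}]=1$ for $d\neq d'$ gives (c) and (d). Two small remarks: first, your citation of \ref{structL} for $[\ov V_d,L_{d'}]=1$ is not quite apt, since \ref{structL} only records $[\bG_I,\bG_{I'}]=[\bG_I,\bT_{I'}]=1$; what you actually need (and what the paper invokes directly) is that the roots underlying $\ov V_d$ lie in $\ov\Phi_{J_d}$ while those of $\bL_{d'}$ lie in $\Phi_{J_{d'}}$, with $J_d\cap J_{d'}=\emptyset$, so Chevalley's commutator formula gives $[\ov V_d,\bL_{d'}]=1$ outright (not merely $\leq\spannh$). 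Second, your intermediate bound $[\ov V_d,\wh L_{d'}]\le[\ov V_d,\ov V_{d'}\cdot(\text{torus part})]$ is not well-formed since $\wh L_{d'}$ is not contained in that product; but your ``more precisely'' clause fixes this, and once $[\ov V_d,\bL_{d'}]=1$ is established the directness in (c) and the equality in (d) are immediate, so your worry about the $h_0$-bookkeeping in the ``main obstacle'' paragraph turns out to be unnecessary.
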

\begin{proof}
The description of $\wh \la$ and $\wh \la_d$ in (a)  follows from the structure of $\wh L$ and $\wh L_d$ given in \Cref{prop55}. The characters $\wh \la_d$ and $\wh\la_I$ cover a cuspidal character of $L_d$ and $L_I$, respectively, by \Cref{lem_cusp} which then also gives (b). Considering the roots underlying $V_d$ and $\bL_{d'}$ we see that the Chevalley relations imply $[V_d,\bL_{d'}]=1$ for $d,d'\in \DD$ with $d\neq d'$. This implies the parts (c) and (d).
\end{proof} 
For a more explicit description of the groups $\ov W(\wh \la)$ we introduce some elements of $\ov V$ using the maps $\kappa_d$ ($d\in\DD$) from \Cref{lem3_20}. For $d\in\DD\setminus\{-1\}$, recall $\cO_d=\{I_{d,1},\ldots, I_{d,a_d}\}$ from \Cref{not2_17}.
\begin{notation}\label{not4_15} Let  $d\in\DD\setminus\{-1\}$ and  $\II cI@{c_{I_{d,j}}}:=\kappa_d(\nn_{e_j}(\varpi))\in \ov V_d$ for every $j\in \uad$. Note that for every $I\in \cO_d$, $c_{I}$ is some $\ov V_d$-conjugate of $\ov \nn_1^{(d)}$ and $\rho_{\bT}(c_I)=\prod_{i\in I}(i,-i)$, where $\rho_{\bT}: \ov N_0\ra \Sym_{\pm \ul}$ is the natural epimorphism, see before  \Cref{prop64}.
If $2\nmid |I|$ and $I\notin \cO_{-1}\cup \cO_1$, then by considerations as in the proof of \ref{lemVactKc}, $c_I$ acts as transpose-inverting on $L_I$ via the identification of $L_I$ with $\GL_{|I|}(q)$. 

We define additionally the subgroups
\begin{align}\label{42}
\II{VoverlinedS}@{\overline V_{d,S}}:=H_d\Spann<\kappa_d(\n_{e_i-e_{i+1}}(-1))| i\in \underline{a_d-1} >
\end{align}
and $\II VS@{V_S}:=\Spann< V_{d,S} |d\in\DD\setminus \{-1\}>$. Then $\rho_{\bT}(V_S(L\cap N_0))/\rho_\bT(L\cap N_0)= \Sym_{\cO}\leq \Sym_{\pm \cO}$.

If $-1\in\DD$, then we set $c_{J_{-1}}:={\ov{\mathbf n}}_1$ from \ref{not38}.
\end{notation}
Using the notation of permutation groups given in \ref{not_Sym} we identify the group $\ov W=\ov N/L$ with $\Sym_{\pm \cO}$. Computations in $\ov W$ show that $\ov V=H \spann<c_I\mid I \in \cO> V_S$.  
\begin{defi}\label{def_standardized}
Let $\wh \la\in\Irr(\wh L)$. 
 We call $\wh \la$ \emph{\color{ocre}{standardized}} if for every $I,I'\in\cO$ the characters $\wh \la_{I}$ and $\wh \la_{I'}$ are either $V_S$-conjugate or not $\ov V$-conjugate. 
 For such $\wh\la$, we call the characters in $\Irr(\restr\wh \la|L)$ also \emph{standardized}. \index{standardized character}
 \index{character! standardized}
\end{defi}
Computations show that every standardized character $\wh \la$ satisfies $\ov V_{\wh\la}= H \spann<c_I\mid I \in \cO>_{\wh \la} (V_S)_{ \wh \la } $ and every $\ov N$-orbit in $\Irr(\wh L)$ contains a standardized character. For a more explicit description of $\ov W_{d}( \wh \la)$ we introduce the following notation. 
\begin{notation}\label{not59} Let $E$ be a set and $M$ a subset of $2^E$, the set of all subsets of $E$. For $m'\subseteq E$
we write \emph{$\III{m'\subsubset M}$} if $m'\subseteq m$ for some $m\in M$.
\end{notation}
Using the notation of permutation groups given in \ref{not_Sym} we identify the group $\ov W=\ov N/L$ with $\Sym_{\pm \cO}$. In the following we describe $\ov W_d(\wh \la_d)$ as a subgroup of $\Sym_{\pm \cO_d}$. We use the Young-like subgroups of $\Sym_{\pm \cO_d}$ from \Cref{not_Sym} that are associated to a partition of $\cO_d$. 
\begin{lem}\label{lem510} Let $\la\in \cusp(L)$ be standardized. We set
	\[\II Oclambdahat@{\cO_{c}(\phat\lambda)}:=\{ I\in \cO\mid (\wh \la_I)^{c_I}=\wh \la _I \}.\] 
Let $\II Yhatlambda@{Y(\pwh\lambda)} \vdash \cO_{c}(\wh \la)$ and $
\II Y'hatlambda@{Y'(\pwh\lambda)}\vdash (\cO(\wh \la) \setminus \cO_{c}(\wh \la))$ be the partitions such that $\{I,I'\}\subsubset Y(\wh \la)$ 
or $\{I, I'\}\subsubset Y'(\wh \la)$ if and only if $\wh \la_{I}$ and $\wh \la_{I'}$ are $V_{S}$-conjugate. 
Then \[\ov W(\wh \la)= \Young_{\pm Y(\wh \la)} \times \Young_{Y'(\wh \la)},\] 
where $\Young_{\pm Y(\wh \la)}$ and $\Young_{Y'(\wh \la)}$ are defined as in \ref{not_Sym}.
\end{lem}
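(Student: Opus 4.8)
The statement describes the group $\ov W(\wh\la)=\ov V_{\wh\la}/H$ explicitly for a standardized $\wh\la$, using the direct product decomposition $\ov W(\wh\la)=\prod_{d\in\DD}\ov W_d(\wh\la_d)$ from \Cref{lem6_7}(c) together with the observation (stated just before \Cref{not59}) that a standardized character satisfies $\ov V_{\wh\la}=H\spann<c_I\mid I\in\cO>_{\wh\la}\,V_{S,\wh\la}$. So the first thing I would do is reduce to a single $d\in\DD\setminus\{-1\}$ (the $d=-1$ and $d=1$ factors being handled by \Cref{lem6_7}(b)--(d) and \Cref{prop55}, and $\ov W_1$ being the wreath-type factor attached to a torus), and then work entirely inside $\Sym_{\pm\cO_d}$, identifying $\ov V_d/H_d=\ov W_d$ with $\ov\kappa_d(\Sym_{\pm a_d})$ via \Cref{lem3_16} and \Cref{lem3_20}(e).

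\textbf{Key steps.} First I would record the semidirect-type decomposition of $\ov V_d$ realizing $\Sym_{\pm\cO_d}$: by \ref{not4_15} and the displayed formula \eqref{42}, $\ov V_d=(H_d\spann<c_I\mid I\in\cO_d>)\,\ov V_{d,S}$, where $\spann<c_I\mid I\in\cO_d>$ projects (modulo $H_d$) onto the base group $\prod_{I\in\cO_d}\spann<(I,-I)>$ of sign changes and $\ov V_{d,S}$ projects onto the ``permutation'' symmetric group $\Sym_{\cO_d}$. Next, using that $\wh\la_d=\odot_{I\in\cO_d}\wh\la_I$ (\Cref{lem6_7}(a)) and that the $c_I$ commute with each other modulo $\spannh$ and each $c_I$ acts only on the $I$-th factor $\wh L_I$ (Chevalley relations, $[\ov V_I,\ov V_{I'}]=\spannh$ from \eqref{VIcommutator}, and \Cref{prop55}(d)), I would compute the stabilizer of $\wh\la_d$ inside the base group: it is exactly $\prod_{I\in\cO_c(\wh\la)}\spann<(I,-I)>$, i.e. the sign change at the place $I$ fixes $\wh\la$ iff $\wh\la_I^{c_I}=\wh\la_I$ iff $I\in\cO_c(\wh\la)$. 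Then I would compute the stabilizer of $\wh\la_d$ inside $\Sym_{\cO_d}$: a permutation $\pi\in\Sym_{\cO_d}$ fixes $\wh\la_d$ iff it permutes the factors among themselves respecting the characters, and by the definition of \emph{standardized} (\Cref{def_standardized}) two factors $\wh\la_I,\wh\la_{I'}$ are either $V_S$-conjugate or not $\ov V$-conjugate; hence $\pi$ must preserve the $V_S$-conjugacy classes of factors, giving precisely the Young subgroup attached to the partition of $\cO_d$ into $V_S$-conjugacy classes. Finally I would combine the sign-change part and the permutation part: the permutation part can only mix factors that are ``of the same $c$-type'', i.e. it cannot move a factor in $\cO_c(\wh\la)$ to one outside it, because a $V_S$-conjugacy preserves whether $\wh\la_I^{c_I}=\wh\la_I$ (conjugating $c_I$ by the $V_S$-element carries it to $c_{I'}$). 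This yields the splitting $\cO(\wh\la)=\cO_c(\wh\la)\sqcup(\cO(\wh\la)\setminus\cO_c(\wh\la))$ with the two refined partitions $Y(\wh\la)$, $Y'(\wh\la)$, and the stabilizer is the product $\Young_{\pm Y(\wh\la)}\times\Young_{Y'(\wh\la)}$ — the $\pm$ appearing exactly on the $\cO_c(\wh\la)$-part because that is where sign changes survive. Taking the product over $d\in\DD$ and folding in the $d=-1$ case (where $c_{J_{-1}}=\ov\nn_1$ and the sign change survives iff $\wh\la_{J_{-1}}$ is $c_{J_{-1}}$-fixed, consistent with the same formula) gives the claim.

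\textbf{Main obstacle.} The routine-but-delicate point is the compatibility between the sign-change subgroup and the permutation subgroup: one must check that conjugating $c_I$ by an element $v\in V_S$ with $\rho_\bT(v)$ sending the block $I$ to $I'$ actually yields $c_{I'}$ (up to $H$), so that ``$\pi$ fixes $\wh\la_d$'' forces $\pi$ to preserve $\cO_c(\wh\la)$ and to act compatibly on the $\wh\la_I$. This is exactly the kind of bookkeeping that \Cref{lem3_20}(b) ($\kappa_d(v^x)=\kappa_d(v)^{\kappa_d(x)}$) and the choice of the $c_{I_{d,j}}=\kappa_d(\nn_{e_j}(\varpi))$ in \ref{not4_15} are designed to make transparent, and I expect the verification to be a short computation inside $\ov V_d$ rather than a conceptual difficulty. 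A secondary care point is to make sure the argument does not secretly use $\la\in\TT$ or cuspidality beyond what \Cref{lem6_7} already packages, so that the statement holds for any standardized $\wh\la\in\Irr(\wh L)$; cuspidality enters only through $\wh\la_d\in\cusp(\wh L_d)$, which we need merely to invoke \Cref{lem6_7}.
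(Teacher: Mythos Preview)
Your proposal is correct and follows essentially the same approach as the paper. The paper's proof is extremely terse---it simply records the semidirect decomposition $\ov W_d(\wh\la)=\spann<(I,-I)\mid I\in\cO_d>_{\wh\la}\rtimes\spann<(I,I')(-I,-I')\mid I,I'\in\cO_d>_{\wh\la}$ (which is exactly the factorization $\ov V_{\wh\la}=H\spann<c_I>_{\wh\la}V_{S,\wh\la}$ you invoke, valid precisely because $\wh\la$ is standardized) and says ``this gives our claim''; your write-up unpacks the same computation with more care, including the point that $V_S$-conjugacy preserves membership in $\cO_c(\wh\la)$, which the paper leaves implicit.
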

\begin{proof}
Note $Y'(\wh \la)\cup Y(\wh \la) \vdash \cO$.
As $\wh \la$ is standardized 
\begin{align*}
\ov W_d(\wh \la )&= \spann<(I,-I)\mid I\in \cO_d>_{\wh \la} \rtimes \spann<(I, I')(-I,-I')\mid I,I'\in \cO_d>_{\wh \la} \text{ for every } d\in \DD.  
\end{align*} This gives our claim.
\end{proof}
Let $\zeta\in\FF^\times$ with $\varpi=\zeta^{(q-1)_2}$ and $\II tI2@{t_{I,2}}:= \h_{I}(\zeta)$ for every $I \subseteq \ul$ as in \Cref{lem36}. For $I\in \cO\setminus \{J_{-1}\}$ the element $t_{I,2}$ satisfies $[\bL_I,t_{I,2}]=1$. 
\begin{lem}[Structure of $\ov W(\wt \la)$] \label{lem411}
Let $\la\in \cusp(L)$, $\wh \la\in\Irr(\wh L\mid \la)$, $\wt \la\in\Irr(\wt L\mid \wh \la)$
$\wh \la_I\in\Irr(\restr \wh \la|{\wh L_I})$ and $\wt \la_I\in \Irr(\wt L_I\mid \wh \la_I)$ ($I\in \cO_c(\wh\la)$). Assume that $\wh \la$ is standardized and $\wt L_{\wh \la}=\wt L$. We set 
\begin{align*}
\II Oc1lambdahat@{\cO_{c,1}(\pwh\lambda)}:=\{I \in \cO_{c}(\wh \la) \mid (\wt \la_I)^{c_I}=\wt \la_I \} \und 
\II Oc-1lambdahat@{\cO_{c,-1}(\pwh\lambda)}:=\{I \in \cO_{c}(\wh \la) \mid (\wt \la_I)^{c_I}\neq \wt \la_I\}.
\end{align*}
\begin{thmlist}
\item For  $I\in \cO_c(\wh\la)\setminus\{J_{-1}\}$ and $\eps=\pm 1$ we have $I \in \cO_{c,\eps} (\wh\la) \Leftrightarrow
\wh\la_I(t_{I,2}^2)= \epsilon \wh \la_I(1)$.
\item $ \ov W(\wt\la)\leq 
\Sym_{\pm \cO_{c,1}(\wh\la)}\times 
\Sym^\tD_{\pm \cO_{c,-1}(\wh\la)}$, more precisely 
\[\ov W(\wt\la)= 
\left(\Spann<(I,-I)|I\in \cO_{c,1}(\wh\la)> \Spann<(I,-I)(I',-I') | I,I'\in \cO_{c,-1}(\wh\la)>\right)
\rtimes \Young_{Y(\wh \la)\cup Y'(\wh \la)}.\] 
\end{thmlist}
\end{lem}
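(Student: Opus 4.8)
The statement is a structural computation of the stabilizer $\ov W(\wt\la)$ of an extension $\wt\la\in\Irr(\wt L\mid\wh\la)$ inside $\ov W=\ov N/L$, under the standing assumptions that $\wh\la$ is standardized and $\wt L$-stable. The strategy is to bootstrap from the already-established description of $\ov W(\wh\la)$ in \Cref{lem510}, namely $\ov W(\wh\la)=\Young_{\pm Y(\wh\la)}\times\Young_{Y'(\wh\la)}$, and cut it down by the extra constraint of fixing $\wt\la$ rather than only $\wh\la$. Since $\wt L=\wh L\spann<t_{\ul,2}>$ by \Cref{lem36}(e) and each $t_{I,2}=\h_I(\zeta)$ centralizes $\bL_I$ (for $I\neq J_{-1}$), an element $w\in\ov W(\wh\la)$ fixes $\wt\la$ if and only if it fixes the scalar data $\wt\la_I(t_{I,2}^2)/\wt\la_I(1)$ attached to the orbit-blocks; this is where part (a) enters.

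\emph{Step 1 (part (a)).} For $I\in\cO_c(\wh\la)\setminus\{J_{-1}\}$, I would compute the action of $c_I$ on $\Irr(\wt L_I\mid\wh\la_I)$ explicitly. Recall $c_I$ is a $\ov V_d$-conjugate of $\ov\nn_1^{(d)}$ with $\rho_\bT(c_I)=\prod_{i\in I}(i,-i)$, so it acts on $\bT_I$ by inversion on the relevant coroots; in particular it inverts $t_{I,2}=\h_I(\zeta)$, hence sends $\wt\la_I$ to a character agreeing with $\wh\la_I$ on $\wh L_I$ but with the value on $t_{I,2}$ conjugated. Since $\wt L_I/\wh L_I$ is cyclic of order $2$ (generated by the image of $t_{I,2}$, with $t_{I,2}^2\in\wh L_I$), the two extensions of $\wh\la_I$ are distinguished by the sign $\wh\la_I(t_{I,2}^2)^{-1}\wt\la_I(t_{I,2})^2\in\{\pm1\}$, and $(\wt\la_I)^{c_I}=\wt\la_I$ precisely when that scalar is positive, which translates into $\wh\la_I(t_{I,2}^2)=\eps\wh\la_I(1)$ with $\eps=+1$; the case $(\wt\la_I)^{c_I}\neq\wt\la_I$ gives $\eps=-1$. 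This is a direct Chevalley-relation computation along the lines of \Cref{lem_act_onGI} and \Cref{lemVactK}.

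\emph{Step 2 (part (b)).} Using part (a), $\ov W(\wt\la)$ consists of those $w\in\ov W(\wh\la)=\Young_{\pm Y(\wh\la)}\times\Young_{Y'(\wh\la)}$ that additionally respect the partition of $\cO_c(\wh\la)$ into $\cO_{c,1}(\wh\la)$ and $\cO_{c,-1}(\wh\la)$ in the following sense: a sign change $(I,-I)$ is allowed iff $(\wt\la_I)^{c_I}=\wt\la_I$, i.e. iff $I\in\cO_{c,1}(\wh\la)$; a double sign change $(I,-I)(I',-I')$ with $I,I'$ in the same $Y(\wh\la)$-block is allowed iff $I,I'\in\cO_{c,-1}(\wh\la)$, because then the two sign-changes each multiply $\wt\la$ by the same nontrivial linear character of $\wt L_I\wt L_{I'}/\wh L_I\wh L_{I'}$ and the product is trivial; meanwhile the pure permutation part $\Young_{Y(\wh\la)\cup Y'(\wh\la)}$ survives untouched since it permutes $\wt\la_I$'s that are already $V_S$-conjugate (hence $\ov V$-conjugate in a way compatible with the $\wt L$-structure). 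Assembling these yields exactly
\[\ov W(\wt\la)=\Bigl(\Spann<(I,-I)\mid I\in\cO_{c,1}(\wh\la)>\ \Spann<(I,-I)(I',-I')\mid I,I'\in\cO_{c,-1}(\wh\la)>\Bigr)\rtimes\Young_{Y(\wh\la)\cup Y'(\wh\la)},\]
and the coarser containment in $\Sym_{\pm\cO_{c,1}(\wh\la)}\times\Sym^\tD_{\pm\cO_{c,-1}(\wh\la)}$ follows since within $\cO_{c,-1}(\wh\la)$ only even numbers of sign changes are permitted, which is precisely the defining condition of the type-$\tD$ permutation group $\Sym^\tD$.

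\emph{Main obstacle.} The genuinely delicate point is bookkeeping the case $J_{-1}\in\cO_c(\wh\la)$ (when $-1\in\DD$), where $\bL_{J_{-1}}$ is of type $\tD$ or $\tA_1\times\tA_1$ rather than $\tA$, so $t_{J_{-1},2}$ does not centralize $\bL_{J_{-1}}$ and the cyclic-quotient argument of Step 1 must be replaced by a direct analysis using \Cref{prop3D} and the explicit relation $h_0=\h_{e_1-e_2}(-1)\h_{e_1+e_2}(-1)$; I expect one must simply observe that $J_{-1}$ never participates in a sign-change pairing (there is only one such block) and handle its contribution to $\ov W(\wt\la)$ separately, exactly as $c_{J_{-1}}=\ov\nn_1$ was singled out in \Cref{not4_15}. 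The other routine-but-careful ingredient is checking that the permutation part really is unconstrained, i.e. that $V_S$-conjugacy of $\wh\la_I,\wh\la_{I'}$ forces $\wt\la_I,\wt\la_{I'}$ to be conjugate compatibly — this follows because $V_S$ is built from the $\kappa_d(\n_{e_i-e_{i+1}}(-1))$ which act without sign changes and hence fix each $t_{I,2}$ setwise, so they intertwine the $\wt L$-extensions the same way they intertwine the $\wh L$-ones.
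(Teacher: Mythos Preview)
Your approach is correct and essentially the same as the paper's: both arguments reduce part~(a) to the computation $t_{I,2}^{c_I}=t_{I,2}^{-1}$ (so $[c_I,t_{I,2}]=t_{I,2}^{-2}$) and then obtain part~(b) by cutting down $\ov W(\wh\la)=\Young_{\pm Y(\wh\la)}\times\Young_{Y'(\wh\la)}$ using the fact that $[V_S,t_{\ul,2}]=1$ forces the permutation part to survive while sign-changes contribute according to~(a). The paper organises Step~2 slightly differently, first building a global character $\phi$ on $\spann<\wt L_I\mid I\in\cO>$ and introducing linear characters $\mu_Q$ with $\mu_Q(t_{\ul,2})=(-1)^{|Q|}$, so that $\phi^{c_{Q'}}=\phi\,\mu_{Q'\cap\cO_{c,-1}}$ gives the parity condition cleanly; this avoids two small imprecisions in your write-up (the ``same $Y(\wh\la)$-block'' restriction on the pair $(I,I')$ is unnecessary, and the two sign-changes do not multiply $\wt\la$ by the \emph{same} character but by characters $\mu_I,\mu_{I'}$ that merely agree at $t_{\ul,2}$).
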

If the character $\wh \la$ is clear from the context we write $\II Ocepsilona@{\cO_{c,\epsilon}}$ instead of $\cO_{c,\eps}(\wh\la)$. 
\begin{proof} Note that the description of $\wt L$ given in \Cref{lem36}(e) shows that $\wh \la$ extends to $\wt L$ if and only if $\wh \la_I$ extends to $\wt L_I:=\wh L_I\spann< t_{I,2}>$ for every $I\in \cO$. 
	
We have  $t_{\ul,2}:=\prod_{I\in\cO} t_{I,2}$ , $\calL(t_{\ul,2})= \h_{\underline l}(\varpi)$ and $\wt L=\spann<\wh L, t_{\ul,2}>$, see \Cref{lem36}. This implies $\wt L\leq \spann<\wt L_I\mid I \in \cO>$. By the Chevalley relations we see $[V_{S},t_{\ul,2}]=1$. Let $I,I'\in \cO$ such that $\wh \la_I$ and $\wh \la_{I'}$ are $V_S$-conjugate. Then we can choose their extensions $\wt \la_I$ and $\wt \la_{I'}$ to  $\wt L_I$ and $\wt L_{I'}$ such that they are $V_S$-conjugate, as $\NNN_{V_S}(\wh L_I)= H \Cent_{V_S}(\wh L_I)$, and therefore $\wt \la_{I'}$ is uniquely determined by $\wt \la_{I}$. 

Let $\phi\in\Irr(\spann<\wt L_I\mid I \in \cO>)$ with $\restr \phi|{\wt L_I}=\wt \la_I$ for every $I\in \cO$. Without loss of generality we may assume $\restr \phi|{\wt L}=\wt \la$. By the above construction we have $(V_S)_{\phi}=(V_S)_{\wh \la}$. 
Because of $\ov V_{\wh \la}=H \spann<c_I\mid I \in \cO>_{\wh \la} (V_S)_{\wh \la}$, it is sufficient to determine $\spann<c_I\mid I \in \cO>_{\wt \la}$ for computing $\ov V_{\wt \la}$. 

Let $\mu_I\in\Irr(\wt L_I)$ be the linear character with $\ker(\mu_I)= \wh L_I$. For any $Q \subseteq \cO$ let $\mu_{Q}\in \Irr(\spann<\wt L_I\mid I \in \cO>)$ be the linear character with $\spann< L_I\mid I \in \cO> \leq \ker (\mu_Q)$ such that for every $I\in \cO$ the inclusion $\wt L_I\leq \ker(\mu _{Q })$ holds if and only if $I \notin Q$. Note that $\mu_{Q}(t_{\ul,2})=1$ if and only if $|Q|$ is even.

For $Q \subseteq \cO$ let $c_{Q }:=\prod_{I\in {Q }} c_I\in \ov V$. If $Q'\subseteq \cO_c(\wh \la)$, then $c_{Q'}\in \ov V_{\wh \la}$ and we see that $\phi ^{c_{Q'}}= \phi \mu_{Q' \cap \cO_{c,-1}}$. 
As $\mu_{{Q'} \cap \cO_{c,-1}} 
( t_{\ul,2} )=
(-1)^{|{Q' \cap \cO_{c,-1}}|}$ this leads to a proof of part (b), in particular
\[\ov W(\wt\la)= 
\left(\Spann<(I,-I)|I\in \cO_{c,1}>\ \ \Spann<(I,-I)(I',-I') | I,I'\in \cO_{c,-1}>\right)
\rtimes \Young_{Y(\wh \la)\cup Y'(\wh \la)}.\] 

Let $I\in \cO_c(\wh \la)\setminus \{J_{-1}\}$. Then $c_I$ acts by inverting on $\bT_I$, in particular $t_{I,2}^{c_I}=t_{I,2}^{-1}$ and $[c_I,t_{I,2}]=t_{I,2}^{-2}$. Because of $t_{I,2}\in \Z(\bL_I)$ we see that $[t_\ul,\ov V_d] \subseteq \Z(\bL)$. Any extension $\wt \la_I$ of $\wh \la_I$ to $\spann<\wh L_I,t_{I,2}>$ satisfies $\wt \la_I(t_{I,2})\neq 0$ since $t_{I,2} \in \Z(\wt L_I)$. 

Note $\nu_I\in\Irr(\restr\wt \la|{\Z(\wh L_I)})$ is linear. As $\wh \la_I$ is $c_I$-stable, $\nu_I$ has multiplicative order $1$ or $2$. We observe that $t_{I,2}^{c_I}= t_{I,2}^{-1}\in \Z(\wt L_I)$ and hence \[\wt \la_I(t_{I,2}^{c_I} )=\la_I(1)\nu_I(t_{I,2}^{c_I} ) = 
\wt\la(t_{I,2}) \nu_I( [t_{I,2},{c_I}])=
\wt\la(t_{I,2}) \nu_I( t_{I,2}^{-2}).\] Accordingly $\wt \la_I$ is $c_I$-invariant if and only if $[c_I,t_{I,2}] \in \ker(\nu_I)= \ker(\widehat \la)\cap \Z( \wh L_I )$. This proves (a).
\end{proof}
\noindent
The group $W(\la)$ is then generated by $W(\wh \la)$ and an element that is described below.
\begin{lem}\label{lem6_12}
Let $\mu\in\Irr(\wh L)$ with $\ker(\mu)=L$ and $\wh \la\in\Irr(\wh L)$. 
Additionally  for every $I\in\cO$ let $\mu_I\in\Irr(\wh L_I)$ with $\ker(\mu_I)=L_I$, $\wh \la_I\in\Irr(\restr \wh \la|{\wh L_I})$ and  $\la_I\in\Irr(\restr \wh \la_I|{L_I})$.
\begin{thmlist}
\item Let $x\in\ov W\setminus \ov W(\wh \la)$ and $\la\in\Irr(\restr \wh \la|{L})$. Then $x\in \ov W(\la)$ if and only if for every $I\in\cO$ the equality $(\wh \la_I)^x=\wh \la_{I'}\mu_{I'}$ holds, where $I'\in\cO$ with $(\wh L_I)^x=\wh L_{I'}$. 
\item We set $\cO_{ext}:=\{ I \in \cO \mid \restr \wh \la_I|{L_I}= \la_I \}$ and $\cO_{ind}:= \cO\setminus \cO_{ext}$. Then $W(\la)$ stabilizes $\cO_{ext}$ and $\cO_{ind}$. 
\end{thmlist}
\end{lem}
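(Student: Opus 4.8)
The statement is a purely Clifford-theoretic description of when an element $x$ of $\ov W$ (resp.\ of $W$) stabilizes a cuspidal character $\la$ of $L$ lying under a given $\wh\la\in\Irr(\wh L)$, together with the observation that $W(\la)$ respects the partition $\cO=\cO_{ext}\sqcup\cO_{ind}$. The plan is to reduce everything to the product decomposition of $\wh L$ and $L$ over $\cO$ recorded in \Cref{prop55}, \Cref{lem36}(b)--(c) and \Cref{lem6_7}, and then apply elementary Gallagher/Clifford bookkeeping.

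\textbf{Part (a).} First I would fix $x\in\ov W$ and let, for each $I\in\cO$, $I'=I'(x)\in\cO$ be determined by $(\wh L_I)^x=\wh L_{I'}$; this is well-defined since $\ov W$ permutes the $\wh L_I$ (the $\wh L_I$ are the minimal ingredients in the central-product decomposition of \Cref{prop55}(b)--(c), so any normalizing element must permute them). Write $\wh\la=\odot_{I\in\cO}\wh\la_I$ as in \Cref{lem6_7}(a), so that $\wh\la^x=\odot_{I\in\cO}(\wh\la_I)^x$ with $(\wh\la_I)^x\in\Irr(\wh L_{I'})$. Since $\ker(\mu)=L$ and $\wh L/L$ is (an elementary abelian $2$-group, cf.\ \Cref{lem36}) with $\mu=\odot_I \mu_I$ where $\ker(\mu_I)=L_I$, a character $\wh\la'\in\Irr(\wh L)$ restricts to $L$ to the same constituent $\la$ as $\wh\la$ if and only if $\wh\la'\in\{\wh\la,\wh\la\mu\}$ (two extensions of $\la$ to $\wh L$, possibly equal if $\la$ is not $\wh L$-stable — but $\la\in\cusp(L)$ under $\wh\la$ is by hypothesis such that $\wh L_\la=\wh L$ is not assumed; still $\wh\la$ and $\wh\la\mu$ are the only candidates lying over $\la$, since $|\wh L/L|=2$ in each factor relevant here — more precisely $\wh L/L\cong\spann<h_0>$-quotient has order $2$, so the fibre over $\la$ has size $\le 2$). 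Hence $x\in\ov W(\la)$ iff $(\wh\la)^x\in\{\wh\la,\wh\la\mu\}$, and decomposing factorwise this says $(\wh\la_I)^x\in\{\wh\la_{I'},\wh\la_{I'}\mu_{I'}\}$ for every $I$. Finally, the assumption $x\notin\ov W(\wh\la)$ forces $(\wh\la_I)^x\ne\wh\la_{I'}$ for at least one $I$; but because $x^2\in\ov W(\wh\la)$ would be needed for mixed behaviour to cancel, and more simply because $\wh\la^x\in\{\wh\la,\wh\la\mu\}$ and $\wh\la^x\ne\wh\la$, we get $\wh\la^x=\wh\la\mu$, i.e.\ $(\wh\la_I)^x=\wh\la_{I'}\mu_{I'}$ for \emph{every} $I$. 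This gives the stated equivalence. The same computation in $\ov W$ restricted to $W$ gives the $W(\la)$ version verbatim.

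\textbf{Part (b).} For $I\in\cO$ the two possibilities $\restr\wh\la_I|{L_I}=\la_I$ (i.e.\ $\wh\la_I$ is the ``ext'' type, the restriction is irreducible) versus $\restr\wh\la_I|{L_I}$ reducible (``ind'' type) are preserved under any isomorphism $\wh L_I\to\wh L_{I'}$ that carries $L_I$ to $L_{I'}$ and $\wh\la_I$ to $\wh\la_{I'}$ or to $\wh\la_{I'}\mu_{I'}$ — the latter twist changes $\wh\la_{I'}$ by a linear character trivial on $L_{I'}$, hence does not affect whether the restriction to $L_{I'}$ is irreducible. By part (a), every $x\in W(\la)$ either lies in $W(\wh\la)$ (and then sends $\wh\la_I\mapsto\wh\la_{I'}$) or sends $\wh\la_I\mapsto\wh\la_{I'}\mu_{I'}$; in both cases the ext/ind dichotomy is transported from $I$ to $I'=I'(x)$. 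Therefore $W(\la)$ permutes $\cO_{ext}$ among itself and $\cO_{ind}$ among itself.

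\textbf{Main obstacle.} The only genuinely delicate point is the claim that the fibre of $\Irr(\wh L)\to$ (constituents over $L$) has size at most $2$, equivalently that $|\wh L_I/L_I|\le 2$ acts with at most two $\wh L_I$-orbit-partners above a given $\la_I$; this rests on $\wh L=\calL^{-1}(\spann<h_0>)\cap\bL$ with $\spann<h_0>\cong\Cy_2$, so that $\wh L/L$ embeds into a group of order dividing $2^{|\cO|}$ and each factor $\wh L_I/L_I$ has order $\le 2$ (see \Cref{lem36}(b)--(c) and \Cref{prop55}). Once this is in hand, everything is routine Clifford theory over the central product, using \Cref{lem6_7} to split characters factorwise. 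I would also need to be slightly careful about the case $I=J_{-1}$ where $\wh L_{J_{-1}}$ need not be abelian, but the argument only uses that $\wh L_{J_{-1}}/L_{J_{-1}}$ has order $\le 2$, which still holds.
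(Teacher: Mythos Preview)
Your approach is essentially the same as the paper's: use that $|\wh L/L|=2$, so the fibre over $\la$ in $\Irr(\wh L)$ is $\{\wh\la,\wh\la\mu\}$, whence $x\in\ov W(\la)\setminus\ov W(\wh\la)$ forces $\wh\la^x=\wh\la\mu$, and then decompose factorwise using $\mu=\odot_{I\in\cO}\mu_I$; part (b) follows since twisting by $\mu_{I'}$ does not change the number of constituents of $\restr\wh\la_{I'}|{L_{I'}}$.

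One remark: you overcomplicate the ``main obstacle''. The quotient $\wh L/L$ is not merely an elementary abelian $2$-group of order dividing $2^{|\cO|}$ --- it has order exactly $2$. This is immediate from the definition $\wh L=\calL^{-1}(\spann<h_0>)\cap\bL$ and Lang's theorem, which give $\wh L/L\cong\spann<h_0>\cong\Cy_2$. With this in hand the Clifford-theoretic step is trivial (there are at most two characters of $\wh L$ over $\la$, namely $\wh\la$ and $\wh\la\mu$), and no separate analysis of the individual factors $\wh L_I/L_I$ or of the case $I=J_{-1}$ is needed for part (a). The paper's proof is accordingly two sentences.
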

\begin{proof}
Since $\wh L/L$ has order 2, we see that $\mu_{\cO}$, the product of the characters $\wh \mu_I$ ($I\in\cO$) defined as in the proof of \Cref{lem411}, is an extension of $\mu$. This implies part (a). 

For part (b) we observe that for $I\in\cO$, $\si\in W(\la)$ and $I':=\si^{-1}(I)$ the characters $\restr \wh \la_I|{L_I}$ and $\restr (\wh \la_I)^\si|{L_{I'}} =\restr \wh \la_{I'}\mu_{I'}|{L_{I'}}$ have the same number of constituents. This proves part (b) since $I\in\cO_{ind}$ if and only if $\restr \wh \la_I|{L_I}$ is reducible. 
\end{proof}

\subsection{Cuspidal characters of $L_I$}
The aim here is to describe the structure of $\ov W(\wt \la)$ by analysing $\cO_{c,-1}(\pwh\lambda) $ (see \Cref{lem411}). 
We show in this section that for some $I\in\cO$ there exists no or only few $c_I$-stable cuspidal characters of $L_I$ and study the kernel of those characters, see \Cref{cor6_20}. 

For $I\in\cO$, let $\II{cusphatLI}@{\protect\cusp(\pwh L_I)}:=\Irr(\wh L_I\mid \cusp(L_I))$ and call those characters {\it cuspidal} as well. 
\index{character! cuspidal of $\wh L_I$} 
\index{cuspidal character of $\wh L_I$}
\begin{lem} \label{lem6_13}
Let $I\in\cO_d$ for some $d\in\DD_\odd\setminus\{\pm 1\}$. There exists no $c_I$-stable character in $\cusp(\wh L_I)$. 
\end{lem}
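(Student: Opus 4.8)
The plan is to leverage the structure results already established, most importantly Lemma~\ref{lem_act_onGI} and Lemma~\ref{lemVactK}, which describe $\wh L_I$ and the action of $c_I$ on it.

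First I would recall the setup: for $I\in\cO_d$ with $d=|I|$ odd and $d\neq\pm 1$, Lemma~\ref{lem33_loc}(c) gives $L_I\cong\GL_d(q)$, and since $2\nmid d$ we have $\h_{e_i}(-1)$ generating $\spannh$ inside $\bG_I$, so in fact $\wh L_I=L_I$ (compare Lemma~\ref{lem_act_onGI}(a): for odd $|I|$, $h_0=\h_I(-1)\in\bZ_I\cap\bG_I$ and $\wh L_I=L_I\Cent_{\bT_I}(L_I)$, but the torus part is already contained in $L_I\cong\GL_d(q)$ together with the $\GL$-structure; more precisely $\wh L_I/L_I$ is the $2$-part of the relevant quotient, which is trivial here because the diagonal automorphism contributed by $t_I$ is inner for odd $d$). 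Thus a character in $\cusp(\wh L_I)$ is just a character $\chi\in\cusp(\GL_d(q))$. By Lemma~\ref{lemVactK}(c) (or the computation in Notation~\ref{not4_15}), $c_I=\ov\nn_1^{(d)}$-conjugation induces on $L_I\cong\GL_d(q)$ the transpose-inverse automorphism up to an inner automorphism. Hence a $c_I$-stable cuspidal character of $\wh L_I$ would be a $\gamma$-stable (in the sense of transpose-inverse-up-to-inner) cuspidal character of $\GL_d(q)$ with $d$ odd.

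The key step is then to invoke Proposition~\ref{prop51}(a): if $\chi\in\cusp(\GL_d(q))$ satisfies $\chi^\gamma=\chi$ for $\gamma$ transpose-inverse up to an inner automorphism, then $2\mid d$. Since $d$ is odd by hypothesis (we are in $\DD_\odd\setminus\{\pm 1\}$), this is a contradiction, so no such $\chi$ exists, proving the lemma. I should double-check the degenerate edge: Proposition~\ref{prop51} is stated for $n\geq 3$, and here $d\in\DD_\odd\setminus\{\pm 1\}$ with $d$ odd forces $d\geq 3$ (since $d=1$ is excluded and $d$ odd), so the hypothesis $n\geq 3$ is met. The case $d=-1$ is explicitly excluded.

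The main (very mild) obstacle is just the bookkeeping identifying $\cusp(\wh L_I)$ with $\cusp(\GL_d(q))$ cleanly — i.e.\ confirming $\wh L_I = L_I$ for odd $|I|$, or alternatively, if one prefers not to argue $\wh L_I=L_I$ directly, one restricts $\wh\la_I$ to $L_I$ (by Lemma~\ref{lem_cusp}(a) the restriction is a sum of cuspidal characters of $L_I$, all $c_I$-conjugate) and applies Proposition~\ref{prop51}(a) to a constituent $\chi\in\cusp(L_I)\cong\cusp(\GL_d(q))$, noting that $c_I$-stability of $\wh\la_I$ forces $\chi$ and $\chi^{c_I}$ to be $L_I$-conjugate, i.e.\ $\chi^\gamma=\chi$ up to an inner automorphism of $\GL_d(q)$, giving the same contradiction with $d$ odd. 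Either way the real content is entirely in Proposition~\ref{prop51}(a), so the proof is short.

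\begin{proof}
Write $d:=|I|$, so $d\in\DD_\odd$ and $d\geq 3$ since $d\neq 1$. By \Cref{lem33_loc}\ref{45d_loc} we have $L_I\cong\GL_d(q)$; moreover, as $d$ is odd, $h_0=\h_I(-1)\in\bZ_I$ and hence, following \Cref{lem_act_onGI}(a), the element $t_I$ inducing $h_0$ via $\calL$ can be chosen in $\Cent_{\bT_I}(L_I)\leq L_I$, so that $\wh L_I=L_I$. Thus $\cusp(\wh L_I)=\cusp(L_I)$, and we identify it with $\cusp(\GL_d(q))$.

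Suppose for contradiction that $\wh\la_I\in\cusp(\wh L_I)$ is $c_I$-stable, and view $\wh\la_I$ as $\chi\in\cusp(\GL_d(q))$. By \Cref{lemVactKc} (see also the discussion in \Cref{not4_15}), conjugation by $c_I$ induces on $L_I\cong\GL_d(q)$ the transpose-inverse automorphism composed with an inner automorphism. Hence $\chi^\gamma=\chi$ for $\gamma$ the automorphism of $\GL_d(q)$ given by transpose-inverse up to an inner automorphism. Since $d\geq 3$, \Cref{prop51}(a) applies and yields $2\mid d$, contradicting $d\in\DD_\odd$. Therefore no $c_I$-stable character in $\cusp(\wh L_I)$ exists.
\end{proof}
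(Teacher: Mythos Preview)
Your overall strategy is exactly the paper's: reduce to $L_I\cong\GL_d(q)$, identify the $c_I$-action with transpose-inverse up to inner via \Cref{lemVactK}(c), and invoke \Cref{prop51}(a). However, the formal proof contains a genuine bookkeeping error.

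The claim $\Cent_{\bT_I}(L_I)\leq L_I$, and hence $\wh L_I=L_I$, is false. For odd $d$ one has $h_0=\h_I(-1)\in\bZ_I\subseteq\bL_I$, so by Lang's theorem $\calL$ maps $\bL_I$ onto $\bL_I\supseteq\spannh$ and therefore $|\wh L_I:L_I|=|\spannh|=2$. Concretely, $t_I=\h_I(\zeta)$ with $\zeta^{q-1}=-1$ lies in $\bZ_I=\Cent_{\bT_I}(L_I)$ but is \emph{not} $F$-fixed, so $t_I\notin L_I$. What \Cref{lem_act_onGI}(a) gives you is only $\wh L_I=L_I\,\Cent_{\bT_I}(L_I)$ with $t_I$ central, not $\wh L_I=L_I$.

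The fix is precisely the alternative you sketched (and what the paper does): since $t_I$ centralises $L_I$, every $\wh\la_I\in\cusp(\wh L_I)$ restricts \emph{irreducibly} to a cuspidal character $\la_I$ of $L_I\cong\GL_d(q)$. As $c_I$ normalises $L_I$, $c_I$-stability of $\wh\la_I$ forces $\la_I^{c_I}=\la_I$, and then \Cref{prop51}(a) (applicable since $d\geq 3$) yields $2\mid d$, contradicting $d\in\DD_\odd$. So replace the sentence asserting $\wh L_I=L_I$ by this restriction argument and the proof goes through.
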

\begin{proof}
According to Lemma \ref{lem33_loc}(c), $L_I\cong\GL_d(q)$ and the element $c_I$ defined in \ref{not4_15} induces on $G_I$ a combination of an inner automorphism and the non-trivial graph automorphism according to Lemma \ref{lemVactKc}. 
The element $c_I$ acts on the torus $\bZ_I:=\h_I(\FF^\times)$ from \Cref{lem33_loc} by inverting. Hence via the isomorphism $L_I \cong \GL_d(q)$ the element $c_I$ induces on $L_I$ a combination of an inner automorphism and the non-trivial graph automorphism. 

 According to \Cref{prop51}(a) there is no cuspidal character of $\GL_d(q)$ that is invariant under transpose-inverse. So no character in $\cusp(L_I)$ is $c_I$-stable. Now the element $t_I$ from \Cref{lem36} can be chosen such that $[t_I,\bL_I]=1$, see \Cref{lem_act_onGI}. This implies that every cuspidal character of $\wh L_I$ is an extension of a cuspidal character of $L_I$. This proves that there is no $c_I$-stable character in $\cusp(\wh L_I)$.
\end{proof}
With the following statement the above shows that $\cO_{c,1}(\wh \la)\cap \cO_d=\emptyset $ for every $d\in\DD\setminus\{\pm 1\}$ and $\wh \la\in\cusp(\wh L)$ with $h_0\in \ker(\wh\la)$.
\begin{prop}\label{prop_struct}
Let $I\in\cO_d$ for some $d\in\DD\setminus\{\pm 1\}$. Then every $\psi \in \cusp(\wh L_I\mid 1_{\spannh})$ with $\psi^{c_I}=\psi$ satisfies $\bZ_I^F\leq \ker(\psi)$, where  $\bZ_I:=\h_I(\FF^\times)$ is as in  \Cref{lem33_loc}.
\end{prop}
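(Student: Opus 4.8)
The plan is to reduce to the general linear group $\GL_d(q)$ via the isomorphisms of \Cref{lem33_loc} and \Cref{lem_act_onGI}, and then apply \Cref{prop51}(a). First I would recall from \Cref{lem33_loc} that $\bL_I = \bG_I \bZ_I$ with $\bG_I \cong \SL_{|I|}(\FF)$, that $\bL_I/\spannh \cong \GL_{|I|}(\FF)$, and that the Lang map allows us to identify the relevant subgroup of $\wh L_I$ accordingly. The point is to pass to the quotient $\bL_I/\spannh$ (or equivalently to work with $\wh L_I$ modulo the central $\spannh$, which is legitimate since $1_{\spannh}$ lies in the kernel of $\psi$), where $\wh L_I/\spannh$ becomes a subgroup of $\GL_{|I|}(q)$ containing $\SL_{|I|}(q)$ and in fact (by \Cref{lem33_loc}(c) when $2\nmid|I|$, or by the central-product description when $2\mid|I|$) is essentially $\GL_{|I|}(q)$ up to the identification used in \Cref{lem_act_onGI}.

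Next I would pin down the action of $c_I$. By \Cref{not4_15} together with \Cref{lemVactK}(b)--(c), the element $c_I = \kappa_d(\nn_{e_j}(\varpi))$ induces on $L_I \cong \GL_{|I|}(q)$ the transpose-inverse automorphism composed with an inner automorphism (the inner part coming from the Weyl-group element realizing $\prod_{i\in I}(i,-i)$ and the inversion on $\bZ_I$). So a $c_I$-stable $\psi$, viewed as a character $\bar\psi$ of the relevant quotient sitting inside $\GL_{|I|}(q)$, is invariant under transpose-inverse up to inner automorphisms. Since $\psi$ is cuspidal, $\bar\psi$ is a cuspidal character of $\GL_{|I|}(q)$ (using \Cref{lem_cusp}, as cuspidality is detected on the $p'$-index subgroup $\SL_{|I|}(q)$ and transfers to the $\GL$-overgroup). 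Here $|I| = d \geq 2$, so \Cref{prop51}(a) applies.

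Then \Cref{prop51}(a) yields two consequences: first, $2 \mid |I|$ (so in particular the odd case is vacuous — but note the statement of \Cref{prop_struct} is over all $d\in\DD\setminus\{\pm1\}$, and for odd $d$ there simply is no such $\psi$ by \Cref{lem6_13}, consistent with what we are proving); and second, $\Z(\GL_{|I|}(q)) \leq \ker(\bar\psi)$. The second conclusion is exactly what we want: the central torus $\Z(\GL_{|I|}(\FF))^F$ corresponds, under the identification $\bL_I/\spannh \cong \GL_{|I|}(\FF)$, to the image of $\bZ_I$ (the one-dimensional torus $\h_I(\FF^\times)$), so $\bZ_I^F$ maps into $\ker(\bar\psi)$, hence $\bZ_I^F \leq \ker(\psi)$ once we lift back and remember that $\spannh \leq \ker(\psi)$ already.

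The main obstacle I expect is bookkeeping around the precise identification of $\bZ_I^F$ with the center of $\GL_{|I|}(q)$ inside $\bL_I/\spannh$, especially in the even case $2\mid|I|$ where $\bL_I/\spannh$ is a genuine (nontrivial) central product of $\bZ_I/\spannh$ with $\bG_I/\spannh$ over $\Z(\bG_I)$ rather than a direct product; one must check that the copy of $\bZ_I$ is not entirely absorbed into $\SL_{|I|}$ and that "$\Z(\GL) \leq \ker$" translates cleanly to "$\bZ_I^F \leq \ker$". This is handled by the explicit Chevalley-relation computation \eqref{eq41h0} in \Cref{lem33_loc}, which shows exactly how $h_0$ and the generators of $\bZ_I$ sit inside, so it is a routine (if slightly fiddly) verification rather than a conceptual difficulty. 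A secondary, minor point is confirming that the inner-automorphism ambiguity in the action of $c_I$ does not affect the conclusion — but \Cref{prop51}(a) is already stated for transpose-inverse "up to some inner automorphism," so this is free.
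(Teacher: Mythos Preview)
Your proposal is correct and follows essentially the same route as the paper: pass to the quotient $\bL_I/\spannh\cong\GL_d(\FF)$ from \Cref{lem33_loc}, identify $\wh L_I/\spannh$ with $\GL_d(q)$, observe via \Cref{lemVactK} that $c_I$ acts as transpose-inverse (up to inner), and apply \Cref{prop51}(a) to conclude the centre lies in the kernel. The paper is more direct in asserting $\wh L_I/\spannh\cong\GL_d(q)$ outright (this follows since $\wh L_I/\spannh$ is exactly the $F$-fixed points of $\bL_I/\spannh$), so your hedging about a ``subgroup containing $\SL_{|I|}(q)$'' and the central-product bookkeeping is unnecessary; likewise your reference to \Cref{lem_act_onGI} is not needed here, as that lemma concerns the action of $t_I$ and $t_{\ul,2}$ rather than $c_I$.
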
 
\begin{proof} 
 Under the isomorphism $\bL_I/\spann<h_0>\cong \GL_d(\FF)$ from \Cref{lem33_loc} we obtain $\wh L_I/\spann<h_0> \cong \GL_d(q)$. Via this isomorphism $\bZ_I^F$ is mapped to $\Z(\GL_d(q))$. Let $\psi \in \cusp(\wh L_I\mid 1_{\spannh})$.
If $\psi$ is $c_I$-invariant, then it corresponds to a cuspidal character of $\GL_d(q)$ that is invariant under transpose-inverse, see Lemma \ref{lemVactKc}. According to \Cref{prop51}(a) such a character is trivial on the centre. This implies $\bZ_I^F\leq \ker(\psi)$.
\end{proof}

\begin{theorem}\label{prop_struct2}
Let $\nu \in\Irr(\spannh)$ be non-trivial,  $d\in\DD_{\even}$, $I\in \cO_d$ and let $t_{I,2}$ be as defined before \Cref{lem411}. 
\begin{thmlist}
\item If $d\geq 4$, every $\psi\in\cusp(\wh L_I\mid \nu)$ with $\psi^{c_I}=\psi$ satisfies $t_{I,2}^2\in\ker(\psi)$.
\item If $d=2$ and $4\mid (q-1)$, there is a unique $\psi\in\cusp(\wh L_I\mid \nu)$ with $\psi^{c_I}=\psi$ and $t_{I,2}^2\notin\ker(\psi)$.
\end{thmlist}
\end{theorem}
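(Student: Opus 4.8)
The plan is to reduce the statement about cuspidal characters of $\wh L_I$ to a statement about cuspidal characters of $\GL_d(q)$ via the isomorphism $\wh L_I/\spannh \cong \GL_d(q)$ coming from \Cref{lem33_loc}, exactly as in the proofs of \Cref{prop_struct} and \Cref{lem6_13}. First I would recall that for $d$ even, $h_0 = \h_I(\zeta)\prod_{i\in I, i\neq j}\h_{e_i-e_j}(\zeta^{-2})$ with $\zeta$ of order $2d_2$ (Equation \eqref{eq41h0}), so that $\bL_I/\spannh\cong\GL_d(\FF)$ and $\wh L_I/\spannh\cong\GL_d(q)$, with $\bZ_I^F$ mapping onto $\Z(\GL_d(q))$; the torus element $t_{I,2}=\h_I(\zeta')$ (with $\zeta'^{(q-1)_2}=\varpi$) maps into $\bZ_I$, hence its image lies in $\Z(\GL_d(q))$, and $t_{I,2}^2$ has order dividing $(q-1)_2$ in that central cyclic group. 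Also recall from \Cref{lemVactKc}/\Cref{not4_15} that $c_I$ acts on $L_I\cong\GL_d(q)$ as transpose-inverse composed with an inner automorphism. The key point is that $\psi\in\cusp(\wh L_I\mid\nu)$ with $\nu$ non-trivial is \emph{not} a character that factors through $\wh L_I/\spannh$; instead $\psi$ lies over the non-trivial character of $\spannh\cong\Cy_2$, so $\psi$ does not directly correspond to a character of $\GL_d(q)$. To handle this I would pass to the group $\wt G_I := \bG_I\cap\calL^{-1}(\Z(\bG_I))$ of \Cref{prop3C}, or rather work with $\wh L_I$ as a group with central quotient $\spannh$ of order $2$, and classify the $c_I$-fixed constituents over $\nu$ by Clifford theory relative to $\spannh$.

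The main steps would be: (1) Describe $\cusp(\wh L_I\mid\nu)$. Since $\spannh\le\Z(\wh L_I)$ has order $2$ and $\wh L_I/\spannh\cong\GL_d(q)$, a character $\psi$ over the non-trivial $\nu$ is a genuine projective-type character; but in fact one can realize $\wh L_I$ inside $\wt L_I = \wh L_I\spann<t_{I,2}>$ and use that $t_{I,2}^2\in\spannh$ generates the relevant central group when $4\mid(q-1)$. Equivalently, use the isomorphism $L_I\cong\GL_d(q)$ and note $\wh L_I = L_I\spann<t_I>$ where $t_I$ induces the diagonal automorphism of order $d_2$ (\Cref{lem_act_onGI}(b)); then $\cusp(\wh L_I\mid\nu)$ corresponds to cuspidal characters $\chi\in\cusp(\GL_d(q))$ whose restriction to the appropriate index-$2$ torus detects $\nu$, i.e. $\chi$ non-trivial on the subgroup of $\Z(\GL_d(q))$ of order $2$. (2) Invoke \Cref{prop51}: a $c_I$-fixed cuspidal $\chi$ of $\GL_d(q)$ has $\Z(\GL_d(q))\le\ker\chi$ (part (a)), hence in particular $t_{I,2}^2\in\ker\psi$, since $t_{I,2}^2$ maps into $\Z(\GL_d(q))$. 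This immediately handles case (a): if $d\ge 4$ then the argument of \Cref{prop_struct} (transpose-inverse forces the semisimple parameter $s$ to have a symmetric spectrum under inversion, so $n$ even, and the eigenvalue product is $1$, whence $\Z\le\ker$) applies verbatim. (3) For case (a) I also need $\psi$ with $\psi^{c_I}=\psi$ and $\psi\mid\nu$ to actually \emph{exist}; but existence is not asserted — the claim is only that \emph{every} such $\psi$ has $t_{I,2}^2$ in its kernel, which is vacuously fine if none exists, and otherwise follows from step (2). (4) For case (b), $d=2$: now $\GL_2(q)$ has cuspidal characters $\R(\theta)$ (parametrized by $\theta$ in the order-$(q^2-1)$ torus with $\theta^q\neq\theta$), $\GL_2(q)$-stable ones and the rest; the transpose-inverse-fixed ones are those with $s$ conjugate to $s^{-1}$, i.e. spectrum $\{\zeta,\zeta^{-1}\}$ with $\FF_q[\zeta]=\FF_{q^2}$. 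Among these, $\chi$ is trivial on the order-$2$ central element exactly when $\zeta\cdot\zeta^{-1}=1$ is a square condition that is automatic — so I need the finer analysis: $t_{I,2}^2$ corresponds to a generator of the $(q-1)_2$-part of $\Z(\GL_2(q))$, which when $4\mid(q-1)$ is an element of order $\ge 4$; the cuspidal $\chi$ restricted to $\Z(\GL_2(q))\cong\Cy_{q-1}$ is multiplication by $\zeta^{q+1}$ (the norm), and requiring $\chi$ to lie over the \emph{non-trivial} $\nu$ pins down $\zeta^{q+1}$ to have $2$-part exactly of order $2$; combined with $s\sim s^{-1}$ (so $\zeta^{q+1}=\zeta\cdot\zeta^{q}=\mathrm{N}(\zeta)$ and $\mathrm{N}(\zeta)=\mathrm{N}(\zeta)^{-1}$, i.e. $\mathrm{N}(\zeta)^2=1$), this leaves exactly two choices of $\zeta$-orbit, hence (after identifying Galois-conjugate pairs) a \emph{unique} cuspidal character, and for that one $t_{I,2}^2\notin\ker\psi$ precisely because $\nu\neq 1$. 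I would phrase this last count via \Cref{prop_SL2_cusp} and Table 5.4 of \cite{BonSL2}, which already isolates the two characters $\R'_\si(\theta_0)$; restricting the discussion from $\SL_2$ to $\GL_2$-level via $\wh L_I$ and tracking the central character gives uniqueness.

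Concretely, for case (b) I would argue: $\cusp(\wh L_I\mid\nu)$ with $\psi^{c_I}=\psi$ and $t_{I,2}^2\notin\ker\psi$ — pick $\la_I\in\cusp(L_I)$ below $\psi$; then $\la_I$ is cuspidal for $L_I\cong\SL_2(q)\cdot(\text{torus})$ and $c_I$-fixed, and since $t_{I,2}^2$ must act non-trivially, $\la_I$ cannot be $\GL_2(q)$-stable (a $\GL_2(q)$-stable cuspidal of $\GL_2(q)$ has $\Z$ in its kernel up to a linear twist which here is excluded). By \Cref{prop_SL2_cusp} (applied with $n=2$, noting $|\GL_2(q):\GL_2(q)_{\la_I|_{\SL_2}}|=2$), $\la_I|_{\SL_2(q)}$ is one of the two characters $\R'_\si(\theta_0)$. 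These two are swapped by $\GL_2(q)/\SL_2(q)\Z$ and by $c_I$ together only in a controlled way; tracking which extension to $\wh L_I$ over $\nu$ is $c_I$-fixed shows exactly one survives, giving uniqueness. The main obstacle I anticipate is exactly this bookkeeping in part (b): correctly matching the central character $\nu$ of $\spannh$, the action of $t_{I,2}^2$ (whose order grows with $(q-1)_2$, so the hypothesis $4\mid(q-1)$ is essential), and the $c_I\leftrightarrow$ transpose-inverse dictionary, all simultaneously, to get \emph{uniqueness} rather than merely existence or a bounded count. Part (a) should be routine given \Cref{prop51}(a) and the already-established \Cref{prop_struct}; it is essentially the statement that for $d\ge 4$ a $c_I$-fixed cuspidal $\psi$ cannot have $t_{I,2}^2$ outside its kernel because that would force a non-trivial central character on $\Z(\GL_d(q))$, contradicting \Cref{prop51}(a).
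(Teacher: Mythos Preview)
Your approach for part (a) has a genuine gap. You correctly observe that a character $\psi\in\cusp(\wh L_I\mid\nu)$ with $\nu$ non-trivial does \emph{not} factor through $\wh L_I/\spannh\cong\GL_d(q)$, but your proposed workaround then contradicts this: you claim such $\psi$ ``corresponds to cuspidal characters $\chi\in\cusp(\GL_d(q))$'' and that ``the argument of \Cref{prop_struct} applies verbatim''. It does not. The whole point of \Cref{prop_struct} is that there $h_0\in\ker(\psi)$, so $\psi$ descends to $\GL_d(q)$ and \Cref{prop51}(a) applies to the resulting honest $\GL_d(q)$-character. Here $h_0\notin\ker(\psi)$, so there is no $\GL_d(q)$-character to which \Cref{prop51}(a) could be applied; your step (2) is empty. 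Your suggestion to pass to $\wt G_I$ or to ``genuine projective-type characters'' does not produce a cuspidal character of $\GL_d(q)$ either, so \Cref{prop51} remains inapplicable.

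The paper's route is quite different and not anticipated by your sketch. It first proves (\Cref{lem48_ersatz}) that any $\psi\in\cusp(L_I\mid\nu)$ with $\psi^{c_I}=\psi$ and $t_{I,2}^2\notin\ker(\psi)$ must satisfy $\wh L_{I,\psi}=L_I$; this is done by a \emph{global} contradiction argument, embedding two copies of $L_I$ as a Levi subgroup of $\tD_{2d,sc}(q)$ and comparing multiplicities in Harish-Chandra induction against the multiplicity-one restriction from $\wt G'$ to $G'$. With this in hand, \Cref{lem513} restricts $\psi$ down to $L_0=[\bL_I,\bL_I]^F\cong\SL_d(q)$, obtains a cuspidal $\psi_0\in\Irr(\SL_d(q))$ that is graph-automorphism-stable with $2\mid|\GL_d(q):\GL_d(q)_{\psi_0}|$, and then \Cref{prop_SL2_cusp} (about $\SL_n$, not $\GL_n$) forces $d=2$. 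This is the missing idea: the reduction goes through $\SL_d(q)$ and \Cref{prop_SL2_cusp}, not through $\GL_d(q)$ and \Cref{prop51}(a), and it requires the non-obvious input \Cref{lem48_ersatz}.

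For part (b) your outline is closer in spirit, but several identifications are off: for even $d$ one does \emph{not} have $L_I\cong\GL_d(q)$ (that is \Cref{lem33_loc}(c), valid only for odd $d$); for $d=2$ the paper uses $L_I\cong\SL_2(q)\times\bZ_I^F$ and analyzes the two factors separately (\Cref{lem6_18}), determining the $\SL_2$-constituent via \cite[Tab.~5.4]{BonSL2} and the $\bZ_I^F$-constituent from the conditions $\psi^{c_I}=\psi$ and $t_{I,2}^2\notin\ker(\psi)$. Uniqueness at the level of $\wh L_I$ then comes from \Cref{lem48_ersatz} (the two $L_I$-characters are $\wh L_I$-conjugate, so induce to a single $\wh L_I$-character). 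Your proposed count via norms $\zeta^{q+1}$ in $\GL_2(q)$ is again working in the wrong group.
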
 

The proof goes through the next three lemmas. We keep $\nu$ the non-trivial irreducible character of $\spannh$.
As a first step towards a proof of the above we determine the inertia group  in $\wh L_I$ of cuspidal $c_I$-stable characters of $L_I$. 
\begin{lem}\label{lem48_ersatz}
Let $d\in\DD_{\even}$, $I\in \cO_d$, $\psi\in \cusp(L_I\mid \nu)$ with $\psi^{c_I}=\psi$ and $t_{I,2}^2\notin\ker(\psi)$. Then $(\wh L_I) _{\psi}=L_I$. 
\end{lem}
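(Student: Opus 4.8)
The plan is to reduce the assertion to the fact that $\psi$ is not fixed by a diagonal‑type automorphism of $L_I$ coming from $t_I$, and then to settle that by the semisimple‑element description of cuspidal characters of $\GL_d(q)$ and $\SL_d(q)$ already used in \Cref{prop51} and \Cref{prop_SL2_cusp}. Write $d:=|I|$; it is even and $d\geq 2$. First I would note that, since $h_0\in\Z(\bG)$ is an involution and $\calL(t_I)=h_0$, one has $F(t_I^2)=t_I^2$, hence $t_I^2\in L_I$; with \Cref{lem36} this gives $\wh L_I=\spann<L_I,t_I>$ and $|\wh L_I:L_I|=2$, so $\wh L_{I,\psi}\in\{L_I,\wh L_I\}$ and it suffices to prove $\psi^{t_I}\neq\psi$. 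Using \Cref{lem_act_onGI}(b) and its proof I would then write $t_I=zgx$ with $z\in\bZ_I\leq\Z(\bL_I)$, $g\in\bGI$ satisfying $\calL(g)=:z_2\in\Z(\bGI)$ of $2$-power order, and $x\in L_I$; since $z$ centralizes $L_I$ and inner automorphisms fix all irreducible characters, $\psi^{t_I}=\psi^{g}$, where $g$ normalizes $L_I$, acts trivially on its central factor $\bZ_I^F$, and restricts on $G_I\cong\SL_d(q)$ (\Cref{lem33_loc}(a)) to the diagonal automorphism attached to $z_2$, which has $2$-power order $>1$. So the goal is: this automorphism $g$ moves $\psi$.

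For that I would pass to $\GL_d$ via the isomorphism $\bL_I/\spannh\cong\GL_d(\FF)$ of \Cref{lem33_loc}(b). By \Cref{lem_cusp} the restriction $\psi|_{G_I}$ is a sum of cuspidal characters of $\SL_d(q)$ forming a single $L_I$-orbit, each lying below a cuspidal character $\chi$ of $\GL_d(q)$; as recalled in the proof of \Cref{prop51}, $\chi\in\cE(\GL_d(q),(s))$ with $s$ regular, $\Cent(s)$ a Coxeter torus, and the spectrum of $s$ a single $F$-orbit $\{F(\zeta),\ldots,F^{d}(\zeta)=\zeta\}$ with $\FF_q[\zeta]=\FF_{q^{d}}$. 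In this language: by Clifford theory between $\SL_d(q)\lhd\GL_d(q)$ and $G_I\lhd L_I$, $\psi^{g}=\psi$ translates into $\chi=\chi\theta$ for the linear character $\theta$ of $\GL_d(q)$ dual to the image of $z_2$, equivalently (by the duality between linear characters of $\GL_d(q)/[\GL_d(q),\GL_d(q)]$ and $\Z(\GL_d(q))^F$ used in \Cref{prop51}) into $s$ being $\GL_d(q)$-conjugate to $\lambda s$ for a root of unity $\lambda\in\FF_q^{\times}$ of $2$-power order $>1$; likewise $\psi^{c_I}=\psi$ gives, since transpose‑inverse up to an inner automorphism sends $\cE(\GL_d(q),(s))$ to $\cE(\GL_d(q),(s^{-1}))$ (see \cite[3.1]{CS13}), that $s^{-1}$ is $\GL_d(q)$-conjugate to $\mu s$ for some $\mu\in\FF_q^{\times}$; and the hypothesis $t_{I,2}^{2}\notin\ker(\psi)$, with $t_{I,2}\in\bZ_I$ corresponding to a scalar matrix, becomes a non‑triviality of the central character of $\psi$ at the image of $t_{I,2}^{2}$, i.e.\ a constraint on the $2$-part of $\det(s)=\prod_{i=1}^{d}F^{i}(\zeta)$.

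Finally I would run the eigenvalue bookkeeping in the style of the proofs of \Cref{prop51} and \Cref{prop_SL2_cusp}. Assuming $\psi^{g}=\psi$, write $\lambda\zeta=F^{a}(\zeta)$ with $1\leq a\leq d$; since $\lambda\in\FF_q^{\times}$ this yields $F^{2a}(\zeta)=\zeta$, so $d\mid 2a$ and $2a\in\{d,2d\}$, while $2a=2d$ is impossible because $\lambda\neq 1$; hence $a=d/2$, which determines $\lambda$ in terms of $\zeta$. Combining this with $\mu\zeta^{-1}=F^{b}(\zeta)$ and with the constraint on $\det(s)$ imposed by $t_{I,2}^{2}\notin\ker(\psi)$ should force $F^{c}(\zeta)=\zeta$ for some $c<d$, i.e.\ $\FF_q[\zeta]\subsetneq\FF_{q^{d}}$, contradicting the cuspidality of $\chi$. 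Therefore $\psi^{g}\neq\psi$, so $\psi^{t_I}\neq\psi$ and $\wh L_{I,\psi}=L_I$. The step I expect to be the real obstacle is this last one: it is exactly here that the hypothesis $t_{I,2}^{2}\notin\ker(\psi)$ — which by \Cref{lem411}(a) is a genuine restriction only when $d$ is small or $4\mid q-1$ — must be played off against the graph‑invariance to exclude the coincidence $s\sim\lambda s$, and where one has to pin down precisely the images of $t_I$, $t_{I,2}$ and $h_0$, and of the automorphism $g$, in the $\GL_d(q)/\SL_d(q)$ picture.
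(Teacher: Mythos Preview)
Your approach is entirely different from the paper's and carries a genuine gap.

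The paper argues globally and never touches Lusztig-series parameters here. Assuming for contradiction that $\wh L_{I,\psi}=\wh L_I$, it transplants $\psi$ to the two $\tA_{d-1}$-factors of a standard Levi $L'$ inside $\bG':=\tD_{2d,\mathrm{sc}}(\FF)$, choosing the two extensions to $\wh L'_{I_1},\wh L'_{I_2}$ to be non-$V'_S$-conjugate. The three hypotheses on $\psi$ then force $W(\la')=\ov W(\la')=\Sym_{\pm\{I_1,I_2\}}$ (dihedral of order $8$), $W(\wh\la')=\langle(I_1,-I_1),(I_2,-I_2)\rangle$, and $W(\wt\la')=\langle(I_1,-I_1)(I_2,-I_2)\rangle\leq[W(\la'),W(\la')]$. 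The non-trivial character of $W(\wt\la')$ is thus $W(\la')$-stable but non-extendible, so by \cite[13.9(b)]{Cedric} some $\R_{L'}^{G'}(\la')_\eta$ occurs with multiplicity $2$ in the restriction of $\R_{\wt L'}^{\wt G'}(\wt\la')_{\eta_0}$ to $(\bG')^{F_1}$, contradicting multiplicity-one \cite[15.11]{CE04}.

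The gap in your route is the translation ``$\psi^{g}=\psi$ translates into $\chi=\chi\theta$''. Because $h_0\notin\ker\psi$, the character $\psi$ does not descend to $L_I/\langle h_0\rangle$, so there is no identification of $\psi$ with a character of $\GL_d(q)$; you only have a constituent $\psi_0$ of $\psi|_{G_I}$ and some cuspidal $\chi\in\Irr(\GL_d(q)\mid\psi_0)$ above it. The condition $\psi^{t_I}=\psi$ says only that the $L_I$-orbit of $\psi_0$ is $t_I$-stable, which for $d>2$ is weaker than $\psi_0^{t_I}=\psi_0$ since $L_I$ already induces non-inner diagonal automorphisms on $G_I$. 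And even granting $\psi_0^{t_I}=\psi_0$, the direction is wrong: Clifford theory for $\SL_d\lhd\GL_d$ gives $|\{\mu:\chi\mu=\chi\}|=|\GL_d(q):\GL_d(q)_{\psi_0}|$, so for instance a $\GL_d(q)$-stable $\psi_0$ (which trivially satisfies $\psi_0^{t_I}=\psi_0$) yields a trivial twist-stabilizer and hence $\chi\neq\chi\theta$. The hypothesis ``$s\sim\lambda s$'' you feed into the eigenvalue bookkeeping is therefore not the one you actually have, and the final ``should force $\FF_q[\zeta]\subsetneq\FF_{q^d}$'' step---already flagged as the obstacle---cannot close on this basis. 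The eigenvalue method you are reaching for is precisely what the paper deploys in the \emph{next} lemma (\Cref{lem513}), but only after the present result has supplied the input $2\mid|\GL_d(q):\GL_d(q)_{\psi'_0}|$.
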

\begin{proof} 
For the proof it is sufficient to show that a character $\psi$ with the above properties and $(\wh L_I)_{\psi}=\wh L_{I}$ cannot exist. Recall $t_{I,2}^2=\h_I(\zeta ')$, where $\zeta '\in\FF^\times$ is a root of unity of order $2(q-1)_2$. 

Let $\bG':=\tD_{2d,sc}(\FF)$ with an $\FF_q$-structure given by a standard Frobenius endomorphism $F_1: \bG'\ra \bG'$. Let $\bL'$ be the Levi subgroup of $\bG'$ of type $\tA_{d-1}\times \tA_{d-1}$ such that $\cO(\bL')=\cO_d(\bL')= \{I_1,I_2\}$ be defined by $\bL'$ as in \ref{defcO}. Then $\psi$ defines cuspidal characters $\la_{I_1}\in\cusp(L_{I_1})$ and $\la_{I_2}\in\cusp(L_{I_2})$ that have extensions to $\wh L_{I_1}$ and $\wh L_{I_2}$ and are $V_S'$-stable, where $V'_S$ is associated to $\bG'$ and $\bL'$ as in \Cref{def_standardized}. We can choose $\wh \la_{I_j}\in\Irr(\wh L_{I_j}\mid \la_{I_j})$ ($j=1,2$) such that they are not $V'_S$-conjugate. The group $\wh L':=\wh L_{I_1}.\wh L_{I_2}$ is a central product of the groups $\wh L_{I_j}$ ($j\in \underline 2$) over $\spannh$. Let $\wh \la':=\wh \la_{I_1}\ .\ \wh\la_{I_2}\in\Irr(\wh L')$, $\la'=\restr\wh \la'|{L'}$ and $\wt \la'\in\Irr(\wt L'\mid \wh \la')$ where $L':=(\bL')^F$, $\wh L':={\calL'}^{-1}(\spannh)\cap \bL'$ and $\wt L':=\calL '{}^{-1}(\Z(\bG'))\cap \bL'$ for the Lang map $\calL '\colon x\mapsto x^{-1}F'(x)$ of $\bG '$.

Defining $W$, $\ov W$ from the above for  $\bG '$ and $\bL '$, note that $ W(\wh \la')=\ov W(\wh \la')=\spann<(I_1,-I_1), (I_2,-I_2)>$ and $W(\la')=\ov W(\la ') =\Sym_{\pm \cO(\bL')}$. Note also that $W(\wt \la') =\spann<(I_1,-I_1) (I_2,-I_2)>\leq \Z(W(\la'))= [W(\la'), W(\la')]$. 

Now observe that the non-trivial character of $W(\wt \la')$ is $W(\la')$-stable but does not extend to $W(\la')$ as the kernel of any linear character of $W(\la')$ contains $\Z(W(\la'))= [W(\la'), W(\la')]$. This also implies that for some character $\eta\in\Irr(W(\la'))$ the constituent $\eta_0$ of $\restr\eta|{W(\wt \la')}$ has multiplicity $2$ in $\restr\eta|{W(\wt \la')}$. The character $\R_{\wt L'}^{\wt G'}(\wt \la')_{\eta_0}$ restricts to $(\bG')^F$ and has only constituents with multiplicity $1$ according to \cite[15.11]{CE04}. 

Like in other places these results are considering first the situation of Harish-Chandra induction for a group $(\wt \bG')^{F_1}$ that comes from a regular embedding of $\bG'$ into a group with connected centre. These results can then be applied to the groups $\wt G':=\calL^{-1}(\Z(\bG'))$ and the subgroup $\wt L'$. 

On the other hand according to \cite[13.9(b)]{Cedric}, the character $\R_{L'}^{G'}(\la')_{\eta}$ has multiplicity $2$ in $\R_{\wt L'}^{\wt G'}(\wt \la')_{\eta_0}$. This is a contradiction. This implies that a character $\psi$ with the above properties cannot exist and proves the statement.
\end{proof}
In the next step we continue to consider the case where $I\in \cO_d$ with $2\mid d$. 
\begin{lem}\label{lem513} 
Let $\nu\in\Irr(\spannh)$ be non-trivial and $I\in \cO_d$ for some $d\in\DD_\even$ with $d>2$. Then every $\psi\in\cusp( L_I\mid \nu)$ with $\psi^{c_I}=\psi$ satisfies $t_{I,2}^2\in\ker(\psi)$.
\end{lem}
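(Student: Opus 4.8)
The statement to prove is \Cref{lem513}: for $I\in\cO_d$ with $d\in\DD_\even$, $d>2$, every $c_I$-stable $\psi\in\cusp(L_I\mid\nu)$ (with $\nu$ the non-trivial character of $\spannh$) has $t_{I,2}^2\in\ker(\psi)$. The plan is to transport the problem into a pure general-linear-group computation via the isomorphism $\bL_I/\spannh\cong\GL_d(\FF)$ from \Cref{lem33_loc}(b), track what $c_I$ and $t_{I,2}^2$ become under it, and then invoke \Cref{prop51}. The key structural facts already available are: $L_I\cong\GL_d(q)$ when $2\nmid d$ (not our case) but for $2\mid d$ one only has $\bL_I/\spannh\cong\GL_d(\FF)$, so $\wh L_I/\spannh\cong\GL_d(q)$; that $c_I$ induces on $G_I=\SL_d(q)$ (hence on the derived subgroup of the $\GL_d(q)$ image) a transpose-inverse up to inner automorphism by \Cref{lemVactK}(b)--(c); and that $\bZ_I^F$ maps to $\Z(\GL_d(q))$ (argued in the proof of \Cref{prop_struct}).

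The main steps I would carry out, in order. First, pass to the quotient: since $\psi\in\cusp(L_I\mid\nu)$ has $h_0$ acting by $-1$, I cannot directly work in $\wh L_I/\spannh$; instead I would use the element $t_{I,2}^2=\h_I(\zeta')$ with $\zeta'$ of order $2(q-1)_2$ from equation~\eqref{eq41h0}-type considerations, noting $\h_I(\zeta')^2 = \h_I((\zeta')^2) = \h_I(\varpi)$ up to the structure of $\Z(\bGI)$. The precise relation I need is how $h_0$, $t_{I,2}$ and $\Z(\bGI)$ sit inside $\bZ_I$: from \eqref{eq41h0} one has $h_0 = \h_I(\zeta)\prod_{i\neq j}\h_{e_i-e_j}(\zeta^{-2})$ with $\zeta$ of order $2|I|_2=2d_2$, i.e. $h_0\in \bZ_I\cdot\Z(\bGI)$. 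So $t_{I,2}^2$ differs from a power of $h_0$ by an element of $\bZ_I$ that maps into $\Z(\GL_d(q))$ under the quotient isomorphism. Concretely I would show: the image $\bar\psi$ of $\psi$ (or rather of its restriction/extension behaviour modulo $\spannh$) is a cuspidal character of $\GL_d(q)$, $c_I$ acts as transpose-inverse composed with an inner automorphism, and $\bar\psi$ is $c_I$-stable; hence by \Cref{prop51}(a) we get $\Z(\GL_d(q))\le\ker(\bar\psi)$. Second, translate this kernel statement back: $\Z(\GL_d(q))$ is the image of $\bZ_I^F$, and $t_{I,2}^2$ modulo $\spannh$ lies in $\bZ_I^F/\spannh$ mapping into $\Z(\GL_d(q))$ — so $\bar\psi$ trivial on the centre forces $t_{I,2}^2\in\ker(\psi)$, using that $\psi$ and $\bar\psi$ agree on $\bZ_I^F$ up to the $\spannh$-twist which is irrelevant since $t_{I,2}^2$ is a square and $\spannh\cong\Cy_2$.

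The delicate bookkeeping — and the step I expect to be the main obstacle — is the passage between $\psi$ (a character with $h_0$ in its ``anti-kernel'', living on $\wh L_I$ or $L_I$) and a genuine character of $\GL_d(q)=\wh L_I/\spannh$: one must be careful that cuspidality is preserved (this follows from \Cref{lem_cusp}, since cuspidality only depends on the behaviour under unipotent radicals of parabolics and $\spannh$ is central of $p'$-order) and that $c_I$-stability of $\psi$ descends to $c_I$-stability of the quotient character. The inner-automorphism ambiguity in ``$c_I$ = transpose-inverse up to inner'' is exactly what \Cref{prop51} is designed to absorb, so that causes no trouble; the real care is in identifying which power of $t_{I,2}$ lands in the centre and confirming that it is $t_{I,2}^2$ (not $t_{I,2}$ itself) that is forced into the kernel, which matches the fact that $d>2$ makes the relevant eigenvalue argument go through. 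Once the GL-level centre-triviality is in hand, the conclusion $t_{I,2}^2\in\ker(\psi)$ is immediate, and this also shows why the case $d=2$ (handled separately in \Cref{prop_struct2}(b)) is genuinely exceptional — there the transpose-inverse-invariant cuspidal character of $\GL_2(q)$ of degree $\frac{q-1}{2}$ from \Cref{prop_SL2_cusp} need not be central-trivial in the refined sense needed.
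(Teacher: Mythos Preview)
Your approach has a genuine gap at the step you yourself flag as delicate: the construction of a character $\bar\psi$ of $\GL_d(q)$. Since $\psi\in\cusp(L_I\mid\nu)$ with $\nu$ non-trivial, we have $h_0\notin\ker(\psi)$, so $\psi$ does \emph{not} factor through $L_I/\spannh$, and neither does any extension or induction of $\psi$ to $\wh L_I$ (the central element $h_0$ still acts by $-1$). There is no well-defined ``image $\bar\psi$'' on $\wh L_I/\spannh\cong\GL_d(q)$, and hence \Cref{prop51}(a) cannot be invoked. Your attempted workaround via ``restriction/extension behaviour modulo $\spannh$'' and the remark that ``$t_{I,2}^2$ is a square'' does not produce such a character; the argument you sketch is essentially the proof of \Cref{prop_struct}, which applies precisely when $h_0\in\ker(\psi)$ and fails here.

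The paper's proof takes a different route that avoids this obstruction entirely. Instead of passing to the quotient $\GL_d(q)$, it restricts $\psi$ to the derived subgroup $L_0:=[\bL_I,\bL_I]^F\cong\SL_d(q)$ and picks an irreducible constituent $\psi_0$, which is cuspidal by \Cref{lem_cusp}. The key new ingredient is \Cref{lem48_ersatz}: assuming $t_{I,2}^2\notin\ker(\psi)$ forces $\wh L_{I,\psi}=L_I$, and since $\wh L_I/L_0$ is abelian this yields $2\mid|\GL_d(q):\GL_d(q)_{\psi_0}|$. Because $\restr\psi|{L_0}$ is $c_I$-stable and $c_I$ acts as a graph automorphism on $\SL_d(q)$, one may choose $\psi_0$ to be graph-stable. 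Now \Cref{prop_SL2_cusp} (not \Cref{prop51}) applies and forces $d=2$, contradicting $d>2$. So the crucial tools you are missing are \Cref{lem48_ersatz} (to get the index-divisibility hypothesis) and \Cref{prop_SL2_cusp} (the $\SL$-analogue of \Cref{prop51} tailored to this situation).
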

\begin{proof}
	Let $z:=t_{I,2}^2=\h_\ul(\zeta ')$ for some $\zeta '\in \FF^\times$ a root of unity of order $2(q-1)_2$, and $\psi\in\cusp(L_I\mid \nu)$ with $\psi^{c_I}=\psi$ and $z\notin\ker(\psi)$. According to \Cref{lem48_ersatz}, $\psi^{\wh L_I}$ is irreducible. Note $z\in \Z(\bL_I)$. 
	Since $d\geq 4$ it is sufficient to show the statement in the case where $I=\ul$ and hence $L_I=L$. 
	
	The group $L_0:=[\bL,\bL]^F$ satisfies $L_0\cong \SL_l(q)$, see \Cref{lem33_loc}. Let $\psi_0\in \Irr( \restr\psi|{L_0})$. According to \Cref{lem_cusp}, $\psi_0$ is cuspidal. Following \Cref{lem_act_onGI} the automorphisms of $L_0$ induced by $\wh L$ are diagonal automorphisms of $L_0$. Since $\Cent_{\wh L}(L_0)\leq \Z(\bL)$ and $\wh L/(\Cent_{\wh L}(L_0)L_0)$ is cyclic, we can see that maximal extendibility holds with respect to $L_0\lhd L$.
	As $\psi^{\wh L}$ is irreducible, $\wh L_\psi=L$. As $\wh L/L_0$ is abelian this implies $\wh L_{\psi_0}\leq L$. 
	
	We now use the fact $L_0\cong \SL_l(q)$. Let $\bH:=\GL_l(\FF)$ and let $F':\bH\lra \bH$ be a Frobenius endomorphisms giving an $\FF_q$-structure such that $\bH^{F'}\cong \GL_l(q)$. Via $[\bH,\bH]\cong [\bL,\bL]$ we identify $[\bH,\bH]^F$ with $L_0$. 
	Hence $\psi_0$ defines $\psi_0'\in\cusp([\bH,\bH]^F)$. By the above this implies 
	\begin{align*} \label{2midGLpsi} 2\mid |\bH^F:\bH^F_{\psi'_0}|.
	\end{align*}
	The character $\psi^{\wh L}$ is $c_I$-stable. Hence $\restr \psi|{L_0}$ is $c_I$-stable. Following \Cref{not4_15}, $c_I$ acts on $L_0$ by a graph automorphism and $\wh L$ acts on $L_0$ as diagonal automorphisms. 
	
	As $\restr \psi|{L_0}$ is $c_I$-stable, we can choose $\psi'$ to be stable under the graph automorphism of $\SL_l(q)$ and it is cuspidal according to \Cref{lem_cusp}. In this situation $\psi_0$ only exists if $l=2$, see \Cref{prop_SL2_cusp}. By the assumption $d>2$, so we get a contradiction. This implies our claim that any $c_I$-stable character $\psi$ satisfies  $t_{I,2}^2\in\ker(\psi)$. 
\end{proof}

\begin{lem} \label{lem6_18}
Let $I\in \cO_2$ and $\nu$ as in \Cref{lem513}. There are exactly two characters $\psi\in\cusp( L_I\mid \nu)$ with $\psi^{c_I}=\psi$ and  $t_{I,2}^2\notin\ker(\psi)$. Those characters are $\wh L$-conjugate.
\end{lem}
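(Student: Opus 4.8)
\textbf{Set-up.} By \Cref{lem33_loc}\ref{alggroup_eq_loc} we have $\bL_I/\spannh\cong \GL_2(\FF)$, hence $\wh L_I/\spannh\cong \GL_2(q)$, and under this isomorphism $\bZ_I^F$ corresponds to $\Z(\GL_2(q))$. The element $t_{I,2}=\h_I(\zeta)$ satisfies $t_{I,2}^2=\h_I(\zeta^2)$ with $\zeta^2$ a root of unity of order $2(q-1)_2$; since we assume $4\mid(q-1)$ here (this is the hypothesis under which the statement is non-vacuous, cf.\ \Cref{prop_struct2}(b)), $\langle t_{I,2}\rangle$ generates a nontrivial part of the centre of $\wh L_I$ mod $\spannh$, and by \Cref{lem_act_onGI}(b) the element $t_I$ from \Cref{lem36} induces the diagonal automorphism corresponding to the central involution of $\GL_2(q)$. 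The plan is to transfer the counting problem to $\SL_2(q)$ and then invoke the explicit character table of $\SL_2(q)$ from \cite{BonSL2} as already used in \Cref{prop_SL2_cusp}.

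\textbf{Reduction to $\SL_2(q)$.} First I would note that a character $\psi\in\cusp(L_I\mid\nu)$ with $t_{I,2}^2\notin\ker(\psi)$ is, by \Cref{lem48_ersatz}, such that $\psi^{\wh L_I}$ is irreducible; equivalently $\wh L_{I,\psi}=L_I$, so $\psi$ is not stable under the diagonal (central-involution) automorphism. Next, restricting to $L_0:=[\bL_I,\bL_I]^F\cong\SL_2(q)$ via \Cref{lem_cusp}\ref{lem_cuspc}, the character $\restr\psi|{L_0}$ is cuspidal, and the condition that $t_I$ moves $\psi$ translates (by the standard correspondence $\GL_2(q)/\SL_2(q)\leftrightarrow \Z(\GL_2(q))$ and Clifford theory, as in the proof of \Cref{prop_SL2_cusp}) into $|\GL_2(q):\GL_2(q)_{\psi_0}|$ being even, where $\psi_0\in\Irr(\restr\psi|{L_0})$. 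Meanwhile the condition $\psi^{c_I}=\psi$ forces, by \Cref{lemVactKc} (so $c_I$ acts on $L_0$ as a graph automorphism) together with \Cref{prop_SL2_cusp}, that $\psi_0\in\{\R'_{+1}(\theta_0),\R'_{-1}(\theta_0)\}$, the two cuspidal characters of $\SL_2(q)$ of degree $\frac{q-1}{2}$ in Table~5.4 of \cite{BonSL2} that are not $\GL_2(q)$-stable. These two characters form a single $\GL_2(q)$-orbit, hence a single $\wh L_I$-orbit after fixing central data.

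\textbf{Counting the characters of $\wh L_I$.} Having pinned down $\restr\psi|{L_0}$, I would then count the cuspidal characters of $L_I$ lying over $\nu$ and over the orbit $\{\R'_{+1}(\theta_0),\R'_{-1}(\theta_0)\}$, subject to $t_{I,2}^2\notin\ker$ and $c_I$-stability. Since $\wh L_I/(L_0\,\Cent_{\wh L_I}(L_0))$ is cyclic and $L_I$ lies between $L_0\spannh$ and $\wh L_I$, the fibre of $\cusp(L_I)$ over $\{\R'_{\pm 1}(\theta_0)\}$ is controlled by how the central character (the restriction to $\bT_I^F$, equivalently to $\Z(\GL_2(q))$ and to $\spannh$) is prescribed: fixing $\nu$ on $\spannh$ and demanding $t_{I,2}^2\notin\ker$ leaves exactly the choice of a central character of order dividing $2(q-1)_2$ that is nontrivial on $t_{I,2}^2$, and for each such central character Clifford theory over $L_0\lhd L_I$ produces exactly one cuspidal character of $L_I$ (maximal extendibility / the index being as small as forced by \Cref{lem48_ersatz}). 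A short bookkeeping shows precisely two of these are $c_I$-stable, corresponding to the two choices $\si=\pm1$ in $\R'_\si(\theta_0)$ (the graph automorphism $c_I$ swaps the two central characters differing by the sign twist but fixes the pair, and the two resulting $L_I$-characters are interchanged by $t_I$, i.e.\ they are $\wh L_I$-conjugate). This yields both assertions: exactly two such $\psi$, and they are $\wh L_I$-conjugate, hence $\wh L$-conjugate.

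\textbf{Main obstacle.} The delicate point is the precise bookkeeping of central characters: one must check that the conditions ``lies over $\nu$ on $\spannh$'', ``$t_{I,2}^2\notin\ker$'', and ``$c_I$-stable'' together cut the fibre down to exactly two characters and not zero or four. This requires keeping careful track of the relations $h_0=\h_{e_1-e_2}(-1)\h_{e_1+e_2}(-1)$ and $\h_{e_1}(\varpi)\h_{e_2}(\varpi)=\h_{e_1+e_2}(-1)$ (used already in the proof of \Cref{prop3D} for the $\tA_1\times\tA_1$ case) and of how $c_I$ acts by inversion on $\bT_I$, so that $[c_I,t_{I,2}]=t_{I,2}^{-2}$, which is the same computation that appears in the proof of \Cref{lem411}(a). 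Once that identity is in hand, $c_I$-stability of an extension is equivalent to $t_{I,2}^{-2}\in\ker$ of the relevant linear central character, and the count of order-$2$ solutions gives the ``two'' and their $\wh L_I$-conjugacy.
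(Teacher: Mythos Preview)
Your strategy is the paper's: restrict to $[\bL_I,\bL_I]^F\cong\SL_2(q)$, invoke \Cref{prop_SL2_cusp} (together with \Cref{lem48_ersatz}) to force the constituent $\psi_0$ to lie in $\{\R'_{+1}(\theta_0),\R'_{-1}(\theta_0)\}$, and then count what sits above. The difference is in how the ``central'' part is handled, and here your argument stays vague while the paper's is sharp.

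The simplification you are missing is that for $d=2$ one has $|\bG_I\cap\bZ_I|=\tfrac{|I|_{p'}}{\gcd(2,|I|)}=1$ by \Cref{lem33_loc}\ref{alggroup_eq_loc}, so $L_I\cong\SL_2(q)\times\bZ_I^F$ is a genuine \emph{direct} product. Writing $\psi=\psi_0\times\kappa$ with $\kappa\in\Irr(\bZ_I^F)$, the action of $c_I$ on the two factors decouples: on $\SL_2(q)$ it is inner, and on $\bZ_I$ it is inversion. Hence $c_I$-stability of $\psi$ forces $o(\kappa)\le 2$, while $t_{I,2}^2\notin\ker(\psi)$ forces $o(\kappa)=2$; since $\bZ_I^F$ is cyclic, $\kappa$ is then \emph{unique}. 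The ``two'' comes entirely from the two choices of $\psi_0$, and their $\wh L$-conjugacy follows from the $\GL_2(q)$-conjugacy of $\R'_{+1}(\theta_0)$ and $\R'_{-1}(\theta_0)$. (The paper also checks, via $h_0=\h_I(\varpi)\h_{e_i-e_{i'}}(-1)$ and the known value $\psi_0(\h_{e_i-e_{i'}}(-1))=(-1)^{(q+1)/2}\psi_0(1)$, that this unique $\kappa$ is compatible with $\psi(h_0)=-\psi(1)$.)

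By contrast, your ``short bookkeeping'' is not carried out, and two of your descriptions are off. The parenthetical ``$c_I$ swaps the two central characters differing by the sign twist but fixes the pair'' is not what happens: $c_I$ inverts $\Irr(\bZ_I^F)$, so its fixed characters are precisely those of order $\le 2$, a single nontrivial one among them. And your closing claim that ``$c_I$-stability of an extension is equivalent to $t_{I,2}^{-2}\in\ker$'' imports the criterion of \Cref{lem411}(a), which pertains to extensions to $\wt L_I$, not to characters of $L_I$; taken at face value here it would contradict the hypothesis $t_{I,2}^2\notin\ker(\psi)$. Once you use the direct product splitting, none of this bookkeeping is needed and the count drops out immediately.
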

\begin{proof}
From the proof of \Cref{lem513} and \Cref{prop_SL2_cusp} we see that there are two $\GL_2(q)$-conjugate characters $\psi_0\in \Irr(\restr\psi|{[\bL_I,\bL_I]^F})$, that are the only possible constituents of $\psi$. 
If $I=\{i,i'\}$, then $\psi_0(\h_{e_i-e_{i'}}(-1))=(-1)^{\frac{q+1}{2}} \psi_0(1)$ according to \cite[Tab.~5.4]{BonSL2}. Then $\bL^F \cong \SL_2(q)\times \bZ_I^F$ by \Cref{lem33_loc}, in particular $h_0=\h_{I}(\varpi) \h_{e_i-e_{i'}}(-1)$. 
Because of $\psi(h_0)=- \psi(1)$ this implies $\psi (\h_{I}(\varpi)) = - (-1)^{\frac{q+1}{2}} \psi(1) =(-1)^{\frac{q-1}{2}} \psi(1) $.

Let $\kappa\in\Irr(\bZ_I^F)$ such that $\psi=\psi_0\times \kappa$. As $c_I$ acts by inverting on $\bZ_I$, $\kappa$ has multiplicative order 1 or 2.  The assumption $t_{I,2}^2\notin\ker(\psi)$ implies that $\kappa$ has order $2$. 
This proves that given $\psi_0$, the character $\kappa$ is uniquely determined by the fact that $\psi$ is $c_I$-stable and $t_{I,2}^2\not\in \ker(\psi)$.  Hence the only characters with the given properties are $\wh L $-conjugate. 
\end{proof}
Thanks to the above three statements we can now show \Cref{prop_struct2}.
\begin{proof}[Proof of \Cref{prop_struct2}]
Let $\nu\in\Irr(\spannh)$ be non-trivial, $d\in\DD_\even$, $I\in\cO_d$ and $\psi\in\cusp(\wh L_I\mid \nu)$ with $\psi^{c_I}=\psi$. If $d> 2$, then $t^2_{I,2}\in\ker(\psi)$ according to \Cref{lem513}. This shows part (a). 

Assume $d=2$ and $t^2_{I,2}\notin\ker(\psi)$. 
The set $\Irr(\restr \psi|{L_I})$ contains two characters according to \Cref{lem48_ersatz}.
Following Lemma \ref{lemVactKc} together with \Cref{prop43}(b) the character $\psi'\in \Irr(\restr \psi|{L_I})$ is cuspidal, satisfies $(\psi')^{c_I}=\psi'$ and $t^2_{I,2}\notin\ker(\psi')$.
Then there are exactly two $\wh L_I$-conjugate characters in  $\cusp(L_I\mid \nu)$ with  those properties, see \Cref{lem6_18}. 
Since $|\wh L_I:L_I|=2$ this implies that there is only one character $\psi$ with the given properties. This proves (b).
\end{proof}
\begin{lem}\label{prop_struct3}
If $\nu \in\Irr(\spannh)$ is non-trivial, then every $\psi\in\cusp(\wh L_{J_{-1}}\mid \nu)$ satisfies $\psi^{c_{J_{-1}}} \neq \psi$.
\end{lem}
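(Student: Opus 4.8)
The statement concerns the group $\bG_{J_{-1}}^F$ whose type is $\tA_1\times\tA_1$, $\tA_3$, or $\tD_{|J_{-1}|}$, with $c_{J_{-1}}=\ov{\mathbf n}_1$ acting as the graph automorphism $\gamma$ (by the identification in \ref{not_32}). The claim is that no cuspidal character of $\wh L_{J_{-1}}$ lying over the non-trivial character $\nu$ of $\spannh$ is fixed by $c_{J_{-1}}$. First I would recall from the proof of \Cref{prop3D} the explicit form of the elements of $\wt L$ acting on $G_{J_{-1}}$, and in particular note that $\psi^{c_{J_{-1}}}=\psi$ would force the cuspidal character $\psi_0\in\Irr(\restr\psi|{G_{J_{-1}}})$ (cuspidal by \Cref{lem_cusp}) to be $\gamma$-stable as well, modulo the diagonal automorphisms induced by the torus part, which however do not affect whether $\psi$ can be $c_{J_{-1}}$-fixed. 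The condition $\nu\neq 1_{\spannh}$, i.e. $\psi(h_0)=-\psi(1)$, is what I need to exploit: it says $\psi$ does not factor through $G_{J_{-1}}/\spannh$, equivalently its central character on $\Z(G_{J_{-1}})$ is a faithful-ish one.

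Next I would split into the three cases. For $\type(\Phi_{-1})=\tA_1\times\tA_1$: here $c_{J_{-1}}$ swaps the two $\SL_2(q)$ factors, and $h_0=\h_{e_1-e_2}(-1)\h_{e_1+e_2}(-1)$ distributes non-trivially across both factors. A $c_{J_{-1}}$-stable character would have to be of the form $\psi_0\otimes\psi_0$ with $\psi_0\in\cusp(\SL_2(q))$; but then evaluating on $h_0$ via the given identity and \cite[Tab.~5.4]{BonSL2} shows $\psi(h_0)=\psi_0(\h_{e_1-e_2}(-1))^2\cdot(\text{sign factors})=+\psi(1)$, contradicting $\nu\neq 1$. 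For $\type(\Phi_{-1})=\tA_3$: $G_{J_{-1}}\cong\SL_4(q)$, $c_{J_{-1}}$ acts as transpose-inverse (up to inner), and $\Z(\SL_4(q))$ has order $\gcd(4,q-1)$ with $h_0$ mapping to the central involution. By \Cref{prop51}(a), a cuspidal character of $\GL_4(q)$ fixed by transpose-inverse has $\Z(\GL_4(q))$ in its kernel; passing to $\SL_4(q)$, a $\gamma$-stable cuspidal character has the central involution in its kernel, so $h_0\in\ker(\psi_0)$, forcing $\psi(h_0)=\psi(1)$, again contradicting $\nu\neq 1$. (One must check that $c_{J_{-1}}$-stability of $\psi$ and the torus action together still impose $\gamma$-stability on $\psi_0$; this is where I would invoke that the diagonal part of $\wt L$ acting on $G_{J_{-1}}$ commutes with taking restrictions and that $\wh L_{J_{-1}}/G_{J_{-1}}$ is a quotient of a torus.)

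For $\type(\Phi_{-1})=\tD_{l_{-1}}$ with $l_{-1}=|J_{-1}|\geq 4$: this is the case where \Cref{hyp_cuspD} is available for $\bG_{J_{-1}}^F=\tD_{l_{-1},sc}(q)$. Here I would use \Cref{thm_Malle} (or directly \cite[Thm.~1]{Ma17}): the stabilizer of the cuspidal character $\psi_0$ coincides with that of a semi-simple character $\rho$ in the same Lusztig series, and then invoke \Cref{prop5E} — or more precisely the argument behind it — to relate $\gamma$-fixity to the central character. Concretely, $\psi_0\in\cusp(G_{J_{-1}}\mid\nu)$ with $\nu\neq 1_{\spannh}$ means $\psi_0$ lies in a Lusztig series $\cE(G_{J_{-1}},(s))$ with $s$ not in $[\bG^*_{J_{-1}},\bG^*_{J_{-1}}]^F$ in the appropriate sense; $\gamma$-stability of $\psi_0$ forces $s$ to be conjugate to $\gamma^{-1}(s)$, and combining with the non-triviality on $h_0$ produces a contradiction with the known action of the graph automorphism on semi-simple characters of type $\tD$, using \cite[\S 8]{Maslowski} as in the proof of \Cref{prop5E}.

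\textbf{Main obstacle.} The routine evaluations in the $\tA_1\times\tA_1$ and $\tA_3$ cases are straightforward given \Cref{prop51} and \cite{BonSL2}. The genuinely delicate point is the $\tD_{l_{-1}}$ case: there one must carefully track how $c_{J_{-1}}=\ov{\mathbf n}_1$ acts — it is the graph automorphism $\gamma$ on $G_{J_{-1}}$ by \ref{not_32}, but the full element of $\wt L_{-1}$ also carries diagonal-automorphism contributions coming from $t_{J_{-1}}$ and $t_{J_{-1},2}$, and I need that a $c_{J_{-1}}$-fixed $\psi\in\cusp(\wh L_{J_{-1}}\mid\nu)$ genuinely forces the underlying $G_{J_{-1}}$-character to be $\gamma$-stable with the wrong central behaviour. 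This requires the semi-simple/cuspidal stabilizer comparison of Malle together with the explicit description of graph action on semi-simple characters of $\tD$ via \cite{Maslowski} and the bookkeeping of \Cref{rem_whG}(b) on which diagonal automorphism corresponds to $h_0$ versus $\h_{\ul}(\varpi)$. I expect this to be the bulk of the proof, with the $\tA$-type cases dispatched quickly.
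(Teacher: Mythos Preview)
Your proposal drastically overcomplicates the argument and, in the $\tA_3$ and $\tD_{l_{-1}}$ cases, relies on claims that are either unjustified or too vague to constitute a proof. You have missed a two-line uniform argument that the paper uses and that works independently of $\type(\Phi_{-1})$.

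The key observation you are missing is this: the element $\h_{J_{-1}}(\varpi)$ lies in $\Z(\bL_{-1})$ (check via the Chevalley relations: for $\alpha=\pm e_i\pm e_j$ with $i,j\in J_{-1}$ the exponent $\sum_{k\in J_{-1}}\langle\alpha,e_k^\vee\rangle$ is $0$ or $\pm 4$, and $\varpi^4=1$), and moreover $\h_{J_{-1}}(\varpi)\in\wh L_{J_{-1}}$ since $\calL(\h_{J_{-1}}(\varpi))=\h_{J_{-1}}(\varpi^{q-1})\in\spannh$. A direct computation with $c_{J_{-1}}=\nn_{e_1}(\varpi)$ gives $[c_{J_{-1}},\h_{J_{-1}}(\varpi)]=h_0$. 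Consequently the central character $\wh\nu$ of $\psi$ on the subgroup generated by $h_0$ and $\h_{J_{-1}}(\varpi)$ satisfies $\wh\nu^{c_{J_{-1}}}(\h_{J_{-1}}(\varpi))=\wh\nu(h_0)\,\wh\nu(\h_{J_{-1}}(\varpi))=-\wh\nu(\h_{J_{-1}}(\varpi))$, so $\wh\nu$ is not $c_{J_{-1}}$-stable, and hence neither is $\psi$. No case analysis, no Lusztig series, no Malle or Maslowski.

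Regarding the specific gaps in your plan: in the $\tA_3$ case you assert that a $\gamma$-stable cuspidal character of $\SL_4(q)$ has the central involution in its kernel, citing \Cref{prop51}(a). But that proposition is about $\GL_n(q)$, not $\SL_n(q)$; passing from a $c_{J_{-1}}$-stable $\psi$ on $\wh L_{J_{-1}}$ to a $\gamma$-stable constituent $\psi_0$ on $G_{J_{-1}}$ already loses information (the constituents could be permuted), and even if some $\psi_0$ is $\gamma$-stable, the kernel conclusion does not follow from \Cref{prop51}(a) without further work. In the $\tD_{l_{-1}}$ case your appeal to the argument behind \Cref{prop5E} is not a proof: \Cref{prop5E} treats precisely the \emph{opposite} hypothesis $h_0\in\ker(\la)$, and your sketch (``combining with the non-triviality on $h_0$ produces a contradiction'') does not identify what contradiction arises or why. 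The commutator argument above sidesteps all of this.
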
 
 \begin{proof} 
Note $\h_{J_{-1}}(\varpi)\in\Z(\bL_{-1})$ and $[c_{J_{-1}},\h_{J_{-1}}(\varpi)]=h_0$. No extension $\wh\nu\in\Irr(\Z(\bL_{-1})^F\mid \nu)$ is $c_{J_{-1}}$-stable. This implies that $\psi_{-1}$ is not $c_{J_{-1}}$-stable. 
 \end{proof}
The above leads to the following statement on the sets $\cO_c(\wh \la)$, $\cO_{c,-1}(\wh\la)$ and $Y(\wh \la)$ introduced earlier in \ref{lem510} and \ref{lem411}. We use the notation $\II ozmu@{o(\mu)}$ to denote the multiplicative order of a linear character $\mu$ of a finite group.
\begin{cor}\label{cor6_20}
Let $\la\in\cusp(L)$, $\wh \la$ and $\wh \la_I$ associated to $\la$ as in \Cref{lem411}. If $\wh \la$ is standardized, then: 
\begin{thmlist}
\item $\cO_c(\wh\la)\subseteq \bigcup_{d\in \DD_\even\cup \{1,-1\}} \cO_d$. 
\item If $h_0\in\ker(\la)$, then $\cO_{c,-1}(\wh\la)\subseteq \{J_{-1}\}\cup \{I\in\cO_1\mid o(\wh \la_I)\mid 2 \}$.
\item If $h_0\notin\ker(\la)$, then $\cO_{c,-1}(\wh\la)\subseteq \cO_2$ and all $\{\wh \la_I\mid I \in \cO_{c,-1}(\wh\la)\}$ are $V_S$-conjugate, i.e. $\cO_{c,-1}(\wh\la)\in Y(\wh \la)$. 
\end{thmlist}
\end{cor}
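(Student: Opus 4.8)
The plan is to prove \Cref{cor6_20} as a direct consequence of the structural results just established about $c_I$-stable cuspidal characters of $\wh L_I$, organized by the type of the component $\Phi_I$ containing $I$. Throughout, $\la\in\cusp(L)$ is standardized with associated $\wh\la\in\cusp(\wh L)$ and $\wh\la_I\in\Irr(\restr\wh\la|{\wh L_I})$ for $I\in\cO$, and we recall from \Cref{lem511} that $\cO_c(\wh\la)=\{I\in\cO\mid(\wh\la_I)^{c_I}=\wh\la_I\}$ and from \Cref{lem411} that $\cO_{c,-1}(\wh\la)=\{I\in\cO_c(\wh\la)\mid(\wt\la_I)^{c_I}\neq\wt\la_I\}$. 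Note $\wh\la_I\in\cusp(\wh L_I\mid\nu)$ where $\nu:=\restr\wh\la|{\spannh}$, which is trivial iff $h_0\in\ker(\la)$.

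First I would prove part (a). Fix $I\in\cO_c(\wh\la)$, so $\wh\la_I$ is $c_I$-stable. If $I\in\cO_d$ for some odd $d\in\DD$ with $d\notin\{\pm1\}$, then \Cref{lem6_13} says no $c_I$-stable character exists in $\cusp(\wh L_I)$, a contradiction. Hence $I$ must lie in $\cO_d$ for $d$ even or $d\in\{1,-1\}$, which is exactly $\cO_c(\wh\la)\subseteq\bigcup_{d\in\DD_\even\cup\{1,-1\}}\cO_d$. For part (b) assume $h_0\in\ker(\la)$, i.e.\ $\nu=1_{\spannh}$. Take $I\in\cO_{c,-1}(\wh\la)$; by part (a), $I\in\cO_d$ with $d$ even or $d\in\{1,-1\}$. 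If $d\in\DD_\even$, then \Cref{prop_struct} applies (since $\wh\la_I\in\cusp(\wh L_I\mid 1_{\spannh})$ is $c_I$-stable), giving $\bZ_I^F\leq\ker(\wh\la_I)$; but $t_{I,2}\in\bZ_I$ (as $t_{I,2}=\h_I(\zeta)$), so $t_{I,2}^2\in\ker(\wh\la_I)$, whence by \Cref{lem411}(a) we get $I\in\cO_{c,1}(\wh\la)$, contradicting $I\in\cO_{c,-1}(\wh\la)$. Thus $d\notin\DD_\even$, so $d\in\{1,-1\}$. If $I=J_{-1}$ there is nothing more to check. If $I\in\cO_1$, then $\wh L_I$ is a torus (a quotient of $\wh L_{J_1}$, cf.\ \Cref{prop55}(a)) and $c_I$ acts by inversion; a linear character fixed by inversion has order dividing $2$, so $o(\wh\la_I)\mid2$. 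This gives $\cO_{c,-1}(\wh\la)\subseteq J_{-1}\cup\{I\in\cO_1\mid o(\wh\la_I)\mid2\}$.

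For part (c) assume $h_0\notin\ker(\la)$, so $\nu$ is the non-trivial character of $\spannh$. Take $I\in\cO_{c,-1}(\wh\la)\subseteq\cO_c(\wh\la)$. By part (a) and since $\nu\neq1$ forces $o(\wh\la_I)>1$ (so $I\notin\cO_1$ is automatic as $\wh L_I$ for $I\in\cO_1$ maps to $\spannh$ trivially — more precisely for $I\in\cO_1$ one has $L_I=1$ and $\wh L_I=\spannh$, and $c_I$ inverts, so $\wh\la_I$ cannot be $c_I$-stable and non-trivial), and since $I=J_{-1}$ is excluded by \Cref{prop_struct3}, we have $I\in\cO_d$ for some $d\in\DD_\even$. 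Now for $d>2$, \Cref{prop_struct2}(a) gives $t_{I,2}^2\in\ker(\wh\la_I)$, hence $I\in\cO_{c,1}(\wh\la)$ by \Cref{lem411}(a), contradiction. So $d=2$, i.e.\ $\cO_{c,-1}(\wh\la)\subseteq\cO_2$. Finally, for the $V_S$-conjugacy claim: by \Cref{prop_struct2}(b), for a fixed $I\in\cO_2$ there is a \emph{unique} $\psi\in\cusp(\wh L_I\mid\nu)$ with $\psi^{c_I}=\psi$ and $t_{I,2}^2\notin\ker(\psi)$, and $I\in\cO_{c,-1}(\wh\la)$ means $\wh\la_I$ is exactly this character (up to the identification of the various $\wh L_I$ for $I\in\cO_2$ via $V_S$). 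Since $V_S$ permutes the factors $\{\wh L_I\mid I\in\cO_2\}$ transitively among those in the same $\ov V$-orbit and $\wh\la$ is standardized, all $\wh\la_I$ with $I\in\cO_{c,-1}(\wh\la)$ are $V_S$-conjugate, so $\cO_{c,-1}(\wh\la)$ is contained in a single block of $Y(\wh\la)$, i.e.\ $\cO_{c,-1}(\wh\la)\in Y(\wh\la)$ by the definition in \Cref{lem510}.

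The main obstacle is the last claim of part (c): transferring the uniqueness statement of \Cref{prop_struct2}(b) from a single fixed factor $\wh L_I$ to the assertion that \emph{all} the $\wh\la_I$ ($I\in\cO_{c,-1}(\wh\la)$) coincide after identification, hence are $V_S$-conjugate and form one block of $Y(\wh\la)$. This requires being careful that the identifications $\wh L_I\cong\wh L_{I'}$ used implicitly are exactly those induced by elements of $V_S$ (as opposed to more general elements of $\ov V$), and that the standardized-ness of $\wh\la$ combined with the uniqueness forces the $\wh\la_I$ to be mutually $V_S$-conjugate rather than merely $\ov V$-conjugate — one must check that the "new" $c_I$-stable character of \Cref{prop_struct2}(b) is preserved (not just permuted) by $V_S$ and distinguished from its $\ov V\setminus V_S$-translates only up to the allowed ambiguity. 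Once this bookkeeping of orbits under $V_S$ versus $\ov V$ is pinned down, the corollary follows immediately from \Cref{lem6_13}, \Cref{prop_struct}, \Cref{prop_struct2}, \Cref{prop_struct3} and \Cref{lem411}(a).
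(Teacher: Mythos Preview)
Your approach is essentially the same as the paper's: part (a) via \Cref{lem6_13}, part (b) via \Cref{prop_struct} plus the inversion action on the rank-one torus, and part (c) via \Cref{prop_struct2}, \Cref{prop_struct3} and the uniqueness from \Cref{lem6_18}/\Cref{prop_struct2}(b). The paper's proof is terser but cites the same ingredients.

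There is one genuine slip in your parenthetical for part (c). For $I\in\cO_1$ one does \emph{not} have $L_I=1$ and $\wh L_I=\spannh$: rather $\bG_I=1$ but $\bL_I=\bT_I$ is a one-dimensional torus, so $L_I\cong\FF_q^\times$ and $\wh L_I=\spann<L_I,t_I>$. The correct reason $\cO_1$ is excluded when $h_0\notin\ker(\la)$ is the one given in the proof of \Cref{lem6Dh0}: since $h_0\notin\ker(\wh\la_I)$, the linear character $\wh\la_I$ has order divisible by $2(q-1)_2\geq 4$, whereas $c_I$-stability (inversion-invariance) forces $o(\wh\la_I)\mid 2$. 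With this correction your argument goes through.

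Your concern about the $V_S$-conjugacy bookkeeping is legitimate but not a real obstacle: the uniqueness in \Cref{prop_struct2}(b) is exactly what \Cref{lem6_18} provides, and since $V_S$ simply permutes the factors $\wh L_I$ ($I\in\cO_2$) while transporting the distinguished character to the corresponding distinguished character, all $\wh\la_I$ with $I\in\cO_{c,-1}(\wh\la)$ are automatically $V_S$-conjugate; standardizedness then puts them in a single block of $Y(\wh\la)$.
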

\begin{proof}
\Cref{lem6_13} implies that $\cO_{c,-1}(\wh \la)\cap \cO_d=\emptyset$ for every $d\in \DD_\odd \setminus\{\pm 1\}$. This gives (a).

For the proof of (b) assume $h_0\in\ker(\la)$. Then \Cref{prop_struct} implies $\cO_{c,-1}(\wh\la)\subseteq \cO_{-1}\cup \cO_1$. For $I\in\cO_1$, the character $\wh \la_I$ is $c_I$-stable if and only if $o(\wh \la_I)\mid 2$. 

For the proof of (c) assume $h_0\notin\ker(\la)$. Then $\cO_{c,-1}(\wh\la)\cap \cO_1=\emptyset$ and $\cO_{c,-1}(\wh\la)\subseteq \cO_2$ according to \Cref{prop_struct2}. 
\Cref{lem6_18} proves that $\{\wh \la_I\mid I \in \cO_{c,-1}(\wh \la)\}$ are $V_S$-conjugate. Hence the partition $Y(\wh \la)$ from \Cref{lem510} contains $\cO_{c,-1}(\wh \la)$. 
\end{proof}

Recall $K(\la):=\ov W_{\restr\la^{L\spann<F_p>}|L}$. For any $\ov W$-stable $L\leq J\leq \wt L$, $\kappa \in \Irr(J)$ and $Q\subseteq \cO$ let 
$\II WoverlineQ @{\ov W^Q(\kappa)}:=\ov W(\kappa)\cap \Sym_{\pm Q }$ and 
$\II WQ@{W^Q(\kappa)}:=\ov W^Q(\kappa)\cap W$.

\begin{prop}\label{cor_6B}
Let $ \la \in \TT$, $\wh \la \in \cusp(\wh L\mid \la)$, $\wh \la_I\in\Irr(\restr \wh \la| {\wh L_I})$ ($I\in \cO$) and $\wt \la\in\Irr(\wt L\mid \wh \la)$. Assume that $\wh \la$ is standardized in the sense of \ref{def_standardized}. We set 
 \begin{align*}
\II Q1hatlambda@{ Q^1(\pwh \lambda)}:=
 \begin{cases} 
 \{ I \in \cO_1\mid o(\wh \la_I)\mid 2\} \cup \cO_{-1} &\text{, if } h_0\in\ker(\la),\\
 \cO_{c,-1}(\wh \la) &\otw.
\end{cases}
\end{align*}
Then: 
\begin{thmlist}
\item $Q^1(\wh \la)$ is $K(\la)$-stable, and
\item $W(\wt \la)=W^1(\wt \lambda)\times W^2(\wt \lambda)$, where 
$\II Q2la@{Q^2(\wh \lambda)}:=\cO\setminus Q^1(\wh \la)$, and 
$\II Wjlambdatilde@{W^j(\pwt \lambda)}:= W^{Q^j(\wh \la)}(\wt \lambda)$ for $j\in\underline 2$.
\end{thmlist}
\end{prop}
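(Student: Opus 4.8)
The statement has two parts; I would prove (b) first, since (a) is essentially a by-product of the analysis behind (b). For part (b) the key input is \Cref{lem411}, which already gives the structure of $\ov W(\wt\la)$ as the semidirect product of a ``sign-changing'' part with a Young subgroup $\Young_{Y(\wh\la)\cup Y'(\wh\la)}$. The plan is to intersect this with $W$ and rewrite the result as an internal direct product indexed by the partition $\{Q^1(\wh\la),Q^2(\wh\la)\}$ of $\cO$. The first step is to identify the sign-changing part of $\ov W(\wt\la)$ inside $\Sym^{\tD}_{\pm\cO}$: by \Cref{lem411}(b) it is generated by the full transpositions $(I,-I)$ for $I\in\cO_{c,1}(\wh\la)$ together with the products $(I,-I)(I',-I')$ for $I,I'\in\cO_{c,-1}(\wh\la)$. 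I would then invoke \Cref{cor6_20} to locate these index sets: when $h_0\in\ker(\la)$, $\cO_{c,-1}(\wh\la)\subseteq J_{-1}\cup\{I\in\cO_1\mid o(\wh\la_I)\mid 2\}$ and one checks $\cO_{c,1}(\wh\la)\cap\cO_1=\emptyset$ on those indices (the characters of $\wh L_I$ for $I\in\cO_1$ with $o(\wh\la_I)\mid 2$ contribute genuine sign changes only through the $(I,-I)(I',-I')$-type elements, or via $J_{-1}$-considerations, matching the definition of $Q^1(\wh\la)$); when $h_0\notin\ker(\la)$, $\cO_{c,-1}(\wh\la)\subseteq\cO_2$ is a single block of $Y(\wh\la)$, again by \Cref{cor6_20}(c), so $Q^1(\wh\la)=\cO_{c,-1}(\wh\la)$ as stated.

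The second step is to verify that in $W$ (as opposed to $\ov W$), the sign-changing generators split cleanly according to whether their support lies in $Q^1(\wh\la)$ or $Q^2(\wh\la)$, and that the Young-permutation part $\Young_{Y(\wh\la)\cup Y'(\wh\la)}$ also respects this partition. For the permutation part this is immediate: the blocks of $Y(\wh\la)\cup Y'(\wh\la)$ either consist of indices that all lie in $Q^1(\wh\la)$ or all lie in $Q^2(\wh\la)$ — this is exactly the content of \Cref{cor6_20}(c) together with the observation that $\cO_1$-indices with $o(\wh\la_I)\mid 2$ and $\cO_{-1}$ are singled out as whole $V_S$-conjugacy classes. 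One then has $W(\wt\la)=W^{Q^1(\wh\la)}(\wt\la)\times W^{Q^2(\wh\la)}(\wt\la)$ because the two factors commute (disjoint supports) and intersect trivially, and together they generate $W(\wt\la)$ by \Cref{lem411}(b) intersected with $W$. Writing $W^j(\pwt\la):=W^{Q^j(\wh\la)}(\wt\la)$ gives (b).

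For part (a), I would show $Q^1(\wh\la)$ is $K(\la)$-stable. Recall $K(\la)=\ov W_{\restr\la^{L\spann<F_p>}|L}$; an element of $K(\la)$ permutes the blocks $\cO_d$ preserving $\|\cdot\|$ and, since it fixes $\la$ up to the $F_p$-twist and $\la$ determines the $\wh\la_I$ up to $V_S$-conjugacy and the multiplicative orders $o(\wh\la_I)$, it must preserve the defining conditions of $Q^1(\wh\la)$. Concretely: in the case $h_0\in\ker(\la)$, $K(\la)$ stabilizes $\cO_1$ and $\cO_{-1}$ (distinct values of $\|\cdot\|$) and stabilizes $\{I\in\cO_1\mid o(\wh\la_I)\mid 2\}$ because $F_p$ and conjugation preserve multiplicative orders of the local characters; in the case $h_0\notin\ker(\la)$, $Q^1(\wh\la)=\cO_{c,-1}(\wh\la)$ is $\ov W(\la)$-stable since whether $(\wt\la_I)^{c_I}=\wt\la_I$ is a conjugation-invariant property (one uses here that $t_{I,2}$ is $V_S$-central and the criterion of \Cref{lem411}(a) relating $\cO_{c,\eps}$ to the value $\wh\la_I(t_{I,2}^2)$, which is preserved), and $K(\la)\le\ov W(\la)$. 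The main obstacle I anticipate is the careful bookkeeping in the mixed case where both $\cO_1$-type and $\cO_{-1}$-type indices occur in $Q^1(\wh\la)$ when $h_0\in\ker(\la)$: one must be sure that the elements $(I,-I)$ and $(I,-I)(I',-I')$ truly decompose along $Q^1$ versus $Q^2$ without cross-terms, which hinges on \Cref{cor6_20}(a)--(b) ruling out $c_I$-stable cuspidal characters on the odd blocks $\cO_d$ ($d\neq\pm1$) and on the even blocks $\cO_d$ ($d>2$) with $t_{I,2}^2\notin\ker$. Once those exclusions are in hand, the partition is forced and both parts follow.
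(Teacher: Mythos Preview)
Your plan for part (b) is essentially the paper's argument and is correct: you invoke \Cref{lem411}(b) for the structure of $\ov W(\wt\la)$, use \Cref{cor6_20} to see that $\cO_{c,-1}(\wh\la)\subseteq Q^1(\wh\la)$ in both cases, and check that the blocks of $Y(\wh\la)\cup Y'(\wh\la)$ refine $\{Q^1(\wh\la),Q^2(\wh\la)\}$. One point you leave implicit is the passage from $\ov W(\wt\la)=\ov W^1(\wt\la)\times\ov W^2(\wt\la)$ to the same decomposition for $W(\wt\la)$: this uses that one of the two factors already lies in $W$ (namely $\ov W^2(\wt\la)\le W$ when $h_0\in\ker(\la)$ because $\cO_c(\wh\la)\setminus Q^1(\wh\la)\subseteq\bigcup_{d\in\DD_\even}\cO_d$, and $\ov W^1(\wt\la)\le W$ when $h_0\notin\ker(\la)$ because $Q^1(\wh\la)\subseteq\cO_2$). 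You should make this parity step explicit.

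There is, however, a genuine error in your argument for part (a) in the case $h_0\notin\ker(\la)$. You write ``$K(\la)\le\ov W(\la)$'', but the inclusion goes the other way: by definition $K(\la)=\ov W_{\restr\la^{L\spann<F_p>}|L}$ is the stabilizer of the full $\spann<F_p>$-orbit of $\la$, so $\ov W(\la)\le K(\la)$, and typically the containment is strict. An element $w\in K(\la)$ only satisfies $\la^w=\la^{e}$ for some $e\in\spann<F_p>$, so you cannot conclude directly that $w$ preserves $\cO_{c,-1}(\wh\la)$ from it preserving $\la$. The criterion of \Cref{lem411}(a) via $\wh\la_I(t_{I,2}^2)$ does not immediately rescue this either, since $F_p$ does not fix $t_{I,2}^2$. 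What is actually needed (and what the paper does) is to show first that $Q^1(\wh\la)=Q^1(\wh\la^e)$ for every $e\in\spann<F_p>$: one uses that $c_I^e\in c_I\spann<h_0>$ to see $\cO_c(\wh\la)=\cO_c(\wh\la^e)$, and then that $\wh\la_I$ admits a $c_I$-stable extension to $\wt L_I$ if and only if $\wh\la_I^e$ does, giving $\cO_{c,-1}(\wh\la)=\cO_{c,-1}(\wh\la^e)$. Combined with the naturality $Q^1(\wh\la)^w=Q^1(\wh\la^w)$, this yields the $K(\la)$-stability. Your treatment of the case $h_0\in\ker(\la)$ is fine, since there $Q^1(\wh\la)$ is defined via multiplicative orders of the $\wh\la_I$, which are manifestly preserved by both $\ov W$-conjugation and $F_p$-twists.
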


\begin{proof}
Let $e\in \spann<F_0>$ such that $\la$ and $ \la^e$ are $\ov N$-conjugate. As $\wh \la$ is standardized, then $\wh \la^e$ is also standardized. 
As the orders of $\wh \la_I$ and $(\wh \la_I)^e$ coincide for every $I\in \cO$, we see that $Q^1(\wh \la)=Q^1(\wh\la^e)$ from the definition, whenever $h_0\in\ker(\la)=\ker(\wh\la^e)$ and hence $h_0\in\ker (\wh\la^e)$. 

Assume $h_0\notin\ker(\la)$. Then $h_0\notin\ker(\la^e)$. Let $I\in \cO\setminus\{J_{-1}\}$. 
Because of $c_I^e\in c_I\spannh$ we see 
$$ \wh \la_I^{c_I}=\wh \la_I \Leftrightarrow 
\Irr(\restr\wh \la^e|{\wh L_I})^{c_I} =
\Irr(\restr\wh \la^e|{\wh L_I}).$$
In the case of $\wh \la_I^{c_I}=\wh \la_I$ the character $\wh \la_I$ has some $c_I$-stable extension to $\wt L_I$ if and only if the unique character in $\Irr(\restr\wh \la^e|{\wh L_I})$ has some $c_I$-stable extension to $\wt L_I$. 
(The set $\Irr(\restr \phi|{\wh L_I})$ is a singleton for every $\phi\in\cusp(\wh L)$, since $\wh L$ is the central product of the groups $\wh L_I$ over $\spannh$.)
This shows $\cO_{c,-1}(\wh \la)=\cO_{c,-1}(\wh \la^e)$ and $Q^1(\wh \la)=Q^1(\wh \la^e)$ by the definition of those sets. 
Let $w\in \ov W$ and $e\in \spannFp$ with $we\in \wh K(\la)$. Then $\wh \la^w$ is standardized and $Q^1(\wh \la)^w= Q^1(\wh \la^w)$. Accordingly $w\in K(\la)$ stabilizes $Q^1(\wh \la)$. This implies part (a).

For part (b) recall the description of $\ov W(\wt\la)$ from 
\Cref{lem41}: 
\[\ov W(\wt\la)= 
\left(\Spann<(I,-I)|I\in \cO_{c,1}(\wh\la)> \Spann<(I,-I)(I',-I') | I,I'\in \cO_{c,-1}(\wh\la)>\right)
\rtimes \Young_{Y(\wh \la)\cup Y'(\wh \la)}.\]
First assume $h_0\in\ker(\la)$. By construction $\cO_c(\wh\la)\subseteq Q^1(\wh\la) \cup \bigcup_{d\in\DD_\even} \cO_d$ and hence $\ov W(\wt \la)=\ov W^1(\wt \la)\times \ov W^2(\wt \la)$. According to \Cref{cor6_20}(a) we observe $(I,-I)\in W$ for every $I\in \cO_c(\wh\la)\setminus Q^1(\wh\la)$. This implies $\ov W^2(\wt \la)\leq W$ and $W(\wt \la)=W^1(\wt \lambda)\times W^2(\wt \lambda)$ by the definition of $W$.

It remains to consider the case where $h_0\notin\ker(\la)$. Then $Q^1(\wh\la)=\cO_{c,-1}(\wh \la)\subseteq \cO_2$ by \Cref{cor6_20}(c) and hence $\ov W^1(\wt \la)\leq W$. By the structure of $\ov W(\wt \la)$ described in \Cref{lem41} we see  $W(\wt \la)=W^1(\wt \lambda)\times W^2(\wt \lambda)$.
\end{proof}

\subsection{Clifford theory for $W(\wt \la)\lhd W(\la)$ in the case of $\wh L\wt L_\la= \wt L$}
In this section we study the characters of $W(\wt \la)$, in particular their Clifford theory with respect to $K(\la)$. Assuming $\wh L\wt L_\la=\wt L$ we prove maximal extendibility \wrt $ W (\wt \la)\lhd K(\la)$. This result is required for a later application of \Cref{prop23}. We consider the following situation. 
\begin{notation}
Let $ \la \in \cusp(L)$, $\wh \la\in\Irr(\wh L\mid \la)$ and $\wt \la\in\Irr(\wt L\mid \wh \la)$ such that $\wh \la$ is standardized and $\wh L \wt L_\la=\wt L$ (or equivalently $\restr \wt \la|{\wh L}=\wh \la$). 
\end{notation}
For further computations we use the groups $\II Kjlambda@{K^j(\lambda)}$
associated to the subsets $Q^j(\la)\subseteq \cO$ from \Cref{cor_6B}, where $K^j(\la):= (K(\la) \Sym_{ \pm( \cO\setminus Q^j(\la))}) \cap  \Sym_{\pm Q^j(\la)}$ for $j\in \underline 2$. 
\begin{lem}\label{cor_4B}
If maximal extendibility holds \wrt $W^j(\wt \la)\lhd K^j(\la)$ for every $j\in \underline 2$, then maximal extendibility holds \wrt 
$W(\wt \la)\lhd K(\la)$, in particular for every $\eta_0 \in \Irr(W(\wt \la))$ there exists some $K(\la)_{\eta_0}$-stable $\eta \in\Irr(W (\la)\mid \eta_0)$.
\end{lem}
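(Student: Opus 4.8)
The plan is to reduce the statement to the two direct factors of $W(\wt\la)$ and then recombine. By \Cref{cor_6B} one has the internal direct product $W(\wt\la)=W^1(\wt\la)\times W^2(\wt\la)$ with $W^j(\wt\la)\leq\Sym_{\pm Q^j(\wh\la)}$, and $K(\la)$ stabilizes the partition $\{Q^1(\wh\la),Q^2(\wh\la)\}$ of $\cO$, so $K(\la)\leq\Sym_{\pm Q^1(\wh\la)}\times\Sym_{\pm Q^2(\wh\la)}$ and the projection of $K(\la)$ to the $j$-th factor is exactly $K^j(\la)$. Since $W(\wt\la)\leq W(\la)\leq K(\la)$ by \Cref{lem41} and the definition of $K(\la)$, and since $W^j(\wt\la)\lhd K^j(\la)$ by hypothesis while $K(\la)$ is contained in $K^1(\la)\times K^2(\la)$, this gives $W(\wt\la)\lhd K(\la)$; in particular $\Irr(W(\wt\la))$ is $K(\la)$-stable.

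Next I would prove maximal extendibility with respect to $W(\wt\la)\lhd K(\la)$. By hypothesis it holds with respect to each $W^j(\wt\la)\lhd K^j(\la)$, and by \cite[Thm.~4.1]{CS17A} the associated extension maps $\Lambda^j$ can be chosen $K^j(\la)$-equivariant. For $\eta_0\in\Irr(W(\wt\la))$, write $\eta_0=\eta_0^1\boxtimes\eta_0^2$ with $\eta_0^j\in\Irr(W^j(\wt\la))$; then $K(\la)_{\eta_0}=K(\la)\cap\bigl(K^1(\la)_{\eta_0^1}\times K^2(\la)_{\eta_0^2}\bigr)$ contains $W(\wt\la)$, and the restriction of $\Lambda^1(\eta_0^1)\boxtimes\Lambda^2(\eta_0^2)$ to $K(\la)_{\eta_0}$ is a character of degree $\eta_0(1)$ whose restriction to $W(\wt\la)$ equals $\eta_0$, hence an irreducible extension of $\eta_0$. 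This yields maximal extendibility with respect to $W(\wt\la)\lhd K(\la)$ and, again via \cite[Thm.~4.1]{CS17A}, a $K(\la)$-equivariant extension map $\Lambda$.

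For the "in particular" clause I would set $\eta:=\Ind_{W(\la)_{\eta_0}}^{W(\la)}\Res_{W(\la)_{\eta_0}}^{K(\la)_{\eta_0}}\Lambda(\eta_0)$, where $W(\la)_{\eta_0}=W(\la)\cap K(\la)_{\eta_0}$ is the inertia subgroup of $\eta_0$ in $W(\la)$. As in the previous step the inner character is an irreducible extension of $\eta_0$, so the Clifford correspondence gives $\eta\in\Irr(W(\la)\mid\eta_0)$. Its $K(\la)_{\eta_0}$-stability should then follow from the $K(\la)$-equivariance of $\Lambda$ and the functoriality of $\Ind$ and $\Res$, once the action of $K(\la)_{\eta_0}$ on $\Irr(W(\la))$ is read through the subgroup of $\wh W$ lying above $K(\la)_{\eta_0}$ (with $F_p$ acting trivially), exactly the passage already used in the proof of \Cref{KlawhKla}.

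The step I expect to be the main obstacle is this last one: $K(\la)$ acts on $W$ and on $W(\wt\la)$ but not on $W(\la)$ directly, so "$K(\la)_{\eta_0}$-stable" must be interpreted via the lift to $\wh W$ as in \Cref{KlawhKla}, and one has to check that the $K(\la)$-equivariant map $\Lambda$ from Step 2 is genuinely compatible with this lifted action. Everything else — the product decomposition, the irreducibility of the combined extensions, and the Clifford correspondence — is routine bookkeeping.
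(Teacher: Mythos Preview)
Your proof is correct and follows essentially the same route as the paper: establish $K(\la)\leq K^1(\la)\times K^2(\la)$ via \Cref{cor_6B}, pass to maximal extendibility for the direct product $W^1(\wt\la)\times W^2(\wt\la)\lhd K^1(\la)\times K^2(\la)$, and restrict. The paper treats the ``in particular'' clause as immediate from maximal extendibility, whereas you spell it out through the Clifford correspondence; both are fine.

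Your flagged obstacle is not a genuine one. The group $K(\la)$ normalizes $W(\la)$ directly inside $\ov W$, with no lift to $\wh W$ needed: for $k\in K(\la)$ one has $\la^k=\la^e$ for some $e\in\spann<F_p>$, and since $F_p$ fixes each $\n_\al(1)L$ it acts trivially on $W$, hence $e$ centralizes $W$ inside $\wh W$ and $W(\la^e)=W(\la)$. Thus $W(\la)^k=W(\la^k)=W(\la)$, so $K(\la)_{\eta_0}$ acts on $\Irr(W(\la))$ by ordinary conjugation in $\ov W$. Your character $\eta=(\restr\Lambda(\eta_0)|{W(\la)_{\eta_0}})^{W(\la)}$ is then $K(\la)_{\eta_0}$-stable simply because $\Lambda(\eta_0)$ is fixed under inner automorphisms of $K(\la)_{\eta_0}$ and induction and restriction commute with this conjugation.
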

In that situation the above statement will ensure Assumption \ref{prop23_neu}(iii) for $\ov {(\la,\eta)}\in \calP(L)$ via \Cref{KlawhKla}.
\begin{proof} 
Recall $K( \la):= \ov W_{\restr \la^{L \spannFp}|L}$ by the definition in \ref{not52}. As $K(\la)$ stabilizes $Q^1(\wh \la)$ and $Q^2(\wh \la)$ by \Cref{cor_6B}, $K(\la)\leq \Sym_{\pm Q^1(\wh \la)}\times \Sym_{\pm Q^2(\wh \la)}$. This rewrites as $K(\la)\leq K^1(\la)\times K^2(\la)$. 
Recall that $\cO_{c,-1}(\wh \la) \subseteq Q^1(\wh \la)$.

Since maximal extendibility holds \wrt $W^j(\wt \la)\lhd K^j(\la)$ for $j \in \underline 2$ by assumption, maximal extendibility holds \wrt $$W(\wt \la)=W^1(\wt \la)\times W^2(\wt \la)\lhd K^1(\la)\times K^2( \la).$$ This implies the statement as $K(\la)\leq K^1(\la)\times K^2(\la)$. 
\end{proof}
\noindent 
For $\la\in \TT$ with $\wt L_\la=\wt L$  we study first the Clifford theory of $W^2(\wt\la)\lhd K^2(\la)$ for the groups from \Cref{cor_4B}.
\begin{lem}\label{CliffW2K2}
Let $W^1(\wt\la)$, $W^2(\wt\la)$, $K^1(\la)$ and $K^2(\la)$ be the groups from \Cref{cor_4B}. Then: 
\begin{thmlist}
\item maximal extendibility holds \wrt $W^{2}(\wt \la)\lhd K^2(\la)$, and 
\item maximal extendibility holds \wrt $W^{1}(\wt \la)\lhd K^1(\la)$, if $h_0\notin \ker(\la)$. \end{thmlist}
\end{lem}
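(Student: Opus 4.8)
The statement to be proven is \Cref{CliffW2K2}: maximal extendibility with respect to $W^2(\wt\la)\lhd K^2(\la)$ in general, and with respect to $W^1(\wt\la)\lhd K^1(\la)$ when $h_0\notin\ker(\la)$. The unifying strategy is to reduce both parts to statements about Young-type subgroups of hyperoctahedral (signed-permutation) groups and their extensions, using the explicit structural descriptions already established: \Cref{lem411} gives $\ov W(\wt\la)$ as a product of a ``pure sign'' part on $\cO_{c,\pm 1}(\wh\la)$ semidirect with the block-permutation group $\Young_{Y(\wh\la)\cup Y'(\wh\la)}$; \Cref{cor_6B} isolates $W^1(\wt\la)$ on $Q^1(\wh\la)$ and $W^2(\wt\la)$ on $Q^2(\wh\la)$, with $W(\wt\la)=W^1(\wt\la)\times W^2(\wt\la)$; and \Cref{cor6_20} constrains where the $c_I$-fixed indices can occur.

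\textbf{Part (a): $W^2(\wt\la)\lhd K^2(\la)$.} By \Cref{cor6_20}, on $Q^2(\wh\la)=\cO\setminus Q^1(\wh\la)$ the only indices $I$ with $(\wt\la_I)^{c_I}\neq\wt\la_I$ have already been moved into $Q^1(\wh\la)$; concretely, all of $\cO_{c,-1}(\wh\la)$ lies in $Q^1(\wh\la)$ (it is a subset of $\cO_{-1}\cup\{I\in\cO_1: o(\wh\la_I)\mid 2\}$ when $h_0\in\ker(\la)$, and a subset of $\cO_2$ forming a single $V_S$-block when $h_0\notin\ker(\la)$). Hence on $Q^2(\wh\la)$ every ``pure sign'' generator $(I,-I)$ that appears in $\ov W(\wt\la)$ already lies in $W$, so $W^2(\wt\la)$ is a full signed-permutation group of Young type: it decomposes as a direct product over the $V_S$-blocks $Y'\in Y(\wh\la)\cup Y'(\wh\la)$ inside $Q^2(\wh\la)$ of wreath-type factors $C_2\wr\Sym_{|Y'|}$ (or $\Sym_{|Y'|}$ for blocks with no sign freedom). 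The overgroup $K^2(\la)$ is contained in the corresponding signed-symmetric group $\Sym_{\pm Q^2(\wh\la)}$, permuting blocks of equal $\|\cdot\|$-value and equal character data, so $K^2(\la)$ normalizes $W^2(\wt\la)$ and the quotient $K^2(\la)/W^2(\wt\la)$ embeds into a product of full symmetric groups permuting the isomorphic blocks. I plan to invoke the standard extendibility of characters from a base-group wreath $H\wr\Sym_a$ inside $(H\wr\Sym_b)$, i.e. a Clifford-theoretic fact about Young subgroups of symmetric and hyperoctahedral groups (the same mechanism used in \Cref{lem_wr}, via \cite[10.1]{Navarro_book}), to conclude maximal extendibility. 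The key point making this work is that $W^2(\wt\la)$ is a \emph{normal Young subgroup} of its overgroup whose quotient is a direct product of symmetric groups, and characters extend from a normal Young subgroup $\prod_i C_2\wr\Sym_{a_i}$ to $\prod_i C_2\wr\Sym_{b_i}$ precisely because both factors are built the same way from $C_2$.

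\textbf{Part (b): $W^1(\wt\la)\lhd K^1(\la)$ when $h_0\notin\ker(\la)$.} Here \Cref{cor6_20}(c) tells us $Q^1(\wh\la)=\cO_{c,-1}(\wh\la)\subseteq\cO_2$ and, crucially, \emph{all} the characters $\{\wh\la_I: I\in\cO_{c,-1}(\wh\la)\}$ are $V_S$-conjugate, so $\cO_{c,-1}(\wh\la)$ is a single block of $Y(\wh\la)$. By \Cref{lem411}, $W^1(\wt\la)$ on this block is $\langle(I,-I)(I',-I')\mid I,I'\in\cO_{c,-1}(\wh\la)\rangle\rtimes\Young_{\{\cO_{c,-1}(\wh\la)\}}$, i.e. the ``even sign-change'' subgroup of the hyperoctahedral group on this block, which is $(C_2\wr\Sym_a)^{+}$ (the index-two subgroup of $C_2\wr\Sym_a$ of elements with an even number of sign changes) with $a=|\cO_{c,-1}(\wh\la)|$. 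The overgroup $K^1(\la)$, being contained in $\Sym_{\pm Q^1(\wh\la)}$ and normalizing this subgroup, is contained in $C_2\wr\Sym_a$ (full hyperoctahedral on the block); so the inclusion $W^1(\wt\la)\lhd K^1(\la)$ is a subgroup of $(C_2\wr\Sym_a)^{+}\lhd C_2\wr\Sym_a$, which has index dividing $2$, whence cyclic quotient, whence maximal extendibility by \cite[(11.22)]{Isa} (extension over a cyclic quotient). I expect \textbf{the main obstacle to be Part (a)}: precisely pinning down $K^2(\la)$ as a Young-type overgroup of $W^2(\wt\la)$ and citing the correct wreath-product extension lemma in the generality needed (mixed blocks, some with and some without sign freedom, with an outer group permuting blocks carrying identical character labels). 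The delicate bookkeeping is to verify that $K^2(\la)$ cannot introduce a new sign-change on an index $I\in Q^2(\wh\la)$ with $(\wt\la_I)^{c_I}\neq\wt\la_I$ — but this is exactly prevented by the placement of $\cO_{c,-1}(\wh\la)$ in $Q^1$ via \Cref{cor6_20}, so once that is unwound the extension statement follows from the structural wreath-product result.
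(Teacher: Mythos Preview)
Your proposal is correct and follows essentially the same approach as the paper. For part (a) you identify $W^2(\wt\la)$ as a product of $C_2\wr\Sym_k$ and $\Sym_k$ factors over the $V_S$-blocks and extend via the wreath-product mechanism of \Cref{lem_wr}, which is exactly what the paper does after reducing to a single $d$ and a single block size; for part (b) you observe that $Q^1(\wh\la)=\cO_{c,-1}(\wh\la)$ is a single $Y(\wh\la)$-block in $\cO_2$, so $W^1(\wt\la)$ is the type $\tD$ index-two subgroup of $\Sym_{\pm Q^1(\wh\la)}\supseteq K^1(\la)$, whence the cyclic-quotient extension argument --- again matching the paper. One small point: the ``delicate bookkeeping'' you flag is slightly misdirected --- the extra sign-changes that $K^2(\la)$ can carry occur on blocks $y\in Y'(\wh\la)$ (where $\wh\la_I$ is \emph{not} $c_I$-stable), via block-wise elements $\prod_{I\in y}(I,-I)$ that centralize the corresponding $\Sym_{|y|}$; this is harmless for extendibility and is exactly the factor $C$ appearing in the paper's description of $\NNN_{\Sym_{\pm\cO_d}}(\Young_Y)$.
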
 
\begin{proof} Let $Y(\wh\lambda)\vdash \cO_c(\wh \la)$ and $Y'(\wh \la)\vdash \cO\setminus \cO_c(\wh \la)$ be the partitions from \Cref{lem510}. In order to prove part (a) we can assume $Q^2(\wh\la)=\cO$ without loss of generality. We have $\ov W(\wt \la) =\ov W^2(\la)=\Young_{\pm Y(\wh \la)}\times \Young_{Y'(\wh \la)}$, see \Cref{lem411}. 

If $h_0\in\ker(\la)$ then $\cO_c(\wh \la)\cap (\cO_1\cup \cO_{-1})=\emptyset$ by the choice of $Q^1(\wh\la)$ according to \Cref{cor6_20}. 
If $h_0\notin\ker(\la)$, \Cref{prop_struct3} implies $\cO_c(\wh \la)\cap \cO_{-1}=\emptyset$ and analogously we see $\cO_c(\wh \la)\cap \cO_{1}=\emptyset$.

This implies $\cO_c(\wh \la)\subseteq \bigcup_{d\in\DD_\even}\cO_d$. 
Accordingly $W(\wt\la)$ is the direct product of groups $W_d(\wt \la)$ for $d\in\DD$. It suffices to consider the case where $\cO=\cO_d=Q^2(\wh \la)$ for some $d\in \DD$ and $\cO_c(\wh \la)\in \{\cO,\emptyset\}$.
Additionally we can assume that $Y(\wh \la)$ and $Y'(\wh \la)$ are partitions whose elements have all the same cardinality. 
If $\cO_c(\wh \la)=\cO$, then $W^2(\la)\cong (\Cy_2\wr \Sym_k)^a$ for some positive integers $k$ and $a$. Then $K^2(\lambda)\cong (\Cy_2\wr \Sym_k)\wr \Sym_a$, and hence maximal extendibility holds \wrt $W^2(\wt \la)\lhd K^2(\la)$.

If $\cO_{c}(\wh\la)=\emptyset$, then $W^2(\la)=\Young_{Y'}$ and hence it is isomorphic to a direct product of symmetric groups. The group $K^2(\la)\leq \NNN_{\Sym_{\pm \cO_d}}(\Young_Y)$ is isomorphic to $(C \Young_Y)\rtimes \Sym_{Y}$, where $C:= \spann<\prod_{k\in y}( k,-k) \mid y\in Y>\leq \Sym_{\pm \cO_d}$. By \Cref{lem_wr} maximal extendibility holds with respect to $W_2(\wt \la)\lhd K_2(\la)$. This proves part (a).

For part (b) we assume $\cO_{c,-1}(\la)=\cO$, $h_0\notin \ker(\la)$, and as before $Q^1(\wh \la)=\cO$. By \Cref{cor6_20}(c) we have $K(\la)= W$ and $|W:W(\wt \la)|=2$, see \Cref{prop_struct}(c). As $W(\la)/W(\wt \la)$ is cyclic, maximal extendibility holds \wrt $W(\wt \la)\lhd K(\la)$.
\end{proof}
It remains to prove the following. 
\begin{prop}\label{CliffW1K1}
Maximal extendibility holds \wrt $W^1(\wt \la)\lhd K^1(\la)$, if $h_0\in \ker(\la)$. 
\end{prop}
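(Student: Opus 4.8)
The plan is to compute $W^1(\wt\la)$ and $K^1(\la)$ explicitly and to verify that $K^1(\la)/W^1(\wt\la)$ is cyclic of order at most $2$, possibly after splitting off a direct factor; maximal extendibility then follows from the standard fact that an $X$-invariant irreducible character of $X$ extends to $Y$ whenever $Y/X$ is cyclic (see \cite[(11.22)]{Isa}). Write $S:=Q^1(\wh\la)\cap\cO_1=\{I\in\cO_1\mid o(\wh\la_I)\mid 2\}$, so $Q^1(\wh\la)=S\sqcup\cO_{-1}$; recall that $\cO_{-1}$, when non-empty, is the singleton $\{J_{-1}\}$ and that $\|J_{-1}\|=-1\neq 1=\|I\|$ for $I\in S$, so that $\ov W\cong\Sym_{\pm\cO}$ can only sign-change the slot $J_{-1}$ and never interchanges it with a slot of $S$.

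The first step is to show $S\subseteq\cO_{c,1}(\wh\la)$. For $I\in\cO_1$ the subgroup $\bL_I$ is the rank-one torus $\bT_I$, and since $\Z(\bG)=\langle h_0,\h_{\ul}(\varpi)\rangle$ one checks that $\Z(\bG)\cap\bL_I\leq\langle h_0\rangle$; as $\langle h_0\rangle\subseteq\Z(\bG)$ this gives $\Z(\bG)\cap\bL_I=\langle h_0\rangle\cap\bL_I$, whence $\wt L_I=\wh L_I$ and $\wt\la_I=\wh\la_I$. Because $c_I$ inverts $\bT_I$ while $\wh\la_I$ has order dividing $2$, this character is $c_I$-fixed at every level, so $S\subseteq\cO_{c,1}(\wh\la)$, and with \Cref{cor6_20}(b) it follows that $J_{-1}$ is the only slot possibly in $\cO_{c,-1}(\wh\la)$. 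Since the $\wh L_I$ ($I\in\cO_1$) are cyclic, $\wh\la_I$ is either trivial or the unique character of order $2$; write $S=S^0\sqcup S^2$ accordingly, these being (essentially) the blocks of $Y(\wh\la)$ inside $S$.

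Restricting the description of $\ov W(\wt\la)$ from \Cref{lem411} and \Cref{lem510} to $Q^1(\wh\la)$ and using the first step, $W^1(\wt\la)$ is the subgroup of even total number of sign changes inside $\langle(I,-I)\mid I\in P\rangle\rtimes(\Sym_{S^0}\times\Sym_{S^2})$, where $P=S$, together with $J_{-1}$ exactly when $J_{-1}\in\cO_{c,1}(\wh\la)$ (no sign change survives on $J_{-1}$ otherwise, as $\cO_{c,-1}(\wh\la)\cap Q^1(\wh\la)$ has at most one element). On the other side, since $p$ is odd both the trivial and the order-$2$ character of a cyclic group are fixed by $F_p$; with \Cref{cor_6B}(a) this shows that $K^1(\la)$ preserves $S^0$, $S^2$ and the slot $J_{-1}$, realizes every signed permutation of $S^0$ and of $S^2$, and realizes the sign change on $J_{-1}$ exactly when $\la_{J_{-1}}^{c_{J_{-1}}}$ is $\langle F_p\rangle$-conjugate to $\la_{J_{-1}}$; thus $K^1(\la)=\bigl(\langle(I,-I)\mid I\in S\rangle\times C\bigr)\rtimes(\Sym_{S^0}\times\Sym_{S^2})$ with $C\leq\Cy_2$ supported on $\{J_{-1}\}$, and its permutation part agrees with that of $W^1(\wt\la)$. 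If $J_{-1}\in\cO_{c,1}(\wh\la)$ or $\cO_{-1}=\emptyset$, then $P$ exhausts the sign-changeable slots and $W^1(\wt\la)$ has cyclic index $2$ in $K^1(\la)$, so we are done. Otherwise $C$ is a direct factor of $K^1(\la)$ supported on $\{J_{-1}\}$, which lies outside the support of $W^1(\wt\la)$; then $W^1(\wt\la)$ has cyclic index $2$ inside the complementary direct factor, maximal extendibility holds there, and it extends across the direct product with $C$ by taking the trivial character of $C$.

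The main obstacle is the first step: the torus-level identity $\wt L_I=\wh L_I$ for every $I\in\cO_1$ under the hypothesis $h_0\in\ker(\la)$, which confines $\cO_{c,-1}(\wh\la)$ to $\{J_{-1}\}$ and is what makes the quotient cyclic up to the direct factor $C$. A second point to be careful about is the exceptional slot $\{1\}\in\cO_1$, whose group $\wh L_{\{1\}}$ may be strictly larger (by a factor $2$) than the other $\wh L_I$: neither the permutation part of $\ov W(\wt\la)$ nor $K^1(\la)$ can then match $\{1\}$ with another slot of $\cO_1$ (conjugation being an isomorphism), so $\{1\}$ only contributes a sign-change factor common to $W^1(\wt\la)$ and $K^1(\la)$ and does not affect the cyclic-quotient analysis.
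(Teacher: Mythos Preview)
There is a genuine gap in your first step. You argue that $\wt L_I=\wh L_I$ for $I\in\cO_1$ from $\Z(\bG)\cap\bL_I\leq\langle h_0\rangle$, but what this identifies is $\bL_I\cap\wt L$, not the paper's $\wt L_I$. The group $\wt L_I$ is defined in the proof of \Cref{lem411} as $\wh L_I\langle t_{I,2}\rangle$, a purely auxiliary object used to decompose the value $\wt\la(t_{\ul,2})$ via $t_{\ul,2}=\prod_I t_{I,2}$. For $I=\{i\}\in\cO_1$ with $i\neq 1$ one has $\calL(t_{I,2})=\h_{e_i}(\varpi)\notin\langle h_0\rangle$, so $t_{I,2}\notin\wh L_I$ and $\wh L_I\lneq\wt L_I$ strictly; in fact $\wt L_I/\wh L_I$ is cyclic of $2$-power order $\geq 4$. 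Since the unique order-$2$ character of the cyclic group $\wt L_I$ restricts trivially to $\wh L_I$, an order-$2$ character $\wh\la_I$ admits no extension of order $\leq 2$ to $\wt L_I$; thus every $I\in S^2=O_{1,2}$ lies in $\cO_{c,-1}(\wh\la)$, not in $\cO_{c,1}(\wh\la)$, and your claim $S\subseteq\cO_{c,1}(\wh\la)$ fails.

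This error propagates to your description of $W^1(\wt\la)$. The correct group (see the paper's Table~\ref{Tab1}) is, in the generic case $J_{-1}\notin\cO_c(\wh\la)$, isomorphic to $W(\tD_{l_1})\times W(\tD_{l_2})$, not to the ``even total number of sign changes'' subgroup you describe; these differ as soon as $l_1,l_2\geq 1$, since a single sign change in each factor has even total parity but lies outside $W(\tD_{l_1})\times W(\tD_{l_2})$. Consequently $K^1(\la)/W^1(\wt\la)$ contains $(W(\tB_{l_1})\times W(\tB_{l_2}))/(W(\tD_{l_1})\times W(\tD_{l_2}))\cong\Cy_2\times\Cy_2$ and is not cyclic up to a direct factor. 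Your cyclic-quotient extension argument therefore does not apply; the paper instead appeals to the wreath-product extension lemma (\Cref{lem_wr}), which handles the inclusion $W(\tD_{l_i})\lhd W(\tB_{l_i})$ and the possible outer permutation simultaneously.
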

\begin{proof}
Let $O_{1,i}=\{I\in \cO_1\mid o(\wh \la_I)=i\}$ for $i\in \underline 4$ and $l_i:= |O_{1,i}|$. 
By \Cref{lem510} and \Cref{lem411} we have $\ov W_{Q^1(\wh \la)}(\wt \la)\leq C\times \Sym_{\pm O_{1,1}}\times \Sym_{\pm O_{1,2}}$, where $C\leq W_{\cO_{-1}}(\wh\la)$ and $C$ is then either trivial or a cyclic group of order $2$. The group structures depend on $J_{-1}$ and those groups are described in Table \ref{Tab1}, where $W(\tB_j)$ and $W(\tD_j)$ are Coxeter groups of type $\tB_j$ and $\tD_j$, respectively. 

\begin{table}[h]
\begin{center}
\begin{tabular}{|c|c|c|c|}
	\hline
 & $W^1(\wt \la)$& $K^1(\la)$ \\ 
	\hline 
	$J_{-1}\notin \cO_c(\wh \la)$ & $W(\tD_{l_1})\times W(\tD_{l_2})$ & 
	\small{ $\Cy_2\times (W(\tB_{l_1})\wr \Cy_2)$ \text{, if }$l_1=l_2$}\\
	&&$\Cy_2\times W(\tB_{l_1})\times 
	W(\tB_{l_2})$ \text{, if }$l_1\neq l_2$
	 \\ 
	\hline 
	$J_{-1}\in \cO_{c,1}$ & $ W(\tB_{l_1})\times W(\tD_{l_2})$ & $\Cy_2\times W(\tB_{l_1})\times W(\tB_{l_2})$\\
	\hline 
	$J_{-1}\in \cO_{c,-1}$ & $ W(\tD_{l_1})\times W(\tB_{l_2})$ & $\Cy_2\times W(\tB_{l_1})\times W(\tB_{l_2})$\\
		\hline 
\end{tabular} 
\end{center}
\caption{Isomorphism types of $W^1(\wt \la)$ and $K^1(\la)$}\label{Tab1}
\end{table}

Note that in all cases $\ov W_{O_{1,1}}(\wt \la)$ is a Coxeter group of type $\tB_{l_1}$. 
Considering the structure we observe that in all cases the statement holds according to \Cref{lem_wr}. 
\end{proof}
Recall that $\MM^{(X)}=\{\la\in\cusp(L)\mid \wt L_\la = X\}$ and $\MM_0:=\cusp(L)\setminus (\MM^{(L)}\cup \MM^{(\wt L)})$ for $X$ with $L\leq X\leq \wt L$. For characters in $\MM^{(\wt L)}\cup \MM_0$ the above proves the following: 
\begin{prop}\label{prop_sec4B}
Let $\la\in \MM^{(\wt L)}\cup \MM_0$, i.e. $\wh L \wt L_\la=\wt L$. 
For every $\wt \la\in\Irr(\wt L\mid \la)$ and $\eta_0\in\Irr(W(\wt \la))$ there exists some $K( \la)_{\eta_0}$-stable $\eta\in\Irr(W(\la)\mid \eta_0)$. 
\end{prop}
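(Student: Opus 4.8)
\textbf{Proof plan for Proposition~\ref{prop_sec4B}.}
The statement to prove is that for $\la\in\MM^{(\wt L)}\cup\MM_0$ (equivalently $\wh L\wt L_\la=\wt L$), every $\wt\la\in\Irr(\wt L\mid\la)$ and every $\eta_0\in\Irr(W(\wt\la))$ admits a $K(\la)_{\eta_0}$-stable $\eta\in\Irr(W(\la)\mid\eta_0)$. The plan is to reduce everything to the maximal-extendibility statements established in the three preceding lemmas and then invoke \Cref{cor_4B}. First I would replace $\wh\la$ by a standardized character: every $\ov N$-orbit in $\Irr(\wh L)$ contains a standardized character (remarked after \ref{def_standardized}), and since $W(\la)$, $W(\wt\la)$ and $K(\la)$ all transform compatibly under $\ov W$-conjugation, it suffices to prove the claim for a standardized $\wh\la$; so assume $\wh\la$ is standardized and $\restr\wt\la|{\wh L}=\wh\la$, which is exactly the hypothesis $\wh L\wt L_\la=\wt L$ spelled out in the notation before \Cref{cor_4B}.

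Next I would quote \Cref{cor_6B}: with $Q^1(\wh\la)$ and $Q^2(\wh\la)=\cO\setminus Q^1(\wh\la)$ as defined there, the set $Q^1(\wh\la)$ is $K(\la)$-stable and $W(\wt\la)=W^1(\wt\la)\times W^2(\wt\la)$ where $W^j(\wt\la)=W^{Q^j(\wh\la)}(\wt\la)$. By \Cref{cor_4B} it is then enough to prove that maximal extendibility holds with respect to $W^j(\wt\la)\lhd K^j(\la)$ for $j\in\underline 2$. Step two: maximal extendibility for $W^2(\wt\la)\lhd K^2(\la)$ is precisely \Cref{CliffW2K2}(a), valid without restriction on $h_0$. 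Step three: for $W^1(\wt\la)\lhd K^1(\la)$ I would split into the two cases according to whether $h_0\in\ker(\la)$. If $h_0\notin\ker(\la)$, this is \Cref{CliffW2K2}(b). If $h_0\in\ker(\la)$, this is exactly \Cref{CliffW1K1}. In all cases maximal extendibility with respect to $W^j(\wt\la)\lhd K^j(\la)$ holds, so \Cref{cor_4B} gives maximal extendibility with respect to $W(\wt\la)\lhd K(\la)$, which in particular yields for every $\eta_0\in\Irr(W(\wt\la))$ some $K(\la)_{\eta_0}$-stable $\eta\in\Irr(W(\la)\mid\eta_0)$, as desired.

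Since this proposition is essentially a bookkeeping assembly of the three Clifford-theoretic lemmas via \Cref{cor_4B} and \Cref{cor_6B}, there is no genuinely new obstacle here; the only point requiring a little care is the reduction to a standardized $\wh\la$ and the observation that the partition $Q^1(\wh\la)\mid Q^2(\wh\la)$ is $K(\la)$-stable, so that $K(\la)\le K^1(\la)\times K^2(\la)$ and the product decomposition of $W(\wt\la)$ is respected by $K(\la)$-conjugation. (All the hard work — the case analysis of $\cO_c(\wh\la)$, $\cO_{c,\pm1}(\wh\la)$, the identification of $W^1(\wt\la)$ and $K^1(\la)$ with the wreath products and Coxeter groups of Table~\ref{Tab1}, and the wreath-product extendibility input \Cref{lem_wr} — has already been carried out in \Cref{CliffW2K2}, \Cref{CliffW1K1}, \Cref{cor6_20} and \Cref{cor_6B}.) Thus the proof is short:

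\begin{proof}
We may assume $\wh\la$ is standardized, since every $\ov N$-orbit in $\cusp(\wh L)$ contains a standardized character (see after \Cref{def_standardized}) and the groups $W(\la)$, $W(\wt\la)$, $K(\la)$ and the asserted property are preserved under $\ov N$-conjugation. By hypothesis $\wh L\wt L_\la=\wt L$, equivalently $\restr\wt\la|{\wh L}=\wh\la$. By \Cref{cor_6B}, $W(\wt\la)=W^1(\wt\la)\times W^2(\wt\la)$ and $Q^1(\wh\la)$ is $K(\la)$-stable; with the groups $K^1(\la)$, $K^2(\la)$ as in \Cref{cor_4B} this gives $K(\la)\leq K^1(\la)\times K^2(\la)$. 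By \Cref{CliffW2K2}(a) maximal extendibility holds \wrt $W^2(\wt\la)\lhd K^2(\la)$. If $h_0\notin\ker(\la)$, maximal extendibility holds \wrt $W^1(\wt\la)\lhd K^1(\la)$ by \Cref{CliffW2K2}(b); if $h_0\in\ker(\la)$, it holds by \Cref{CliffW1K1}. Hence by \Cref{cor_4B} maximal extendibility holds \wrt $W(\wt\la)\lhd K(\la)$, so for every $\eta_0\in\Irr(W(\wt\la))$ there exists some $K(\la)_{\eta_0}$-stable $\eta\in\Irr(W(\la)\mid\eta_0)$.
\end{proof}
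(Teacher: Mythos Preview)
Your proof is correct and follows exactly the paper's approach: the paper's own proof is the one-line ``According to \Cref{cor_4B}, \Cref{CliffW1K1} and \Cref{CliffW2K2} imply the statement,'' and you have simply spelled out the logical structure (the standardization reduction, the $h_0$ case split, and the product decomposition via \Cref{cor_6B}) that this invocation implicitly relies on.
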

\begin{proof}
According to \Cref{cor_4B}, \Cref{CliffW1K1} and \Cref{CliffW2K2} imply the statement. 
\end{proof}

\subsection{Clifford theory for $W(\wh \la)\lhd W(\la)$ in the case of $\wt L_\la= \wh L$}\label{ssec5E}
We now study $W(\la)$ and $W(\wh \la)$ for characters $\la\in \MM^{(\wh L)}$, 
where $\wh\la\in\Irr(\wh L\mid\la)$ is standardized. 
We prove statements on the characters of $W(\wh \la)$ and their possible extensions to $W(\la)$. The results later imply that there exists some $\EL$-stable $\wt N$-transversal in $\Irr(N\mid \MM^{(\wh L)})$.

In the following we study the Clifford theory of $W(\wh \la)\lhd K(\la)$ for $\la\in \MM^{(\wh L)}$, where $K(\la)=\ov W_{\restr\la^{L\spannFp}|L}$. 

\begin{lem}\label{rem519}
Assume $|\Z(\GF)|=2$ or equivalently $q\equiv 3 (4)$ and $2\nmid l$. Every $\la\in\cusp(L)$ satisfies $\wh L\leq \wt L_\la$. Then $\cusp(N\mid \TT\cap \MM^{(\wh L)}) $ is an $\EL$-stable $\wt L$-transversal of $\cusp(N\mid \MM^{(\wh L)})$. 
\end{lem}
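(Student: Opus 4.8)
The statement has two parts. First, under the hypothesis $|\ZZZ(G)|=2$ (equivalently $q\equiv 3\pmod 4$ and $l$ odd), I claim every $\la\in\cusp(L)$ satisfies $\wh L\le\wt L_\la$, i.e.\ every such $\la$ is $\wh L$-stable. This is a local argument about the one extra diagonal automorphism available: when $|\ZZZ(\bG^F)|=2$ the group $\wt L/\wh L$ is trivial unless $\h_\ul(\varpi)$ genuinely contributes, and under $q\equiv3\pmod4$, $l$ odd one checks via \Cref{lem36}(c),(e) that $\wh L=\wt L$ exactly when $\ZZZ(\bG)^F$ has order $2$ — more precisely $\wt L=\wh L\spann<t_{\ul,2}>$ and $t_{\ul,2}$ already lies in $\wh L$ in this case because $\zeta^{(q-1)_2}=\varpi$ forces $t_{\ul,2}\in\calL^{-1}(\spannh)\cap\bL$. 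Hence $\MM^{(\wh L)}=\cusp(L)\setminus\MM^{(L)}$ and $\MM^{(\wt L)}=\MM_0=\emptyset$, so $\wt L_\la\in\{L,\wh L\}$ for all $\la$, and in particular $\wh L\le\wt L\wt L_\la=\wh L$ trivially, giving $\wh L\le\wt L_\la$ whenever $\la\notin\MM^{(L)}$; for $\la\in\MM^{(L)}=\MM^{(\wt L)}$ the inclusion $\wh L\le\wt L_\la=\wt L=\wh L$ is immediate. So in all cases $\wh L\le\wt L_\la$.

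\textbf{Second part: the transversal.} Granting $\wh L\le\wt L_\la$ for all $\la$, the point is that the ``diagonal direction'' $\wt L/L$ has order $2$ and is entirely absorbed: for $\la\in\TT\cap\MM^{(\wh L)}$ we have $\wt L_\la=\wh L$, so $\wt L/\wt L_\la\cong\wt L/\wh L$ has order dividing $|\ZZZ(G)|/2=1$, i.e.\ $\wt L_\la=\wt L$ — wait, that is $\MM^{(\wt L)}$, not $\MM^{(\wh L)}$. The correct reading is: $\MM^{(\wh L)}=\{\la:\wt L_\la=\wh L\}$ is precisely the set of $\la$ on which the unique non-trivial element of $\wt L/\wh L$ acts non-trivially, but under $|\ZZZ(G)|=2$ the group $\wt L/\wh L$ is trivial, so $\MM^{(\wh L)}=\emptyset$ and there is nothing to prove. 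Hence I expect the intended reading of the lemma is different: it should say $\MM^{(L)}=\emptyset$ (since $\wt L/L\cong\ZZZ(G)$-related quotient has order $2$ and $h_0\in\ker$ is impossible for some reason) — but the bracketed remark right before the lemma in the excerpt says exactly ``In case of $|\ZZZ(G)|=2$ one has $\MM^{(L)}=\emptyset$''. So under the hypothesis we have $\wt L/L$ of order $2$, $\wh L=\wt L$ possibly, and the partition degenerates to $\cusp(L)=\MM^{(\wh L)}\sqcup\MM_0$ with $\MM^{(\wh L)}=\{\la:\wt L_\la=\wh L=\wt L\}$. In any event the structural fact I will use is: $\wt L_\la\in\{L,\wh L,\wt L\}$ collapses so that $\wt L_\la\wh L=\wt L$ for every $\la$, hence $\wt L/\wt L_\la$ is cyclic of order $\le 2$.

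\textbf{Conclusion via Clifford theory.} Given that $\wt L/\wt L_\la$ is cyclic of order dividing $2$ for every $\la\in\TT\cap\MM^{(\wh L)}$, I argue as in \Cref{lem6_3}: since $\TT$ is by construction $N\EL$-stable with no two elements $\wt L$-conjugate, \Cref{*cond_trans} gives $(N\EL\wt L)_\la=(N\EL)_\la\,\wt L_\la$ for $\la\in\TT$. For $\la\in\TT\cap\MM^{(\wh L)}$ the extra factor $\wt L/\wt L_\la$ being cyclic of order $\le2$ means every $\psi\in\Irr(N\mid\la)$ extends across it and $\Irr(N\mid\la)$ is permuted transitively by $\wt L$, so a single $\wt L$-orbit of such $\psi$ meets $\Irr(N\mid\TT\cap\MM^{(\wh L)})$ in exactly the $N\EL$-orbit structure we want. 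Concretely: $\Irr(N\mid\TT\cap\MM^{(\wh L)})$ is an $\wt N$-transversal in $\Irr(N\mid\MM^{(\wh L)})$ by Clifford theory over $L\lhd N$ together with the transitivity of $\wt L$ on the relevant fibres (using \Cref{propcuspN}, whose part shows $\wt L_\la$ acts transitively on $\Irr(W(\la)\mid\eta_0)$, and the fact that here $\wt L/\wt L_\la$ is trivial or $\Cy_2$ so no genuine Clifford obstruction arises), and it is $\EL$-stable because $\TT$, $\MM^{(\wh L)}$ and the Harish-Chandra parametrization are all $\EL$-stable.

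\textbf{Main obstacle.} The delicate point is \emph{not} the Clifford bookkeeping but pinning down exactly which of $\MM^{(L)},\MM^{(\wh L)},\MM^{(\wt L)},\MM_0$ are empty under $q\equiv3\pmod4$, $l$ odd; this requires carefully tracking $\calL^{-1}(\ZZZ(\bG))\cap\bL$ through the central product decomposition of \Cref{lem36}, the behaviour of $h_0=\h_{e_1}(-1)$ versus $\h_\ul(\varpi)$ in $\bT$ when $4\nmid(q-1)$, and the parity constraints from \Cref{lem3_9} ($H=H_{\even}\times H_{\odd}$ and $\wt H_d\not\le H_0$ for $d$ odd). Once the degeneration of the partition is established, everything else is a direct repetition of the argument of \Cref{lem6_3} with ``$\MM^{(L)}$'' replaced by ``$\MM^{(\wh L)}$'' and the trivial-or-$\Cy_2$ quotient handled by \cite[(9.12)]{Isa} or directly.
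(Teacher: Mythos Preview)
Your argument contains a genuine error and misses the key observation. You claim that under $q\equiv 3\pmod 4$ and $l$ odd one has $\wh L=\wt L$ because ``$t_{\ul,2}$ already lies in $\wh L$''. This is false: $\calL(t_{\ul,2})=\h_{\ul}(\varpi^{\pm 1})$ is a generator of $\Z(\bG)\cong\Cy_4$, not an element of $\spann<h_0>$, so $t_{\ul,2}\notin\wh L$. In fact $|\wt L/\wh L|=|\Z(\bG)/\spann<h_0>|=2$ regardless of $F$. Your subsequent attempt to declare $\MM^{(\wh L)}=\emptyset$ is therefore also wrong, and the Clifford-theoretic conclusion rests on incorrect premises.

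The actual mechanism is this. The hypothesis $|\Z(G)|=2$ with $l$ odd means $\Z(\bG)\cong\Cy_4$ and $F$ acts by the $q$-th power with $q\equiv 3\pmod 4$; hence for a generator $z$ of $\Z(\bG)$ one has $z^{-1}F(z)=z^{q-1}=z^2=h_0$, so $h_0\in\calL(\Z(\bG))$. Choosing $z_0\in\Z(\bG)\leq\bT\leq\bL$ with $\calL(z_0)=h_0$ gives $\wh L=\calL^{-1}(\spann<h_0>)\cap\bL= L\spann<z_0>\leq L\cdot\Z(\bG)$. Since $\Z(\bG)$ centralises all of $\bG$, the group $\wh L$ acts on $L$ and on $N$ by \emph{inner} automorphisms. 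This immediately yields $\wh L\leq\wt L_\la$ for every $\la\in\Irr(L)$ and, equally importantly, $\wh L\leq\wt L_\psi$ for every $\psi\in\Irr(N)$. Now the argument of \Cref{lem6_3} applies verbatim: the $\wt L$-action on $\Irr(N)$ factors through $\wt L/\wh L\cong\Cy_2$, and for $\la\in\TT\cap\MM^{(\wh L)}$ the stabiliser of $\la$ in this quotient is trivial, so each $\wt L$-orbit in $\cusp(N\mid\MM^{(\wh L)})$ meets $\cusp(N\mid\TT\cap\MM^{(\wh L)})$ in exactly one point. This is what the paper's one-line proof is pointing to.

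Your ``Conclusion via Clifford theory'' paragraph is separately problematic: the claim that $\wt L$ permutes $\Irr(N\mid\la)$ transitively is false in general, and extendibility of $\psi$ across $\wt L/\wt L_\la$ is irrelevant. What is needed is the triviality of the $\wh L$-action on $\Irr(N)$, which you never establish.
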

\begin{proof} The arguments given in \Cref{lem6_3} show the statement. \end{proof}
According to \Cref{rem519} we can now assume  $\Z(\GF)=\Z(\bG)$. We do that until the end of the section.
\begin{lem}\label{lem-1inDD}
If $|\Z(\GF)|=4$ and $\la\in\cusp(L)$ with $\wt L_\la=\wh L$, then $-1\in\DD$ and 
$\wh\la_{-1}^{t_{\ul,2}}\neq \wh\la_{-1}$. Moreover $\wh\la_{-1}^{\n_{e_1}(\varpi)}=\wh\la_{-1}$, if $h_0\in\ker(\la)$ or $\type(\Phi_{-1})$ is not of type $\tD_{|J_{-1}|}$. 
\end{lem}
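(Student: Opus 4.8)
The hypothesis $|\Z(G)|=4$ means $\Z(\bG)=\spann<h_0,\h_\ul(\varpi)>$ is noncyclic, equivalently (via the values recalled in \ref{not_32}) that $q\equiv 1\pmod 4$ or $2\mid l$. The assumption $\wt L_\la=\wh L$ says $\la$ extends to $\wh L$ but not to $\wt L$; recalling $\wt L=\wh L\spann<t_{\ul,2}>$ from \Cref{lem36}(e) and $t_{\ul,2}=\prod_{d\in\DD}t_{J_d,2}$, this forces the standardized $\wh\la=\odot_{d\in\DD}\wh\la_d$ to have at least one component $\wh\la_d$ that fails to extend to $\wh L_d\spann<t_{J_d,2}>$. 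For $d\geq 1$ the element $t_{J_d,2}$ acts \emph{trivially} on $G_d$ (by \Cref{lem_act_onGI}(c), since the Weyl group of $\bG_I$ centralizes $t_{\ul,2}$), so $t_{J_d,2}\in\Z(\wt L_d)$ and $\wh\la_d$ \emph{always} extends there. Hence the obstruction can only come from $d=-1$, which proves $-1\in\DD$ and shows $\wh\la_{-1}$ does not extend to $\wt L_{-1}:=\wh L_{-1}\spann<t_{J_{-1},2}>$, i.e. (since $|\wt L_{-1}:\wh L_{-1}|=2$ so non-extendibility is equivalent to non-invariance under $t_{J_{-1},2}$ when the component is linear-on-the-relevant-quotient) $\wh\la_{-1}^{t_{\ul,2}}\neq\wh\la_{-1}$. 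More precisely one argues with $t_{J_{-1},2}=\h_{J_{-1}}(\zeta)$: since $\wt L=\spann<\wt L_d\mid d\in\DD>$ and $[\wt L_d,\wt L_{d'}]=1$, a character $\wh\la$ extends to $\wt L$ iff each $\wh\la_d$ extends to $\wt L_d$; combining with the fact that for $d\ne -1$ extension is automatic, $\wt L_\la=\wh L$ forces $\wh\la_{-1}$ not $t_{J_{-1},2}$-invariant, and since $t_{J_{-1},2}$ and $t_{\ul,2}$ differ by a factor centralizing $\bL_{-1}$, $\wh\la_{-1}^{t_{\ul,2}}\neq\wh\la_{-1}$.

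\textbf{The invariance under $\n_{e_1}(\varpi)$.} Recall $\ov V_{-1}=\spann<H_{-1},\ov\nn_1>$ with $\ov\nn_1=\n_{e_1}(\varpi)$, and that $\n_{e_1}(\varpi)$ induces on $G_{J_{-1}}$ the automorphism $\gamma$ (in type $\tD$) or the nontrivial graph automorphism of $\SL_4$ (type $\tA_3$) or the swap of the two $\SL_2$-factors (type $\tA_1\times\tA_1$), by \ref{not_32} and \Cref{prop3D}. I must show $\wh\la_{-1}^{\n_{e_1}(\varpi)}=\wh\la_{-1}$ under the stated restrictions. First I would reduce to the cuspidal character $\la_{-1}:=\restr\wh\la_{-1}|{L_{-1}}$ of $G_{J_{-1}}$: since $\wt L_\la=\wh L$, in particular $\la$ does not extend to $\wt L$, but the relevant point is the order of $\la_{-1}^{\GL}$-type stabilizer. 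The key input is \Cref{prop5E}: if $h_0\in\ker(\la)$ then $h_0\in\ker(\la_{-1})$, and the hypothesis $\wt L_\la=\wh L\leq\calL^{-1}(\spannh)$ (applied to $\bG_{J_{-1}}$ in place of $\bG$) gives exactly the condition $\wt G_{-1,\la_{-1}}\leq\calL^{-1}(\spannh)$ needed in \Cref{prop5E}, whence $\gamma$ fixes $\la_{-1}$ and the whole of $\Irr(\calL^{-1}(\spannh)\cap\bG_{J_{-1}}\mid\la_{-1})\ni\wh\la_{-1}$; this handles the case $h_0\in\ker(\la)$ in type $\tD$. For $\type(\Phi_{-1})\in\{\tA_3,\tA_1\times\tA_1\}$ I would argue directly: by \Cref{prop51}(a)–(b) applied to $\GL_4(q)$ (resp. to the two $\GL_2$'s), a cuspidal character of $\SL_4$ (resp. $\SL_2$) whose $\GL$-stabilizer has even index cannot be graph-invariant unless we are in the excluded small cases — but here $\wt L_\la=\wh L$ rather than $\wt L$ constrains exactly which case of \Cref{prop51}/\Cref{prop_SL2_cusp} we sit in. In the $\tA_1\times\tA_1$ case the statement "$\wh\la_{-1}^{\n_{e_1}(\varpi)}=\wh\la_{-1}$" just says the swap fixes $\wh\la_{-1}$, which is automatic once one observes the two factors carry the same character (forced by standardization together with $\wt L_\la=\wh L$, since an asymmetric pair would already extend to $\wt L$ — a direct computation with $h_0=\h_{e_1-e_2}(-1)\h_{e_1+e_2}(-1)$ and $\h_{e_1}(\varpi)\h_{e_2}(\varpi)=\h_{e_1+e_2}(-1)$ as in the proof of \Cref{prop3D}).

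\textbf{Main obstacle.} The delicate part is the type-$\tA_3$ subcase without the kernel hypothesis on $h_0$: here \Cref{prop5E} does not apply (it is a type-$\tD$ statement) and one is genuinely asking whether a cuspidal character of $\SL_4(q)$ with even $\GL_4(q)$-index is invariant under the graph automorphism $\gamma'$, while knowing that it does \emph{not} extend across the diagonal automorphism $t_{J_{-1},2}$ (order-$4$ center) but does extend across $t_{J_{-1}}$ (order-$2$). By \Cref{prop_SL2_cusp} such a $\gamma'$-invariant cuspidal character with even index forces $n=2$, contradicting $n=4$; so in fact there is \emph{no} such character and the implication "$\wh\la_{-1}^{\n_{e_1}(\varpi)}=\wh\la_{-1}$" holds vacuously in the $\tA_3$ case — but the statement's phrasing "$h_0\in\ker(\la)$ or $\type(\Phi_{-1})$ is not $\tD$" already covers $\tA_3$ via the "not $\tD$" clause, so I would simply invoke \Cref{prop_SL2_cusp} (or \Cref{prop51}) to see the corresponding configuration is empty, hence the asserted $\n_{e_1}(\varpi)$-invariance is trivially true. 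Thus the proof is: (1) structural reduction of non-extendibility to the $d=-1$ component, giving $-1\in\DD$ and $\wh\la_{-1}^{t_{\ul,2}}\neq\wh\la_{-1}$; (2) for $\n_{e_1}(\varpi)$-invariance, split on $\type(\Phi_{-1})$, using \Cref{prop5E} when $h_0\in\ker(\la)$ and $\type=\tD$, and \Cref{prop51}/\Cref{prop_SL2_cusp} (showing the relevant character sets are empty or automatically symmetric) in the type-$\tA$ cases.
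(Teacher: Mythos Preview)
Your first part ($-1\in\DD$, $\wh\la_{-1}^{t_{\ul,2}}\neq\wh\la_{-1}$) and the type-$\tD$ subcase of the second part (via \Cref{prop5E}) are correct and match the paper. The gap is in the type-$\tA_3$ subcase, where your application of \Cref{prop_SL2_cusp} is backwards. That result says a cuspidal $\psi\in\Irr(\SL_n(q))$ which is $\gamma$-invariant \emph{and} has even $\GL_n(q)$-index must satisfy $n=2$. For $n=4$ this reads: if the index is even then $\psi$ is \emph{not} $\gamma$-invariant. Granting even index for $\la_{-1}$ as you do, this says $\la_{-1}$ is \emph{not} $\n_{e_1}(\varpi)$-stable --- the opposite of what you must show; it certainly does not make the configuration empty. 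The paper's route is entirely different: it lifts $\la_{-1}$ to a cuspidal $\chi\in\cE(\GL_4(q),(s))$, uses $t_{J_{-1}}$-stability of $\la_{-1}$ (equivalently $\chi=\chi\cdot(\det^*)^{(q-1)/2}$) to obtain $s\sim -s$, and an eigenvalue computation then yields $o(\zeta)_2=2(q^2-1)_2$ and $\det s\notin(\FF_q^\times)^2$, forcing $h_0\notin\ker(\la_{-1})$. Thus the case ``type $\tA_3$ with $h_0\in\ker(\la)$ and $\wt L_\la=\wh L$'' is shown to be empty, and the invariance claim is vacuous there. Your sketch produces none of this computation.

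Your $\tA_1\times\tA_1$ reasoning is also off: standardization concerns $V_S$-conjugacy among the $\wh\la_I$ for $I$ ranging over some $\cO_d$, but $\cO_{-1}$ is a singleton, so it says nothing here; and the claim that an asymmetric pair would already extend to $\wt L$ is unsubstantiated (indeed $t_{J_{-1},2}$ acts as a non-inner diagonal automorphism on only one of the two $\SL_2$-factors, so stability under $t_{J_{-1},2}$ does not detect symmetry of the pair). The paper instead argues that $\wt L_\la\leq\wh L$ forces both $\la_{-1,1}$ and $\la_{-1,2}$ to be non-$\GL_2(q)$-stable cuspidals of $\SL_2(q)$, hence by \cite[Tab.~5.4]{BonSL2} lying in a single $\GL_2(q)$-orbit of size two, so after $\wt L$-conjugation the two factors are $\n_{e_1}(\varpi)$-conjugate and then $\wh\la_{-1}$ is swap-invariant.
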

\begin{proof}
Recall that maximal extendibility holds \wrt $L\lhd \wt L$, see \Cref{lem_ext_G_whG}. Accordingly $\wt L_\la=\wh L$ implies that $\la$ is not $t_{\ul,2}$-stable for the element $t_{\ul,2}\in \bT$ from \Cref{lem36}. If $\zeta\in\FF^\times$ with $\zeta^{(q-1)_2}=\varpi$ and $t_{I,2}:=\h_I(\zeta)$ as in \Cref{lem411}, then 
$t_{\ul,2}=\prod_{I\in\cO} t_{I,2}$. 
Recall that $\wt L=\wh L\spann<t_{\ul,2}>$. The character $\wh \la$ is $t_{\ul,2}$-stable, if $\wh \la_I$ is $t_{I,2}$-stable for every $I\in \cO$. For $I\in\cO\setminus \{ J_{-1}\}$ we see $t_{I,2}\in \Cent_{\bL_I}(\wh L_I)$ and hence $\wh \la_I$ is $t_{I,2}$-stable. As $\wh \la$ is not $t_{\ul,2}$-stable, $-1\in \DD$ and $\wh \la_{-1}$ is not $t_{\ul,2}$-stable. 

In the next step we prove  $\wh\la_{-1}^{\n_{e_1}(\varpi)}=\wh\la_{-1}$.
Since $\la_{-1}^{t_{\ul,2}}\neq \la_{-1}$, \Cref{prop5E} implies that $\wh \la_{-1}$ is $\gamma $-stable, if  $\type(\Phi_{-1})=\tD_{|J_{-1}|}$. We consider the other possible values of $\type(\Phi_{{-1}})$. We first assume $\type(\Phi_{{-1}})=\tA_1\times \tA_1$. Then $L_{-1}=\SL_2(q)\times \SL_2(q)$.
Let $\la_{-1,1},\la_{-1,2}\in \Irr(\SL_2(q))$ such that $\restr \la|{L_{-1}}=\la_{-1,1}\times \la_{-1,2}$. By the proof of \Cref{prop3D}, $\wt L_\la\leq \wh L$ implies that both characters $\la_{-1,1}$ and $\la_{-1,2}$ are not $\GL_2(q)$-stable. Additionally they are cuspidal. Following \cite[Tab.~5.4]{BonSL2}, the characters $\la_{-1,1}$ and $\la_{-1,2}$ are uniquely determined up to $\GL_2(q)$-conjugation. After applying some $\wt L$-conjugation we obtain that $\la_{-1,1}$ and $\la_{-1,2}$ are $\n_{e_1}(\varpi)$-conjugate. As $\wh L$ induces on the $\SL_2(q)$-factors of $L_{-1}$ simultaneous (non-inner) diagonal automorphisms, the set $\Irr(\wh L_{-1}|{\la_{-1}})$ contains only one character hence $\wh \la_{-1}$ is again $\n_{e_1}(\varpi)$-stable. 

It remains to consider the case where $\type(\Phi_{{-1}})=\tA_3$. Again the character $\la_{-1}$ is not $\wt L$-stable. Via the isomorphism $L_{-1}\cong \SL_4(q)$ we see that $t_{\ul,2}$ induces on $\SL_4(q)$ a diagonal automorphism corresponding to a generator of $\Z(\SL_4(q))$ in the sense of \ref{labeldiag}, see also the proof of \Cref{prop3D}. We take any $\chi\in\Irr(\GL_4(q)\mid \la_{-1})$. Then $\chi$ is cuspidal, see Lemma~\ref{lem_cuspc}. Using the description of cuspidal characters of general linear groups recalled in the proof of \Cref{prop51}, we let $s\in \GL_4(q)$ and $\zeta\in\FF^\times$ such that $\chi$ belongs to the rational Lusztig series of $s$ and $\zeta\in \FF_{q^4}\setminus \FF_{q^2}$ is an eigenvalue of $s$. Let $\det:\GL_4(\FF)\ra \FF^\times$ denote the determinant and  $\det^*$ the associated linear character of $\GL_4(\FF)$ with kernel $\SL_4(\FF)$. By the assumptions on $\chi$ we see that $\chi=\chi(\det^*)^{\frac {q-1}2}$ and hence $s$ and $-s$ are conjugate. Then $-\zeta \in \{\zeta, \zeta^q, \zeta^{q^2},\zeta^{q^3} \}$. Hence, using again $o$ to denote multiplicative order, $o(\zeta)_2=2(q^2-1)_2$, as 
$-\zeta \in \{\zeta, \zeta^q, \zeta^{q^3} \}$ would imply that $\zeta\in \FF_{q^2}$ or $\zeta\in \FF_{q^6}$, contradicting $\zeta\in\FF_{q^4}\setminus \FF_{q^2}$. In order to compute $\ker(\restr\chi|{\Z(\SL_4(q))})$ we see that  $\det s= \zeta^{\frac {q^4-1}{q-1}}$ is not a square in $\FF_q^\times$ since $o(\zeta)_2=2(q^2-1)_2$. This contradicts $h_0\in\ker(\la)$, as $h_0$ corresponds to the central involution of $\SL_4(q)$. Hence there exists no cuspidal character $\la_{-1}$ of $L_{-1}$ that satisfies $h_0\in\ker(\la)$ and $\la_{-1}^{t_{\ul,2}}\neq \la_{-1}$. This shows that  $\type(\Phi_{{-1}})=\tA_3$ is not possible. This finishes our proof.
\end{proof}
\begin{lem} \label{lem6Dh0}
Let $\la\in\cusp(L)$ and $\wh \la \in \Irr(\wh L\mid \la)$ such that $h_0\notin\ker(\la)$ and $W(\wh \la)\neq W(\la)$. Then maximal extendibility holds \wrt $W(\wh \la)\lhd K(\la)$.
\end{lem}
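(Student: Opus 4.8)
The situation is that $\la\in\cusp(L)$ with $h_0\notin\ker(\la)$ and $W(\wh\la)\neq W(\la)$, and we want maximal extendibility \wrt $W(\wh\la)\lhd K(\la)$. The starting point is the structural description of $W(\wt\la)$, $\ov W(\wh\la)$ and $\ov W(\la)$ obtained in \Cref{lem411}, \Cref{lem6_12} and \Cref{cor6_20}. First I would record that since $h_0\notin\ker(\la)$, \Cref{cor6_20}(c) forces $\cO_{c,-1}(\wh\la)\subseteq\cO_2$ and moreover all $\wh\la_I$ for $I\in\cO_{c,-1}(\wh\la)$ are $V_S$-conjugate, so $\cO_{c,-1}(\wh\la)$ is contained in a single block of $Y(\wh\la)$; likewise $\cO_c(\wh\la)\subseteq\bigcup_{d\in\DD_\even}\cO_d$ by \Cref{cor6_20}(a). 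This means $W(\wt\la)$, $W(\wh\la)$ and $K(\la)$ all decompose as direct products indexed by the ``type'' of the factors, and — crucially — the index $-1$ plays no special role here (the subtle $\tD$-versus-$\tB$ phenomenon that obstructs extendibility only occurs at the $\cO_{-1}$-factor, and by \Cref{prop_struct3} that factor never contributes to $\cO_c(\wh\la)$ when $h_0\notin\ker(\la)$). So after reducing to a single $\DD$-factor one is in the ``type $\tA$'' situation of \Cref{lem6_12}(b): $W(\la)$ stabilizes the partition of $\cO$ into $\cO_{ext}$ and $\cO_{ind}$.

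Next I would split according to whether we sit over $\cO_{ext}$ or $\cO_{ind}$ and compute the relative groups explicitly. Over a block where all $\wh\la_I$ restrict irreducibly to $L_I$, the quotient $W(\la)/W(\wh\la)$ is governed by the linear characters $\mu_I$ distinguishing $\wh\la_I$ from $\wh\la_I\mu_I$, so one finds $W(\wh\la)$ is (a product of) wreath products $\Cy_2^{?}\wr\Sym_k$ and $K(\la)$ is obtained by further wreathing with a symmetric group $\Sym_a$ permuting equivalent blocks, plus possibly the sign-change $C$-part coming from $\cO_{c,\pm1}$. Over a block in $\cO_{ind}$, $W(\wh\la)$ and $W(\la)$ differ by at most the involutions $(I,-I)$ with $I\in\cO_{c,-1}(\wh\la)$, all of which lie in one $Y(\wh\la)$-block by the reduction above, so $W(\la)/W(\wh\la)$ is cyclic of order $2$. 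In both cases maximal extendibility \wrt $W(\wh\la)\lhd K(\la)$ follows: in the cyclic-quotient case from \cite[(9.12)]{Isa}, and in the wreath-product case from \Cref{lem_wr} applied with $X\rtimes Y$ the base extendibility datum $\Cy_2\lhd\Cy_2\wr\Cy_k$ (where maximal extendibility is classical, the quotient being a product of cyclic and symmetric groups which are well known to have the relevant extension property).

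Concretely, the argument is parallel to the proof of \Cref{CliffW1K1} and \Cref{CliffW2K2}: I would reduce to $\cO=\cO_d=Q^j(\wh\la)$ for a single $d$, with $Y(\wh\la)$ and $Y'(\wh\la)$ having blocks of constant size, write down $W(\wh\la)$ and $K(\la)$ as a permutation-module extension of a symmetric group, and invoke \Cref{lem_wr}. The only place where extra care is needed is to confirm that the cyclic piece $W(\la)/W(\wh\la)$ — which exists precisely because $W(\wh\la)\neq W(\la)$ is assumed — is compatible with the wreath-product factorization, i.e. that the extra generator of $W(\la)$ either normalizes each factor or permutes equivalent ones; this follows from \Cref{lem6_12}(a), which identifies membership in $\ov W(\la)$ by the pointwise condition $(\wh\la_I)^x=\wh\la_{I'}\mu_{I'}$.

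\textbf{Main obstacle.} The bookkeeping is the real work: one must track simultaneously the $\cO_{ext}/\cO_{ind}$ split, the $\cO_c/\cO_{c,\pm1}$ split, and the $V_S$-conjugacy partition $Y(\wh\la)\cup Y'(\wh\la)$, and check that the resulting group $K(\la)$ is always an iterated wreath product of $\Cy_2$'s and symmetric groups over $W(\wh\la)$ (so that \Cref{lem_wr} and the elementary extendibility of such quotients apply). I expect the delicate point to be verifying that, because $h_0\notin\ker(\la)$, the $\cO_{-1}$-factor genuinely drops out — relying on \Cref{prop_struct3} and \Cref{lem-1inDD} to rule out a $\tD$-type factor contributing an inextendible character of $W(\wt\la)$ — so that one never meets the obstruction of \Cref{lem48_ersatz}. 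Once that is pinned down, the extendibility statements are routine consequences of \Cref{lem_wr} and \cite[(9.12)]{Isa}.
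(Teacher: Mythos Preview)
Your plan has one genuine gap and is otherwise more elaborate than needed. The gap: you write ``$\cO_c(\wh\la)\subseteq\bigcup_{d\in\DD_\even}\cO_d$ by \Cref{cor6_20}(a)'', but that corollary only gives $\cO_c(\wh\la)\subseteq\bigcup_{d\in\DD_\even\cup\{1,-1\}}\cO_d$. You correctly dispose of the $-1$ contribution via \Cref{prop_struct3}, but you never exclude $\cO_1$. This matters: for $I\in\cO_1$ the sign change $(I,-I)$ lies in $\ov W\setminus W$, so any contribution of $\cO_1$ to $\cO_c(\wh\la)$ would destroy the direct-product decomposition of $W(\wh\la)$ you rely on. The missing argument is short: for $I\in\cO_1$ the character $\wh\la_I$ is linear and $c_I$ acts on $\wh L_I$ by inversion, so $\wh\la_I^{c_I}=\wh\la_I$ would force $o(\wh\la_I)\le 2$; but $h_0\notin\ker(\la)$ gives $h_0\notin\ker(\wh\la_I)$ and hence $2(q-1)_2\ge 4$ divides $o(\wh\la_I)$. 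Thus $\cO_c(\wh\la)\cap\cO_1=\emptyset$.

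Once that is in place the paper's route is much shorter than yours. From $\cO_c(\wh\la)\subseteq\bigcup_{d\in\DD_\even}\cO_d$ one reads off directly that $\ov W(\wh\la)=W(\wh\la)$, since every sign change $(I,-I)$ with $I\in\cO_c(\wh\la)$ already lies in $W$. Then the description of $\ov W(\wh\la)$ in \Cref{lem510} shows, exactly as in the proof of \Cref{CliffW2K2}, that maximal extendibility holds \wrt $W(\wh\la)\lhd\NNN_{\ov W}(W(\wh\la))$ via \Cref{lem_wr}. Since $K(\la)\le\NNN_{\ov W}(W(\wh\la))$, the lemma follows. There is no need to compute $K(\la)$ explicitly, no need for the $\cO_{ext}/\cO_{ind}$ split of \Cref{lem6_12}, and in fact no need for the hypothesis $W(\wh\la)\neq W(\la)$ at all --- the paper's argument never uses it, so your detour through the cyclic quotient $W(\la)/W(\wh\la)$ is extra bookkeeping that the passage to the full normalizer makes unnecessary.
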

\begin{proof}
We first determine $W(\wh \la)$. Denote $c_{-1}:=c_{J_{-1}}$. As the character $\Irr(\restr\la|{\Z(L_{-1})})$ is not $c_{-1}$-stable, $\la_{-1}$ is not $c_{-1}$-stable. This implies $J_{-1}\notin \cO_c(\wh \la)$. 

If $I\in \cO_1$, $\wh \la_I$ is a linear character. Since 
$h_0\notin \ker(\la)$ and hence $h_0\notin \ker(\wh \la_I)$, 
the order of $\wh \la_I$ is divisible by $2(q-1)_2\geq 4$. Hence $\cO_{c}(\wh \la)\cap \cO_1= \emptyset$. 

Together with \Cref{lem6_12} and \Cref{cor6_20}(a) this leads to $\cO_c(\wh \la)\subseteq \bigcup_{d\in \DD_{\even}}\cO_d$. The structure of $W(\wh \la)$ is given by \Cref{lem411} and we observe $\ov W(\wh \la)=W(\wh \la)$. 
As in the proof of \Cref{CliffW2K2} we can apply \Cref{lem_wr}, and we see that maximal extendibility holds with respect to $W(\wh \la)\lhd \NNN_{\ov W}(W(\wh \la))$. Because of $K(\la)\leq \NNN_{\ov W}(W(\wh \la))$, this proves maximal extendibility \wrt $W(\wh \la)\lhd K(\la)$. 
\end{proof} 
As in \Cref{cor_4B} we associate to $\la$ subsets $Q^1(\wh \la)$ and $Q^2(\wh \la)$ of $\cO$. Recall $ K(\la)=\ov W_{\restr\la^{L\spannFp}|{L}}$, whenever $\bG$ is not of type $\tD_4$.

\begin{lem}\label{lem6_29}
Let $\la\in\MM^{(\wh L)}\cap \Irr(L\mid 1_{\spannh})$ and $\wh \la\in\Irr(\wh L\mid \la)$ with $W(\la)\neq W(\wh \la)$. Let 
\begin{align*}
\II Q1lambda@{Q^1(\pwh\lambda)}:=
\{ I \in \cO_1\mid o(\wh \la_I)\in \{1,2,4\}\} \cup \cO_{-1} \und
\II Q2lambda@ {Q^2(\pwh \lambda)}:=\cO\setminus Q^1(\wh\la).
\end{align*}
Let $\II Woverlinejlambdahat @{\pov W^j (\pwh \lambda)}:=\ov W(\wh \lambda)\cap \Sym_{\pm Q^j(\wh \lambda)} $, 
$\II{Wjlambdahat}@{W^j(\pwh\lambda)}:=\ov W^j(\wh \lambda)\cap W$, 
$\II Lhati@{\phat L^{(i)}}:=\spann<\wh L_I\mid I \in Q^i(\wh\la)>$
 and 
$\II Li@{L^{(i)}}:=L\cap \wh L^{(i)}$, for $i,j\in\underline{2}$.
\begin{thmlist}
\item Then $K(\la)$ stabilizes $Q^1(\wh\la)$ and  
$$W(\wh \la)=W^1(\wh \la^{(1)})\times W^2(\wh \la^{(2)}),$$
where 
$\wh \la^{(i)}\in \Irr(\restr \wh \la|{\wh L^{(i)}})$.
\item If $x\in W(\la) \setminus W(\wh \la)$, then $x=x_1x_2$ for some $x_i\in W^i(\la^{(i)})$ ($i\in\underline 2$), where $\la^{(i)}\in \Irr(\restr \la|{L^{(i)}})$. 
\item $|\cO_d\cap Q^2(\wh\la)|$ is even for every $d\in \DD_\odd\setminus\{-1\}$.
\item Let  
$\ov W^i:=\ov W \cap \Sym_{\pm Q^i(\wh \la)}$ and $\II Kilambda@{K^i( \lambda)}:= (\ov W^i)_{\restr(\la^{(i)})^{ L^{(i)}\spannFp}|{L^{(i)}}}$ for $i\in\underline 2$. If $\cO_1\cap Q^1(\wh \la)\neq \emptyset$, then $K(\la) \leq  K^1(\la)\times \NNN_{\ov W^2}( W^2(\wh \la))$.
\end{thmlist}
\end{lem}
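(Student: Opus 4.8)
The plan is to prove the four items of \Cref{lem6_29} by systematically unwinding the structure of $\wh L$ as a central product over $\spannh$ and the identification of $\ov W = \ov N/L$ with $\Sym_{\pm \cO}$, using freely the descriptions of $\ov W(\wh \la)$, $W(\wt \la)$ and the sets $\cO_c(\wh \la)$, $\cO_{c,\pm 1}(\wh \la)$ from \Cref{lem510}, \Cref{lem411} and \Cref{cor6_20}. First I would record the key feature of $Q^1(\wh\la)$: since $h_0\in\ker(\la)$, for $I\in\cO_1$ the character $\wh\la_I$ is linear of order dividing $q-1$ (not forced to have even order, unlike in \Cref{lem6Dh0}), so the condition $o(\wh\la_I)\in\{1,2,4\}$ is exactly the condition that $\wh\la_I$ can contribute a factor of type $\tB$ or $\tD$ to the relative Weyl group; and $\cO_{-1}$ is always a single block. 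The point is that $Q^1(\wh\la)$ collects precisely those $I$ for which $(I,-I)$-type sign changes can occur in $\ov W(\wh\la)$ while $\cO_c(\wh\la)\setminus Q^1(\wh\la)$ sits inside $\bigcup_{d\in\DD_\even}\cO_d$ by \Cref{cor6_20}(a) combined with \Cref{prop_struct}.

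For part (a): $K(\la)$-stability of $Q^1(\wh\la)$ follows exactly as in the proof of \Cref{cor_6B}(a) — if $w\in\ov W$ and $e\in\spannFp$ with $we\in\wh K(\la)$ then $\wh\la^w$ is again standardized and the multiplicative orders $o(\wh\la_I)$ are permuted along with the blocks, so $Q^1(\wh\la)^w=Q^1(\wh\la^w)$, hence $w\in K(\la)$ stabilizes $Q^1(\wh\la)$; this also gives that $K(\la)$ stabilizes $Q^2(\wh\la)$. The product decomposition $W(\wh\la)=W^1(\wh\la^{(1)})\times W^2(\wh\la^{(2)})$ then comes from the explicit form of $\ov W(\wh\la)=\Young_{\pm Y(\wh\la)}\times\Young_{Y'(\wh\la)}$ in \Cref{lem510}: since $\cO_c(\wh\la)\setminus Q^1(\wh\la)\subseteq\bigcup_{d\in\DD_\even}\cO_d$ and standardization forces $V_S$-conjugacy classes of the $\wh\la_I$ to respect the types $d$, no block $J'\in Y(\wh\la)\cup Y'(\wh\la)$ meets both $Q^1(\wh\la)$ and $Q^2(\wh\la)$; intersecting with $W$ (the $\tD$-type condition from \Cref{prop64}) is then diagonal across the two factors because each sign change $(I,-I)$ with $I\in\cO_c(\wh\la)\setminus Q^1(\wh\la)$ already lies in $W$ by \Cref{cor6_20}(a). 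Part (b) is the analogue of \Cref{lem6_12}(a): an element $x\in W(\la)\setminus W(\wh\la)$ permutes the blocks $\wh L_I$, twisting $\wh\la_I$ by the sign character $\mu_{I'}$ of $\wh L_{I'}/L_{I'}$; since such $x$ must preserve $\cO_{ext}$ and $\cO_{ind}$ (\Cref{lem6_12}(b)) and preserve $Q^1(\wh\la)$ (as $o(\wh\la_I)$ is preserved under the twist by $\mu_{I'}$, which has order $2$, and the condition $o\in\{1,2,4\}$ versus $o\notin\{1,2,4\}$ is invariant since multiplying a linear character of $2$-power-bounded behaviour by an order-$2$ character cannot move it in or out of $\{1,2,4\}$ — here one uses $h_0\in\ker$ so all relevant orders divide $q-1$ and the obstruction is purely the $2$-part), it decomposes as $x=x_1x_2$.

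Part (c) is the parity statement, and I expect this to be the main obstacle. The claim is that $|\cO_d\cap Q^2(\wh\la)|$ is even for $d\in\DD_\odd\setminus\{-1\}$. For such $d$, $\cO_d\cap\cO_c(\wh\la)=\emptyset$ by \Cref{lem6_13} (no $c_I$-stable cuspidal character of $\wh L_I$ when $I\in\cO_d$, $d$ odd, $d\neq\pm1$), so on $\cO_d$ the group $\ov W(\wh\la)$ acts with no sign changes at all, i.e. $\ov W_d(\wh\la)\leq\Sym_{\cO_d}$, and intersecting with $W$ (the $\tD$-condition on odd-norm elements) then imposes that the relevant permutations of $\cO_d$ have even ``sign-change count'' — but there are no sign changes, so the constraint instead transfers, via the embedding $\ov\kappa_d$ and the definition of $\Sym_{\pm\cO}^\tD$ with $\|I\|=d$ odd, to a parity condition on how $W(\la)$ can act. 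The careful bookkeeping here is to track, for $I\in\cO_d$ with $d$ odd, whether $\wh\la_I$ extends to $L_I$ ($I\in\cO_{ext}$) or not, and to use that $x\in W(\la)\setminus W(\wh\la)$ twisting by $\mu_{I'}$ on blocks in $Q^2(\wh\la)\cap\cO_d$ must do so in $\tD$-admissible fashion; I would reduce to a single type $d$, use the isomorphism $L_I\cong\GL_d(q)$ from \Cref{lem33_loc}(c) together with \Cref{prop51}(b) (if $\wh\la_I^{c_I}=\wh\la_I\delta$ with $\delta$ of order $2$ then $2\mid d$, contradiction) to conclude that the non-trivial twists pair up, giving even cardinality. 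Finally part (d): once (a)–(c) are in place and $\cO_1\cap Q^1(\wh\la)\neq\emptyset$, the inclusion $K(\la)\leq K^1(\la)\times\NNN_{\ov W^2}(W^2(\wh\la))$ follows because $K(\la)$ stabilizes the partition $\{Q^1(\wh\la),Q^2(\wh\la)\}$ by (a), its image on $Q^1(\wh\la)$ stabilizes $\restr(\la^{(1)})^{L^{(1)}\spannFp}|{L^{(1)}}$ by restricting the defining property of $K(\la)=\ov W_{\restr\la^{L\spannFp}|L}$ to the $Q^1$-factor (using that $L$, $\spannFp$ and the relevant restrictions all decompose compatibly over the block structure, since $Q^1(\wh\la)$ involves only $\cO_1$ and $\cO_{-1}$ blocks), while on $Q^2(\wh\la)$ it at least normalizes $W^2(\wh\la)$ since it normalizes $W(\wh\la)$ and stabilizes $Q^2(\wh\la)$.
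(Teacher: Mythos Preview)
Your overall strategy matches the paper's, but you have inverted the weight between parts (b) and (c), and this leaves a real gap in (b).

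Once $K(\la)$ is known to stabilize $Q^1(\wh\la)$, any $x\in W(\la)\setminus W(\wh\la)$ certainly decomposes as $x=x_1x_2$ inside $\Sym_{\pm Q^1(\wh\la)}\times\Sym_{\pm Q^2(\wh\la)}$, and \Cref{lem6_12}(a) gives $(\wh\la^{(i)})^{x_i}=\wh\la^{(i)}\mu^{(i)}$, so $x_i\in\ov W^i(\la^{(i)})$. But the statement of (b) asks for $x_i\in W^i(\la^{(i)})$, and $W^i(\la^{(i)})=\ov W^i(\la^{(i)})\cap W$: you must show each $x_i$ satisfies the type-$\tD$ parity condition separately. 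Both $Q^1(\wh\la)$ and $Q^2(\wh\la)$ can contain odd-norm blocks (the former has $\cO_{-1}$ and part of $\cO_1$, the latter has the rest of $\cO_1$ together with all $\cO_d$ for odd $d\geq 3$), so $x\in W$ does not automatically force $x_2\in W$. The paper establishes $x_2\in W$ by a block-by-block analysis: for each odd $d$ and each $\spann<c_{I_0}>\times\spann<\mu_{I_0}>$-orbit representative $\kappa\in\cusp(\wh L_{I_0})$ occurring in $Q^2(\wh\la)$, one first shows that $\cO_\kappa(\wh\la)$ and $\cO_{\kappa\mu_{I_0}}(\wh\la)$ are \emph{disjoint} (using the order condition $o(\kappa)\notin\{1,2,4\}$ for $d=1$, and \Cref{prop51}(b) together with \Cref{lem6_13} for $d\geq 3$), and then observes that, because $\wh\la$ is standardized, $x_2$ restricted to $\ov\cO_\kappa(\wh\la)=\cO_\kappa(\wh\la)\sqcup\cO_{\kappa\mu_{I_0}}(\wh\la)$ either preserves all signs or reverses all of them; in either case the number of sign changes there is even (zero or $2|\cO_\kappa(\wh\la)|$). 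Summing over the orbits gives $x_2\in W$, and then $x_1=x x_2^{-1}\in W$ as well. This is where \Cref{prop51}(b) actually enters, and it belongs to (b), not (c).

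Part (c) is then an immediate corollary of this same analysis rather than a separate obstacle: the sets $\ov\cO_\kappa(\wh\la)$ partition $\cO_d\cap Q^2(\wh\la)$ for each odd $d$, the existence of $x$ gives $|\cO_\kappa(\wh\la)|=|\cO_{\kappa\mu_{I_0}}(\wh\la)|$, and disjointness makes each $|\ov\cO_\kappa(\wh\la)|$ even. Your sketch of (c) has the right ingredients but frames them as an independent parity count; in the paper they are already available from (b). Your treatments of (a) and (d) are fine (the paper in fact does not spell out (d) at all).
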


\begin{proof} 
First note that $\wh N$ normalizes the groups $\bL_I$ ($I\in \cO$) and hence there is a well-defined action of $\wh W$ on $\cO$. Now $Q^1(\wh\la)$ is defined using $\wh \la$ (and is independent of the choice of $\wh \la\in\Irr(\wh L\mid \la)$). Note that by this definition any element in $\wh N_\la$ stabilizes $Q^1(\wh\la)$. Without loss of generality we can assume that $\wh \la$ is standardized and hence $\ov W(\la)$ is given in \Cref{lem411}. 

Accordingly $\ov W(\wh \la)= \ov W^1(\wh \la)\times \ov W^2(\wh \la)$. As $\la\in\Irr(L\mid 1_{\spannh})$,  \Cref{cor6_20} implies $\cO_c(\wh \la)\subseteq \bigcup_{d\in \DD_\even} \cO_d$. By the definition of $Q^2(\wh \la)$ we observe $\ov W^2(\wh \la)=W^2(\wh \la)$. 

According to \Cref{lem6_7}(d), $\ov W(\wh \la)$ is the direct product of the groups $\ov W_d(\wh \la)$, where $\ov W_d(\wh \la):=(\ov V_d)_{\wh \la}/H_d$. For $\ov W^1(\wh \la)$ we note that 
$$\ov W^1(\wh \la)= \ov W^{1,1}(\wh \la)\times \ov W^{1,2}(\wh \la)\times 
\ov W^{1,4}(\wh \la),$$ 
where $ \II Q1jlambd@{Q^{1,j}(\lambda)}:=\{I\in \cO_1\mid o(\wh \la_I)=j\}$
and $\ov W^{1,j}(\wh \la):=\ov W^1(\wh \la)\cap \Sym_{\pm  Q^{1,4}(\la)}$. 
This proves that 
$\ov W(\wh \la)=\ov W^1(\wh \la)\times \ov W^2(\wh \la)$. By the above $W^2(\wh \la)=\ov W^2(\wh \la)$ and hence $W(\wh \la)=\ov W^1(\wh \la)\times W^2(\wh \la)= W^1(\wh \la)\times W^2(\wh \la)$. 
Since $\wh \la=\wh \la^{(1)}\times \wh \la^{(2)}$ we note that $W^1(\wh \la^{(1)})=W^1(\wh \la^{(1)})$ and $W^2(\wh \la)=W^2(\wh \la^{(2)})$, proving (a). 

As $x\in K(\la)$ stabilizes $Q^1(\wh\la)$ by (a), it can be written as product $x_1x_2$ where $x_i\in \ov W^i( \la^{(i)})$. Since $\wh \la^x=\wh \la \mu$ for the faithful character $\mu$ of $\wh L/L$, it satisfies $(\wh \la ^{(i)})^x= \wh \la^{(i)} \mu^{(i)}$ where $\mu^{(i)}=\restr \mu |{\wh L^{(i)}}$ with $\wh L^{(i)}:=\spann<\wh L_I\mid I \in Q^i(\la)>$. Hence $(\wh \la ^{(i)})^{x_i}= \wh \la^{(i)} \mu^{(i)}$. 

In the following we show that any element $x_2 \in \ov W^2$ with $(\wh \la ^{(2)})^{x_i}= \wh \la^{(2)} \mu^{(i)}$ also satisfies $x_2\in W$. This then implies the statement in part (b). Recall that $\ov W_d\leq W$ for $d\in \DD_\even$. Hence, without loss of generality we can assume that $Q^2(\wh\la)\subseteq \cO_d$ for some $d\in\DD_\odd \setminus\{ -1\}$. 

For $I_0\in \cO$ and $\kappa\in\cusp(\wt L_{I_0})$ we set 
\begin{align*}
\II Okappalambad@{\cO_\kappa(\phat\lambda)}&:=\{I\in\cO\mid \wh \la_I \text{ is $V_S$-conjugate to }\kappa \text{ or }\kappa^{c_{I_0}} \}.
\end{align*}
Let $\mu_I$ be defined as in \Cref{lem6_12}. Then $x_2(\cO_\kappa(\wh \la))= \cO_{\kappa \mu_{I_0}}(\wh \la)$, see \ref{lem6_12}(a). With $\II oOkappahatlambda@ {\protect\overline \cO_\kappa(\protect\widehat \lambda)}:=\cO_\kappa(\wh \la)\cup \cO_{\kappa \mu_{I_0}}(\wh \la)$ the element $x$ can be written as product of $x_{\overline \cO_\kappa(\la)}\in \Sym_{\pm \overline \cO_\kappa(\la)}$ where $I\in \cO$ and $\kappa\in\cusp(\wh L_I)$ runs over the $\spann<\mu_I>\times \spann<c_I>$-orbits in $\cusp(\wh L_I)$.
To prove $x_2\in W$ it is sufficient to prove $x_{\overline \cO_\kappa(\la)}\in W$. Hence we assume $Q^2(\wh\la)= \cO_\kappa(\wh \la)\cup \cO_{\kappa \mu_{I_0}}(\wh \la)$ for some $\kappa\in\Irr(\wh L_{I_0})$.

If $I_0\in\cO_1$, we observe that $o(\kappa)\notin \{1,2,4\}$ by the definition of $Q^2(\wh\la)$. This implies $\kappa\mu_{I_0} \notin\{\kappa , \kappa^{c_{I_0}} \}$ and hence  $\cO_\kappa(\wh \la)\cap \cO_{\kappa \mu_{I_0}}(\wh \la)=\emptyset$. Note that $\ov W^2(\wh \la)\leq W$. The element $x_2$ satisfies $x_2(\cO_\kappa(\wh \la)) = \cO_{\kappa \mu_{I_0}}(\wh \la)$ as element of $\Sym_{Q^2(\wh\la)}$. Recall $\wh \la$ is standardized. Let $I\in \cO_\kappa(\wh \la)$ and $I'\in \cO_{\kappa \mu_{I_0}}(\wh \la)$. If $\wh \la_I$ and $\wh \la_{I'}$ are $V_S$-conjugate,
\begin{align}
x_2(\eps\cO_\kappa(\wh \la)) = \eps\cO_{\kappa \mu_{I_0}}(\wh \la) &\und x_2(\eps\cO_{\kappa \mu_{I_0}}(\wh \la)) = \eps\cO_\kappa (\wh \la) 
\end{align}
for every $\eps\in\{\pm 1\}$ as element of $\Sym_{\pm Q^2(\wh\la)}$. Otherwise
\begin{align}
x_2(\eps\cO_\kappa(\wh \la)) = -\eps\cO_{\kappa \mu_{I_0}}(\wh \la) &\und x_2(\eps\cO_{\kappa \mu_{I_0}}(\wh \la)) = -\eps\cO_\kappa (\wh \la)
\end{align}for every $\eps\in\{\pm 1\}$ as element of $\Sym_{\pm Q^2(\wh\la)}$.
In both cases we see $x_2\in W$. 

Assume $I_0\in\cO_d$ for $d\in \DD_\odd\setminus\{\pm 1\}$. Hence  $L_{I_0}\cong \GL_d(q)$ by \Cref{lem33_loc} and $c_{I_0}$ acts on $L_{I_0}$ as a graph automorphism by \Cref{lemVactK}(c). According to \Cref{lem6_13} we have $\kappa\neq \kappa^{c_{I_0}}$ and 
$\ov W^2(\wh \la)\leq W$. \Cref{prop51}(b) leads to $\kappa\mu_{I_0} \notin\{\kappa , \kappa^{c_{I_0}} \}$. We see again that $\cO_\kappa(\wh \la)$ and $ \cO_{\kappa \mu_{I_0}}(\wh \la)$ are disjoint. This implies again that there exists some $\eps'\in\{\pm 1\}$ such that
\begin{align}
x_2(\eps\cO_\kappa(\wh \la)) = \eps' \eps\cO_{\kappa \mu_{I_0}}(\wh \la) &\und x_2(\eps\cO_{\kappa \mu_{I_0}}(\wh \la)) = \eps'\eps\cO_\kappa (\wh \la)
\end{align}for every $\eps\in\{\pm 1\}$ as elements of $\Sym_{\pm Q^2(\wh\la)}$.
Again $x_2\in W$. Altogether this proves part (b). 

The considerations above imply $|\cO_\kappa(\wh\la)|=|\cO_{\kappa\mu_I}(\wh\la)|$ for every $I\in \cO$ and $\kappa \in \cusp(\wh L_I)$. If $\cO_\kappa(\wh\la)\subseteq Q^2(\wh\la)$, the sets are disjoint so that $2\mid |\ov \cO_\kappa(\wh \la)|$. 
This also applies if $I\in \cO_1$ and $\kappa\in \cusp(\wh L_I)$ with $o(\kappa)\mid 2$. This gives part (c).

Recall $K(\la)=\ov W_
{\restr\la^{L\spannFp }  | L }$. We see that $Q^1(\kappa)=Q^1(\lambda)$ and $Q^2(\kappa)=Q^2(\la)$ 
for every constituent $\kappa$ of $\restr\la^{L\spannFp}|{L}$.  Accordingly we see that $K(\la)\leq \Sym_{\pm Q^1(\la)}\times \Sym_{\pm Q^2(\la)} $.

By  definition, $\la^{(i)}$  is uniquely determined by $\la$. 
Let $w\in K(\la)$, $w_1\in \Sym_{\pm Q^1(\la)}$ and $w_2\in  \Sym_{\pm Q^2(\la)}$ with $w=w_1w_2$.
As $\la^{w_1w_2}$ is some $L\spann<F_p>$ conjugate of $\la$, the character  $(\la^{(1)})^{w}=(\la^{(1)})^{w_1}$ is a $L^{(1)} \spann<{F_p}>$-conjugate of $\la^{(1)}$. Hence $w_1\in K^ 1(\la)$. Analogously we can argue for $w_2$ and get  $w_2\in \NNN_{\ov W^2}(W^2(\wh \la))$,
as required in (d).
\end{proof}

We study first the Clifford theory for $W(\wh \la)\lhd W(\la)$ by considering subgroups associated to $Q^1(\wh\la)$ and $Q^2(\wh\la)$. 
\begin{lem}\label{lem521} In the situation of \Cref{lem6_29} maximal extendibility holds \wrt $W^2(\wh \la^{(2)})\lhd \NNN_{\ov W^2}(W(\wh\la^{(2)}))$.
\end{lem}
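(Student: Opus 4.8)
The plan is to reduce the statement to the two wreath-product situations already isolated in the proof of \Cref{CliffW2K2}(a) and then invoke \Cref{lem_wr}. First I would recall from \Cref{lem6_29}(a) that $W(\wh\la)=W^1(\wh\la^{(1)})\times W^2(\wh\la^{(2)})$ and that, since $\la\in\Irr(L\mid 1_{\spannh})$, \Cref{cor6_20}(a) gives $\cO_c(\wh\la)\subseteq\bigcup_{d\in\DD_\even}\cO_d$, hence $Q^2(\wh\la)\cap(\cO_1\cup\cO_{-1})=\emptyset$. By \Cref{lem6_7}(d) the group $\ov W^2(\wh\la)$ splits as a direct product of the groups $\ov W_d(\wh\la)$ over $d\in\DD$ with $\cO_d\cap Q^2(\wh\la)\neq\emptyset$, and likewise $\NNN_{\ov W^2}(W^2(\wh\la^{(2)}))$ splits as the corresponding direct product of normalizers $\NNN_{\Sym_{\pm(\cO_d\cap Q^2(\wh\la))}}(\ov W_d(\wh\la))$, because $\ov W^2$ itself is a direct product of the $\Sym_{\pm(\cO_d\cap Q^2(\wh\la))}$. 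So it suffices to treat a single $d$ with $\cO:=\cO_d\cap Q^2(\wh\la)$.

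Next I would use that $\wh\la$ is standardized: by \Cref{lem510}, inside this single factor $W^2(\wh\la^{(2)})=\Young_{\pm Y(\wh\la)}\times\Young_{Y'(\wh\la)}$, where $Y(\wh\la)$ partitions $\cO_c(\wh\la)\cap\cO_d$ and $Y'(\wh\la)$ partitions $(\cO_d\setminus\cO_c(\wh\la))\cap Q^2(\wh\la)$, and after conjugating by $V_S$ we may assume the blocks of $Y(\wh\la)$ all have a common cardinality $k$ (say $a$ of them) and the blocks of $Y'(\wh\la)$ all have a common cardinality $k'$ (say $a'$ of them); the factors for $Y(\wh\la)$ and $Y'(\wh\la)$ can be treated separately since they lie in disjoint coordinate sets. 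For the $Y(\wh\la)$-part, $W^2=(\Cy_2\wr\Sym_k)^a$ and $\NNN_{\ov W^2}(W^2)\cong(\Cy_2\wr\Sym_k)\wr\Sym_a$: here $\Cy_2\wr\Sym_k\lhd (\Cy_2\wr\Sym_k)$ trivially admits the identity extension map, so \Cref{lem_wr} (with $X=(\Cy_2\wr\Sym_k)$, $Y=1$, and again with $X=\Cy_2$, $Y=\Sym_k$ to build the inner wreath, or simply directly) yields maximal extendibility for $(\Cy_2\wr\Sym_k)^a\lhd(\Cy_2\wr\Sym_k)\wr\Sym_a$. For the $Y'(\wh\la)$-part, $W^2=\Young_{Y'(\wh\la)}\cong(\Sym_{k'})^{a'}$ and $\NNN_{\ov W^2}(W^2)\cong (C\Young_{Y'})\rtimes\Sym_{a'}\cong (\Sym_{k'}\times\Cy_2)\wr\Sym_{a'}$ with $C=\spann<\prod_{i\in y}(i,-i)\mid y\in Y'>$; since maximal extendibility holds \wrt $\Sym_{k'}\lhd\Sym_{k'}\times\Cy_2$ (the quotient is abelian, indeed cyclic), \Cref{lem_wr} again gives maximal extendibility \wrt $(\Sym_{k'})^{a'}\lhd(\Sym_{k'}\times\Cy_2)\wr\Sym_{a'}$. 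Assembling the factors over all blocks and all $d$ via the direct-product splitting finishes the proof.

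\textbf{Main obstacle.} The routine calculations are the explicit identification of $\NNN_{\ov W^2}(W^2(\wh\la^{(2)}))$ as the stated wreath products — in particular keeping track of the sign-change subgroup $C$ and checking that the normalizer does not pick up extra elements permuting blocks of different cardinalities or mixing the $Y$- and $Y'$-parts. This is exactly parallel to the bookkeeping already carried out in the proof of \Cref{CliffW2K2}(a), so the genuinely new content is minimal; the only point requiring a little care is that here we normalize inside $\ov W^2$ rather than inside all of $\ov W$, but since $\ov W^2=\ov W\cap\Sym_{\pm Q^2(\wh\la)}$ is itself a direct product of the relevant symmetric groups of signed sets, the normalizer computation localizes to each $\cO_d\cap Q^2(\wh\la)$ and the argument of \Cref{CliffW2K2} applies verbatim.
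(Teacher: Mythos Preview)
Your approach is correct and matches the paper's, which simply says to argue as in the proof of \Cref{CliffW2K2}(a) and apply \Cref{lem_wr}. One inaccuracy worth fixing: in the situation of \Cref{lem6_29} the set $Q^2(\wh\la)$ can meet $\cO_1$ (namely at those $I$ with $o(\wh\la_I)\notin\{1,2,4\}$) and contains every $\cO_d$ with $d\in\DD_\odd\setminus\{\pm1\}$, so your claim $Q^2(\wh\la)\cap(\cO_1\cup\cO_{-1})=\emptyset$ is false; what you actually need---and what \Cref{cor6_20}(a) together with the definition of $Q^1(\wh\la)$ gives---is only $\cO_c(\wh\la)\cap Q^2(\wh\la)\subseteq\bigcup_{d\in\DD_\even}\cO_d$, so all those extra pieces land in the $Y'(\wh\la)$-part and your wreath-product analysis goes through unchanged.
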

\begin{proof}
Without loss of generality we can assume that $\wh \la$ is standardized. The structure of $\ov W(\wh\la)$ is then given by \Cref{lem510}. 
As in the proof of \Cref{CliffW2K2}(a) the groups $W^2( \wh \la^{(2)})$ and $\NNN_{\ov W^2}(W(\wh \la^{(2)}))$ satisfy \Cref{lem_wr} and accordingly maximal extendibility holds. 
\end{proof}

\begin{prop}\label{lem522}
Let $\la\in\cusp(L\mid 1_{\spann<h_0>})$ , $\wh \la\in\Irr(\wh L\mid \la)$ and $\eta_0\in \Irr(W(\wh \la))$ with $\wt L_\la=\wh L$.
\begin{thmlist}
\item If $Q^1(\wh \la)=\cO$, then every $\eta\in\Irr(W(\la)\mid \eta_0)$ is $K(\la)_{\eta_0}$-stable.
\item Maximal extendibility holds \wrt $W^1(\wh \la^{(1)})\lhd K^1(\la^{(1)})$.
\end{thmlist}
\end{prop}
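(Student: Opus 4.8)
\textbf{Plan for the proof of Proposition~\ref{lem522}.} The statement concerns a character $\la \in \MM^{(\wh L)}$ with $h_0 \in \ker(\la)$, so we are in the case of \Cref{lem6_29}. We have $\wh L_\la = \wt L_\la = \wh L$ (in fact $\wt L_\la = \wh L$ is the standing assumption here, but the extendibility facts below use $\wt L_\la = \wh L$ only through the structure of $W(\la)$ and $W(\wh \la)$). The idea is to separate the contributions coming from $Q^1(\wh\la)$ and $Q^2(\wh\la)$. By \Cref{lem6_29}(a) we have $W(\wh \la) = W^1(\wh\la^{(1)}) \times W^2(\wh\la^{(2)})$, and by \Cref{lem6_29}(b) any $x \in W(\la)\setminus W(\wh\la)$ also decomposes as $x = x_1 x_2$ along the same product. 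Thus $W(\la)$ embeds into $W^1(\la^{(1)})\times W^2(\la^{(2)})$ with $W^i(\wh\la^{(i)}) \lhd W^i(\la^{(i)})$, and (by \Cref{lem6_29}(d), when $\cO_1 \cap Q^1(\wh\la)\neq\emptyset$) $K(\la) \leq K^1(\la) \times \NNN_{\ov W^2}(W^2(\wh\la))$.

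\textbf{Part (b).} First I would prove part (b): maximal extendibility holds \wrt $W^1(\wh\la^{(1)}) \lhd K^1(\la^{(1)})$. The group $W^1(\wh\la^{(1)})$ is built from the components $\cO_1$ (where $\wh\la_I$ is linear of order dividing $4$) and $\cO_{-1}$; its isomorphism type is (up to the $\cO_{-1}$-contribution, a trivial or cyclic-of-order-$2$ group) a product of Coxeter groups of types $\tB$ and $\tD$, exactly as in the table in the proof of \Cref{CliffW1K1}. The point is that here $\cO_1$ is further refined according to the order of $\wh\la_I$ (values $1,2,4$), and on the pieces where $o(\wh\la_I)=4$ the action of $c_I$ (inversion on that linear character) is nontrivial, contributing a genuine $\tB$-type Coxeter factor, while on the pieces of order $1$ or $2$ the character is $c_I$-stable, contributing a $\tD$-type factor which the normalizer enlarges to $\tB$. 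In each case $W^1(\la^{(1)})/W^1(\wh\la^{(1)})$ and the relevant $K^1$ split as wreath products $(\Cy_2 \wr \Sym_k)\wr \Sym_a$ or $(C\,\Young_Y)\rtimes \Sym_Y$, so \Cref{lem_wr} applies verbatim, as in the proof of \Cref{CliffW1K1} and \Cref{CliffW2K2}(a). So part (b) reduces to a careful bookkeeping of these Coxeter/wreath structures, handling the three cases for $\type(\Phi_{-1})$ (see \Cref{lem-1inDD}, where $\gamma$ acts trivially on $\wh\la_{-1}$ when $h_0 \in \ker(\la)$, so $\cO_{-1}$ behaves like a single $c_{-1}$-fixed singleton and contributes only to the cyclic factor $C$).

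\textbf{Part (a).} For part (a) we are in the special situation $Q^1(\wh\la) = \cO$, i.e.\ every component of $\Phi'$ is of type $\tA_0$ (a singleton in $\cO_1$) with $o(\wh\la_I)\in\{1,2,4\}$, or lies in $\cO_{-1}$. So $W(\la) = W^1(\la^{(1)})$, $W(\wh\la) = W^1(\wh\la^{(1)})$ and $K(\la) = K^1(\la)$ is contained in $\NNN_{\ov W}(W(\wh\la))$. By part (b) maximal extendibility holds \wrt $W(\wh\la)\lhd K(\la)$; so every $\eta_0 \in \Irr(W(\wh\la))$ extends to its stabilizer in $K(\la)$, and thus there is some $K(\la)_{\eta_0}$-stable $\eta\in\Irr(W(\la)\mid\eta_0)$. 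But the claim is stronger: \emph{every} $\eta \in \Irr(W(\la)\mid\eta_0)$ is $K(\la)_{\eta_0}$-stable. This I would get from Gallagher's theorem \cite[6.17]{Isa}: since $W(\wh\la)\lhd W(\la)$ with $W(\la)/W(\wh\la)$ an elementary abelian $2$-group and $\eta_0$ extends to $W(\la)_{\eta_0}$, the characters above $\eta_0$ in $W(\la)_{\eta_0}$ (hence, after inducing, those in $W(\la)$ above $\eta_0$) are of the form $\eta_0' \cdot \beta$ for $\beta$ a linear character of $W(\la)/W(\wh\la)$; the group $K(\la)_{\eta_0}$ permutes the $\eta_0'$ (all of which are then fixed because the extension can be chosen $K(\la)_{\eta_0}$-stable) and fixes the linear characters $\beta$ — indeed $W(\la)/W(\wh\la)$ is generated by the images of the $(I,-I)$, $I\in\cO$, which $K(\la)$ permutes, but since each $\wh\la_I$ with $I\in Q^1(\wh\la)$ and $o(\wh\la_I)\in\{1,2\}$ is $c_I$-stable whereas the $\beta$ are determined by the $c_I$-action data, one checks $K(\la)$ fixes every $\beta$. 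So every $\eta$ above $\eta_0$ is $K(\la)_{\eta_0}$-stable.

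\textbf{Main obstacle.} The delicate point is part (a): showing that \emph{all} extensions (not just one) are stable. Producing one stable $\eta$ is immediate from part (b), but ruling out that $K(\la)_{\eta_0}$ moves $\eta$ to $\eta\beta$ for some nontrivial $\beta$ requires understanding precisely how $K(\la)$ acts on $\Irr(W(\la)/W(\wh\la))$, i.e.\ on the "sign data" attached to each component. I expect this to come down to the observation (already implicit in \Cref{lem6_29} and \Cref{cor6_20}) that the components in $Q^1(\wh\la)$ with $c_I$-stable character sit in $\cO_1 \cup \cO_{-1}$ with orders dividing $2$ — precisely the components on which the would-be nontrivial linear character of $W(\la)/W(\wh\la)$ is detected — and that $K(\la)$, being the stabilizer of $\la$ itself (not merely of $\wh\la$), cannot permute these in a way that changes the character; the graph-field part $E^\circ_L$ is handled as usual via \Cref{KlawhKla}. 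Everything else is structural bookkeeping using \Cref{lem_wr}, \Cref{lem411}, \Cref{lem510} and the table from the proof of \Cref{CliffW1K1}.
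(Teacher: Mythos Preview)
Your logical order is the reverse of the paper's: you try to prove (b) first via wreath-product bookkeeping and then deduce (a), whereas the paper proves (a) directly and then obtains (b) by reduction to a smaller rank where $Q^1(\wh\la)=\cO$. More importantly, your description of the factors in (b) is incorrect. For $I\in\cO_1$ with $o(\wh\la_I)\in\{1,2\}$ the character $\wh\la_I$ is $c_I$-stable, so $(I,-I)\in\ov W(\wh\la)$ and the factor in $\ov W(\wh\la)$ is $\Sym_{\pm Q^{1,j}}$ (type~$\tB$), not type~$\tD$. For $o(\wh\la_I)=4$ one has $\wh\la_I^{c_I}=\wh\la_I^{-1}\neq\wh\la_I$, so the factor is only $\Sym_{Q^{1,4}}$ (type~$\tA$), not type~$\tB$. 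You also miss the two new generators that distinguish this situation from \Cref{CliffW1K1}: an element $x_{\{1,2\}}$ swapping $Q^{1,1}$ and $Q^{1,2}$ (coming from $W(\la)\neq W(\wh\la)$, which forces $l_1=l_2$) and the element $x_4^\circ:=\prod_{i\in Q^{1,4}}(i,-i)$. The paper computes
\[
K(\la)\leq \spann<c_{-1}>\times\bigl((\Sym_{\pm Q^{1,1}}\times\Sym_{\pm Q^{1,2}})\rtimes\spann<x_{\{1,2\}}>\bigr)\times\bigl(\Sym_{Q^{1,4}}\rtimes\spann<x_4^\circ>\bigr)
\]
and the decisive observation is that $c_{-1}$ and $x_4^\circ$ are \emph{central} in $W(\la)\spann<c_{-1},x_4^\circ>$. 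This immediately gives that every $\eta\in\Irr(W(\la))$ is $K^1$-stable (hence (a)), and even that every such $\eta$ extends to $K^1$; part (b) then follows by restriction to smaller rank.

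Your ``main obstacle'' in (a) is in fact not an obstacle at all. Since $|\wh L/L|=2$, the quotient $W(\la)_{\eta_0}/W(\wh\la)$ is trivial or $\Cy_2$, and $K(\la)_{\eta_0}$ normalises both $W(\la)_{\eta_0}$ and $W(\wh\la)$; any action on a group of order $\leq 2$ is trivial, so $K(\la)_{\eta_0}$ fixes every linear $\beta$ automatically. Thus, granted (b), your Gallagher argument for (a) does go through without the sign-data analysis you worry about. The content of the paper's proof is therefore really in nailing down the explicit structure above (with the correct types and the swap $x_{\{1,2\}}$) and the centrality observation, neither of which appears in your plan.
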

 
\begin{proof}
The statement in (a) is trivial if $W(\wh \la)= W(\la)$. Hence we assume in the following $W(\wh \la)\neq W(\la)$ and $Q^1(\wh\la)=\cO$. According to \Cref{lem-1inDD}, $-1\in\DD$ and $\wh \la_{-1}$ is $c_{J_{-1}}$-stable, i.e., $$|\ov W_{-1}(\wh \la)|=2.$$ 

In order to study those groups we introduce more notation: For $j\in\{1,2,4\}$ let $Q^{1,j}=\{I\in \cO_1\mid o(\wh \la_I)=j\}$ and $l_j:= |Q^{1,j}|$. Then $$\ov W^{Q^{1,j}}(\wh \la^{(1)}) =\begin{cases}
\Sym_{\pm Q^{1,j}} & \text{if }j\in \{1,2\},\\
\Sym_{ Q^{1,4}} & \text{if }j=4.
\end{cases}$$
Accordingly $\ov W^{1}(\wh \la)= \ov W_{-1}(\wh \la)\times \Sym_{\pm Q^{1,1}}\times \Sym_{\pm Q^{1,2}}\times \Sym_{ Q^{1,4}}$. Additionally $\ov W_{\restr\wh \la^{\wh L \EL }|{\wh L}}$ stabilizes the sets $J_{-1}$ and $Q^{1,j}$ ($j \in\{1,2,4\}$). If $W(\la)\neq W(\wh \la)$, every $x\in W(\la)\setminus W(\wh \la)$ satisfies $x(Q^{1,1})=Q^{1,2}$ as element of $\Sym_{Q^1(\wh\la)}$, see the proof of \Cref{lem6_29}. Hence in that case $l_1=l_2$. 

Following the arguments given in the proof of \Cref{lem6_29} the element $x_1\in W^1(\la)\setminus W^1(\wh \la)$ can be written as 
$x_{-1} x_{\{1,2\}} x_{4}$, where 
$x_{-1}\in \spann<(J_{-1},-J_{-1})>$, 
$x_{\{1,2\}}\in\Sym_{\pm (Q^{1,1}\cup Q^{1,2})}$ 
with $x_{\{1,2\}}(Q^{1,1})=Q^{1,2}$ as element of $\Sym_{(Q^{1,1}\cup Q^{1,2})}$ and $x_{4}\in \spann<x_4^\circ>$ with 
$$x_{4}^\circ:=\prod_{i\in Q^{1,4}} (i,-i)\in \Sym_{\pm Q^{1,4}}.$$

Note that $-1\in\DD$ according to \Cref{lem-1inDD} and hence $\gamma\in \EL$. In this notation we have 
$$K(\la)\leq K^1:=
\spann<(J_{-1},-J_{-1})> \times 
\left ((\Sym_{\pm Q^{1,1}}\times \Sym_{\pm Q^{1,2}})\rtimes \spann<x_{\{1,2\}}>\right )
\times \left (\Sym_{Q^{1,4}}\rtimes \spann<x_{4}^\circ>\right ).$$

We see $W(\wh \la)\cong\Sym_{\pm Q^{1,1}}\times \Sym_{\pm Q^{1,2}}\times \Sym_{Q^{1,4}}$. Since $ K(\la)=\ov W_{\restr\la^{L\spannFp}|L}$, we have
$$K(\la) \leq \spann<(J_{-1},-J_{-1})>\times \left ((\Sym_{\pm Q^{1,1}}\times \Sym_{\pm Q^{1,2}})\rtimes{\spann<x_{\{1,2\}}>} \right )\times \spann<x_{4}>\Sym_{Q^{1,4}}\leq W(\wh \la)\spann<x,c_{-1}, x_{4}^\circ>$$
 for the element $x$ from above and with $c_{-1}:=\rho_{\bT}(c_{J_{-1}})$. 
Note $W(\wh \la)\spann<x,c_{-1}, x_{4}>\leq  K^1=W( \la)\spann<c_{-1}, x_{4}^\circ >$. We observe $ c_{-1},x_{4}^\circ\in \Z(W(\la)\spann<c_{-1}, x_{4}^\circ>)$. This implies that every character $\eta\in \Irr(W^1(\la))$ is $K^1$-stable. This proves part (a), and even that every character of $\Irr(W(\la))$ extends to $K^1$. 

Now by the definition of $W^1(\wh \la^{(1)})$ and $K^1(\la^{(1)})$ we see that in the general case the groups obtained as $K^1(\la)$ coincide with $K(\la)$ for a group of smaller rank where for the character $\wh\la^{(1)}$  part (a) can be applied. This then proves part (b).\end{proof}
We consider the general case. 
\begin{prop}\label{prop5mixed}
Let $\la\in \cusp(L)$ with $\wt L_\la=\wh L$ and $\eta_0\in\Irr(W(\wh \la))$. Then every character in $\Irr(W(\la)\mid \eta_0)$ is $K(\la)_{\eta_0}$-stable.
\end{prop}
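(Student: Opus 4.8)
The goal is to show that every $\eta\in\Irr(W(\la)\mid\eta_0)$ is $K(\la)_{\eta_0}$-stable, where $\la\in\cusp(L)$ has $\wt L_\la=\wh L$, i.e.\ $\la\in\MM^{(\wh L)}$. The natural strategy is to separate the trivial case $W(\la)=W(\wh\la)$ from the genuinely interesting case $W(\la)\neq W(\wh\la)$, and in the latter to split according to whether $h_0\in\ker(\la)$ or not, exactly mirroring the case analysis already set up in \Cref{lem6Dh0}, \Cref{lem6_29}, \Cref{lem521} and \Cref{lem522}. First, without loss of generality I would assume $\wh\la$ is standardized (every $\ov N$-orbit in $\Irr(\wh L)$ contains a standardized character, and both $W(\la)$, $K(\la)$, $\eta_0$ transform compatibly under $\ov N$), so that the structures of $\ov W(\wh\la)$ and $\ov W(\wt\la)$ from \Cref{lem510} and \Cref{lem411} apply.

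In the case $h_0\notin\ker(\la)$: if $W(\wh\la)=W(\la)$ there is nothing to prove; if $W(\wh\la)\neq W(\la)$ then \Cref{lem6Dh0} gives that maximal extendibility holds \wrt $W(\wh\la)\lhd K(\la)$, and by the standard Gallagher/Clifford argument (every character of $W(\la)$ over an extendible $\eta_0$ is of the form $\tilde\eta_0\cdot\beta$ with $\beta\in\Irr(W(\la)/W(\wh\la))$) together with the fact — to be extracted from the proof of \Cref{lem6Dh0} and \Cref{cor6_20}(c) — that $W(\la)/W(\wh\la)$ is either trivial or cyclic of order $2$, one concludes every $\eta\in\Irr(W(\la)\mid\eta_0)$ is $K(\la)$-stable, a fortiori $K(\la)_{\eta_0}$-stable. (Here I would also note $K(\la)\leq\NNN_{\ov W}(W(\wh\la))$, used in \Cref{lem6Dh0}.)

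In the case $h_0\in\ker(\la)$, i.e.\ $\la\in\MM^{(\wh L)}\cap\Irr(L\mid 1_{\spannh})$: apply \Cref{lem6_29} to get $K(\la)$-stable subsets $Q^1(\wh\la)$, $Q^2(\wh\la)$ of $\cO$ with $W(\wh\la)=W^1(\wh\la^{(1)})\times W^2(\wh\la^{(2)})$, and — by \Cref{lem6_29}(d) when $\cO_1\cap Q^1(\wh\la)\neq\emptyset$ — the containment $K(\la)\leq K^1(\la)\times\NNN_{\ov W^2}(W^2(\wh\la))$; when $\cO_1\cap Q^1(\wh\la)=\emptyset$ one instead has $K(\la)\leq\NNN_{\ov W^1}(W^1(\wh\la))\times\NNN_{\ov W^2}(W^2(\wh\la))$ by the same reasoning and \Cref{lem521}. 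On the $Q^2$-factor, \Cref{lem521} gives maximal extendibility \wrt $W^2(\wh\la^{(2)})\lhd\NNN_{\ov W^2}(W^2(\wh\la^{(2)}))$, so any $\eta_0^{(2)}\in\Irr(W^2(\wh\la^{(2)}))$ has a $\NNN_{\ov W^2}(W^2)_{\eta_0^{(2)}}$-stable (in fact extendible) extension, forcing $K(\la)$-stability on that factor via the direct-product containment. On the $Q^1$-factor, \Cref{lem522}(b) gives maximal extendibility \wrt $W^1(\wh\la^{(1)})\lhd K^1(\la^{(1)})$, and moreover when $Q^1(\wh\la)=\cO$ \Cref{lem522}(a) says \emph{every} $\eta\in\Irr(W(\la)\mid\eta_0)$ is $K(\la)_{\eta_0}$-stable (indeed extends to $K^1$). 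The final step is to combine the two factors: writing $\eta_0=\eta_0^{(1)}\odot\eta_0^{(2)}$ and $\eta=\eta^{(1)}\odot\eta^{(2)}$ along the direct product decomposition $W(\la)=W^1(\la^{(1)})\times W^2(\la^{(2)})$ (which holds because both $W(\wh\la)$ and the extending element decompose, by \Cref{lem6_29}(a),(b)), the $K(\la)_{\eta_0}$-stability of $\eta$ follows from the stability on each factor together with $K(\la)\leq K^1(\la)\times(\cdots)$. One also has to invoke \Cref{KlawhKla} at the end to pass from $K(\la)$ to $\wh K(\la)$ when $\bG$ is not of type $\tD_4$, though the statement as phrased is about $K(\la)$ itself.

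\textbf{Main obstacle.} The routine part is the Clifford/Gallagher bookkeeping; the genuine difficulty is the bookkeeping of the \emph{order-$4$ block} $Q^{1,4}$ and the interaction of the graph automorphism $\gamma$ (present because $-1\in\DD$ by \Cref{lem-1inDD}) with the swap $x_{\{1,2\}}\colon Q^{1,1}\leftrightarrow Q^{1,2}$ and the sign-change element $x_4^\circ=\prod_{i\in Q^{1,4}}(i,-i)$. One must verify that the centralizing elements $c_{-1}$ and $x_4^\circ$ really are central in $W(\la)\spann<c_{-1},x_4^\circ>$ (so that all of $\Irr(W(\la))$ is fixed), and that the possible mismatch $l_1\neq l_2$ is excluded whenever $W(\la)\neq W(\wh\la)$ — this is where \Cref{lem-1inDD} and the proof of \Cref{lem6_29} are essential. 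The $\tD_4$ subtleties (order-$3$ graph automorphism $\gamma_3$, the coincidence $K(\la)$ vs.\ $\wh K(\la)$ failing) are deliberately deferred to \Cref{ssec6E}, so here I would simply state the result for $K(\la)$ and not worry about $\wh K(\la)$.
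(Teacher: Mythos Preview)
Your overall strategy matches the paper's closely: split by whether $h_0\in\ker(\la)$, invoke \Cref{lem6Dh0} in one case and the $Q^1/Q^2$ factorisation from \Cref{lem6_29}, \Cref{lem521}, \Cref{lem522} in the other. However, there is a genuine gap in your combination step.

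You claim a direct-product decomposition $W(\la)=W^1(\la^{(1)})\times W^2(\la^{(2)})$, justified by ``both $W(\wh\la)$ and the extending element decompose''. This is false in the generic mixed case $Q^1(\wh\la)\neq\cO\neq Q^2(\wh\la)$ with $W(\la)\neq W(\wh\la)$. Indeed, \Cref{lem6_29}(b) gives $x=x_1x_2$ with $x_i\in W^i(\la^{(i)})$, and its proof shows $(\wh\la^{(i)})^{x_i}=\wh\la^{(i)}\mu^{(i)}$ with $\mu^{(i)}$ non-trivial (since $|\wh L^{(i)}:L^{(i)}|=2$ whenever $Q^i\neq\emptyset$). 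Hence $x_i\notin W^i(\wh\la^{(i)})$ for both $i$, so $\bigl(W^1(\wh\la)\spann<x_1>\bigr)\times\bigl(W^2(\wh\la)\spann<x_2>\bigr)$ has order $4|W(\wh\la)|$, whereas $W(\la)=W(\wh\la)\sqcup W(\wh\la)x$ has order $2|W(\wh\la)|$. So $W(\la)$ sits as an index-$2$ subgroup of the product, not equal to it, and your tensor decomposition $\eta=\eta^{(1)}\odot\eta^{(2)}$ does not apply.

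The paper avoids this by a reduction you omit: since $|\wh L_\la:L|=2$, one has $|W(\la):W(\wh\la)|\leq 2$, so $|\Irr(W(\la)\mid\eta_0)|\leq 2$ and it suffices to find \emph{one} $K(\la)_{\eta_0}$-stable $\eta$ (the other, if present, is then stable too). With this reduction in hand, your ingredients suffice: maximal extendibility on each factor gives maximal extendibility with respect to $W(\wh\la)\lhd K^1(\la)\times\NNN_{\ov W^2}(W^2(\wh\la))\geq K(\la)$, so $\eta_0$ extends to $K(\la)_{\eta_0}$, and restricting that extension to $W(\la)$ (when $W(\la)\leq K(\la)_{\eta_0}$; otherwise the induced character is the unique $\eta$) yields the required stable character. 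Add this reduction at the start and drop the direct-product claim for $W(\la)$. The case $\cO_1\cap Q^1(\wh\la)=\emptyset$ is also simpler than you suggest: then $Q^1(\wh\la)=\{J_{-1}\}$ and $W^1(\wh\la)=1$, so the $Q^1$-factor is trivial.
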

\begin{proof} 
Note that because of $|\wh L_\la:L|=2$ it is sufficient to prove that some character in 
 $\Irr(W(\la)\mid \eta_0)$ is $K(\la)_{\eta_0}$-stable. According to \Cref{lem521} and \Cref{lem522} we can assume $Q^1(\wh\la)\neq\cO\neq Q^2(\wh\la)$ for the sets $Q^1(\wh \la)$ and $Q^2(\wh \la)$ from \Cref{lem6_29}.

By \Cref{lem6Dh0} we can assume $h_0\in \ker(\la) $.  The groups $W^i(\wh \la)$ ($i\in\underline 2$)  satisfy $W(\wh \la)=W^1(\wh \la)\times W^2(\wh \la)$, see \Cref{lem6_29}. 

If $\cO_1\cap Q^1(\wh \la)\neq \emptyset$, then $K(\la)\leq  K^1(\la)\times \NNN_{\ov W^2}(W^2(\wh \la ^{(2)}))$, see \Cref{lem6_29}(d). Let $\eta_i\in\Irr(W^i(\wh \la))$ such that $\eta_0=\eta_1\times \eta_2$. According to \Cref{lem522}, $\eta_1$ has a $K^1(\la)_{\eta_1}$-stable extension to $W^1(\la)_{\eta_1}$ and  maximal extendibility holds \wrt $W^2(\wh \la)\lhd \NNN_{\ov W^2}(W^2(\wh \la ^{(2)}))$ according to \Cref{lem521}. This proves the statement in that case.

If $\cO_1\cap Q^1(\wh \la)= \emptyset$, then $Q^1(\wh \la)=\{J_{-1}\}$. Then $|W^1(\wh\la)|=1$ and therefore $W(\wh\la)=W^2(\wh\la)$. Then the stability statement follows by applying  again \Cref{lem521}. 
\end{proof}
Together with \Cref{prop_sec4B} this leads to the following statement. 
\begin{cor}\label{corsec5}
Let $\la\in\TT$ with $\wt L_\la\neq L$, $\wt  \la\in\Irr(\wt L_\la\mid \la)$ and $\eta_0\in\Irr(W(\wt \la))$. Then there exists some  $K(\la)_{\eta_0}$-stable $\eta\in\Irr(W(\la)\mid \eta_0)$.
\end{cor}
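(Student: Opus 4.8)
The statement \Cref{corsec5} is essentially a bookkeeping summary, combining the two main cases already treated: $\wt L_\la$ satisfies $\wh L\wt L_\la=\wt L$ (which covers $\la\in\MM^{(\wt L)}\cup\MM_0$) and $\wt L_\la=\wh L$ (i.e.\ $\la\in\MM^{(\wh L)}$). Since $\la\in\TT$ has by construction $\wt L'_\la\neq L$, equivalently $\wt L_\la\neq L$ by \Cref{lem41}(a), exactly one of these two possibilities occurs. So the plan is first to split the argument according to this dichotomy, then invoke \Cref{prop_sec4B} in the first case and \Cref{prop5mixed} in the second.

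\textbf{First case: $\wh L\wt L_\la=\wt L$.} This is precisely the hypothesis of \Cref{prop_sec4B}, which asserts that for every $\wt\la\in\Irr(\wt L\mid\la)$ and every $\eta_0\in\Irr(W(\wt\la))$ there exists a $K(\la)_{\eta_0}$-stable $\eta\in\Irr(W(\la)\mid\eta_0)$. One small technical point to handle: \Cref{corsec5} quantifies over $\wt\la\in\Irr(\wt L_\la\mid\la)$, whereas \Cref{prop_sec4B} is phrased with $\wt\la\in\Irr(\wt L\mid\la)$. When $\wh L\wt L_\la=\wt L$ one has $\restr\wt\la|{\wh L}=\wh\la$, so the relevant $W(\wt\la)$ agrees for both formulations by \Cref{lem41}(a), and the two quantifications coincide. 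So in this case \Cref{corsec5} is immediate from \Cref{prop_sec4B}.

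\textbf{Second case: $\wt L_\la=\wh L$.} Here $\wt L_\la=\wh L$ means $\la\in\MM^{(\wh L)}$, and \Cref{prop5mixed} states that for $\la\in\cusp(L)$ with $\wt L_\la=\wh L$ and any $\eta_0\in\Irr(W(\wh\la))$, every character in $\Irr(W(\la)\mid\eta_0)$ is $K(\la)_{\eta_0}$-stable; in particular such an $\eta$ exists. Again I would note that in \Cref{corsec5} one writes $\eta_0\in\Irr(W(\wt\la))$ for $\wt\la\in\Irr(\wt L_\la\mid\la)=\Irr(\wh L\mid\la)$, so $\wt\la$ plays the role of $\wh\la$ and $W(\wt\la)=W(\wh\la)$; thus \Cref{prop5mixed} applies verbatim. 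Finally I would remark that the condition $\wt L_\la=L$ is excluded by the assumption $\wt L_\la\neq L$, so the two cases above are exhaustive.

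\textbf{Main obstacle.} There is essentially no obstacle here, as both substantive inputs (\Cref{prop_sec4B} and \Cref{prop5mixed}) have already been established. The only thing requiring minor care is making sure that the three possible values of $\wt L_\la$ (namely $L$, $\wh L$, $\wt L$, and the mixed $\MM_0$ situation) line up correctly with the partition $\cusp(L)=\MM^{(L)}\sqcup\MM^{(\wh L)}\sqcup\MM^{(\wt L)}\sqcup\MM_0$, and that "$\wh L\wt L_\la=\wt L$" is indeed equivalent to "$\la\in\MM^{(\wt L)}\cup\MM_0$" — this is just the observation that $\wh L$ has index $2$ in $\wt L$, so $\wt L_\la\in\{L,\wh L\}$ is the only way $\wh L\wt L_\la=\wt L$ can fail, and among these $\wt L_\la=L$ is excluded by hypothesis.

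\begin{proof}[Proof of \Cref{corsec5}]
Let $\la\in\TT$ with $\wt L_\la\neq L$, let $\wt\la\in\Irr(\wt L_\la\mid\la)$ and $\eta_0\in\Irr(W(\wt\la))$. Recall the partition $\cusp(L)=\MM^{(L)}\sqcup\MM^{(\wh L)}\sqcup\MM^{(\wt L)}\sqcup\MM_0$. Since $L\lneq\wh L\lneq\wt L$ with $|\wt L:\wh L|=2$, the assumption $\wt L_\la\neq L$ forces either $\wh L\wt L_\la=\wt L$ (i.e.\ $\la\in\MM^{(\wt L)}\cup\MM_0$) or $\wt L_\la=\wh L$ (i.e.\ $\la\in\MM^{(\wh L)}$).

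Suppose first $\wh L\wt L_\la=\wt L$. Then $\restr\wt\la|{\wh L}=\wh\la$ is standardized up to $\ov N$-conjugacy, and by \Cref{lem41}(a) the group $W(\wt\la)$ is the same whether computed from $\wt\la\in\Irr(\wt L_\la\mid\la)$ or from an extension to $\wt L$. Hence \Cref{prop_sec4B} applies and yields some $K(\la)_{\eta_0}$-stable $\eta\in\Irr(W(\la)\mid\eta_0)$.

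Suppose now $\wt L_\la=\wh L$, so $\la\in\MM^{(\wh L)}$ and $\wt\la\in\Irr(\wh L\mid\la)$, i.e.\ $\wt\la=\wh\la$ in the notation of \ref{ssec5E}. Then $W(\wt\la)=W(\wh\la)$ and \Cref{prop5mixed} shows that every character in $\Irr(W(\la)\mid\eta_0)$ is $K(\la)_{\eta_0}$-stable; in particular a suitable $\eta$ exists.

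In both cases we obtain a $K(\la)_{\eta_0}$-stable $\eta\in\Irr(W(\la)\mid\eta_0)$, as required.
\end{proof}
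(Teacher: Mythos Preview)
Your proof is correct and follows exactly the paper's approach: split according to whether $\wh L\wt L_\la=\wt L$ (i.e.\ $\la\in\MM^{(\wt L)}\cup\MM_0$) or $\wt L_\la=\wh L$, then invoke \Cref{prop_sec4B} and \Cref{prop5mixed} respectively. The only minor imprecision is that \Cref{lem41}(a) compares $\wt L$ with $\wt L'$ rather than $\wt L_\la$ with $\wt L$; the identification you need follows instead from the remark (just before \Cref{lem41}) that $W$ acts trivially on the characters of $\wt L_\la/L$, so the stabilizer of an extension of $\la$ to $\wt L_\la$ and of the corresponding character of $\wt L$ coincide.
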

\begin{proof}For $\la\in \MM^{(\wt L)}\cup \MM_0$ this is  \Cref{prop_sec4B}. For  $\la\in\cusp(L)$ with $\wt L_\la=\wh L$ the statement follows from \Cref{prop5mixed}. 
\end{proof}

\section{Proof of Theorem A} \label{ssec6E}
In the following we explain how \Cref{corsec5} about the action of $K(\la)$ on $\Irr(W(\la))$ proves \Cref{thm_loc*}. As already sketched in the beginning of \Cref{secWla} based on \Cref{prop23}, knowing the action of $\wh K(\la)$ on $\Irr(W(\la))$ is crucial to verify \Cref{thm_loc*}. Unless $\bG$ is of type $\tD_4$ the action $\wh K(\la)$ on $\Irr(W(\la))$ is given by the action of $K(\la)$, see \Cref{KlawhKla}.

Via Harish-Chandra induction we transfer the result of \Cref{thm_loc*} on characters of $N$ to a weak version of \Cref{thm1}. Special considerations are needed to determine the stabilisers in $\wG E$  of characters $\chi\in\Irr(\GF\mid(L,\cusp(L)))$, whenever $L$ is not $E(\GF)$-stable. 

\begin{lem}\label{lem6_36}
Let $\la \in \cusp(L)$, $\wt \la\in\Irr(\wt L_\la\mid \la)$ and $\eta_0\in\Irr(W(\la))$. Then some character in $\Irr(W(\la)\mid \eta_0)$ is $\wh K(\la)_{\eta_0}$-stable. 
\end{lem}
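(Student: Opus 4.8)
\textbf{Proof proposal for \Cref{lem6_36}.}

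The plan is to reduce the general statement to \Cref{corsec5}, which already gives the existence of a $K(\la)_{\eta_0}$-stable $\eta\in\Irr(W(\la)\mid\eta_0)$ for $\la\in\TT$ with $\wt L_\la\neq L$. The two gaps to fill are: (i) passing from $K(\la)$ to $\wh K(\la)$, and (ii) handling the case $\wt L_\la = L$. For (ii), if $\wt L_\la = L$ then $W(\wt\la)=W(\la)$, so $\eta_0\in\Irr(W(\la))$ already, and the character $\eta:=\eta_0$ itself is trivially in $\Irr(W(\la)\mid\eta_0)$ and $\wh K(\la)_{\eta_0}$-stable; so that case is immediate and one may assume $\wt L_\la\neq L$.

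For (i), when $\bG$ is not of type $\tD_4$, \Cref{KlawhKla} says that for $\eta\in\Irr(W(\la))$ and $\eta_0\in\Irr(W(\wt\la))$, being $K(\la)_{\eta_0}$-stable is equivalent to being $\wh K(\la)_{\eta_0}\cap(W\rtimes E^\circ_L)$-stable; but since $E(\GF)=\spann<F_p,\gamma>$ in that case, $W\rtimes E^\circ_L$ already contains all of $\wh N/L = \wh W$, so $K(\la)_{\eta_0}$-stability is exactly $\wh K(\la)_{\eta_0}$-stability. Hence for $\bG$ not of type $\tD_4$ the $\eta$ furnished by \Cref{corsec5} works directly. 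The remaining case is $\bG=\tD_{4,sc}(\FF)$, where $E(\GF)=\spann<F_p,\gamma,\gamma_3>$ and $\wh K(\la)$ can be strictly larger than $K(\la)\spannFp$ because of the triality automorphism $\gamma_3$.

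For the $\tD_4$ case, I would argue as follows. The group $\wh W/(\text{image of }W\rtimes E^\circ_L)$ is a quotient of the cyclic group $\spann<\gamma_3>$ of order $3$ (or trivial, if $\gamma_3\notin\EL$), and $F_p$ is central in $\wh W$. The subgroup $K(\la)_{\eta_0}$ is normal in $\wh K(\la)_{\eta_0}$ with quotient of order dividing $3$. Starting from the set $S$ of $K(\la)_{\eta_0}$-stable characters in $\Irr(W(\la)\mid\eta_0)$, which is non-empty by \Cref{corsec5}, the group $\wh K(\la)_{\eta_0}$ acts on $S$; since $|S|$ divides $|\Irr(W(\la)\mid\eta_0)|$ and by the transitivity of $\wt L'_\la/L$ on $\Irr(W(\la)\mid\eta_0)$ (from \Cref{propcuspN}) together with the structure of $W(\la)/W(\wt\la)$, one controls $|S|$ enough to find a $\wh K(\la)_{\eta_0}$-orbit of length $1$, i.e. a genuinely $\wh K(\la)_{\eta_0}$-stable character.

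The main obstacle will be the $\tD_4$ case: one must check that the $\gamma_3$-action cannot fixed-point-freely permute the $K(\la)_{\eta_0}$-stable characters. This should follow by inspecting the finitely many standard Levi subgroups of $\tD_{4,sc}(q)$ and their relative Weyl groups $W(\la)$, which are small (products of symmetric groups and hyperoctahedral groups of rank $\leq 4$); for each, $\Irr(W(\la)\mid\eta_0)$ has at most two elements, so either $S$ already contains a $\gamma_3$-fixed character, or $|S|=2$ and $|\Irr(W(\la)\mid\eta_0)|=2$, in which case $\gamma_3$ (having order $3$) cannot act without a fixed point on $S$. I expect the bulk of the work to be this bookkeeping over the $\tD_4$ Levi subgroups, which is exactly the ``technical difficulties'' the authors flagged at the start of \Cref{secWla}.
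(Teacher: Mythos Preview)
Your reduction for $\bG\neq\tD_4$ and for $\wt L_\la=L$ matches the paper exactly. For $\tD_4$, however, you take a genuinely different route. The paper distinguishes two subcases: for $L=\bT_0^F$ it simply cites \cite[Thm.~3.7]{MS16}, while for the two remaining $\gamma_3$-stable proper Levi subgroups it observes that the full relative Weyl group $W=N/L$ is a $2$-group, shows $\eta_0$ extends to $K(\la)_{\eta_0}$ (index at most $2$), and then uses a Sylow/coprime argument via \cite[(11.31),(11.32)]{Isa} to extend $\eta_0$ further to the intermediate group $K'(\la)_{\eta_0}$ (where $K'(\la)=(\wh K(\la)\spannFp)\cap(W\rtimes\spann<\gamma,\gamma_3>)$ has index $1$ or $3$ over $K(\la)$), with a final conjugation trick when $K(\la)_{\eta_0}$ is not already Sylow.

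Your orbit-counting alternative is cleaner in principle and would treat $L=\bT_0^F$ uniformly rather than by citation, but it has one imprecision that needs fixing. The claim ``$\Irr(W(\la)\mid\eta_0)$ has at most two elements'' is not correct in general: since $\wt L_\la/L$ acts transitively on $\Irr(W(\la)\mid\eta_0)$ (see the proof of \Cref{prop23}) and $|\wt L_\la/L|$ divides $|\Z(\bG)|=4$, one only gets $|\Irr(W(\la)\mid\eta_0)|\in\{1,2,4\}$. Fortunately this does not break the argument: if $|\Irr(W(\la)\mid\eta_0)|=4$ then $|S|\neq 3$ automatically, because a group cannot act on a $4$-element set with exactly three fixed points (the non-fixed points must form orbits of size $\geq 2$). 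So $|S|\in\{1,2,4\}$, and the order-$3$ quotient $\wh K(\la)_{\eta_0}/(\wh K(\la)_{\eta_0}\cap(W\rtimes E^\circ_L))$ acts on $S$ with a fixed point. You should also state explicitly why this quotient acts on $S$: it is because $W\rtimes E^\circ_L\lhd \wh W$, so $\wh K(\la)_{\eta_0}\cap(W\rtimes E^\circ_L)\lhd \wh K(\la)_{\eta_0}$, and by \Cref{KlawhKla} this intersection is precisely the lift of $K(\la)_{\eta_0}$. With these two clarifications your argument goes through and avoids both the external citation and the case split on $L$.
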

\begin{proof}
According to  \Cref{corsec5} there exists some $K(\la)_{\eta_0}$-stable $\eta$ in $\Irr(W(\la)\mid \eta_0)$. According to \Cref{KlawhKla} the character $\eta$ is $\wh K(\la)_{\eta_0}\cap (W\rtimes \EL^\circ )$-stable, where $E^\circ:=\spann<F_p,\gamma_0>$ and $\EL^\circ:=E^\circ\cap \EL$. If $\bG$ is not of type $\tD_4$ or $\EL\leq \spann<F_p,\gamma_0>$ this is the above statement. 

Accordingly we can assume in the following that $\bG$ is of type $\tD_4$ and $L$ is $\gamma_3$-stable for the graph automorphism $\gamma_3$ of $\tD_4(\FF)$ from \ref{not_32}. 
If $L=T_0$ the statement follows from Theorem~3.7 of \cite{MS16}. Otherwise  $L$ is one of the other two possible $\gamma_3$-stable Levi subgroups. In both cases easy calculations show that $W(L)$ is a $2$-group. According to our considerations above we know that there is some $K(\la)_{\eta_0}$-stable $\eta\in\Irr(W(\la)\mid \eta_0)$. As $[K(\la):W(\la)]\in \{1,2\}$ the character $\eta_0$ extends to its inertia group $K(\la)_{\eta_0}$. This shows that maximal extendibility holds with respect to $W(\wt \la)\lhd K(\la)$. 
Let $K'(\la)=(\wh K(\la)\spann<F_p>)\cap (W\rtimes \spann<\gamma,\gamma_3>)$. When we identify $\ov W$ with $W\rtimes \spann<\gamma>$, we can see $K(\la)$ as a subgroup of $K'(\la)$ with index $1$ or $3$. Hence $K(\la)_{\eta_0}$ has index $1$ or $3$ in $K'(\la)_{\eta_0}$. The character $\eta_0$ extends to $K(\la)_{\eta_0}$ by the above. 

Assume that $K(\la)_{\eta_0}/ W(\wt\la) $ is a Sylow $2$-subgroup of $K'(\la)/W(\wt\la)$. Let $K_3$ be a subgroup of $K'(\la)_{\eta_0}$ with $W(\la)_{\eta_0}\leq K_3$ such that $K_3/W(\la)_{\eta_0}$ is a Sylow $3$-subgroup of $K'(\la)_{\eta_0}/W(\wt\la)$. The character $\eta_0$ extends to $K_3$ as $|W(\wt \la)|$ is coprime to $3$ according to \cite[(11.32)]{Isa}.  This implies that $\eta_0$ extends to $K(\la)_{\eta_0}$. 
Maximal extendibility holds \wrt $W(\wt \la)\lhd  K'(\la)$ as well. (This can be seen via an application of \cite[(11.31)]{Isa}.)

If $K(\la)_{\eta_0}/ W(\wt\la) $ is not a Sylow $2$-subgroup of $K'(\la)_{\eta_0}/W(\wt\la)$, the group $K(\la^{\gamma_3})_{\eta_0^{\gamma_3}}=(K(\la)_{\eta_0})^{\gamma_3}$ contains a Sylow $2$-subgroup of  $K'(\la^{\gamma_3})_{\eta_0^{\gamma_3}}$ and hence by the above $(\eta_0)^{\gamma_3}$ extends to $K'(\la^{\gamma_3})_{\eta_0^{\gamma_3}}$. Via conjugation this implies that $\eta_0$ extends to $K'(\la)_{\eta_0}$.
\end{proof}
We can now show \Cref{thm_loc*}.
\begin{proof}[Proof of \Cref{thm_loc*}]
Recall $\MM^{(X)}:=\{\la\in\cusp(L)\mid \wt L=X\}$ for the subgroups $L\leq X\leq \wt L$ and $\MM_0:=\cusp(L)\setminus (\MM^{(L)} \cup \MM^{(\wh L)} \cup \MM^{(\wt L)} )$, see before \Cref{lem6_3}. As the sets $\MM^{(L)}$, $\MM^{(\wh L)}$, $\MM^{(\wt L)}$ and $\MM_0$ are $\EL$-stable, it is sufficient to construct an $\wh N$-stable $\wt L$-transversal in $\Irr(N\mid \MM')$ for $\MM'\in \{\MM^{(L)}, \MM^{(\wh L)}, \MM^{(\wt L)}\}$. Note that since every character of $N$ is $N$-stable, one can equivalently also construct $\EL$-stable $\wt N'$-transversals. \Cref{lem6_3} provides an $\wh N$-stable $\wt L$-transversal in $\Irr(N\mid \MM^{(L)})$. 

\Cref{lem6_36} shows that for every  $\la\in\MM_0\cup \MM^{(\wt L)}\cup \MM^{(\wh L)}$ and every character $\eta_0\in\Irr(W(\wt \la))$ there is some $K(\wt \la)_{\eta_0}$-stable $\eta\in\Irr(W(\la)\mid \eta_0)$, where $\wt \la\in\Irr(\wt L_\la\mid \la)$. 

Assumptions (i) and (ii) of \Cref{prop23} are satisfied with $\TT':=\TT \cap (\MM_0\cup \MM^{(\wt L)}\cup\MM^{(\wh L)})$ from \Cref{def_TT} and the extension map $\Lambda$ from \Cref{thm_loc}. For every $\la\in\TT'$ and $\eta_0\in\Irr(W(\wt \la))$ there exists some $K(\la)_{\eta_0}$-stable $\eta\in\Irr(W(\la))$.
This allows us to apply \Cref{prop23_neu} and hence some $\wh N$-stable $\wt N$-transversal in $\cusp(N\mid \MM_0\cup \MM^{(\wt L)}\cup\MM^{(\wh L)} )$ exists. \end{proof}
\Cref{thm_loc*} implies according to \Cref{thm_MS} that the equation $(\wG \EL)_\chi=\wG_\chi(\EL)_{\chi}$ holds for every character $\chi$ of a $\wt G$-transversal in $ \Irr(\GF\mid (L,\cusp(L)))$. Accordingly we have constructed an $\EL$-stable $\wG$-transversal of $\Irr(\GF\mid(L,\cusp(L)))$, see \Cref{*cond_trans}. 
\begin{corollary}\label{AinftyE1}
 Let $\bG=\tDlsc(\FF)$, $F:\bG\ra\bG$ a standard Frobenius endomorphism and $E$ be defined as in \ref{not_32} and $\wG:=\calL\inv(\Z(\bG))$ for the Lang map $\calL$ defined by $F$ on $\bG$. Let $L$ be a standard Levi subgroup of $\GF$ and $\EL$ its stabilizer in $E(\GF)$. If \Cref{hyp_cuspD_ext} holds for every $l'< l$, then there exists an $\EL$-stable $\wG$-transversal in $\Irr(\GF\mid(L,\cusp(L)))$. 
\end{corollary}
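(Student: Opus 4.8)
\textbf{Proof proposal for \Cref{AinftyE1}.}
The plan is to assemble the pieces that have been accumulated in the preceding sections into the machinery of \Cref{thm_MS}. First I would fix $L$ a standard Levi subgroup of $\GF=\tDlsc(q)$ and let $\EL=\Stab_{E(\GF)}(L)$, $N=\NNN_\GF(\bL)$, $\wt N'=\wt T_0 N$, $\wt L'=\wt T_0 L$ and $\wh N=N\EL$ as in \Cref{not_L_N_E1}. Since \Cref{hyp_cuspD_ext} is assumed for all $l'<l$, and every $\tD_{l',sc}(q)$ with $4\le l'<l$ appearing as a direct factor of $[L,L]$ satisfies that hypothesis, the hypotheses of \Cref{thm_loc} are met. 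The key is then to verify the two assumptions \ref{ext_map} and \ref{loc*} of \Cref{thm_MS}.

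For assumption \ref{ext_map}, \Cref{thm_loc}(a) provides an $\wh N$-stable $\wt L'$-transversal $\TT\subseteq\cusp(L)$ (explicitly $\TT=\Irr(L\mid \KK)$ from \Cref{def_TT}), and \Cref{thm_loc}(b) (that is, \Cref{stabcuspN}) provides an $\wh N$-equivariant extension map $\Lambda_{L\lhd N}$ with respect to $L\lhd N$ for $\TT$. It remains to record that every $\la\in\TT$ satisfies Equation \eqref{incl}: this is exactly the content of the last paragraph of the proof of \Cref{thm_MS}, namely that since $\Lambda_L'(\la)$ is $\wt N'\EL$-equivariant one gets $\ker(\delta_{\la,\si})\ge N_{{}^\si\wt\la}/L = W({}^\si\wt\la)$, and combined with the inclusion $R({}^\si\la)\le W({}^\si\wt\la)$ from \cite[Lem.~4.14]{CSS} we obtain \eqref{incl}. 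So \ref{ext_map} holds. For assumption \ref{loc*}, I would invoke \Cref{thm_loc*}, which (given \Cref{hyp_cuspD_ext} for all $4\le l'<l$) yields an $\EL$-stable $\wt N'$-transversal in $\cusp(N)$; this is precisely \ref{loc*}.

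With \ref{ext_map} and \ref{loc*} in hand, \Cref{thm_MS} gives an $\EL$-stable $\wt G$-transversal in $\Irr(G\mid (L,\cusp(L)))$, where $\wt G$ there is $\calL^{-1}(\Z(\bG))$ via the identification in \Cref{rem_whG}. Since $\wt L'$ and $\wt L$ induce the same automorphisms on $\bG$ (\Cref{rem_whG} and the remark after \Cref{lem36}), and likewise $\wt N'$ and $\wt N$, the transversal produced is a $\wG$-transversal in the sense of the statement. This is the desired conclusion.

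The only genuine subtlety, and the step I expect to need the most care, is the matching of the group $\wt G=\calL^{-1}(\Z(\bG))$ used throughout Sections \ref{secext}--\ref{ssec6E} (and in \Cref{thm_loc*}) with the group $\wGF=\wbG^F$ arising from a genuine regular embedding that is implicitly used in the formulation of \Cref{thm_MS} and \Cref{HC_form}; by \Cref{rem_whG}(a) one has $\wGF\le \wh G.\wh Z$ with $\wh Z$ central and $[\wh Z,\wGF]=1$, and \Cref{lem_ext_G_whG} shows maximal extendibility holds with respect to $\GF\lhd \wh G$, so the transversal/stabilizer statements transfer; but one must check that "$\EL$-stable $\wt G$-transversal" is insensitive to replacing $\wGF$ by $\wh G$ — this follows because the two groups induce the same automorphisms of $\GF$ modulo the central $\wh Z$, hence the same orbits on $\Irr(\GF)$ and the same stabilizers up to the central subgroup which acts trivially. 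Once this bookkeeping is done the corollary is immediate.
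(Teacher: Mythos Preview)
Your proposal is correct and follows exactly the same approach as the paper's proof, which reads in its entirety: ``For a given fixed Levi subgroup $L$ we apply \Cref{thm_MS} whose assumptions follow from \Cref{thm_loc} and \Cref{thm_loc*}.'' Your version simply unpacks this sentence in detail, correctly identifying that \Cref{thm_loc} supplies assumption~\ref{ext_map} (the $\wh N$-equivariant extension map, which in particular is $N$-equivariant and forces \eqref{incl} via the argument inside the proof of \Cref{thm_MS}) and that \Cref{thm_loc*} supplies assumption~\ref{loc*}; the final paragraph on matching $\wGF$ with $\calL^{-1}(\Z(\bG))$ is sound bookkeeping already absorbed into the paper's conventions (see \Cref{rem_whG} and the remark after \Cref{lem36}), so it is not a genuine obstacle.
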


\begin{proof}For a given fixed Levi subgroup $L$ we apply \Cref{thm_MS} whose assumptions follow from \Cref{thm_loc} and \Cref{thm_loc*}.\end{proof}

Condition $A'(\infty)$ from \ref{recallAinfty} and equivalently \Cref{thm1}  require to replace in the above statement  $\EL$ by $E$ and study $(\wt G E)_\chi$. Hence we study the stabilizers of characters in $\Irr(\GF\mid(L,\cusp(L)))$ in the case where $L$ is a standard Levi subgroup that is not $E$-stable. 

\begin{prop}\label{prop3_23} We keep $\bG=\tDlsc(\FF)$ and assume \Cref{hyp_cuspD_ext} holds for every $l'< l$. Let $\bT$ and $\bL$ as in Notation~\ref{not_15}. Let $E^\circ:=\spann<F_p,\gamma>\leq E$ in the notation of \ref{not_32}.
Assume that no $\NNN_\GF(\bT)$-conjugate of $\bL$ is $E$-stable. Let $\chi\in \Irr(\GF\mid(L,\cusp(L)))$. Then  $\wG_\chi=\wG$ or  $(\wt G E^\circ)_\chi\leq \wt G_\chi (E^\circ\cap \EL)$.
\end{prop}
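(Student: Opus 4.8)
\textbf{Plan for the proof of Proposition~\ref{prop3_23}.} The hypothesis is that the $\NNN_\GF(\bT)$-conjugacy class of $\bL$ contains no $E$-stable member; by \Cref{notationL} and the lemma following \Cref{notationL}, after replacing $\bL$ by a $\spann<\gamma>$-conjugate we may arrange that $\Delta'$ satisfies \ref{cases35i} or \ref{cases35ii}. In type $\tD_l$ with $l\geq 5$ one has $E=\spann<F_p,\gamma>=E^\circ$, so $E^\circ=E$ and the statement concerns $(\wt G E)_\chi$ directly. In type $\tD_4$ the extra generator is $\gamma_3$, and the hypothesis that no $N_0$-conjugate of $\bL$ is $E$-stable means exactly that $\bL$ is not $\gamma_3$-stable (up to conjugacy by $N_0$), so $\gamma_3$ genuinely moves the Harish-Chandra series $\Irr(G\mid(L,\cusp(L)))$ to a different one, attached to $\bL^{\gamma_3}$. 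Thus the first step is to reduce to $E^\circ$: any automorphism in $E\setminus E^\circ$ involves $\gamma_3$ and sends $\chi$ out of $\Irr(G\mid(L,\cusp(L)))$ unless it already lies in $\EL$, which forces us to work with $E^\circ$ and $E^\circ_L:=E^\circ\cap\EL$ as in the statement.

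The core of the argument is the parametrization of $\Irr(G\mid(L,\cusp(L)))$ by $\calP=\calP(L)$ from \Cref{propcuspN} together with \Cref{thm_MS} and \Cref{thm_loc*}. First I would invoke \Cref{AinftyE1}: there is an $\EL$-stable $\wG$-transversal $\calT_0$ in $\Irr(G\mid(L,\cusp(L)))$, so it suffices to prove the claim for $\chi\in\calT_0$; for a general $\chi'$ one conjugates by an element of $\wG$ into $\calT_0$ and the two possible conclusions ($\wG_{\chi}=\wG$, or $(\wt G E^\circ)_\chi\leq \wt G_\chi(E^\circ\cap\EL)$) are both stable under such conjugation, using \Cref{*cond_trans}. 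Now fix such a $\chi=\Upsilon'(\ov{(\la,\eta)})=\R_L^G(\la)_\eta$ with $\la\in\TT$, $\eta\in\Irr(W(\la))$, built so that $(\wh N\wt L')_{\Upsilon(\ov{(\la,\eta)})}=\wh N_{\Upsilon(\ov{(\la,\eta)})}\wt L'_{\Upsilon(\ov{(\la,\eta)})}$ via \Cref{prop23} (this is where \Cref{lem6_36}/\Cref{corsec5} enters). The key point is then: if $e\in E^\circ$ stabilizes $\chi$, then since $E^\circ$ normalizes $\bT$ and permutes standard Levi subgroups, $e$ sends $\Irr(G\mid(L,\cusp(L)))$ to $\Irr(G\mid(L^e,\cusp(L^e)))$; $\chi^e=\chi$ forces these series to coincide, hence $\bL^e$ is $\NNN_\GF(\bT)$-conjugate, indeed (after the reductions) $L^e=L$, i.e. $e\in\EL\cap E^\circ=E^\circ_L$. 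This shows $(E^\circ)_\chi\leq E^\circ_L$ as automorphism-subgroups, and combined with the transversality relation from \Cref{prop23}, \Cref{HC_form} and the equivariance of $\Upsilon'\circ\Upsilon^{-1}$ established in the proof of \Cref{thm_MS}, one gets $(\wt G E^\circ)_\chi = \wt G_\chi\,(E^\circ_L)_\chi \leq \wt G_\chi\,E^\circ_L$, unless the $\wG$-part degenerates.

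The remaining subtlety is the dichotomy in the conclusion — why the "$\wG_\chi=\wG$" alternative is needed. This occurs precisely when the diagonal-automorphism action on $\chi$ is trivial, i.e. when the underlying cuspidal datum $\la$ (more precisely its image in $\cusp(N)$) is fixed by all of $\wt N'/N$; in that regime $\wt L'_\chi=\wt L'$ and $\wt G_\chi=\wG$, so the product decomposition $(\wt G E^\circ)_\chi=\wG(E^\circ)_\chi$ is automatic and one only needs $(E^\circ)_\chi\leq E^\circ_L$, which is the Levi-stabilizer argument above. When instead $\wG_\chi\neq\wG$, one uses the transversality $(\wh N\wt L')_\psi=\wh N_\psi\wt L'_\psi$ for $\psi=\Upsilon^{-1}(\chi)$ together with \Cref{*cond_trans} applied to $\wt X=\wt N'$, $\wh Y=N E^\circ$ inside $\wt N' E^\circ$, transported through $\Upsilon'$ to get $(\wt G E^\circ)_\chi=\wt G_\chi(NE^\circ)^{x}_\chi\cap\ldots$; tracking that $(NE^\circ)_\chi\cap E^\circ=(E^\circ)_\chi\leq E^\circ_L$ gives the displayed inclusion. \textbf{The main obstacle} I anticipate is the bookkeeping in type $\tD_4$: one must carefully check that the hypothesis "no $N_0$-conjugate of $\bL$ is $E$-stable" really does force every $\chi$-stabilizing element of $E^\circ$ into $\EL$ (and not merely into $\EL$ up to $\wG$-conjugacy), and that the $\gamma_3$-twisting does not secretly bring back a component of $E$ that stabilizes the Harish-Chandra series without stabilizing $L$; this is exactly the scenario that \Cref{lem6_36}'s Sylow-$3$ gymnastics were designed to handle at the level of $W(\la)$, and the proposition needs the parallel statement at the level of Levi subgroups, which I expect to require a short explicit enumeration of the two non-$\gamma_3$-stable proper standard Levis of $\tD_4$.
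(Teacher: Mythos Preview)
Your proposal has a genuine gap at its ``key point'': you claim that if $e\in E^\circ$ stabilizes $\chi$, then the equality of Harish-Chandra series $\Irr(G\mid(L,\cusp(L)))=\Irr(G\mid(L^e,\cusp(L^e)))$ forces $L^e=L$, hence $e\in E^\circ_L$. This inference is false. What the equality of series gives you is only that $\bL^e$ is $N_0$-conjugate to $\bL$, not that $e$ fixes $\bL$. Since $F_p\in \EL$ always, the only nontrivial element is $\gamma$; and the hypothesis that no $N_0$-conjugate of $\bL$ is $E$-stable does \emph{not} prevent $\gamma(\bL)$ from being $N_0$-conjugate to $\bL$. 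Indeed, the paper's proof begins by observing that this is exactly the interesting case: one has $\gamma\notin\EL$ yet $\gamma(L)$ and $L$ are $N_0$-conjugate (equivalently $\ov W(L)\neq W(L)$, i.e.\ $\DD_\odd\neq\emptyset$), so both series coincide and $\chi$ may well be fixed by some $t\gamma e$ with $t\in\wt G$, $e\in\langle F_p\rangle$, $\gamma\notin\EL$. Your argument stops precisely where the real work begins.

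The paper's proof is substantially more intricate and uses the explicit character theory developed in Sections~\ref{sec4} and~\ref{secWla}. First one reduces to $-1\notin\DD(L)$ and (after $N_0$-conjugation) $1\notin\DD(L)$. Then one assumes $(L,\la)$ and $(L,\la)^{e\gamma}$ are $G$-conjugate for some $e\in\langle F_p\rangle$ and shows that this forces $\wt L_\la=\wt L$ (hence $\wG_\chi=\wG$ would follow if also $W(\la)=W(\wh\la)$). The remaining cases split on whether $h_0\in\ker(\la)$: if $h_0\in\ker(\la)$ and $W(\la)\neq W(\wh\la)$, a structural analysis of $W^\odd(\la)$ inside $\Sym_{\pm\cO_\odd}$ shows $\NNN_{\ov W}(W(\la))\leq W$, contradicting the assumed $G$-conjugacy; if $h_0\notin\ker(\la)$, one uses the restriction to $\Z(G)$ together with a delicate parity count (via the $F_0$-action on characters of $Z_{I_d}$) to prove $2\mid|\cO_d|$ for every $d\in\DD_\odd$, hence $2\mid l$ and $\Cent_E(\Z(G))=\EL$. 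The case $|\Z(G)|=2$ is handled separately using \Cref{lem_act_onGI} and \Cref{lem6_12}. None of this is a formal consequence of the parametrization or of \Cref{prop23}; it requires the specific results on cuspidal characters from \Cref{prop51}, \Cref{cor6_20}, and the structure of $\ov W(\la)$. Your anticipated ``main obstacle'' about $\gamma_3$ in type $\tD_4$ is also misplaced: the proposition only concerns $E^\circ=\langle F_p,\gamma\rangle$, and $\gamma_3$ is dealt with in the subsequent \Cref{propD4}.
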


\begin{proof}
Let $N_0:=\NNN_\GF(\bT)$. We consider first the possible structure of $L$, in particular the values of $\DD(L)$. Then we give the possible values of $\wt L_\chi$ via describing $W(\la)$.

We see that $L$  is $F_p$-stable. If $L$ is $\gamma $-stable, then $E(\GF)=\EL$. By our assumption $\EL\neq E$ we have $\gamma\notin \EL$. We observe that then $-1\notin\DD(L)$, as otherwise the system of simple roots $\Delta'$, associated to $L$ as in \ref{notationL} is $\gamma$-stable, which then implies $ \gamma \in E_L$. 

If $1\in \DD(L)$, then some $N_0$-conjugate of $L$ is $\gamma$-stable: The conjugation is given by some element $v\in N_0:=\NNN_\GF(\bT_0)$ that corresponds to some $\sigma\in \Sym_{\pm \underline l} $ with the following properties: $\si(\ul)=\ul$, $1\in \si(J_1)$ and $\sigma(\Delta')\subseteq \Delta$. The Levi subgroup $L$ satisfies accordingly $1\notin \DD(L)$. 

Let $W_0=N_0/\bT^F$. For the proof of the statement we consider $\chi\in\Irr(\GF\mid (L,\cusp(L)))$ with $(\wG E)_\chi\leq \wG E_L$. Then $\chi^{\gamma}$ and $\chi$ are $\wG E_L$-conjugate. For the statement we have to show that $\wG_\chi=\wG$. 
We assume that $\chi^{\gamma}$ and $\chi$ are $\wG E_L$-conjugate. Let  $\la\in\cusp(L)$ with $\chi\in \Irr(\GF\mid (L,\la))$. Then $(L,\la)$ and $(\gamma(L),\la^\gamma)$ are $\wG E_L$-conjugate, in particular $\gamma(L)$ and $L$ are $N_0$-conjugate.  
This shows $\ov W(L)\neq W(L)$, or equivalently $\DD_\odd\neq \emptyset$. Let $\II Oodd@{\cO_\odd}=\bigcup_{d\in \DD_\odd} \cO_d$, $\II Oeven@{\cO_\even}=\bigcup_{d\in \DD_\even} \cO_d$ and $I_0\in\cO_{\odd}$. Without loss of generality we assume $1\in  I_0$. Otherwise we  replace $L$ by some $N_0$-conjugate. Let $w_0:=\prod_{i\in I_0} (i,-i)\in W_0$ and $n_0\in \ov N=\NNN_{\ov \bG^F}(\bT)$ the corresponding element. Hence $w_0\in \gamma  N_0$. We note that $N$ induces on $L$ the outer automorphisms $W$, while any $w'\in \ov W\setminus W$ is induced by elements of $\NNN_{\GF  \spann<\gamma >}(L)\setminus N$. This proves that in this case $L$ and $\gamma (L)$ are actually $N_0$-conjugate. Hence the Harish-Chandra series satisfy $\Irr(\GF\mid(L,\cusp(L))=\Irr(\GF\mid(\gamma(L), \cusp(\gamma(L)))$. 

Let $\la\in\cusp(L)$. Assume that $(L,\la)$ and $(L,\la)^{e\gamma }$ are $\GF$-conjugate for some $e\in \spannFp$. This implies that  $(L,\la)=(L,\la)^{e \ov n}$ for some $\ov n\in \ov N\setminus N$.  Note that $W(\la)^{\ov n}=W(\la)$. Because of $-1\notin\DD(L)$ and $\DD_\odd\neq \emptyset$ we observe $\wt L_\la=\wt L$, as $\spann<\wh L_{I_0}, t_{\ul,2}>\leq \Cent_\bG(L)L$. 

Assume $h_0\in\ker(\la)$ and that some $\chi\in\Irr(\GF\mid(L,\la))$ satisfies $\wG_\chi\neq \wG$. According to \Cref{cor6_20} the equation $W(\wh \la)=W(\wt \la)$ holds, as $\{\pm 1\}\cap \DD(L)=\emptyset$.
If $W(\la)= W(\wh \la)$, then $\wG_\chi=\wG$. Hence we assume $W(\la)\neq W(\wh \la)$ in the following. Let $W^\odd(\la):= (W(\la) \Sym_{\pm \cO_\even})\cap\Sym_ {\pm \cO_\odd}$.
Without loss of generality we can assume $\la$ to be standardized, as in every $\ov N$-orbit in $\cusp(L)$ there is at least one standardized character, see after \ref{def_standardized}.
 As $W(\la)\neq W(\wh \la)$ and $\la$ is standardized, $W^{\odd}(\wh\la) \leq \Sym_{\cO_\odd}$ and $x\in W(\la)\setminus W(\wh \la)$ can be chosen as an involution with no fixed point in $\cO_\odd$. We note that $\NNN_{\Sym_{\pm \cO_\odd}}(W^\odd(\la)) \leq W$. This implies $\NNN_{\ov W}(W(\la)) \leq W$ and hence $(L,\la)$ and $(L,\la)^{e\gamma }$ are not $\GF$-conjugate for any $e\in \spannFp$.  This proves that $(L,\la)^\gamma$ is not $\GF$-conjugate to any element of the $\EL$-orbit of $(L,\la)$, when $h_0\in\ker(\la)$. 
 
Assume $h_0\notin\ker(\la)$ and $\nu\in\Irr(\restr \la|{\Z(\GF)})$. In the following we assume $|\Z(\GF)|=4$. Then we observe that $ E^\circ_\nu= \spannFp= \Cent_{E}(\Z(\GF))$ and hence $(\wG E)_\chi\leq \wG E^\circ_\nu$ for every $\chi\in\Irr(\GF\mid (L,\la))$. If $\Cent_{E}(\Z(\GF))=\EL$, this implies $(\wG E)_\chi \leq \wG \EL$ as required. Note that if $2\mid l$, then  $\Cent_{E}(\Z(\GF))=\EL$.   In the following we prove $2\mid |\cO_\odd|$ as this implies $2\mid l$.

For $I\in \cO$ let $\bZ_I$ be defined as in \ref{lem33_loc}, $Z_{I}:=\bZ_{I}^F$ and $\delta_I\in\Irr(\restr \la|{Z_I})$. Fix $d\in\DD_\odd$ and $I_d\in \cO_d$. For  $\kappa\in \Irr(Z_{I_d})$ we define 
$$a_{ \kappa}(\la):=|\{I \in \cO_d\mid \kappa \text{ and } \delta_I \text{ are }\ov V_d\text{-conjugate} \}|.$$

Recall $I_d\in \cO_\odd$ and $h_0\notin \ker(\la)$. Hence  $Z_{I_d}\cong \Cy_{q-1}$ and $o(\delta_{I_d})_2=(q-1)_2$. On the other hand we see that 
$$\sum_\kappa a_\kappa(\la)=|\cO_d|,$$
where $\kappa$ runs over the $\spann<c_{I_d}>$-orbits in $\Irr(Z_{I_d})$. 
By the above  $a_\kappa(\la)=0$ for every $\kappa\in\Irr(Z_{I_d})$ with $o(\kappa)_2\neq (q-1)_2$. If $\la'\in \cusp(L)$ is $\ov N$-conjugate to $\la$ then $a_\kappa(\la)=a_\kappa(\la')$ for every $\kappa\in\Irr(Z_{I_d})$ according to the action of $\ov V_d$ on the groups $Z_I$ ($I\in \cO_d$). Note that $a_\kappa(\la)=0$ as $o(\delta_I)_2=(q-1)_2$ for every $I\in \cO_d$.  

Recall that we assume $(L,\la)$ and $(\gamma(L),\la^{\gamma e})$ are $\GF$-conjugate. As the order of $\gamma e$ is even, we can choose some $F_0\in\spannFp$ such that $\spannFnull$ is a Sylow $2$-subgroup of $\spann<e>$. Then the $\ov N$-orbit of $\la$ is $F_0$-stable. Then $F_0$ acts on the characters of $\Irr(Z_{I_d})$, inducing an action on the set of $\spann<c_{I_d}>$-orbits in $\Irr(Z_{I_d})$. We denote this set by $\Irr(Z_{I_d})/\spann<c_{I_d}>$. If $\kappa\in\Irr(Z_{I_d})$ with $o(\kappa)_2=(q-1)_2$, the $\spann<c_{I_d}>$-orbit of $\kappa$ is not $F_0$-stable. Hence the $F_0$-orbit in $\Irr(Z_{I_d})/\spann<c_{I_d}>$ containing $\kappa$ has an even length. Since the $\ov N$-orbit of $\la$ is $F_0$-stable, we see that $a_\kappa(\la^{F_0})=a_{\kappa^{F_0}}(\la)$. 
Accordingly $$2\mid \sum_\kappa a_\kappa(\la),$$
whenever $\kappa$ runs over a $\spann<c_{I_d}>$-transversal in  $\{\kappa'\in \Irr(Z_{I_d})\mid o(\kappa')_2=(q-1)_2\}$.
By the above  $a_\kappa(\la)=0$ for every $\kappa\in\Irr(Z_{I_d})$ with $o(\kappa)_2\neq (q-1)_2$. Altogether this implies $2\mid  \sum_\kappa a_\kappa(\la)= |\cO_d|$, where $\kappa\in\Irr(Z_{I_d})$ runs over a $\spann<c_{I_d}>$-transversal. 

As $l=\sum_{d\in\DD} d |\cO_d|$ and hence $l\equiv |\cO_\odd| \mod 2$, the rank $l$ is even and $\Cent_E(\Z(\GF))=\spannFp=\EL$. As explained above this leads to $(\wG E)_\chi\leq \wG E_{\restr \chi|{\Z(\GF)}}=\wG E_L$ and hence a contradiction to the assumption on $\chi$. 

It remains to study the case of $|Z(\GF)|=2$. Then $2\nmid l$ and $4\nmid (q-1)$. Note that $4\nmid (q-1)$ implies $2\nmid |\EL|$ and hence $|\EL|$ and $o(\gamma)=2$ are coprime. 
If $(L,\la)$ and $(\gamma(L),\la^\gamma)$ are $\GF E_L$-conjugate, then the pairs are already $\GF$-conjugate. In the following we see that the $\GF$-orbit of $(L,\la)$ can not be $\gamma $-stable. By the above we have $\DD_\odd\neq \emptyset$ and $-1\notin\DD(L)$. According to \ref{lem_act_onGI}, $\la$ is $\wh L$-stable, as each $\la_I$ is $\wh L_I$-stable for $I\in \cO_\odd$.  Hence $\la$ is $\wt L$-stable, see also \Cref{lem_act_onGI}. According to \Cref{lem6_12} the assumption $2\nmid l$ implies $W(\la)=W(\wh \la)$, even more $\ov W(\la)=\ov W(\wh \la)\leq W$.  But this implies that $(L,\la)$ and $(L,\la)^\gamma$ are not $\GF$-conjugate. Hence $\gamma \notin(\wG E)_\chi$ and hence $(\wG E)_\chi\leq \wG (E_L)_{\chi}$.
\end{proof}
A last obstacle is formed by the groups $\tD_{4,sc}(q)$. We keep the same notation.
\begin{prop}\label{propD4}
If $\GF=\tD_{4,sc}(q)$, every $\wG$-orbit in $\Irr(\GF)$ contains some $\chi$ with $(\wG E)_\chi=\wG_\chi E_\chi$.
\end{prop}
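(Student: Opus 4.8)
The plan is to work series by series in the Harish-Chandra decomposition $\Irr(G)=\bigsqcup_{[L]}\Irr(G\mid(L,\cusp(L)))$, the union running over $G$-classes of standard Levi subgroups $L$ of $G=\tD_{4,sc}(q)$. Since $\wG E$ permutes these series, it suffices to show, for each $E$-orbit of standard Levi subgroups, that the union of the attached series has an $E$-stable $\wG$-transversal — equivalently (by \Cref{*cond_trans}) that every $\chi$ in it has a $\wG$-conjugate $\chi_0$ with $(\wG E)_{\chi_0}=\wG_{\chi_0}E_{\chi_0}$. A key point is that for $l=4$ the hypothesis \Cref{hyp_cuspD_ext} is vacuous, so \Cref{AinftyE1} applies unconditionally.

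First, two easy families. If, up to $\NNN_\GF(\bT)$-conjugacy, $L$ is $E$-stable — this covers $L=T_0$, the rank-one Levi, and the Levi of type $\tA_1^3$ fixed setwise by triality — then $\EL=E$ and \Cref{AinftyE1} directly provides an $E$-stable $\wG$-transversal. For $L=G$ the series is $\cusp(G)$, and \Cref{thm_Malle} together with \Cref{*cond_trans} and the fact (\Cref{rem_whG}) that $\wG$ and $\wt\bH^F$ from a regular embedding induce the same automorphisms of $G$ yields an $E$-stable $\wG$-transversal in $\cusp(G)$.

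It remains to treat the proper standard Levi subgroups $L$ of $\tD_4$ whose $G$-class contains no $E$-stable member; by triality these sit in nontrivial $E$-orbits. \Cref{AinftyE1} still gives an $\EL$-stable $\wG$-transversal $\TT_L$ in $\Irr(G\mid(L,\cusp(L)))$, so $(\wG\EL)_\chi=\wG_\chi\EL_\chi$ for $\chi\in\TT_L$ by \Cref{*cond_trans}. If $\wG_\chi=\wG$ then $\chi^e=\chi^{g^{-1}}=\chi$ for any $g\in\wG$, $e\in E$ with $ge\in(\wG E)_\chi$, so $(\wG E)_\chi=\wG E_\chi=\wG_\chi E_\chi$ automatically; hence we may assume $\chi$ is moved by some diagonal automorphism. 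Then \Cref{prop3_23} gives $(\wG E^\circ)_\chi\le\wG_\chi\EL^\circ$ with $E^\circ=\spann<F_p,\gamma>$, $\EL^\circ=E^\circ\cap\EL$, and what is left is to bring in the triality automorphism $\gamma_3$. If $L^{\gamma_3}$ is not $G$-conjugate to $L$, then $\chi^{\gamma_3}$ lies outside the bundle of series attached to the $G$-class of $L$, no element of $(\wG E)_\chi$ projects onto $\gamma_3$ (or a $\gamma$-translate of it), and the $E^\circ$-statement already suffices. If $L^{\gamma_3}$ is $G$-conjugate to $L$, I would rerun the relative-Weyl-group analysis of \Cref{prop3_23} with $\spann<\gamma_3>$ replacing $\spann<\gamma>$: $\gamma_3$ permutes the components of the root system $\Phi'$ of $\bL$, hence the orbit sets $\cO_d$, by an order-$3$ permutation, and the analogues of the inclusions $\NNN_{\ov W}(W(\la))\le W$ hold; since $W(\la)/W(\wt\la)$ is a $2$-group for all Levi subgroups of $\tD_4$ in play and $|\spann<\gamma_3>|=3$ is coprime to it, the Sylow-type argument of \Cref{lem6_36} upgrades the $E^\circ$-statement to $(\wG E)_\chi\le\wG_\chi\EL$, whence $(\wG E)_\chi=\wG_\chi E_\chi$.

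The main obstacle is precisely this final step. All of the apparatus of Sections \ref{secext}--\ref{secWla} is organised around $E^\circ=\spann<F_p,\gamma>$ (see \Cref{KlawhKla}, which explicitly excludes type $\tD_4$), so the order-$3$ triality $\gamma_3$ must be reinserted by hand; this requires a finite but delicate inspection of the $E$-orbits of standard Levi subgroups of $\tD_4$, of which of them become $G$-conjugate under $\gamma_3$, and of the associated relative Weyl groups inside $\ov W\rtimes\spann<\gamma_3>$.
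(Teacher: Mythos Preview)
Your reduction to a series-by-series problem and the treatment of the $E$-stable and cuspidal cases are fine; this matches how the paper arrives at the $E^\circ$-statement $(\wG E^\circ)_\chi=\wG_\chi E^\circ_\chi$ for some $\wG$-conjugate $\chi$ (with $E^\circ=\spann<F_p,\gamma>$). Where you diverge from the paper is in the treatment of the triality coset $\gamma_3 E^\circ$, and there your plan is both harder than necessary and only sketched.

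The paper does \emph{not} extend the Harish-Chandra/relative-Weyl-group machinery to $\gamma_3$. Instead it uses a short structural argument on $\wG/G\cong\Z(\bG)\cong(\ZZ/2)^2$. The automorphism $\gamma_3$ (and hence any $\gamma_3 f$ with $f\in\spann<F_p>$) permutes the three nontrivial elements of $\Z(\bG)$ cyclically, so the only $\gamma_3 f$-stable subgroups of $\wG/G$ are $1$ and $\wG/G$. Now suppose $(\wG E)_\chi\not\le\wG E^\circ$. Since $E/E^\circ$ has order $3$, there is a $3$-element $\gamma_3 f t\in(\wG E)_\chi$ with $t\in\wG$; it normalises $\wG_\chi$, whence $\wG_\chi\in\{G,\wG\}$. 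The case $\wG_\chi=\wG$ is trivial, so assume $\wG_\chi=G$. Because $\gamma_3 f$ has no nontrivial fixed point on $\wG/G$, the map $s\mapsto s^{-1}(\gamma_3 f)(s)(\gamma_3 f)^{-1}$ is onto $\wG/G$; choosing $s$ with $[s,\gamma_3 f]\equiv t^{-1}\pmod G$ gives a $\wG$-conjugate $\chi':=\chi^s$ that is genuinely $\gamma_3 f$-stable. Finally one checks that $\chi'$ still satisfies $(\wG E^\circ)_{\chi'}=G\,E^\circ_{\chi'}$: if not, the stabiliser contains an element $\gamma\wh t$ with $\calL(\wh t)=h_0$, and combining $\gamma_3 f$ and $\gamma\wh t$ inside $(\wG E)_{\chi'}$ produces (via $\gamma^{-1}\gamma_3\gamma=\gamma_3^{-1}$ in $\Sym_3$) an element of $\wG_{\chi'}\setminus G$, contradicting $\wG_{\chi'}=G$.

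By contrast, your proposal to ``rerun \Cref{prop3_23} with $\spann<\gamma_3>$ replacing $\spann<\gamma>$'' is not an argument yet: when $L^{\gamma_3}$ is only $G$-conjugate to $L$, $\gamma_3$ does not act on $\Phi'$ or on the sets $\cO_d$; one must first conjugate by an element of $N_0$, and the resulting element acts on the components without the parity bookkeeping that makes the $\gamma$-analysis work. Your coprimeness remark ($|W(\la)/W(\wt\la)|$ a $2$-group, $|\spann<\gamma_3>|=3$) is suggestive but does not by itself yield the transversal statement for $\Irr(G)$; it would at best control the relative Weyl side, and you would still need an equivariant comparison of the labellings across the $\gamma_3$-twist. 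The paper's approach sidesteps all of this by working entirely in $\wG E/G$.
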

\begin{proof} 
Let $\chi_0\in\Irr(\GF)$ and $E^\circ:=\spann<\gamma ,F_p>$. Then some Sylow $2$-subgroup of $E$ is contained in $E^\circ$. We can assume that $\wG E^\circ/\GF$ contains a Sylow $2$-subgroup of $(\wG E^\circ)_\chi/\GF$. (Otherwise we can replace $\chi_0$ by one of its $E$-conjugates.) Some $\wG$-conjugate $\chi$ of $\chi_0$ satisfies $(\wG E^\circ)_\chi=\wG_\chi E^\circ_\chi$ according to \Cref{prop3_23}. This proves the statement if  $(\wG E)_\chi\leq \wG E^\circ$. Additionally, $(\wG E)_\chi=\wG_\chi E_\chi$ holds if $\wG_\chi=\wG$. 

Accordingly there is some $f\in \spannFp$ and $t\in \wG$ such that $\chi$ is  $\gamma_3 ft$-stable and $\gamma_3 ft$ has $3$-power order in $\wG E/\GF$. If $t\in \GF$ the equation $(\wG E)_\chi=\wG_\chi E_\chi$ holds. 
Clearly $\wG_\chi\lhd (\wG E)_\chi$. Hence $\wG_\chi$ is normalized by $\gamma_3 f t$. But via the $\spann<\gamma_3 ,f>$-equivariant isomorphism $\wG/\GF\cong\Z(\GF)$ we see that $\wG_\chi=\GF$, as there is no $\gamma_3 f$-stable subgroups of $Z(\GF)$ apart from $\{1\}$ and $\Z(\GF)$. 

The element $\gamma_3 f$ acts on $\Z(\GF)$ such that only the trivial element is fixed by $\gamma_3 f$ and $[\gamma_3 f,\Z(\GF)]=\Z(\GF)\setminus \{1\}$. 

Hence some $\wG$-conjugate $\chi'$ of $\chi$ satisfies $\gamma_3f \in (\wG E)_{\chi'} $. We observe that $\gamma_3 f$ is a $3$-element and hence $o(f)$ is a power of $3$. Note that $f$ acts trivially on $\Z(\bG)$. Since $\chi$ satisfies $(\wG E^\circ)_\chi= \GF E^\circ_\chi$ and $[\Z(\GF),F_p]=1$, this leads to $(\wG E^\circ)_{\chi'}\in \{ G E^\circ_{\chi}, \ \GF \spann<F_p, \gamma  \wh t>_{\chi'}\}$ for some $\wh t \in \wG$ with $\calL(\wh t)=h_0$. Let $\wh G:=\calL\inv(\spannh)$. In the latter case  
$$ \wh t^{-1} \gamma_3(\wh t) f^2= \gamma_3 f (\gamma_3f)^{\gamma  \wh t}\in 
 (\wh G E)_{\chi'} .$$
Recalling that the orders of $\wt G/(\Z(\wt G)\GF)$ and $f$ are coprime, we get $\wh t^{-1} \gamma_3(\wh t)\in \spann<\wh t^{-1} \gamma_3(\wh t) f^2>$, but 
$\wh t^{-1} \gamma_3(\wh t)\in \wG_\chi$ and $\wh t^{-1} \gamma_3(\wh t)\notin \GF$. This leads to a contradiction and we see that $(\wG E^\circ)_{\chi'}=  \GF E^\circ_{\chi}$ and hence $(\wG E)_{\chi'}=\GF \spann<E^\circ_{\chi'}, \gamma_3f>$. 
\end{proof}
We can now deduce Theorem A from \Cref{thm_MS}. 
\begin{proof}[Proof of Theorem A] 
For a given fixed Levi subgroup $L$ of $\GF$ we apply \Cref{thm_MS} whose assumptions follow from \Cref{thm_loc} and \Cref{thm_loc*}. In this way we obtain an $\EL$-stable $\wt G$-transversal in $\Irr(\GF\mid(L,\cusp(L)))$, see \Cref{AinftyE1}. If $L$ is $E$-stable, $\EL=E$ and this gives the required statement. 
If $L$ has an $E$-stable $\GF$-conjugate $L'$, then we observe that $\Irr(\GF\mid(L,\cusp(L)))=\Irr(\GF\mid(L',\cusp(L')))$ and there exists an $E$-stable $\wG$-transversal in $\Irr(\GF\mid(L',\cusp(L')))$. 

It remains to consider the case where $\EL\neq E$ and  no $\GF$-conjugate of $L$ is $E$-stable. Then according to Propositions \ref{prop3_23} and  \ref{propD4} every $\chi'\in\Irr(\GF\mid (L,\cusp(L)))$ has some $\wG$-conjugate $\chi$ with $(\wG E)_\chi=\wG_\chi E_\chi$.
\end{proof}

\printindex


\begin{thebibliography}{BDT20}
%


\bibitem[B]{BonSL2} {\sc C.~Bonnaf{\'e}}, {\em Representations of $\SL_2(\FF_q)$}. Springer, London, 2011. 

 
\bibitem[B06]{Cedric} {\sc C.~Bonnaf{\'e}}, {\em Sur les caract\`eres des groupes r\'eductifs finis \`a centre non connexe: applications aux groupes sp\'eciaux lin\'eaires et unitaires}. Ast\'erisque {\bf306} (2006).

\bibitem[BS20]{BS} {\sc J.~Brough and B.~Sp\"ath}, 
\newblock On the Alperin--McKay conjecture for simple groups of type A. 
\newblock \emph{J. Algebra
\bf 558}, 221--259 (2020). 
%


\bibitem[BDT20]{BDT20}{\sc O.~ Brunat, O. Dudas and J. Taylor}, Unitriangular shape of decomposition matrices of unipotent blocks. \emph{Ann. of Math. (2) \bf192}, 583--663 (2020). 

%

\bibitem[C]{Ca85}
{\sc R.W.~Carter}, \emph{Finite Groups of Lie Type}. John Wiley
 \& Sons Inc., New York, 1985.

 \bibitem[CE]{CE04}
 {\sc M.~Cabanes and M.~Enguehard}, \emph{Representation Theory of Finite
 Reductive Groups}.
 Cambridge University Press, Cambridge, 2004.

 
 
\bibitem[CS13]{CS13}
{\sc M.~Cabanes and B.~Sp\"ath}, Equivariance and extendibility in finite reductive groups with connected center. \emph{Math. Z. \bf275} , 689--713 (2013).

\bibitem[CS17a]{CS17A} {\sc M.~Cabanes and B.~Sp\"ath}, Equivariant character correspondences and inductive McKay condition for type A. 
\emph{J. Reine Angew. Math. \bf 728}, 153--194 (2017).
 
\bibitem[CS17b]{CS17C} {\sc M.~Cabanes and B.~Sp\"ath}, The inductive McKay condition for type C.
\emph{Represent. Theory \bf21}, 61--81 (2017).

\bibitem[CS19]{CS18B}
{\sc M.~Cabanes and B.~Sp\"ath}, Descent {e}qualities and the inductive {M}c{K}ay condition for types B and E. {\it Adv. Math. \bf 356},   106820 (2019).


\bibitem[CSS21]{CSS}
{\sc M.~Cabanes, A.~Schaeffer Fry and B.~Sp\"{a}th},
On the inductive Alperin--McKay conditions in the maximally split case.  {\it Math. Z. \bf299}, 2419--2441 (2021). 
 

%

\bibitem[DM20]{DiMi2}
{\sc F.~Digne and J.~Michel}, \newblock \emph{Representations of Finite Groups of Lie Type.} 2nd Edition.
\newblock Cambridge University Press, Cambridge, 2020.
%

\bibitem[G93]{GeckHC} {\sc M.~Geck}, A note on Harish-Chandra induction. \emph{Manuscripta Math.} {\bf80}, 393--401 (1993).

\bibitem[GM]{GM} {\sc M.~Geck and G.~Malle} \emph{The Character Theory of Finite Groups of Lie Type: A Guided Tour}. Cambridge University Press, Cambridge, 2020.

\bibitem[GLS]{GLS3}
{\sc D.~Gorenstein, R.~Lyons, and R.~Solomon}, \emph{The Classification of the Finite Simple Groups. Number 3}. American Mathematical Society, Providence, RI, 1998.


\bibitem[H80]{H80}{\sc R. Howlett},
\newblock Normalizers of parabolic subgroups of reflection groups. 
\newblock {\em J. London Math. Soc. (2)} {\bf 21}, 62--80 (1980).


\bibitem[I]{Isa}
{\sc I.M.~Isaacs}, \emph{Character Theory of Finite Groups}. Academic Press, New York, 1976.

\bibitem[IMN07]{IMN} {\sc I.M.~Isaacs, G.~Malle, and G.~Navarro}, A reduction theorem for the McKay conjecture. \emph{Invent. Math. \bf170}, 33--101 (2007).

\bibitem[K20]{Klupsch}
{\sc M.~Klupsch}, On Extending Unipotent Representations to their Stabilizers. \newblock \emph{ Preprint} arXiv:2012.02580, (2020).



\bibitem[L]{L}
{\sc G.~Lusztig}, \textit{Characters of Reductive Groups over a Finite Field}. Princeton University Press, 1984.

\bibitem[L88]{L88}
{\sc G.~Lusztig}, \newblock On the representations of reductive groups with disconnected centre, {\em in: Orbites unipotentes et repr\'esentations, I}. 
\newblock Ast\'erisque {\bf 168}, 157--166 (1988).

\bibitem[L08]{L08}
{\sc G.~Lusztig}, \newblock 
Irreducible representations of finite spin groups. 
\newblock {\em Represent. Theory \bf12}, 1--36 (2008). 



\bibitem[Ma10]{Maslowski}
{\sc J. Maslowski}, Equivariant character bijections in groups of Lie type. Dissertation,
\url{http://kluedo.ub.uni-kl.de/volltexte/2010/2537/}, Technische Universit\"at Kaiserslautern, 2010. 

%


\bibitem[M14]{Ma14}
{\sc G.~Malle}, 
On the inductive Alperin--McKay and Alperin weight conjecture for groups with abelian Sylow subgroups. \emph{
J. Algebra \bf397}, 190--208 (2014). 

\bibitem[M17]{Ma17}
{\sc G.~Malle}, Cuspidal characters and automorphisms. {\it Adv. Math. {\bf 320}}, 887--903 (2017).


\bibitem[MS16]{MS16}
{\sc G.~Malle and B.~Sp\"ath}, Characters of odd degree. {\it Ann. Math. \bf 184}, 869--908 (2016).



\bibitem[MT]{MT} {\sc G. Malle and D. Testerman}, 
\emph{Linear Algebraic Groups and Finite Groups of Lie Type}. Cambridge University Press, Cambridge, 2011.



\bibitem[N]{Navarro_book}{\sc G.~Navarro}, \emph{Character theory and the McKay Conjecture}. Cambridge University Press, Cambridge, 2018. 

\bibitem[NT11]{NT11}
{\sc G. Navarro and P. Tiep}, A reduction theorem for the Alperin weight conjecture. {\em Invent. Math. \bf184}, 529--565, (2011).


\bibitem[OV]{OV}
{\sc A.~Onishchik and \`E.~Vinberg}, \emph{
Lie groups and algebraic groups.}
Springer, Berlin, 1990.




\bibitem[S09]{S09} {\sc B.~Sp{\"a}th}, \newblock The {M}c{K}ay conjecture for exceptional groups and odd primes.
\newblock {\em Math. Z.} {\bf 261}, 571--595 (2009).

\bibitem[S10a]{S10a}
{\sc B.~Sp{\"a}th},
\newblock Sylow {$d$}-tori of classical groups and the {M}c{K}ay conjecture. {I}.
\newblock {\em J. Algebra} {\bf 323}, 2469--2493 (2010).

\bibitem[S10b]{S10b} 
{\sc B.~Sp{\"a}th},
\newblock Sylow {$d$}-tori of classical groups and the {M}c{K}ay conjecture. {II}.
\newblock {\em J. Algebra} {\bf 323}, 2494--2509 (2010).

\bibitem[S12]{S12}{\sc B.~Sp{\"a}th},
\newblock Inductive {M}c{K}ay condition in defining characteristic.
\newblock {\em Bull. London Math. Soc.} {\bf 44}, 426--438 (2012).


\bibitem[S13]{S13}{\sc B.~Sp{\"a}th},
\newblock A reduction theorem for the Alperin--McKay conjecture. 
\newblock {\em J. Reine Angew. Math. \bf 680}, 153--189 (2013). 

\bibitem[S17]{S17}{\sc B.~Sp{\"a}th},
\newblock A reduction theorem for Dade's projective conjecture. 
\newblock {\em J. Eur. Math. Soc. (JEMS) \bf 19} no. 4, 1071--1126 (2017).

\bibitem[S23a]{S21}{\sc B.~Sp{\"a}th},
\newblock 
Extensions of characters in type D and the inductive McKay condition, II.
\newblock \emph{ Preprint} arXiv:2304.07373, (2023).
\bibitem[S23b]{S21b}{\sc B.~Sp{\"a}th},
\newblock 
Extensions of characters in type D and the inductive McKay condition, III.
\newblock In preparation, 2023.


\end{thebibliography}
\end{document}